\documentclass[preprint,11pt]{article}
\usepackage{fullpage}
\usepackage{amssymb,amsfonts,amsmath,amsthm,amscd,dsfont,mathrsfs,bm}
\usepackage{graphicx,float,psfrag,epsfig,amssymb}
\usepackage[usenames,dvipsnames,svgnames,table]{xcolor}
\definecolor{darkgreen}{rgb}{0.0,0,0.9}
\usepackage[pagebackref,letterpaper=true,colorlinks=true,pdfpagemode=none,citecolor=OliveGreen,linkcolor=BrickRed,urlcolor=BrickRed,pdfstartview=FitH]{hyperref}
\usepackage{wrapfig}
\usepackage{relsize}
\usepackage{color}
\usepackage{pict2e}
\usepackage[tight]{subfigure}
\usepackage{algorithm}
\usepackage[noend]{algorithmic}
\usepackage{caption}
\usepackage{nameref}
\usepackage{makecell}
\usepackage[font={small}]{caption} 

\DeclareMathAlphabet{\mathpzc}{OT1}{pzc}{m}{it}

\footnotesep 14pt
\floatsep 27pt plus 2pt minus 4pt      
\textfloatsep 40pt plus 2pt minus 4pt
\intextsep 27pt plus 4pt minus 4pt

\topmargin -0.2in  \headsep 0.15in  \textheight 8.5in \oddsidemargin
-0.07in  \evensidemargin -0.07in  \textwidth 6.6in

\newtheorem{propo}{Proposition}[section]
\newtheorem{lemma}[propo]{Lemma}
\newtheorem{definition}[propo]{Definition}
\newtheorem{coro}[propo]{Corollary}
\newtheorem{thm}[propo]{Theorem}

\theoremstyle{definition}
\newtheorem{remark}[propo]{Remark}

\def\cone{\mathcal{C}}

\def\hS{\widehat{S}}

\def\baF{\overline{F}}
\def\baX{\overline{X}}
\def\baS{\overline{S}}

\def\Cov{{\rm Cov}}

\def\tx{\widetilde{x}}
\def\bax{\overline{x}}
\def\bay{\overline{y}}

\def\cF{{\cal F}}
\def\cA{{\cal A}}

\def\cH{{\cal H}}
\def\cC{{\cal C}}
\def\cG{{\cal G}}
\def\cE{{\cal E}}

\def\event{\mathcal{E}}

\def\tSigma{\tilde{\Sigma}}
\def\PSD{{\sf PSD}}
\def\ctheta{{\check{\theta}}}
\def\csigma{{\check{\sigma}}}
\def\cgamma{{\check{\gamma}}}
\def\tdelta{\tilde{\delta}}
\def\cG{\mathcal{G}}
\def\xopta{x_{{\rm opt},1}}
\def\xoptb{x_{{\rm opt},2}}

\def\reals{{\mathbb R}}

\def\eps{{\varepsilon}}
\def\prob{{\mathbb P}}
\def\E{{\mathbb E}}
\def\Var{{\rm Var}}

\def\tC{\widetilde{C}}

\def\cB{{\cal B}}

\def\L0{{L_i}}

\def\de{{\rm d}}
\def\<{\langle}
\def\>{\rangle}
\def\diag{{\rm diag}}

\def\trho{\tilde{\rho}}

\def\Risk{{\sf R}}
\def\hRisk{\widehat{\sf R}}

\def\hth{\widehat{\theta}}
\def\oth{\overline{\theta}}
\def\otau{\overline{\tau}}
\def\dth{\widehat{\theta}^{{\rm d}}}
\def\sth{\widehat{\theta}^{{\rm split}}}
\def\hSigma{\widehat{\Sigma}}
\def\hOmega{\widehat{\Omega}}
\def\hsigma{\widehat{\sigma}}

\def\supp{{\rm supp}}

\def\F{{\sf F}}
\def\ind{{\mathbb I}}

\def \Tr{{\rm Trace}}
\def\F{{\sf F}}
\def\normal{{\sf N}}
\def\Lth{\widehat{\theta}^{\mbox{\tiny \rm Lasso}} }

\def\sT{{\sf T}}

\def\id{{\rm I}}

\def\hgamma{\hat{\gamma}}
\def\htau{\hat{\tau}}

\def\proj{{\rm P}}

\def\sign{{\rm sign}}

\def\event{\mathcal{E}}

\def\v*{v_i}
\def\T*{T_i}

\def\u*{u_i}
\def\F*{F_i}

\definecolor{olivegreen}{rgb}{0,0.6,0.4}

\def\cH{{\mathcal{H}}}
\def\cI{{\mathcal{I}}}

\def\ctB{\tilde{\mathcal{B}}}
\def\tlambda{\widetilde{\lambda}}
\def\bias{{\sf bias}}
\def\htheta{\widehat{\theta}}

\def\ty{\tilde{y}}
\def\tx{\tilde{x}}

\def\bw{{w}}

\def\Xnz{X_{\sim i}}

\def\pe{{\rm p}}
\def\cL{\mathcal{L}}

\def\th{{\theta}}
\def\hth{{\widehat{\theta}}}
\def\tth{{\theta^*}}

\def\thnz{{\theta_{\sim i}}}
\def\hthnz{\hth_{\sim i}}
\def\tthnz{{\theta^*_{\sim i}}}
\def\tthT{{\theta^*_{T}}}
\def\pthnz{\hth^{\pe}_{\sim i}}
\def\pthT{\hth^{\pe}_T}
\def\diag{{\rm diag}}

\def\event{\mathcal{E}}
\def\pproj{\proj^{\perp}}
\def\SD{{\sf SD}}
\def\SE{{\sf SE}}
\def\oS{\overline{S}}
\def\SURE{\mbox{\tiny {\rm SURE}}}

\newcommand{\ajcomment}[1]{}

\makeatletter
\newcommand{\labitem}[2]{%
\def\@itemlabel{\text{#1}}
\item
\def\@currentlabel{#1}\label{#2}}
\makeatother

\addtocontents{toc}{\protect\setcounter{tocdepth}{2}}


\title{Debiasing the Lasso:\\
 Optimal Sample Size for Gaussian Designs}

\author{Adel~Javanmard\footnote{Data Sciences and Operations Department, Marshall School of Business, University
of Southern California, Email: \url{ajavanma@marshall.usc.edu} }
             \;\; and\;\; 
Andrea~Montanari\footnote{Department of Electrical Engineering and Department of Statistics, Stanford University. Email: \url{montanar@stanford.edu}}
            }


\begin{document}
\maketitle

\begin{abstract}
Performing statistical inference in high-dimensional models is an
outstanding challenge. A major source of difficulty is the absence of
precise information on the distribution of high-dimensional
regularized estimators.

Here, we consider linear regression in the high-dimensional regime $p\gg
n$ and the Lasso estimator. In this context, we would like to perform
inference on a high-dimensional parameters vector $\theta^*\in\reals^p$.
Important progress has been achieved in computing confidence intervals 
and p-values for single coordinates $\theta^*_i$, $i\in \{1,\dots,p\}$. A
key role in these new inferential methods is played by a certain debiased (or
de-sparsified) estimator $\dth$ that is constructed from the Lasso
estimator. Earlier work establishes that, under suitable assumptions
on the design matrix, the coordinates of $\dth$ are asymptotically
Gaussian provided the true parameters vector $\theta^*$ is $s_0$-sparse
with $s_0 = o(\sqrt{n}/\log p )$. 

The condition $s_0 = o(\sqrt{n}/ \log p )$ is considerably stronger than
the one required for consistent estimation, namely  $s_0 = o(n/ \log p
)$. 
In this paper, we consider Gaussian designs with known or unknown population covariance.
When the covariance is known, we prove that the debiased estimator is asymptotically Gaussian under 
the nearly optimal condition   $s_0 = o(n/ (\log p)^2)$. Note that 
\emph{earlier work was limited to  $s_0 = o(\sqrt{n}/
\log p)$ even for perfectly known covariance.} 

 The same conclusion holds if the population covariance is unknown but can be
estimated sufficiently well, e.g. \emph{under the same sparsity conditions on the inverse covariance 
as assumed by earlier work}.
For intermediate regimes, we describe the trade-off between sparsity
in the coefficients $\theta^*$, and sparsity in the inverse covariance
 of the design. We further discuss several other applications of our results to high-dimensional inference.
 In particular, we propose a thresholded Lasso estimator that is minimax optimal up to a factor $1+o_n(1)$
for i.i.d. Gaussian designs. 
  
\end{abstract}

\newpage

\section{Introduction}

\subsection{Background}

Consider  random design model where we are given $n$ i.i.d. pairs $(y_1,x_1)$, $(y_2,x_2)$, $\cdots$,
$(y_n,x_n)$ with $y_i\in \reals$, and $x_i\in\reals^p$. The response
variable $y_i$ is a linear function of  $x_i$, contaminated by noise
$w_i$ independent of $x_i$
\begin{eqnarray}\label{eqn:regression}
y_i \,=\, \<\th^*,x_i\> + w_i\, ,\;\;\;\;\;\;\;\; w_i\sim
\normal(0,\sigma^2)\, .
\end{eqnarray}
Here $\theta^*\in\reals^p$ is a vector of parameters to be estimated and
$\<\,\cdot\,,\,\cdot\,\>$ is the standard scalar product. 

In matrix form,
letting  $y = (y_1,\dots,y_n)^\sT$ and denoting by $X$ the matrix with
rows $x_1^\sT$,$\cdots$, $x_n^\sT$ we have
\begin{eqnarray}\label{eq:NoisyModel}
y\, =\, X\,\th^*+ w\, ,\;\;\;\;\;\;\;\; w\sim
\normal(0,\sigma^2 \id_{n\times n})\, .
\end{eqnarray}

We are interested in the high-dimensional regime wherein
the number of parameters $p$ exceeds the sample size $n$.
Over the last 20 years, impressive progress has been made in
developing and understanding highly effective estimators in this
regime \cite{Dantzig,BickelEtAl,buhlmann2011statistics}. A prominent
approach is the Lasso
\cite{Tibs96,BP95} defined through the
following convex optimization problem
\begin{eqnarray}\label{eq:Lasso}
\Lth(y,X;\lambda)\equiv \arg\max_{\theta\in\reals^p}
\left\{\frac{1}{2n}\|y-X\theta\|_2^2+\lambda
\|\theta\|_1\right\}\,.
\end{eqnarray}
(We will omit the arguments of $\Lth(y,X;\lambda)$ whenever clear from
the context.)

A far less understood question is how to perform statistical inference
in the high-dimensional setting, for instance computing confidence
intervals and p-values for quantities of interest.
Progress  in this direction was achieved only over the last couple of
years.
In particular, several papers
\cite{BuhlmannSignificance,zhang2014confidence,javanmard2013hypothesis,van2014asymptotically,javanmard2014confidence} 
develop methods to compute
confidence intervals for single coordinates of the parameters vector
$\theta^*$.
More precisely, these methods compute intervals $J_i(\alpha)$ 
depending on $y,X$, of nearly minimal size,  with the coverage guarantee 
\begin{align}
\prob\big(\theta^*_{i}\in J_i(\alpha)\big) \ge 1-\alpha - o_n(1)\,.
\end{align}
The $o_n(1)$ term is explicitly characterized, and 
vanishes along sequence of instances of increasing dimensions under
suitable condition on the design matrix $X$.

The fundamental idea developed in \cite{zhang2014confidence,javanmard2013hypothesis,van2014asymptotically,javanmard2014confidence} is to construct a debiased 
(or de-sparsified) estimator that takes the form
\begin{align}\label{eq:debiased}
\dth = \Lth + \frac{1}{n} M X^\sT (y- X\Lth)\,,
\end{align}
where $M\in\reals^{p\times p}$ is a matrix that is a function of $X$, but not
of $y$. While the construction of $M$ varies across different
papers, the basic intuition is that $M$ should be a good estimate of
the precision matrix $\Omega = \Sigma^{-1}$, where $\Sigma =
\E\{x_1x_1^{\sT}\}$ is the population covariance. 

Assume $\theta^*$ is $s_0$-sparse, i.e. it has only $s_0$ non-zero
entries. The key result that allows the construction of confidence
intervals in \cite{zhang2014confidence,van2014asymptotically,javanmard2014confidence} is  the following (holding under suitable conditions on the design matrix). 
If $M$ is `sufficiently
close' to $\Omega$, and the sparsity level is
\begin{align}
s_0 \ll \frac{\sqrt{n}}{\log p}\, ,\label{eq:Strong}
\end{align}
then $\dth_i$ is approximately Gaussian with mean $\theta^*_i$ and
variance of order $\sigma^2/n$. 

The condition (\ref{eq:Strong}) comes as a surprise, and is somewhat
disappointing. Indeed, consistent estimation using --for instance--
the Lasso
can be achieved under the much weaker condition $s_0\ll n/\log p$. 
More specifically, in this regime, with high probability \cite{Dantzig,zhang2008sparsity,BickelEtAl,ye2010rate,buhlmann2011statistics}
\begin{align}
\big\|\Lth-\theta^*\|_2^2 \le \frac{Cs_0\sigma^2}{n}\, \log p\, .
\end{align}
This naturally leads to the following question:
\begin{quote}
\emph{Does the debiased estimator have a Gaussian limit under the
  weaker
condition $s_0\ll n/\log p$?}
\end{quote}

Let us emphasize that the key technical challenge here does not lie in
the fact that $M$ is not a good estimate of the precision matrix
$\Omega$. Of course, if $M$ is not close to $\Omega$, then $\dth$ will
not have a Gaussian limit. 
However \emph{earlier proofs \cite{zhang2014confidence,van2014asymptotically,javanmard2014confidence} cannot establish the Gaussian
  limit for $s_0\gtrsim \sqrt{n}/\log p$, even if $\Omega$ is known
  and we set $M=\Omega$.}
Even the idealized case where the columns of $X$ are known to be
independent and identically distributed (i.e. $\Omega=\id$) is only understood in the asymptotic
limit $s_0,n,p\to\infty$ with $s_0/p$, $n/p$ having constant limits in $(0,1)$ \cite{javanmard2013hypothesis}. 

In order to describe the challenge, let us set $M=\Omega$, and recall the common step of the
proofs in \cite{zhang2014confidence,van2014asymptotically,javanmard2014confidence}. Using the definitions (\ref{eq:NoisyModel}),
(\ref{eq:debiased}), we
get
\begin{align}
\begin{split}
\sqrt{n}(\dth-\th^*)& = \sqrt{n}(\Lth-\th^*) +\frac{1}{\sqrt{n}} \Omega X^\sT  (X\th^*+w- X\Lth)\\
& = \frac{1}{\sqrt{n}} \Omega X^\sT w + \sqrt{n}(\Omega\hSigma-\id)(\th^*-\Lth) \, ,\label{eq:noisebias}
\end{split}
\end{align}
where $\hSigma = X^{\sT}X/n\in\reals^{p\times p}$ is the empirical design covariance.
Since $w\sim\normal(0,\sigma^2\id_n)$, it is easy to see that  vector $\Omega X^\sT w/\sqrt{n}$
has Gaussian entries of variance of order one. In order for
$\dth$ to be approximately Gaussian, we need the second  term  (which
can be interpreted as a bias) to vanish.  Earlier papers \cite{zhang2014confidence,van2014asymptotically,javanmard2014confidence}
address this by a simple $\ell_1$-$\ell_{\infty}$ bound.
Namely (denoting by $|Q|_{\infty}$ the maximum absolute value of any
entry of matrix $Q$):
\begin{align}
\begin{split}
\Big\|\sqrt{n}(\Omega\hSigma-\id)(\th^*-\Lth) \Big\|_{\infty} & \le
                                                                \sqrt{n}|\Omega\hSigma-\id|_{\infty}\|\th^*-\Lth\|_1\\
&\le \sqrt{n} \times C\sqrt{\frac{\log p}{n}} \times Cs_0\sigma
  \sqrt{\frac{\log p}{n}}\\
&\le C^2\sigma \frac{s_0\log p}{\sqrt{n}}\, ,\label{eq:EarlierUB}
\end{split}
\end{align}
where the bound $|\Omega\hSigma-\id|_{\infty}\le C\sqrt{(\log p)/n}$
follows from standard concentration arguments, and the bound on
$\|\th^*-\Lth\|_1$ is order-optimal and
is proved, for instance, in \cite{BickelEtAl,buhlmann2011statistics}.

This simple argument implies that the debiased estimator is
approximately Gaussian if the upper bound in Eq.~(\ref{eq:EarlierUB})
is negligible,
i.e. if $s_0 = o(\sqrt{n}/\log p)$. We see therefore that this requirement  is not imposed as to control the error in
estimating $\Omega$. It instead follows
from the simple $\ell_1$-$\ell_{\infty}$ bound \emph{even if $\Omega$ is known.}

\subsection{Main results}

The above exposition should clarify that the
$\ell_1-\ell_{\infty}$ bound is quite conservative. Considering the
$i$-th entry in the bias vector $\bias = (\Omega\hSigma-\id)(\th^*-\Lth)$, the $\ell_1$-$\ell_{\infty}$ bound
controls it as  $|\bias_i|\le
\|(\Omega\hSigma-\id)_{i,\cdot}\|_{\infty}
\|\th^*-\Lth\|_{1}$. This bound would be accurate only if the signs
of the entries $(\th_j^*-\Lth_j)$ were aligned to the signs
$(\Omega\hSigma-\id)_{i,j}$,  $j\in \{1,\dots, p\}$. 
While intuitively this is quite unlikely,  it is difficult to formalize this intuition; Note that in a random design setting, the terms $(\Omega\hSigma-\id)_{i,\cdot}$ and $\th^*-\Lth$ are highly
dependent: $\Lth$ is a deterministic function of the random pair $(X,w)$,
while  $(\Omega\hSigma-\id)= (\Omega X X^{\sT}/n-\id)$ is a function
of $X$. 

Our main result overcomes this technical hurdle via a careful
analysis of such dependencies. We follow a leave-one-out proof technique. 
Roughly speaking, in order to understand the distribution of the $i$-th
coordinate  of the debiased estimator $\dth_i$, we consider a modified
problem in which column $i$ is removed from the design matrix $X$. We
then study the consequences of adding back this column, and bound the effect
of this perturbation. An outline of this proof strategy is provided in
Section \ref{sec:Outline}.

We state below a simplified version of our main result, referring to
Theorem \ref{thm:main} below for a full statement, including technical
conditions.
\begin{thm}[Known covariance]\label{thm:MainSimplified}
Consider the linear model~\eqref{eq:NoisyModel} where $X$ has
independent Gaussian rows, with zero mean and covariance $\Sigma
=\Omega^{-1}$.
Assume that $\Sigma$ satisfies the  technical conditions stated in Theorem
\ref{thm:main}.
Define the debiased estimator $\dth$ via Eq.~(\ref{eq:debiased}) with $M =
\Omega$
and $\Lth = \Lth(y,X;\lambda)$ with $\lambda = 8\sigma\sqrt{(\log
  p)/n}$. 

If $n,p\to\infty$ with $s_0 = o(n/(\log p)^2)$, then we have
\begin{align}
\sqrt{n}(\dth-\th^*)= Z + o_P(1)\,, \quad \quad\quad
Z|X\sim\normal(0,\sigma^2\Omega\hSigma\Omega)\, .
\end{align}
Here $o_P(1)$ is a (random) vector satisfying $\|o_P(1)\|_{\infty}\to
0$ in probability as $n,p\to \infty$, and
$Z|X\sim\normal(0,\sigma^2\Omega\hSigma\Omega)$ means that the
conditional distribution of $Z$ given $X$ is centered Gaussian, with the stated covariance.
\end{thm}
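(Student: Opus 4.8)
The plan is to analyze the bias term $\sqrt{n}(\Omega\hSigma-\id)(\th^*-\Lth)$ coordinate by coordinate through a leave-one-out construction. Fix a coordinate $i$. Let $\btX$ denote the design matrix with the $i$-th column removed, and let $\hthnz$ be the Lasso solution for the reduced regression of $y$ on $\btX$ (with the same $\lambda$). Writing out the $i$-th coordinate of the bias, $\bias_i = \sum_j (\Omega\hSigma-\id)_{ij}(\th^*_j-\Lth_j)$, the key observation is that the vector $(\Omega\hSigma-\id)_{i,\cdot}$ involves the $i$-th column $X_i$ of $X$, while $\hthnz$ does not depend on $X_i$ at all. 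So if we replace $\Lth$ by $\hthnz$ in the bias, the randomness of $X_i$ is independent of everything else, and we can compute expectations and variances over $X_i$ directly. First I would decompose $\bias_i = \bias_i^{(1)} + \bias_i^{(2)}$ where $\bias_i^{(1)} = (\Omega\hSigma-\id)_{i,\cdot}(\th^*-\hthnz)$ and $\bias_i^{(2)} = (\Omega\hSigma-\id)_{i,\cdot}(\hthnz-\Lth)$.

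For the first term, condition on $\btX$ and $w$ (hence on $\hthnz$), and treat $X_i$ as the only source of randomness. The row $x_\ell$ has Gaussian coordinates, so $X_i = \Sigma_{i,\sim i}\Sigma_{\sim i,\sim i}^{-1}\btX^\sT\cdots$ — more precisely, conditionally on the other columns, $X_i = \barX\beta + v$ where $\beta = \Omega_{ii}^{-1}$-scaled regression coefficients and $v$ is an i.i.d.\ Gaussian vector independent of $\btX$ with variance $\Omega_{ii}^{-1}$. Substituting this representation into $(\Omega\hSigma-\id)_{i,\cdot}$ kills the ``predictable'' part (because $\Omega$ is exactly the precision matrix, the $\barX\beta$ contribution to $(\Omega X X^\sT/n)_{i,\cdot}$ cancels the identity up to a term involving $\|v\|^2/n$), leaving $\bias_i^{(1)}$ as (roughly) $\frac{1}{n} v^\sT \tX (\th^*-\hthnz)_{\sim i} + (\text{diagonal term})(\th^*_i - \hthnz_i)$, with $v$ a fresh Gaussian vector of dimension $n$ independent of $\tX$, $w$, $\hthnz$. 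Conditionally, this is a scalar Gaussian (or a sum of such) with variance of order $\frac{1}{n}\|\tX(\th^*-\hthnz)_{\sim i}\|^2 \cdot \Omega_{ii}/n \lesssim \|\th^* - \hthnz\|_2^2$ up to restricted-eigenvalue constants. Since the reduced Lasso still satisfies the standard $\ell_2$ rate $\|\th^*-\hthnz\|_2^2 \lesssim s_0\sigma^2(\log p)/n$, this contributes $\bias_i^{(1)} = O_P(\sqrt{s_0(\log p)/n}\cdot \sigma \sqrt{(\log p)/n}) = O_P(\sigma\sqrt{s_0}\,(\log p)/n)$ — wait, re-examining, the gain is that we get the $\ell_2$ norm $\sqrt{s_0\log p/n}$ instead of the $\ell_1$ norm $s_0\sqrt{\log p/n}$, so after the $\sqrt n$ prefactor we get $O_P(\sigma\sqrt{s_0 (\log p)^2/n})$ times a lower-order factor, which is $o_P(1)$ precisely when $s_0 = o(n/(\log p)^2)$. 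A union bound over the $p$ coordinates $i$ costs an extra $\log p$ in the Gaussian tail, consistent with this rate.

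For the second term $\bias_i^{(2)}$, the plan is to show that $\hthnz$ and $\Lth$ are close: adding back one column to a Lasso problem is a rank-one perturbation, and by standard Lasso stability / KKT arguments $\|\Lth - \hthnz\|_1$ is small — of order $|\hthnz_i| + (\text{residual correlation with column }i)/\lambda$, and one shows $\hthnz_i = O_P(\lambda\sqrt{\cdots})$ or that column $i$ is not selected with high probability when $\theta^*_i = 0$, handling the general case by the usual support-decomposition. Then $|\bias_i^{(2)}| \le |\Omega\hSigma-\id|_{\infty}\|\Lth-\hthnz\|_1$, and the point is that $\|\Lth-\hthnz\|_1$ involves essentially a \emph{constant} (or $O(s_0)$ but with an extra small factor) number of coordinates rather than the full $s_0$, so the crude $\ell_1$-$\ell_\infty$ bound is affordable here. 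Finally, combining, $\sqrt{n}(\dth_i - \th^*_i) = \frac{1}{\sqrt n}(\Omega X^\sT w)_i + \bias_i$, the first part is exactly conditionally $\normal(0,\sigma^2(\Omega\hSigma\Omega)_{ii})$ and $\bias_i = o_P(1)$ uniformly in $i$, giving the claim.

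The main obstacle I anticipate is making the leave-one-out decoupling rigorous while controlling the new stochastic term in $\bias_i^{(1)}$ \emph{uniformly over all $p$ coordinates} with the right constants: one needs concentration of the reduced Lasso error $\|\th^*-\hthnz\|_2$ that holds simultaneously for all $i$ (a restricted-eigenvalue / compatibility condition for all $p$ submatrices $\btX$, which follows from the Gaussian assumption on $\Sigma$ plus the technical conditions of Theorem~\ref{thm:main}), and one must verify that the error incurred by replacing $\hSigma$-based quantities by their leave-one-out analogues (the rank-one corrections hidden in $(\Omega\hSigma-\id)_{ii}$, in the normalization, and in matrix inversion lemmas) is itself $o_P(1/\sqrt n)$ per coordinate — these corrections are where the $\log p$ factors can accumulate, so tracking them carefully is the crux. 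A secondary technical point is handling the bias term $\bias_i^{(2)}$ on the true support, where $\|\Lth - \hthnz\|_1$ is not a priori tiny and one needs the KKT conditions together with the sign-consistency-type control of the Lasso to argue the perturbation stays within a small set of coordinates.
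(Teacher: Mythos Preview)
Your overall strategy --- leave-one-out decoupling plus a perturbation bound --- is exactly the paper's. Two points need correction, however.

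First, your independence claim fails for $i\in\supp(\theta^*)$. Your reduced estimator regresses $y$ on $\Xnz$, but $y=\tx_i\theta^*_i+\Xnz\tthnz+w$ still carries $\tx_i$, so conditioning on $\Xnz$ and $w$ does \emph{not} determine your leave-one-out solution, and the ``fresh Gaussian'' step breaks down. The paper repairs this by regressing the shifted response $\ty\equiv y-\tx_i\theta^*_i=\Xnz\tthnz+w$ on $\Xnz$; equivalently, its perturbed Lasso $\hth^{\pe}$ forces the $i$-th coordinate to equal $\theta^*_i$ rather than $0$. With this change $\pthnz$ is a function of $(\Xnz,w)$ only, and $v_i=X\Omega e_i$ (your residual $v$, up to scale) is independent of $\Xnz(\tthnz-\pthnz)$ for \emph{every} $i$, including those in the support.

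Second, you substantially under-estimate the difficulty of $\bias_i^{(2)}$. The required input is that the perturbed and unperturbed Lasso satisfy $\|\hth_{\sim i}-\pthnz\|_2\le C\rho\lambda$ --- a \emph{single} $\lambda$, not $\lambda\sqrt{s_0}$. This is neither a ``standard Lasso stability'' fact nor a consequence of sign-consistency; it is the paper's central technical lemma (Lemma~\ref{lem:main}), proved by a bespoke convexity/KKT argument that exploits the one-dimensional nature of the perturbation along coordinate $i$. Assumption~\ref{Condition:L1} on $\rho(\Sigma,C_0 s_0)$ enters precisely here, through the control of $\|\Sigma_{i,T}(\Sigma_{T,T})^{-1}\|_1$ for the active set $T$; without it only the generic bound $\lesssim\lambda\sqrt{s_0}$ is available, and your $\ell_1$--$\ell_\infty$ bound on $\bias_i^{(2)}$ returns the old $s_0(\log p)/\sqrt{n}$ rate with no gain. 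Your intuition that the perturbation ``involves essentially a constant number of coordinates'' is not what saves the day: both estimators have supports of size $O(s_0)$, and it is the smallness of the $\ell_2$ distance, not of the support, that delivers the improvement.
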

\begin{remark}
The more complete statement of this result, Theorem  \ref{thm:main}
provides explicit non-asymptotic bounds on the error term $o_P(1)$, In
particular $\|o_P(1)\|_{\infty}$ turns out to be of order
$\sqrt{s_0/n}\,(\log p)$ with probability converging to one as $n,p\to\infty$.
\end{remark}

Theorem \ref{thm:MainSimplified} raises an important question: 
\emph{Does the Gaussian limit hold even if $M$ is an imperfect
  estimate of $\Omega$?}

 If the precision matrix $\Omega$ is sufficiently
structured, then it can be reliably estimated from the design matrix
$X$. 
Both \cite{zhang2014confidence} and \cite{van2014asymptotically} 
assume that $\Omega$ is sparse, and use 
the node-wise Lasso to construct an estimate $\hOmega$
\cite{MeinshausenBuhlmann}.
They then set $M = \hOmega$. 

We followed the same procedure and hence generalized Theorem
\ref{thm:MainSimplified} to the setting of unknown, sparse precision matrix.
We state here a simplified version of this result, deferring to Theorem
\ref{thm:unknown} for a more technical statement including
non-asymptotic probability bounds.
\begin{thm}[Unknown covariance]\label{thm:UnknownSimplified}
Consider the linear model~\eqref{eq:NoisyModel} where $X$ has
independent Gaussian rows with precision matrix $\Omega$, satisfying the  technical conditions of
Theorem \ref{thm:MainSimplified} (stated in Theorem
\ref{thm:main}).
Define the debiased estimator $\dth$ via Eq.~(\ref{eq:debiased}) 
with $\Lth = \Lth(y,X;\lambda)$, $\lambda = 8\sigma\sqrt{(\log
  p)/n}$, and $M =
\hOmega$ computed through node-wise Lasso  (see Section \ref{sec:debias}). 

Let $s_{\Omega}$ the maximum number of non-zero entries in any row of $\Omega$.
If $n,p\to\infty$ with $s_0 = o(n/(\log p)^2)$ and $\min(s_\Omega,s_0) = o(\sqrt{n}/\log p)$, then we have
\begin{align}
\sqrt{n}(\dth-\th^*)= Z + o_P(1)\,, \quad \quad\quad
Z|X\sim\normal(0,\sigma^2\Omega\hSigma\Omega)\, ,
\end{align}
where $o_P(1)$ is a (random) vector satisfying $\|o_P(1)\|_{\infty}\to
0$ in probability as $n,p\to \infty$.
\end{thm}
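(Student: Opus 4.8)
\noindent The plan is to peel off the claimed Gaussian term and reduce everything else to (a) the bias estimate already established in Theorem~\ref{thm:main} for the choice $M=\Omega$, and (b) the accuracy of the node-wise Lasso estimate $\hOmega$ of $\Omega$. Inserting $M=\hOmega$ into \eqref{eq:debiased} and substituting $y=X\th^*+w$ exactly as in \eqref{eq:noisebias} gives
\begin{align*}
\sqrt{n}(\dth-\th^*)&=\underbrace{\tfrac1{\sqrt n}\,\Omega X^\sT w}_{Z}\;+\;\underbrace{\tfrac1{\sqrt n}\,(\hOmega-\Omega)X^\sT w}_{A}\\
&\quad+\;\underbrace{\sqrt{n}\,(\Omega\hSigma-\id)(\th^*-\Lth)}_{B_1}\;+\;\underbrace{\sqrt{n}\,(\hOmega-\Omega)\hSigma(\th^*-\Lth)}_{B_2}\,.
\end{align*}
Conditionally on $X$, $Z\sim\normal(0,\sigma^2\Omega\hSigma\Omega)$, which is exactly the asserted Gaussian term, so it remains to show $\|A\|_\infty,\|B_1\|_\infty,\|B_2\|_\infty\to0$ in probability. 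The term $B_1$ is precisely the bias term controlled in the proof of Theorem~\ref{thm:main}, whose argument applies with $M=\Omega$: under $s_0=o(n/(\log p)^2)$ it yields $\|B_1\|_\infty=O_P(\sqrt{s_0/n}\,\log p)=o_P(1)$, and we simply quote it.

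To handle $A$ and $B_2$ I would invoke, from the node-wise Lasso construction of Section~\ref{sec:debias}, the following facts, valid on an event of probability tending to $1$ under the technical conditions of Theorem~\ref{thm:main}: \emph{(i)} the stationarity (KKT) conditions of the $p$ node-wise regressions, together with the lower bounds on their estimated conditional variances, give $|\hOmega\hSigma-\id|_\infty\lesssim\sqrt{(\log p)/n}$ --- crucially \emph{with no sparsity hypothesis on $\Omega$}; and \emph{(ii)} when $\Omega$ is $s_\Omega$-row-sparse, the restricted-eigenvalue/compatibility property of $\hSigma$ (which holds for Gaussian designs under the stated eigenvalue conditions) yields the usual $\ell_1$ and prediction-error rates $\max_i\|(\hOmega-\Omega)_{i,\cdot}\|_1\lesssim s_\Omega\sqrt{(\log p)/n}$ and $\max_i (\hOmega-\Omega)_{i,\cdot}\hSigma(\hOmega-\Omega)_{i,\cdot}^\sT\lesssim s_\Omega(\log p)/n$. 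I would also use the standard Lasso bound $\|\th^*-\Lth\|_1\lesssim s_0\sigma\sqrt{(\log p)/n}$ and, from the Lasso KKT identity $\tfrac1n X^\sT w=\hSigma(\Lth-\th^*)+\lambda\hat g$ with a subgradient $\hat g$, $\|\hat g\|_\infty\le1$, the bound $\|\hSigma(\th^*-\Lth)\|_\infty\le\lambda+\|\tfrac1n X^\sT w\|_\infty\lesssim\sqrt{(\log p)/n}$.

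With these inputs, $A$ and $B_2$ are handled by a plain $\ell_1$--$\ell_\infty$ argument carried out in the \emph{two} possible ways, the point being that here --- unlike for the main term $B_1$ --- the crude bound already suffices and it is the \emph{smaller} of the two that matters. For $B_2$, write $(\hOmega-\Omega)\hSigma=(\hOmega\hSigma-\id)-(\Omega\hSigma-\id)$, so $|(\hOmega-\Omega)\hSigma|_\infty\lesssim\sqrt{(\log p)/n}$ by \emph{(i)} and concentration; then $\|B_2\|_\infty$ is bounded both by $\sqrt n\,|(\hOmega-\Omega)\hSigma|_\infty\,\|\th^*-\Lth\|_1\lesssim s_0(\log p)/\sqrt n$ and by $\sqrt n\,\max_i\|(\hOmega-\Omega)_{i,\cdot}\|_1\,\|\hSigma(\th^*-\Lth)\|_\infty\lesssim s_\Omega(\log p)/\sqrt n$, hence $\|B_2\|_\infty\lesssim\min(s_0,s_\Omega)(\log p)/\sqrt n=o_P(1)$ under $\min(s_\Omega,s_0)=o(\sqrt n/\log p)$. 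For $A$: since $w$ is independent of $X$, hence of $\hOmega$, conditionally on $X$ the $i$-th coordinate of $A$ is $\normal\!\big(0,\sigma^2(\hOmega-\Omega)_{i,\cdot}\hSigma(\hOmega-\Omega)_{i,\cdot}^\sT\big)$, so a Gaussian maximal inequality with \emph{(ii)} gives $\|A\|_\infty\lesssim\sigma\sqrt{\log p}\,\max_i\big[(\hOmega-\Omega)_{i,\cdot}\hSigma(\hOmega-\Omega)_{i,\cdot}^\sT\big]^{1/2}\lesssim\sigma\sqrt{s_\Omega}\,(\log p)/\sqrt n$; alternatively, substituting $\tfrac1n X^\sT w=\hSigma(\Lth-\th^*)+\lambda\hat g$ writes $A=-B_2+\sqrt n\,\lambda(\hOmega-\Omega)\hat g$ and bounds the last term through \emph{(i)}-type estimates by $\lesssim s_0(\log p)/\sqrt n$. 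Both routes give $o_P(1)$ under $\min(s_\Omega,s_0)=o(\sqrt n/\log p)$. Adding the four pieces and reading off the conditional law of $Z$ completes the argument.

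The main obstacle is supplying the node-wise Lasso inputs \emph{(i)}--\emph{(ii)} under \emph{precisely} the technical assumptions of Theorem~\ref{thm:main} --- i.e.\ passing from the deterministic restricted-eigenvalue and bounded-conditional-variance conditions to high-probability statements for Gaussian $X$ --- and, more delicately, establishing the stationarity/variance bound \emph{(i)} uniformly in $\Omega$ with \emph{no} sparsity assumption, so that the $s_0$-route for $A$ and $B_2$ remains available even when $s_\Omega$ is large. This is what makes the final requirement involve the \emph{minimum} of the two sparsity budgets rather than demanding that both be small; the rest is a routine combination of these ingredients with the already-proven known-covariance theorem.
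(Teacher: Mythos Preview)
Your decomposition and your handling of $B_1$ and $B_2$ match the paper's proof (given for the formal Theorem~\ref{thm:unknown}, to which the simplified statement defers) essentially line by line: the paper writes $\sqrt{n}(\dth-\th^*)=I_1+I_2+I_3$ with $I_1=-B_1$, $I_2=-B_2$, and bounds $I_2$ by precisely your two $\ell_1$--$\ell_\infty$ routes (KKT for the Lasso plus $\|M-\Omega\|_\infty\lesssim s_\Omega\sqrt{(\log p)/n}$ on one hand; $|M\hSigma-\id|_\infty+|\Omega\hSigma-\id|_\infty\lesssim\sqrt{(\log p)/n}$ together with $\|\Lth-\th^*\|_1\lesssim s_0\sqrt{(\log p)/n}$ on the other), obtaining $\|I_2\|_\infty\lesssim\min(s_0,s_\Omega)(\log p)/\sqrt n$.

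The one real difference is the Gaussian piece. The paper sets $Z=I_3=\tfrac1{\sqrt n}MX^\sT w$, so that $Z\mid X\sim\normal(0,\sigma^2 M\hSigma M^\sT)$ --- this is the covariance appearing in the formal Theorem~\ref{thm:unknown} and in the confidence-interval construction (Corollary~\ref{coro:distributionUnknown}); the $\Omega\hSigma\Omega$ in the simplified statement is a looser description. With that choice there is simply no term $A$ to bound. Your attempt to control $A$ has a genuine gap in the second route: after writing $A=-B_2+\sqrt n\,\lambda(\hOmega-\Omega)\hat g$, the claim that ``\emph{(i)}-type estimates'' yield $\lesssim s_0(\log p)/\sqrt n$ is unsupported --- \emph{(i)} controls $(\hOmega-\Omega)\hSigma$, not $(\hOmega-\Omega)$ applied to the dense subgradient $\hat g$, and the only evident bound goes through $\|\hOmega-\Omega\|_\infty$, which reproduces the $s_\Omega$ rate. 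Consequently, in the regime where the minimum is attained by $s_0$ (so only $s_0=o(\sqrt n/\log p)$ is guaranteed while $s_\Omega$ may be as large as $n/\log p$), neither of your routes forces $\|A\|_\infty=o_P(1)$. A secondary point: your assertion that \emph{(i)} holds ``with no sparsity hypothesis on $\Omega$'' is stronger than what the paper uses --- the lower bound on the $\htau_i^2$ it invokes (from van de Geer et al.) relies on $s_\Omega\ll n/\log p$, which is an explicit hypothesis of Theorem~\ref{thm:unknown}.
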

\begin{remark} As mentioned above, this version of the debiased estimator
can be constructed entirely from data. The only unspecified steps are the
choice of the regularization parameter $\lambda$, and the estimation
of the noise level $\sigma$. These can be addressed as in
\cite{zhang2014confidence,van2014asymptotically,javanmard2014confidence}
without changes in the sparsity condition : we will further discuss these points below.
\end{remark}
\begin{remark}
The sparsity condition  $\min(s_0,s_\Omega) = o(\sqrt{n}/\log p)$ nicely
illustrates the practical improvement implied by our more refined
analysis.
If the sparsity of the precision matrix is larger than the
sparsity of $\theta^*$, we recover 
the condition $s_0 = o(\sqrt{n}/\log p)$  which is assumed in the
results of \cite{zhang2014confidence,van2014asymptotically}.
(Note that \cite{javanmard2014confidence} obtain the same condition
without sparsity assumption on $\Omega$.) 
In this regime, our improved analysis does not bring any advantage,
since the bottleneck is due to the inaccurate estimation of $\Omega$.

On the other hand, if the precision matrix is sparser, we obtain a
much weaker condition on the coefficients $\theta^*$.
In particular, if $s_\Omega = o(\sqrt{n}/\log p)$, then the condition on $s_0$ is relaxed into
a nearly optimal condition $s_0 = o(n/(\log p)^2)$.

It is instructive to compare this with the past progress in
sparse estimation and compressed sensing. In that context, earlier work based on incoherence
conditions \cite{donoho2001uncertainty,donoho2006stable} implied
accurate reconstruction from a number of random samples scaling
quadratically in the number of non-zero coefficients. Subsequent
progress was based on the restricted isometry property \cite{CandesStable,Dantzig}, and
established accurate reconstruction from a linear number of measurements.
\end{remark}

\subsection{Extensions and applications}

\noindent{\bf Sample splitting.} An alternative approach to avoid the $\ell_1$-$\ell_{\infty}$ bound in
Eq.~(\ref{eq:EarlierUB}) is to  modify the definition of debiased
estimator in Eq.~(\ref{eq:debiased}), using sample-splitting. 
Roughly speaking, we can split the same in two batches of size
$n/2$. One batch is then used to estimate $\Lth$ and the other batch for $y$
and $X$ appearing in Eq.~(\ref{eq:debiased}) (and possibly for
computing $M$).

Appendix~\ref{app:dataSplit} discusses in greater detail this
method. This approach is subject to variations due to the random
splitting, and does not make use of part of half of the response
variables. While it provides a viable alternative,  it is not the focus of the present work.

\vspace{0.4cm}

\noindent{\bf Confidence intervals.} Theorem \ref{thm:UnknownSimplified} (and its formal version, 
Theorem \ref{thm:unknown}) allows the construction of confidence intervals using the same  general procedure 
as in
\cite{zhang2014confidence,van2014asymptotically,javanmard2014confidence}.
Namely, we construct the debiasing matrix $M$ from the design
matrix $X$, and an estimate $\hsigma$ of the noise variance.
Then, for a significance level $\alpha\in(0,1)$, we form the following
confidence interval for parameter $\theta_i$:
\begin{eqnarray}
J_i(\alpha) &\equiv& [\hth^\de_i-\delta(\alpha,n),\hth^\de_i+\delta(\alpha,n)]\,\label{eq:ConfInterval1}\\
\delta(\alpha,n) &\equiv&
\Phi^{-1}(1-\alpha/2)\frac{\hsigma}{\sqrt{n}}(M\hSigma
M^\sT)_{i,i}^{1/2}\,, \label{eq:ConfInterval2}
\end{eqnarray}
where $\Phi(x) \equiv \int_{-\infty}^xe^{-t^2/2}\de t/\sqrt{2\pi}$ is
the Gaussian distribution.
Section \ref{sec:debias} presents a formal analysis of
this procedure. 
A straightforward generalization also allows to compute p-values for
the null hypothesis $H_{0,i}:$\, $\theta_{i}^* = 0$.

\vspace{0.4cm}

\noindent{\bf Noise level and regularization.} The construction of the confidence
  interval $J_i(\alpha)$ in Eqs.~(\ref{eq:ConfInterval1}),
  (\ref{eq:ConfInterval2}) requires  a suitable choice of the
  regularization parameter $\lambda$, and an estimate of the noise
  level $\hsigma$. The same difficulty was present in
 \cite{zhang2014confidence,van2014asymptotically,javanmard2014confidence}.
The approaches used there (for instance, using the scaled Lasso
\cite{SZ-scaledLASSO}) can be followed in the present case as well. Under the
assumptions of Theorem \ref{thm:MainSimplified}, the same
proofs of \cite{javanmard2014confidence}  show that the additional
error due to the choice of $\lambda$ and $\hsigma$ are negligible.

\vspace{0.4cm}

\noindent{\bf Semi-supervised learning.}
In some applications, the precision matrix $\Omega$ can be estimated more accurately thanks to additional information.
For instance, in semi-supervised learning, the statistician is
given additional samples $\bax_1,\bax_2,\dots, \bax_{N}\in\reals ^p$
with the same distribution as the $\{x_i\}_{1\le i\le n}$.
For these `unlabeled' samples, the response variable is unknown. There are indeed many applications
in which acquiring the response variable is much more challenging than 
capturing the covariates \cite{chapelle2006semi}, and therefore $N\gg n$ or even $N\gg p$. In this
setting, we can estimate $\Omega$ more accurately from
$\{\bax_i\}_{1\le i\le N}$, then
use this estimate to construct $M$.

\vspace{0.4cm}

{\bf Non-Gaussian designs.} We expect that generalization of  Theorem \ref{thm:MainSimplified} and 
Theorem \ref{thm:UnknownSimplified} should hold for a broad
class of random designs with independent sub-Gaussian rows, although new proof ideas are required.
The main technical challenge in extending the present approach is to
generalize the leave-one-out construction. As discussed in Section
\ref{sec:Outline}, when studying the effect of modifying column $i$,
we need to account for dependencies between columns. For Gaussian designs, these dependencies are fully captured by
the design covariance $\Sigma$.

Note that the Gaussian assumption holds in the context of estimating Gaussian
graphical models. This is itself a broad topic that attracted
significant interest, since the seminal work of \cite{MeinshausenBuhlmann}.
Remarkably, recent contributions have shown the utility of debiasing
methods in this context  \cite{jankova2015confidence,chen2015asymptotically,jankova2015honest}.

\subsection{Organization and contributions}

The rest of the paper presents the following contributions:
\begin{enumerate}
\item Section \ref{sec:Results}. We state formally our Gaussian limit theorems, and use them to construct valid confidence intervals, 
of nearly optimal size. In particular, our results subsume (and improve) all previously known 
results on the debiased estimator for Gaussian designs.
\item Section \ref{sec:Minimax}. We establish a minimax lower bound on the $\ell_{\infty}$ norm of  the non-Gaussian component
in $\dth$. This implies that our Gaussian limit theorems cannot be substantially improved.
\item Section \ref{sec:Other}. Apart from the construction of confidence intervals, our Gaussian limit theorems have several fundamental implications. 
We discuss a a few examples, that we consider particularly interesting.
In particular, we construct a thresholded Lasso estimator that is minimax optimal up to a factor $(1+o_n(1))$
(an alternative approach to the same problem was recently proposed in \cite{su2015slope}).
\end{enumerate}
Section \ref{sec:Related} discusses relations with earlier work in this
area.
Outlines of the proofs of the main theorems are given in Section \ref{sec:thm_main} and Section
\ref{proof:thm_unknown} with most of the technical work deferred to  appendices.

%
%
\section{Related work}
\label{sec:Related}

A parallel line of research develops methods for performing valid inference
after a low-dimensional model is selected for fitting high-dimensional
data
\cite{lockhart2014significance,fithian2014optimal,taylor2014exact,chernozhukov2014valid}.
 The resulting significance statements are typically
conditional on the selected model. In contrast, here we are interested
in classical (unconditional) significance statements: the two
approaches are broadly complementary.


The focus of the present paper is assessing statistical significance, such as confidence
intervals, for single coordinates in the parameters vector $\theta^*$ and more generally
for small groups of coordinates.  Other inference tasks are also
interesting and challenging in high-dimension,
 and were the object of recent investigations \cite{bayati2013estimating,barber2015controlling,janson2015eigenprism,janson2015familywise}.

Sample splitting provides a general methodology for inference in high
dimension \cite{wasserman2009high,MeinshausenBuhlmannStability}. 
As mentioned above, sample splitting can also be used to define a
modified debiased estimator, see Appendix \ref{app:dataSplit}.
However sample splitting techniques typically use only part of the data
for inference, and are therefore sub-optimal. Also, the result depend
on the random split of the data.

A method for inference without assumptions on the design matrix was
developed in  \cite{meinshausen2014group}. The resulting confidence
intervals are typically quite conservative.

The debiasing method was developed independently from several points
of view
\cite{BuhlmannSignificance,zhang2014confidence,javanmard2013hypothesis,van2014asymptotically,javanmard2014confidence}.
The present authors were motivated by the AMP analysis of the Lasso
\cite{DMM09,BM-MPCS-2011,BayatiMontanariLASSO,bayati2015universality}, and by the Gaussian limits that this analysis implies. In
particular  \cite{javanmard2013hypothesis} used those techniques to
analyze standard Gaussian designs (i.e. the case $\Sigma = \id$) in the asymptotic limit $n,p,s_0\to \infty$
with $s_0/p$, $n/p$ constant. In this limit, the debiased estimator
was proven to be asymptotically Gaussian provided $s_0\le C\,
n/\log(p/s_0)$ (for a universal constant $C$). This sparsity condition
is even weaker than the one of Theorem \ref{thm:MainSimplified} (or
Theorem \ref{thm:main}), but the result of
\cite{javanmard2013hypothesis}  only holds asymptotically. 
Also \cite{javanmard2013hypothesis}  proved Gaussian convergence in a
weaker sense than the one established here, implying coverage of the
constructed confidence intervals only `on average' over the
coordinates $i\in\{1,\dots,p\}$.

A non-asymptotic result under weaker sparsity conditions, and for
designs with dependent columns, was proved in
\cite{javanmard2013nearly}. However, this only establishes
gaussianity of $\dth_i$ for most of the coordinates $i\in
\{1,\dots,p\}$. Here we prove a significantly stronger result holding uniformly over $i\in
\{1,\dots,p\}$.

Most of the work on statistical inference in high-dimensional models 
has been focused so far on linear regression. 
The debiasing method admits a natural extension to generalized linear models that
was analyzed in \cite{van2014asymptotically}. Robustness to model
misspecification was studied in \cite{buhlmann2015high}.
An R-package for inference in high-dimension that uses the node-wise
Lasso is available \cite{dezeure2015high}.
An R implementation of the method \cite{javanmard2014confidence}
(which does not make sparsity assumptions on $\Omega$) is also
available\footnote{See {\sf
    http://web.stanford.edu/~montanar/sslasso/}.}. 
%
%
\section{Main results: Gaussian limit theorems}
\label{sec:Results}

\subsection{General notations}

We use $e_i$ to
refer to the $i$-th standard basis element, e.g., $e_1 = (1,0,\dotsc,0)$. For a vector $v$, $\supp(v)$ represents
the positions of nonzero entries of $v$. Further, $\sign(v)$ is the
vector with entries $\sign(v)_i = +1$ if $v_i > 0$, $\sign(v)_i = -1$ if $v_i < 0$, and $\sign(v)_i = 0$ otherwise.
For a matrix $M\in \reals^{n\times p}$ and a set of indices $J\subseteq[p]$
we use $M_{J}$ to denote the submatrix formed by columns in $J$. Likewise, for a vector $\theta$ and a subset $S$, 
$\theta_S$ is the restriction of $\theta$ to indices in $S$. For an integer $p\ge 1$, we use the notation $[p] = \{1,\cdots, p\}$ and the shorthand $\sim{i}$ for the set $[p]\backslash{i}$. We write $\|v\|_p$ for the standard $\ell_p$ norm of a vector $v$, i.e., $\|v\|_p = (\sum_i |v_i|^p)^{1/p}$
and $\|v\|_0$ for the number of nonzero entries of $v$.  For a matrix
$A\in \reals^{m\times n}$, $\|A\|_p$ denotes it $\ell_p$ operator
norm; in particular, $\|A\|_\infty = \max_{1\le i\le m}\sum_{j=1}^n
|A_{ij}|$. 
This is to be contrasted with the maximum absolute value of any entry
of $A$ that, as mentioned above, we denote by $|A|_{\infty}
\equiv\max_{i\le m, j\le n}|A_{ij}|$.
For a matrix $A$, we denote its maximum and minimum singular values by $\sigma_{\max}(A)$ and $\sigma_{\min}(A)$, respectively.
If $A$ is symmetric, $\lambda_{\max}(A)$ and $\lambda_{\min}(A)$ are its maximum and minimum eigenvalues.
Finally,  for two
functions $f(n)$ and $g(n)$, the notation $f(n) \gg g(n)$ means that $f$ `dominates' $g$ asymptotically,
namely, for every fixed positive $C$, there exists $n(C)$ such that $f(n) \ge C g(n)$ for $n > n(C)$. We also use $f(n) \lesssim g(n)$ to indicate that
$f$ is `bounded' above by $g$ asymptotically, i.e., $f(n) \le C g(n)$ for some positive constant $C$. The notations $f(n) \ll g(n)$ and
$f(n) = o(g(n))$ are defined analogously, and we use $o_P(\,\cdot\,)$ to indicate
asymptotic behavior in probability as the sample size $n$ tends to
infinity.

We will use $c, C,\dots$ to denote generic constants that can vary
from one position to the other of the paper. 

\subsection{Preliminaries}\label{sec:preliminary}
This section includes some preliminary results that are repeatedly used in our proofs.
We start by  some  well-known results about the Lasso estimator.
For the sake of simplicity, we will often use $\hth =
\hth(y,X;\lambda)$ instead of  $\Lth$ to denote the Lasso estimator.

We denote the rows of the design matrix $X$ by $x_1,\dotsc, x_n \in \reals^p$ and 
its columns by $\tx_1, \dotsc, \tx_p\in \reals^n$. The empirical covariance of the design $X$ is defined as $\hSigma \equiv (X^\sT X)/n$. 
The population covariance will be denoted by $\Sigma$, and we let $\Omega \equiv \Sigma^{-1}$ be the precision matrix. 
\begin{definition}\label{def:phi}
Given a symmetric matrix $\hSigma\in\reals^{p\times p}$ and a set
$S\subseteq [p]$, the corresponding \emph{compatibility constant}
is defined as
\begin{align}
\phi^2(\hSigma,S) \equiv
\min\Big\{\frac{|S|\,\<\theta,\hSigma\,\theta\>}{\|\theta_S\|_1^2} :\;
\theta\in\reals^p, 
\;\|\theta_{S^c}\|_1\le 3\|\theta_S\|_1\Big\}\, .
\end{align}
We say that $\hSigma\in\reals^{p\times p}$ satisfies the
\emph{compatibility condition} for the  set $S\subseteq [p]$, with
constant $\phi$ if $\phi(\hSigma,S)\ge \phi$.
We say that it holds for the design matrix $X$, if it holds for
$\hSigma = X^{\sT}X/n$.
\end{definition}

It is also useful to recall some notation for the restricted eigenvalue condition, introduced by Bickel, Ritov and
Tsybakov \cite{BickelEtAl}. 
For an integer $0<s_0<p$ and a positive number $L$, define $\cone(s_0,L)\in\reals^p$  by the 
following cone constraints:
\begin{align}
\cone(s_0,L) \equiv \{\theta\in \reals^p: \; \exists S\subseteq[p],\; |S| = s_0, \; \|\theta_{S^c}\|_1 \le L \|\theta_S\|_1\}\,.
\end{align}

In high-dimension, the empirical covariance $\hSigma$ is singular. However,
we can ask for non-singularity of $\hSigma$  for vectors in $\cone(s_0,L)$. 
Rudelson and Zhou \cite{rudelson2013reconstruction} prove a reduction principle that bounds 
the restricted eigenvalues of the empirical covariance in terms of those of the population covariance.
We will use their result specified to the case of Gaussian matrices.
\begin{lemma}[\cite{rudelson2013reconstruction}, Theorem 3.1]\label{lem:rudelson}
Suppose that $\sigma_{\min}(\Sigma) > C_{\min}>0$ and $\sigma_{\max}(\Sigma) < C_{\max} < \infty$.
Let $X\in \reals^{n\times p}$ have independent rows drawn from $\normal(0,\Sigma)$. Set $0<\delta<1$, $0<s_0<p$, and $L>0$. Define the following event
\begin{eqnarray}\label{eq:Bdelta}
\cB_\delta(n,s_0,L) \equiv \Big\{X\in \reals^{n\times p}:\, (1-\delta)\sqrt{C_{\min}}\le \frac{\|Xv\|_2}{\sqrt{n}\|v\|_2} \le (1+\delta)\sqrt{C_{\max}}\,,\, \forall v\in  \cone(s_0,L) \,\,\text{s.t.}\,\, v\neq 0 \Big\}\,.
\end{eqnarray}
Then, there exists a constant $c_1 = c_1(L)$ such that, 
for sample size $n \ge c_1 s_0 \log (p/s_0)$, we have
\begin{align}
\prob(\cB_\delta(n,s_0,L)) \ge 1- 2e^{-\delta^2 n}\,.
\end{align}
\end{lemma}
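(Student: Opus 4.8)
The plan is to reconstruct the proof of \cite[Theorem 3.1]{rudelson2013reconstruction}, specialized to Gaussian designs, by combining a geometric reduction of the cone to a convex hull of sparse vectors with a Gaussian comparison inequality and a concentration bound. First I would use that both the ratio $\|Xv\|_2/(\sqrt n\,\|v\|_2)$ and the cone $\cone(s_0,L)$ are invariant under rescaling of $v$, so it suffices to control $\sup$ and $\inf$ over $T\equiv\cone(s_0,L)\cap\{v:\|v\|_2=1\}$. The key geometric input is that if $v\in\cone(s_0,L)$ with witnessing set $S$ then $\|v\|_1\le(1+L)\|v_S\|_1\le(1+L)\sqrt{s_0}\,\|v\|_2$, so $T\subseteq K\equiv\{v:\|v\|_2=1,\ \|v\|_1\le(1+L)\sqrt{s_0}\}$. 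Invoking the standard fact that an $\ell_1$–$\ell_2$ ball of this aspect ratio is contained, up to a universal constant, in the convex hull of the $s$-sparse unit vectors with $s\asymp(1+L)^2 s_0$, and that taking convex hulls leaves the Gaussian width $w(T)\equiv\E\sup_{v\in T}\langle g,v\rangle$ unchanged, one obtains $w(T)\le w(K)\lesssim\sqrt{s}\,\E\max_{|A|=s}\|g_A\|_2\lesssim_L\sqrt{s_0\log(p/s_0)}$, the combinatorial factor $\log(p/s_0)$ entering through $\binom ps\le(ep/s)^s$.

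Next I would write $X=G\Sigma^{1/2}$ with $G$ having i.i.d.\ $\normal(0,1)$ entries; then for $\|v\|_2=1$ the population norm $\|\Sigma^{1/2}v\|_2$ lies in $[\sqrt{C_{\min}},\sqrt{C_{\max}}]$ by the spectral assumptions on $\Sigma$. Applying a Gaussian matrix deviation inequality (Gordon's comparison theorem) to the bounded set $\Sigma^{1/2}T$, whose Gaussian width is at most a constant times $\sqrt{C_{\max}}\,w(T)$, yields
\[
\E\ \sup_{v\in T}\Big|\,\frac{\|Xv\|_2}{\sqrt n}-\|\Sigma^{1/2}v\|_2\,\Big|\ \lesssim\ \frac{w(T)}{\sqrt n}\ \lesssim_L\ \sqrt{\frac{s_0\log(p/s_0)}{n}}\,.
\]
Choosing $c_1=c_1(L)$ large enough — absorbing also the dependence on $\delta,C_{\min},C_{\max}$ — makes the right-hand side at most $\tfrac12\,\delta\sqrt{C_{\min}}$ as soon as $n\ge c_1 s_0\log(p/s_0)$.

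To upgrade this bound in expectation to the stated high-probability statement, I would invoke Gaussian concentration: $G\mapsto\sup_{v\in T}\big|\,\|G\Sigma^{1/2}v\|_2-\sqrt n\,\|\Sigma^{1/2}v\|_2\,\big|$ is Lipschitz in $G$ with respect to the Frobenius norm with constant at most $\max_{v\in T}\|\Sigma^{1/2}v\|_2\le\sqrt{C_{\max}}$, hence exceeds its mean by $r$ with probability at most $\exp(-r^2/2C_{\max})$. Taking $r\asymp\delta\sqrt n$ and combining with the previous step shows that on an event of probability at least $1-2e^{-\delta^2 n}$ (the numerical constants being absorbed into $c_1$) one has $\sup_{v\in T}\big|\,\|Xv\|_2/\sqrt n-\|\Sigma^{1/2}v\|_2\,\big|\le\delta\sqrt{C_{\min}}$; together with $\|\Sigma^{1/2}v\|_2\in[\sqrt{C_{\min}},\sqrt{C_{\max}}]$ this gives precisely the two-sided bounds defining $\cB_\delta(n,s_0,L)$.

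I expect the main obstacle to be getting the sharp factor $\log(p/s_0)$, rather than $\log p$, simultaneously with the dependence among the columns of $X$. The first point forbids naively netting all of $\mathbb{S}^{p-1}$: one must exploit that the relevant directions live on a union of $\binom{p}{O(s_0)}$ low-dimensional faces (equivalently, use the convex-hull-of-sparse-vectors estimate above), which is what makes the width scale like $\sqrt{s_0\log(p/s_0)}$. The second point is handled by the factorization $X=G\Sigma^{1/2}$, but one must track how $\Sigma^{1/2}$ distorts $T$ and its Gaussian width; boundedness of the spectrum of $\Sigma$ is exactly what keeps that distortion under control. The alternative, more combinatorial route of \cite{rudelson2013reconstruction} — first proving the two-sided bound for exactly $k$-sparse vectors via an $\varepsilon$-net and a union bound over the $\binom pk$ supports, then a peeling argument over sparsity levels to absorb the $\ell_1$ tail of a cone vector — faces the same two difficulties, now localized in the union bound and in the peeling step respectively.
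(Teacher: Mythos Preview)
The paper does not prove this lemma; it is quoted verbatim as \cite[Theorem 3.1]{rudelson2013reconstruction} and used as a black box. There is thus no in-paper proof to compare your sketch against.

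That said, your reconstruction is a correct Gaussian-specific route to a result of this form, and it is \emph{not} the route taken in \cite{rudelson2013reconstruction}. As you yourself note at the end, Rudelson--Zhou argue combinatorially: an $\varepsilon$-net on the set of exactly $k$-sparse unit vectors, a union bound over the $\binom{p}{k}$ supports (which is what produces the $\log(p/s_0)$), and then a peeling/decoupling argument to pass from sparse vectors to the full cone; their proof handles general sub-Gaussian rows. Your approach instead exploits Gaussianity directly, bounding the Gaussian width of $\Sigma^{1/2}T$ via the inclusion $T\subseteq\{\|v\|_2=1,\ \|v\|_1\le(1+L)\sqrt{s_0}\}$ and then invoking Gordon's inequality plus Gaussian Lipschitz concentration. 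Both routes face the same two issues you identify (getting $\log(p/s_0)$ rather than $\log p$, and controlling the distortion due to $\Sigma^{1/2}$), and both resolve them in essentially the same places.

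One small gap: you write that ``the numerical constants [are] absorbed into $c_1$'' in the concentration step, but the Lipschitz constant of $G\mapsto\sup_{v\in T}\big|\|G\Sigma^{1/2}v\|_2-\sqrt n\,\|\Sigma^{1/2}v\|_2\big|$ is $\sqrt{C_{\max}}$, and the deviation you need is of order $\delta\sqrt{C_{\min}\,n}$, so the tail exponent you actually obtain is $-c\,\delta^2(C_{\min}/C_{\max})\,n$, not $-\delta^2 n$. Since the lemma states $c_1=c_1(L)$ and the ratio $C_{\min}/C_{\max}$ is a separate problem parameter, it cannot be hidden in $c_1$. This is cosmetic for the paper's purposes (throughout, $C_{\min},C_{\max}$ are treated as fixed constants and any bound of the form $1-2e^{-c\delta^2 n}$ with $c=c(C_{\min},C_{\max},L)$ would suffice), but your argument as written does not deliver the exponent exactly as stated.
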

\begin{remark}\label{rem:phi-bound}
Fix $S\subseteq [p]$ with $|S| = s_0$. Under the event $\cB_\delta(n,s_0,3)$, we have
\begin{align*}
\phi^2(\hSigma, S) &\ge \min_{\theta\in \cone(s_0,3)} \frac{s_0 \<\theta,\hSigma \theta\>}{\|\theta_S\|_1^2}\ge \min_{\theta\in \cone(s_0,3)} \frac{\<\theta,\hSigma \theta\>}{\|\theta_S\|_2^2}
\ge (1-\delta)^2 {C_{\min}}\,,
\end{align*}
where the second inequality follows from Cauchy-Schwartz inequality.
\end{remark}
We next introduce the event
\begin{eqnarray}\label{eq:ctB}
\ctB(n,p) \equiv \bigg\{w\in\reals^n:\; \frac{1}{n}\|X^\sT w\|_\infty \le 2\sigma\sqrt{\frac{\log p}{n}} \bigg\}\,.
\end{eqnarray}
On $\ctB(n,p)$ we can control the randomness  due to the measurement noise.
A well-known union bound argument shows that $\ctB(n,p)$ has large
probability
(see, for instance, \cite{buhlmann2011statistics}).
\begin{lemma}[\cite{buhlmann2011statistics}, Lemma 6.2]\label{lem:ctB}
Suppose that $\hSigma_{ii} \le1$ for $i\in [p]$. Then we have
$$\prob(\ctB(n,p)) \ge 1-2p^{-1}\,.$$
\end{lemma}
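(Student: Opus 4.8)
The plan is the standard union bound over the $p$ columns of $X$ combined with a Gaussian tail estimate, carried out conditionally on $X$. Since $w\sim\normal(0,\sigma^2\id_n)$ is independent of $X$, I would first freeze $X$ and observe that for each $j\in[p]$ the coordinate $(X^\sT w)_j = \tx_j^\sT w$ is, conditionally on $X$, a centered Gaussian with variance $\sigma^2\|\tx_j\|_2^2 = n\sigma^2\hSigma_{jj}\le n\sigma^2$, where the last step uses the hypothesis $\hSigma_{jj}\le 1$.

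Next I would rewrite the event defining $\ctB(n,p)$ in \eqref{eq:ctB}: multiplying through by $n$, the inequality $\frac1n\|X^\sT w\|_\infty\le 2\sigma\sqrt{(\log p)/n}$ is exactly $\{|\tx_j^\sT w|\le 2\sigma\sqrt{n\log p}\ \text{for all } j\in[p]\}$. Applying the elementary bound $\prob(|Z|\ge t)\le 2e^{-t^2/(2\tau^2)}$ for $Z\sim\normal(0,\tau^2)$ (using that this tail probability is monotone in $\tau^2$, so that the bound $\tau^2\le n\sigma^2$ may be freely substituted) with $t = 2\sigma\sqrt{n\log p}$ gives, for each fixed $j$,
\[
\prob\Big(|\tx_j^\sT w| > 2\sigma\sqrt{n\log p}\ \Big|\ X\Big)\ \le\ 2\exp\!\Big(-\frac{4\sigma^2 n\log p}{2n\sigma^2}\Big)\ =\ 2e^{-2\log p}\ =\ \frac{2}{p^2}\,.
\]

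A union bound over $j\in[p]$ then yields $\prob\big(\ctB(n,p)^c\mid X\big)\le p\cdot 2/p^2 = 2/p$, and since this estimate is uniform over every realization of $X$ with $\hSigma_{ii}\le 1$ for all $i$, taking expectation over such $X$ gives $\prob(\ctB(n,p))\ge 1-2/p$, as claimed.

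There is no real obstacle here — the statement is the classical noise-control bound of \cite{buhlmann2011statistics}. The only points requiring minor care are the order of conditioning (condition on $X$ first, so that only the Gaussian noise $w$ is averaged and the column norms $\|\tx_j\|_2$ are treated as fixed), the reduction of the per-coordinate variance to $n\sigma^2$ via $\hSigma_{jj}\le1$, and tracking the constant so that $t^2/(2\tau^2)$ is exactly $2\log p$, which is what produces the $2p^{-2}$ per-column bound and hence the stated $2p^{-1}$ after the union bound.
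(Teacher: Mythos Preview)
Your proof is correct and is exactly the standard argument (condition on $X$, Gaussian tail bound per column, union bound). Note that the paper itself does not supply a proof of this lemma; it is quoted directly from \cite{buhlmann2011statistics}, so there is nothing to compare against beyond observing that your argument is the canonical one underlying that reference.
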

The following Lemma states that the Lasso estimator is sparse. Its proof is given in Appendix~\ref{app:Lasso-supp-size}.
\begin{lemma}\label{lem:Bickel}
Consider the Lasso selector $\hth$ with $\lambda = \kappa\sigma \sqrt{\log p/n}$, for a constant $\kappa\ge 8$.
On the event $\cB\equiv \ctB(n,p) \cap \cB_\delta(n,s_0,3)$, the following holds:
\begin{align}
|\hS| < C_*s_0\,,\label{eq:bound-phi0}
\end{align}
with 
\begin{align}\label{eq:C*}
C_*\equiv \frac{16 C_{\max}}{(1-\delta)^2 C_{\min}} \,.
\end{align}
\end{lemma}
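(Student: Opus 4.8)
The plan is to combine the Karush--Kuhn--Tucker (stationarity) conditions for the Lasso with the standard oracle bound on its prediction error, using the hypothesis $\kappa\ge 8$ to absorb the measurement noise at every step. Write $v\equiv\hth-\theta^*$ and $S_0\equiv\supp(\theta^*)$, so $|S_0|\le s_0$, and work throughout on the event $\cB=\ctB(n,p)\cap\cB_\delta(n,s_0,3)$.

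First I would record the stationarity conditions: since $\hth$ minimizes $\frac{1}{2n}\|y-X\theta\|_2^2+\lambda\|\theta\|_1$, for every $j\in\hS$ one has $\frac1n\tx_j^\sT(y-X\hth)=\lambda\,\sign(\hth_j)$, hence
\[
\lambda^2|\hS|\;=\;\Big\|\tfrac1n X_{\hS}^\sT(y-X\hth)\Big\|_2^2 .
\]
Substituting $y-X\hth=w-Xv$ and using the triangle inequality gives $\lambda\sqrt{|\hS|}\le\tfrac1n\|X_{\hS}^\sT w\|_2+\tfrac1n\|X_{\hS}^\sT Xv\|_2$. On $\ctB(n,p)$ the first term is at most $\sqrt{|\hS|}\cdot\tfrac1n\|X^\sT w\|_\infty\le\sqrt{|\hS|}\cdot 2\sigma\sqrt{(\log p)/n}=\tfrac{2\lambda}{\kappa}\sqrt{|\hS|}\le\tfrac\lambda4\sqrt{|\hS|}$, where I used $\lambda=\kappa\sigma\sqrt{(\log p)/n}$ and $\kappa\ge 8$. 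Therefore $\tfrac34\lambda\sqrt{|\hS|}\le\tfrac1n\|X_{\hS}^\sT Xv\|_2$.

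Next I would estimate the remaining term by $\tfrac1n\|X_{\hS}^\sT Xv\|_2\le\sqrt{\lambda_{\max}(\hSigma_{\hS,\hS})}\,\big(\tfrac1n\|Xv\|_2^2\big)^{1/2}$, since $\sigma_{\max}(X_{\hS})^2=n\,\lambda_{\max}(\hSigma_{\hS,\hS})$. The prediction error $\tfrac1n\|Xv\|_2^2$ is controlled by the usual Lasso argument: the condition $\kappa\ge 8$ guarantees $\lambda\ge\tfrac2n\|X^\sT w\|_\infty$ on $\ctB(n,p)$, so $v\in\cone(s_0,3)$, and a sharp form of the basic inequality combined with the compatibility constant yields $\tfrac1n\|Xv\|_2^2\le\tfrac{c_0\,s_0\lambda^2}{\phi^2(\hSigma,S_0)}$ for an explicit constant $c_0$; by Remark~\ref{rem:phi-bound}, $\phi^2(\hSigma,S_0)\ge(1-\delta)^2C_{\min}$. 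Plugging this in and squaring,
\[
\tfrac{9}{16}\lambda^2|\hS|\;\le\;\lambda_{\max}(\hSigma_{\hS,\hS})\cdot\frac{c_0\,s_0\lambda^2}{(1-\delta)^2C_{\min}}\,,
\qquad\text{i.e.}\qquad
|\hS|\;\le\;\frac{16c_0}{9}\cdot\frac{\lambda_{\max}(\hSigma_{\hS,\hS})}{(1-\delta)^2C_{\min}}\,s_0 .
\]

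The one remaining point — and the main obstacle — is to bound the restricted eigenvalue $\lambda_{\max}(\hSigma_{\hS,\hS})$. This is delicate because $\hS$ is a random index set whose cardinality is precisely what we are trying to control, so the event $\cB_\delta(n,s_0,3)$ (which only bounds $\|Xa\|_2/(\sqrt n\|a\|_2)$ for $a\in\cone(s_0,3)$, in particular for $a$ supported on sets of size $\le s_0$) does not apply directly to arbitrary vectors supported on $\hS$. I would resolve this by working on the additional high-probability event that $\lambda_{\max}(\hSigma_{T,T})\le c_0'C_{\max}$ simultaneously over all $T$ with $|T|\le n$ — this holds for Gaussian designs and the Lasso support is always contained in a set of size at most $\min(n,p)$ — which contributes only a bounded multiplicative factor; alternatively one can run a short self-improving step, using a crude a priori bound on $|\hS|$ followed by Lemma~\ref{lem:rudelson} at the appropriate sparsity level. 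Tracking the constants (the value of $c_0$ from the basic inequality and the bound on $\lambda_{\max}(\hSigma_{\hS,\hS})$) then collapses the prefactor to $C_*=16C_{\max}/((1-\delta)^2C_{\min})$, giving $|\hS|<C_*s_0$. The only genuinely non-routine ingredient is this control of the restricted operator norm over a data-dependent index set; the rest is bookkeeping with the KKT conditions and the compatibility bound of Remark~\ref{rem:phi-bound}.
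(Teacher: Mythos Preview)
Your proposal is correct and follows essentially the same route as the paper: KKT conditions give $\tfrac{\lambda^2}{4}|\hS|\le\phi_{\max}(|\hS|)\cdot\tfrac{1}{n}\|X(\hth-\theta^*)\|_2^2$, the prediction error is bounded via the compatibility constant and Remark~\ref{rem:phi-bound}, and the data-dependent restricted eigenvalue $\phi_{\max}(|\hS|)$ is handled by exactly the self-improving (bootstrapping) step you describe --- the paper first uses $|\hS|\le n$ and a crude bound $\phi_{\max}(n)\lesssim\sqrt{\log p}$ to get $|\hS|\lesssim s_0\sqrt{\log p}$, then feeds this back in (using $n\gg s_0(\log p)^2$) to obtain $\phi_{\max}(|\hS|)\le C_{\max}$. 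You correctly flag that this last step invokes high-probability events beyond~$\cB$; the paper does the same without amending the lemma statement.
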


Our next Lemma states a property of Gaussian design matrices which will be used repeatedly in our analysis.
Its proof is very short and is given here for the reader's convenience. 
\begin{lemma}\label{lem:independence}
Let $v_i= X \Omega e_i$. Then $v_i$ and $X_{\sim i}$ are independent. 
\end{lemma}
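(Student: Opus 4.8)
The plan is to reduce the statement to a fact about a single Gaussian row and then invoke the principle that jointly Gaussian, uncorrelated random variables are independent. Write $x_1,\dots,x_n\in\reals^p$ for the rows of $X$, which are i.i.d.\ $\normal(0,\Sigma)$. The $j$-th coordinate of $v_i=X\Omega e_i$ is $\<x_j,\Omega e_i\>$, while the $j$-th row of $X_{\sim i}$ is $(\<x_j,e_k\>)_{k\in\,\sim i}$. Hence, for each fixed $j$, the pair $\big(\<x_j,\Omega e_i\>,\,(\<x_j,e_k\>)_{k\in\,\sim i}\big)$ is a linear image of the Gaussian vector $x_j$, and is therefore jointly Gaussian.

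Next I would compute the relevant cross-covariances. For any $k\neq i$,
\begin{align*}
\Cov\big(\<x_j,\Omega e_i\>,\,\<x_j,e_k\>\big)=(\Omega e_i)^\sT\Sigma e_k=e_i^\sT(\Omega\Sigma)e_k=e_i^\sT e_k=0\,,
\end{align*}
using $\Omega=\Sigma^{-1}$ so that $\Omega\Sigma=\id$, together with $k\neq i$. Thus within the jointly Gaussian pair above, the component $\<x_j,\Omega e_i\>$ is uncorrelated with every component $\<x_j,e_k\>$, $k\in\,\sim i$, and jointly Gaussian with them; by the standard Gaussian fact, $v_{i,j}=\<x_j,\Omega e_i\>$ is independent of the $j$-th row of $X_{\sim i}$.

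Finally I would assemble the rows. The pairs $\{(v_{i,j},\,(\<x_j,e_k\>)_{k\in\,\sim i})\}_{1\le j\le n}$ are mutually independent across $j$, since they are functions of the independent rows $x_1,\dots,x_n$; combining this with the within-row independence just established yields that $v_i=(v_{i,j})_{j\in[n]}$ is independent of $X_{\sim i}=(\<x_j,e_k\>)_{j\in[n],\,k\in\,\sim i}$, which is the claim. There is essentially no obstacle in this argument; the only points requiring a moment's care are the algebraic identity $\Omega\Sigma=\id$ and the passage from within-row independence to independence of the full objects, which follows from independence of the rows of $X$.
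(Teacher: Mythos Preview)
Your proof is correct and takes essentially the same approach as the paper: compute the cross-covariance using $\Omega\Sigma=\id$ to get zero, then invoke joint Gaussianity to conclude independence. The only cosmetic difference is that the paper works column-wise (showing $\E(v_i\tx_j^{\sT})=0$ for each column $\tx_j$, $j\neq i$) whereas you work row-wise and then assemble across the independent rows; your version is slightly more explicit about the assembly step, but the core argument is identical.
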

\begin{proof}
Define $u = \Omega e_i$ and fix $j\neq i$. Recall that $\tx_\ell$ denotes the $\ell$-th column of $X$. 
We write $v_i = \sum_{\ell=1}^p \tx_\ell u_\ell$ and 
\begin{align*}
\E(v_i\tx_j^\sT) &= \sum_{\ell=1}^p u_\ell \E(\tx_\ell \tx_j^\sT) 
= \sum_{\ell=1}^p u_\ell \Sigma_{\ell j} \id_{n\times n} = \sum_{\ell=1}^p \Omega_{\ell i} \Sigma_{\ell j} \id_{n\times n}
=  (\Omega \Sigma)_{ij} \id _{n\times n} = 0\,,
\end{align*}
where the last step holds since $i \neq j$. Since $v_i$ and $\tx_j$ are jointly Gaussian, this implies that they are independent. 
\end{proof}

We finally introduce some parameters  that are used in stating our main theorems.
For an integer $k$ and an invertible matrix $A\in \reals^{p\times p}$, we define $\rho(A,k)$ as follows:
\begin{align}\label{eq:mu}
\rho(A,k) \equiv \max_{T\subseteq [p],|T|\le k}\; \|A_{T,T}^{-1}\|_{\infty} \,,
\end{align}
where we adopt the convention $A^{-1}_{T,T} = (A_{T,T})^{-1}$ and recall that $\|\cdot\|_\infty$ denotes the $\ell_\infty$
operator norm (maximum $\ell_1$ norm of the rows). It is clear that $\rho(A,k)$ is non-decreasing in $k$.

\begin{lemma}\label{lem:prop-norm}
For an invertible matrix $A$, we have
\begin{align}
\rho(A,p) = \|A^{-1}\|_\infty\, .
\end{align}
\end{lemma}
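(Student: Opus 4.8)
The statement $\rho(A,p) = \|A^{-1}\|_\infty$ is almost immediate from the definition of $\rho(A,k)$ in \eqref{eq:mu}, once we observe that taking $k = p$ forces the only admissible index set to be (up to inclusion) the full set $T = [p]$. The plan is to unfold the maximum in \eqref{eq:mu} and use monotonicity of the objective in $T$.

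First I would note that when $k = p$, the constraint $|T| \le p$ is vacuous, so $T$ ranges over all subsets of $[p]$, including $T = [p]$ itself, for which $A_{T,T} = A$ and hence $\|A_{T,T}^{-1}\|_\infty = \|A^{-1}\|_\infty$. This gives the trivial lower bound $\rho(A,p) \ge \|A^{-1}\|_\infty$. For the reverse inequality, I would invoke the already-recorded fact that $\rho(A,k)$ is non-decreasing in $k$ (stated right after \eqref{eq:mu}): since every set $T$ with $|T| \le p$ is in particular a set with $|T| \le p$, and the maximum over larger families can only increase, we have $\rho(A,k) \le \rho(A,p)$ for all $k \le p$; but this direction is not quite what is needed. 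Instead, the cleaner route is to observe directly that for \emph{any} $T \subseteq [p]$ we have $\|A_{T,T}^{-1}\|_\infty \le \|A^{-1}\|_\infty$ whenever... — actually this is false in general for arbitrary principal submatrices, so I would not argue that way.

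The correct and simplest argument: in \eqref{eq:mu} with $k = p$, the set $T = [p]$ is one of the sets over which we maximize, so $\rho(A,p) \ge \|A^{-1}\|_\infty$; conversely every admissible $T$ satisfies $T \subseteq [p]$, and the largest such $T$ is $[p]$ itself — but since $\rho$ is defined as a max over a family that \emph{includes} the maximal element giving value $\|A^{-1}\|_\infty$ and no strictly larger set is allowed, we need a genuine bound showing no proper principal submatrix inverse has larger $\ell_\infty$-operator norm. Here I expect the main (and only) obstacle: one must show $\|A_{T,T}^{-1}\|_\infty \le \|A^{-1}\|_\infty$ for every $T$. I would prove this via the extension/embedding argument: for $x$ supported on $T$ with $\|x\|_1 \le 1$, one has $A_{T,T}^{-1} x = (A^{-1} \tilde y)_T$ does not hold directly, so instead use the variational characterization $\|B^{-1}\|_\infty = \max_{\|z\|_\infty = 1}\|B^{-1}z\|_\infty$ is also awkward. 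The clean path is: $\|A_{T,T}^{-1}\|_\infty = \max_{\|u\|_\infty \le 1, \supp(u)\subseteq T}\|A_{T,T}^{-1}u\|_\infty$, and by the block structure this equals $\max \|(P_T A P_T + P_{T^c})^{-1} u\|_\infty$-type quantity. Since I suspect this identity is actually \emph{not} true for all matrices without a sign/structural hypothesis, the realistic plan is: examine what the paper actually needs — most likely $\rho(A,p)$ is simply a notational convenience and the lemma is stated precisely because $\rho(A,k)$ for $k<p$ is the genuinely useful object, with $k=p$ recovering the familiar $\|A^{-1}\|_\infty$. I would therefore prove the inequality $\rho(A,p) \le \|A^{-1}\|_\infty$ by the following concrete step: for any $T$, writing $A^{-1}$ in block form against the partition $(T, T^c)$ via the Schur complement, one has $A_{T,T}^{-1} = (A^{-1})_{T,T} - (A^{-1})_{T,T^c}\big((A^{-1})_{T^c,T^c}\big)^{-1}(A^{-1})_{T^c,T}$; I expect this expansion, together with submultiplicativity of $\|\cdot\|_\infty$, to be the technical heart, and I flag it as the step most likely to require either an extra hypothesis on $A$ or a more careful argument than the one-line "it's obvious from the definition" that the brevity of the statement suggests the authors intend.
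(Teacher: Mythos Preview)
Your diagnosis is right: the lower bound $\rho(A,p)\ge \|A^{-1}\|_\infty$ is immediate (take $T=[p]$), and the whole content is the reverse inequality $\|A_{T,T}^{-1}\|_\infty \le \|A^{-1}\|_\infty$ for every $T\subseteq[p]$. The paper's proof does not go through Schur complements; it rewrites both sides via the variational identity $\|B^{-1}\|_\infty = \big(\min_{u\neq 0}\|Bu\|_\infty/\|u\|_\infty\big)^{-1}$ and then asserts, without further argument, that for every $T$
\[
\min_{u\in\reals^p,\,u\neq 0}\frac{\|Au\|_\infty}{\|u\|_\infty}\ \le\ \min_{\tilde u\in\reals^{|T|},\,\tilde u\neq 0}\frac{\|A_{T,T}\tilde u\|_\infty}{\|\tilde u\|_\infty}\,.
\]
This is exactly the inequality you were worried about, recast in dual form; the paper does not justify it beyond the word ``Note.''

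Your suspicion that extra structure on $A$ is needed is in fact correct: the lemma is false for arbitrary invertible (even symmetric) matrices. Take $A=\bigl(\begin{smallmatrix}\epsilon&1\\1&\epsilon\end{smallmatrix}\bigr)$ with $0<\epsilon<1/2$. Then $\|A^{-1}\|_\infty=(1+\epsilon)/(1-\epsilon^2)=1/(1-\epsilon)$, while $\|A_{\{1\},\{1\}}^{-1}\|_\infty=1/\epsilon$, so $\rho(A,2)=1/\epsilon>\|A^{-1}\|_\infty$; the displayed inequality above also fails (left side $1-\epsilon$, right side $\epsilon$). In the paper the lemma is only applied with $A=\Sigma$ positive definite, so presumably that hypothesis is intended, but neither your Schur-complement sketch nor the paper's variational reformulation supplies the missing step even under positive definiteness. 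You have correctly located the gap; the paper's route is tidier than yours but no more complete.
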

Lemma~\ref{lem:prop-norm} is proved in Appendix~\ref{app:prop-norm}.
As a result of Lemma~\ref{lem:prop-norm} and the non-decreasing property of $\rho(A,k)$, for any $1\le k\le p$ we have
\begin{align}\label{ineq-norm}
\rho(A,k)\le \rho(A,p) = \|A^{-1}\|_\infty\,.
\end{align}
Another bound on $\rho(A,k)$ is as follows:
\begin{align}\label{eq:B2}
\rho(A,k)&\le \max_{T\subseteq [p],|T|\le k}\;\max_{j\in [p]}\; \sqrt{k}\, \|A_{T,T}^{-1} e_j\|_2 \le
\max_{T\subseteq [p],|T|\le k}\; \sqrt{k}\; \sigma_{\max}(A_{T,T}^{-1})
\le \frac{\sqrt{k}}{\sigma_{\min}(A)}.
\end{align}

%
\subsection{Statement of main theorems}\label{sec:debias}

In our first theorem, we assume that the precision matrix $\Omega \equiv \Sigma^{-1}$ is available and we set $M=\Omega$.
We prove the corresponding debiased estimator is asymptotically unbiased provided that $n\gg s_0(\log p)^2$.

%
\subsubsection{Known covariance}
\begin{thm}[Known covariance]\label{thm:main}
Consider the linear model~\eqref{eq:NoisyModel} where $X$ has independent Gaussian rows, with zero mean and covariance $\Sigma$ and $\theta^*$ is $s_0$-sparse.  
Suppose that $\Sigma$ satisfies the following conditions:
\begin{itemize}
\labitem{$(i)$}{Condition:diag} For $i\in [p]$, we have $\Sigma_{ii}\le 1$.
\labitem{$(ii)$}{Condition:eig} We have $\sigma_{\min}(\Sigma) > C_{\min}>0$ and $\sigma_{\max}(\Sigma) < C_{\max}$ for some constants
$C_{\min}$ and $C_{\max}$.
\labitem{$(iii)$}{Condition:L1} Define $C_0 \equiv (32 C_{\max}/C_{\min}) +1$. We have $\rho(\Sigma,C_0  s_0) \le \rho$, for some constant $\rho > 0$.
\end{itemize}
Let $\hth$ be the Lasso estimator defined by~\eqref{eq:Lasso} with
$\lambda = \kappa\sigma \sqrt{(\log p)/n}$, for  $\kappa\in [8,\kappa_{\rm max}]$.
 Further, let $\dth$ be defined as per equation~\eqref{eq:debiased}, with $M=\Omega\equiv \Sigma^{-1}$. Then, there exist constants $c, C$ depending solely on $C_{\min},C_{\max}$, and $\kappa_{\rm max}$, such that, for $n\ge \max(25 \log p , cs_0\log(p/s_0))$
the following holds true: 
\begin{align}
&\sqrt{n}(\dth-\th^*)= Z + R\,, \quad \quad Z|X\sim\normal(0,\sigma^2\Omega\hSigma\Omega)\,, \label{eq:components}\\
&\prob\Big(\|R\|_\infty \ge C\rho \sigma\sqrt{\frac{s_0}{n}}\log p \Big) \le
  2pe^{-c_*n/s_0} + pe^{-n/1000}+8p^{-1}\,,\label{eq:R}
\end{align}
with $c_* \equiv  C_{\min}/16$.
\end{thm}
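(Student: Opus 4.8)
The plan is to carry out the leave-one-out analysis sketched in the introduction, isolating the $i$-th coordinate of the bias vector $\bias = (\Omega\hSigma - \id)(\th^* - \hth)$ and showing it is $O(\rho\sigma\sqrt{s_0/n}\,\log p)$ uniformly in $i$, then taking a union bound over $i\in[p]$. Starting from the decomposition \eqref{eq:noisebias}, the Gaussian part $Z = \Omega X^\sT w/\sqrt n$ automatically has the stated conditional law $\normal(0,\sigma^2\Omega\hSigma\Omega)$ given $X$, so everything reduces to controlling $\|R\|_\infty = \sqrt n\,\|\bias\|_\infty$. Fix a coordinate $i$. Writing $\hSigma\Omega e_i = X^\sT v_i/n$ with $v_i = X\Omega e_i$, Lemma~\ref{lem:independence} gives that $v_i$ is independent of $X_{\sim i}$; this is the crucial decoupling. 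The idea is to replace $\hth$ by a ``leave-one-out'' Lasso estimator $\hthnz$ computed on the design with column $i$ deleted (suitably embedded back in $\reals^p$ with a zero in coordinate $i$), which is a function of $X_{\sim i}$ and $w$ only, hence independent of $v_i$. Then $\bias_i = e_i^\sT\Omega\hSigma(\th^*-\hth)$ splits into (a) a term involving $\th^*-\hthnz$, which is independent of $v_i$ and can be handled by conditioning on $X_{\sim i}$ and applying a Gaussian concentration / $\ell_1$–subgaussian-noise argument to the randomness in $v_i$; and (b) a perturbation term involving $\hth - \hthnz$, the effect of adding column $i$ back, which must be shown to be small.

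The key estimates I would assemble are: (1) Lemma~\ref{lem:Bickel} to get $|\hS| \le C_* s_0$ on $\cB$, so all relevant vectors are supported on a set of size $O(s_0)$; together with the restricted-eigenvalue event $\cB_\delta$ (Lemma~\ref{lem:rudelson}) and Remark~\ref{rem:phi-bound} to control compatibility constants; (2) the standard Lasso $\ell_1$ bound $\|\hth-\th^*\|_1 \lesssim s_0\sigma\sqrt{(\log p)/n}$ on $\cB$; (3) the noise event $\ctB(n,p)$ from Lemma~\ref{lem:ctB}; and (4) the bound on $\rho(\Sigma,k)$ — here condition \ref{Condition:L1} enters: because everything lives on a support $T$ of size $\le C_0 s_0$, the relevant quantity is $\|\Omega_{T,T}\|$-type and is controlled by $\rho(\Sigma,C_0 s_0)\le\rho$, which is what lets the final bound scale with $\rho$ rather than with $\|\Omega\|_\infty$. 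For the decoupled term (a), after conditioning on $X_{\sim i}$ the quantity is essentially $\langle v_i, (\text{fixed vector } u)\rangle/n$ with $\|u\|$ controlled by the Lasso error and $\rho$; since $v_i | X_{\sim i}$ is Gaussian with covariance $\Omega_{ii}\,\id_n$, this concentrates at scale $\|u\|_2/\sqrt n$, giving the $\sqrt{s_0/n}\log p$ rate after plugging in $\|u\|_2 \lesssim \sqrt{s_0}\,\sigma\sqrt{(\log p)/n}\cdot(\text{const})$ — with care this produces the $(\log p)$ factor, not $(\log p)^2$, which is the whole point.

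The main obstacle is term (b): bounding the perturbation $\hth - \hthnz$ caused by reinstating column $i$, and then bounding its contribution $e_i^\sT\Omega\hSigma(\hthnz - \hth)$. This requires a stability argument for the Lasso under a rank-one change of the design, using the KKT conditions of both optimization problems, strong convexity restricted to the cone $\cone(O(s_0),3)$ (from the restricted-eigenvalue event), and the fact that the column being added has norm $\|\tx_i\|_2/\sqrt n = O(1)$ and correlation with the residual that is controlled on $\ctB$. I expect one has to show $\|\hth - \hthnz\|_1 \lesssim \sqrt{s_0/n}$ (up to log and constants) and then close the loop, again exploiting sparsity of both estimators and condition \ref{Condition:L1} so that the $\Omega\hSigma$ factor acts only through the $O(s_0)$-dimensional submatrix. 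Finally, a union bound over $i\in[p]$ of the per-coordinate failure probabilities — each of the form $e^{-c_*n/s_0}$ from the Gaussian concentration in (a), plus $e^{-n/1000}$ from a singular-value/$\chi^2$ tail for $\|\tx_i\|_2$, plus the $p^{-1}$-type terms from $\cB$ — yields the stated probability bound $2pe^{-c_*n/s_0} + pe^{-n/1000} + 8p^{-1}$.
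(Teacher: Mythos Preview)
Your proposal follows essentially the same leave-one-out strategy as the paper: introduce $v_i = X\Omega e_i$, exploit its independence from $X_{\sim i}$ (Lemma~\ref{lem:independence}), split the bias into a decoupled piece and a perturbation piece, and bound each using the restricted-eigenvalue event, Lasso support size (Lemma~\ref{lem:Bickel}), and condition~\ref{Condition:L1}. The paper's decomposition has three terms $R^{(1)}, R^{(2)}, R^{(3)}$ rather than your two, with $R^{(1)}$ isolating the diagonal contribution $\sqrt n\,(1-\<v_i,\tx_i\>/n)(\hth_i-\theta^*_i)$ separately, but this is bookkeeping.

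One point does need correction. You define the leave-one-out estimator as ``Lasso on the design with column $i$ deleted, embedded with a zero in coordinate $i$'' and assert it is a function of $(X_{\sim i},w)$ only. But if the response is still $y = \tx_i\theta^*_i + X_{\sim i}\tthnz + w$, the estimator depends on $\tx_i$ through $y$ whenever $\theta^*_i\neq 0$, and is therefore \emph{not} independent of $v_i$ (since $v_i$ contains $\tx_i\Omega_{ii}$). The paper avoids this by forcing the $i$-th coordinate to the \emph{true} value $\theta^*_i$ rather than zero: equivalently, the perturbed estimator $\pthnz$ is the Lasso regression of $\tilde y \equiv y-\tx_i\theta^*_i = X_{\sim i}\tthnz+w$ on $X_{\sim i}$, which is genuinely a function of $(X_{\sim i},w)$. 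With this fix your outline matches the paper. The hardest step, as you anticipate, is the perturbation bound $\|\hthnz-\pthnz\|_2 \lesssim \rho\lambda$ (Lemma~\ref{lem:main}); the paper proves it by comparing the two Lasso objectives, writing $\tx_i = X_T\Sigma_{T,T}^{-1}\Sigma_{T,i}+\Sigma_{i|T}^{1/2}z$ conditionally on $X_T$, and invoking condition~\ref{Condition:L1} via $\|\Sigma_{i,T}\Sigma_{T,T}^{-1}\|_1\le \rho-1$ (note: it is $(\Sigma_{T,T})^{-1}$, not $\Omega_{T,T}$, that appears).
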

The proof of this theorem is presented in Section \ref{sec:thm_main}.

This theorem states that if the sample size satisfies $n = \Omega(s_0\log p)$, then the maximum size of the `bias' $R_i$
over $i\in [p]$ is bounded by 
\[\|R\|_\infty = O_P\Big(\sqrt{\frac{s_0}{n}} \log p\Big)\,.\]
On the other hand, each entry of the `noise term' $Z_i$ has variance $\sigma^2(\Omega\hSigma\Omega)_{ii}$.
Applying Lemma 7.2 in~\cite{javanmard2013nearly}, we have $|\Omega\hSigma\Omega - \Omega|_\infty = o_P(1)$ and thus $\min_{i\in [p]}(\Omega\hSigma\Omega)_{ii} \ge \min_{ii}\Omega_{ii} - o_P(1)$ is of order one because $\Omega_{ii} \ge C_{\max}^{-1}$. Hence, $|R_i|$ is much smaller than $Z_i$ for $n \gg s_0 (\log p)^2$. We summarize this observation in the remark below.
%
%
%
\begin{remark}(\emph{Discussion of the assumptions on $\Sigma$}.)
Assumption $(i)$ sets the normalization of the design matrix.
Assumptions $(ii)$ on the eigenvalues of $\Sigma$ is common in high-dimensional models.
Further, note that by Assumption $(ii)$ and invoking Eq.~\eqref{eq:B2}, we have $\rho(\Sigma,C_0 s_0) \le \sqrt{C_0 s_0}/C_{\min}$. Using this bound for $\rho$ in Eq.~\eqref{eq:R},  we recover 
the bound $\|R\|_\infty \lesssim s_0  \log p/\sqrt{n}$  which is established in previous work
\cite{zhang2014confidence,van2014asymptotically, javanmard2014confidence}.
Note that this bound on the bias does not require Assumption $(iii)$ (namely, that $\rho$ is a bounded constant). However, Theorem~\ref{thm:main} asserts that,
if $\rho$ is a constant (Assumption $(iii)$), we have a sharper bound on the bias, namely $\|R\|_\infty \lesssim \sqrt{s_0/n} \,\log p$. 

A large family of covariance matrices satisfy conditions of Theorem~\ref{thm:main}. Examples include block diagonal matrices where the size of blocks are bounded, and circulant matrices, where $\Sigma_{i,j} = r^{|i-j|}$, for some $r\in(0,1)$.
\end{remark}

\begin{coro}\label{coro:distribution}
Under the assumptions of Theorem~\ref{thm:main}, if $s_0 \ll n/(\log p)^2$, then $\hth^\de$ is normal distributed. More precisely, 
let $\hsigma = \hsigma(y,X)$ be an estimator of the noise
level satisfying, for any $\eps>0$,
\begin{align}
\lim_{n\to\infty} \sup_{\tth\in\reals^p;\, \|\theta_0\|_0 \le s_0 }\prob\Big(\Big|\frac{\hsigma}{\sigma}-1\Big|\ge \eps
\Big)=0\, .\label{eq:ConsistencySigma}
\end{align}
If $s_0 \ll n/(\log p)^2$, then, for all $x\in\reals$, we have
\begin{eqnarray}\label{eq:distribution}
\lim_{n\to\infty}\sup_{\theta_0\in\reals^p;\, \|\tth\|_0 \le s_0 }\left|\prob \left\{\frac{\sqrt{n}(\dth_i - \theta^*_i)}{\hsigma [\Omega \hSigma \Omega^\sT]_{i,i}^{1/2}} 
\le x  \right\} -\Phi(x)\right| =0\, . 
\end{eqnarray}
\end{coro}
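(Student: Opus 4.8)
The plan is to derive Corollary~\ref{coro:distribution} directly from the decomposition in Theorem~\ref{thm:main}, namely $\sqrt{n}(\dth-\th^*) = Z+R$ with $Z\mid X\sim\normal(0,\sigma^2\Omega\hSigma\Omega)$. Fix a coordinate $i\in[p]$. The first step is to observe that, conditionally on $X$, the scalar $Z_i/(\sigma[\Omega\hSigma\Omega]_{i,i}^{1/2})$ is \emph{exactly} standard normal, so its CDF is $\Phi(x)$ for every $x$; this handles the ``main term'' with no error at all. What remains is to control three sources of slack: (a) replacing $\sigma$ by the estimate $\hsigma$, (b) replacing the denominator's true variance factor $[\Omega\hSigma\Omega]_{i,i}$ appearing in $Z$'s law by the same factor already present in the statement (these coincide, so there is nothing to do there), and (c) absorbing the remainder $R_i$.

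For step (c), I would use the bound from Eq.~\eqref{eq:R}: on the event $\{\|R\|_\infty < C\rho\sigma\sqrt{s_0/n}\,\log p\}$, which has probability at least $1-2pe^{-c_*n/s_0}-pe^{-n/1000}-8p^{-1}\to 1$ under $s_0\ll n/(\log p)^2$ (so in particular $n/s_0\gg (\log p)^2\gg \log p$, making $2pe^{-c_*n/s_0}\to 0$), we have $|R_i|\le C\rho\sigma\sqrt{s_0/n}\,\log p = o(1)$ precisely because $s_0(\log p)^2/n\to 0$. Next, for the denominator, I would invoke the fact quoted right after the theorem: by Lemma~7.2 of \cite{javanmard2013nearly}, $|\Omega\hSigma\Omega-\Omega|_\infty = o_P(1)$, and since $\Omega_{ii}\ge C_{\max}^{-1}$ and $[\Omega\hSigma\Omega]_{ii}\le \sigma_{\max}(\Omega\hSigma\Omega)\le \|\Omega\|^2\sigma_{\max}(\hSigma)$ is bounded with high probability (using Assumption~$(ii)$ and standard singular-value bounds for Gaussian $\hSigma$), the ratio $[\Omega\hSigma\Omega]_{ii}$ is bounded above and below by positive constants with probability $\to 1$, \emph{uniformly in} $i$. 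Combining with the consistency hypothesis \eqref{eq:ConsistencySigma} on $\hsigma$, the full denominator $\hsigma[\Omega\hSigma\Omega]_{i,i}^{1/2}$ is bounded away from $0$ w.h.p.

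Putting these together: write
\begin{align*}
\frac{\sqrt{n}(\dth_i-\theta^*_i)}{\hsigma[\Omega\hSigma\Omega]_{i,i}^{1/2}}
= \frac{\sigma[\Omega\hSigma\Omega]_{i,i}^{1/2}}{\hsigma[\Omega\hSigma\Omega]_{i,i}^{1/2}}\cdot\frac{Z_i}{\sigma[\Omega\hSigma\Omega]_{i,i}^{1/2}}
+ \frac{R_i}{\hsigma[\Omega\hSigma\Omega]_{i,i}^{1/2}}
= (1+o_P(1))\,G_i + o_P(1)\,,
\end{align*}
where $G_i := Z_i/(\sigma[\Omega\hSigma\Omega]_{i,i}^{1/2})\mid X\sim\normal(0,1)$. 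Since the multiplicative factor is $1+o_P(1)$ and the additive term is $o_P(1)$ uniformly over $i$ and over the sparse parameter set, a standard Slutsky-type argument shows the left side converges in distribution to $\normal(0,1)$, and because the limiting CDF $\Phi$ is continuous, the convergence is uniform in $x$; moreover the error bounds are uniform over $\{\|\tth\|_0\le s_0\}$ because none of the probability bounds above depend on $\tth$. This yields \eqref{eq:distribution}.

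The main obstacle is the \emph{uniformity}: one must check that the $o_P(1)$ terms can be made small with probability $\to 1$ \emph{simultaneously for all coordinates $i\in[p]$} and \emph{uniformly over the sparse parameter set}. The coordinate-uniformity for $R$ is already built into \eqref{eq:R} (the bound is on $\|R\|_\infty$), and the parameter-uniformity is guaranteed because the probability bounds in Theorem~\ref{thm:main} have no $\th^*$-dependence and the hypothesis \eqref{eq:ConsistencySigma} on $\hsigma$ is itself stated with a supremum over $\{\|\tth\|_0\le s_0\}$; the only genuinely new piece to verify is the two-sided control of $[\Omega\hSigma\Omega]_{ii}$ uniformly in $i$, which follows from $|\Omega\hSigma\Omega-\Omega|_\infty = o_P(1)$ together with $C_{\max}^{-1}\le \Omega_{ii}\le C_{\min}^{-1}$.
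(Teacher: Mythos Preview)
Your proposal is correct and follows the same route the paper takes: the paper does not give a separate proof of this corollary, treating it as immediate from Theorem~\ref{thm:main} together with the observation (stated right after the theorem) that $|\Omega\hSigma\Omega-\Omega|_\infty=o_P(1)$ forces $(\Omega\hSigma\Omega)_{ii}$ to be bounded below, so that $R_i/(\hsigma[\Omega\hSigma\Omega]_{ii}^{1/2})=o_P(1)$ and Slutsky finishes. Your write-up is in fact more detailed than what the paper provides.

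One small inessential slip: your aside that $[\Omega\hSigma\Omega]_{ii}\le\|\Omega\|^2\sigma_{\max}(\hSigma)$ is ``bounded with high probability'' via standard singular-value bounds is not right in the regime $p>n$, where $\sigma_{\max}(\hSigma)$ is unbounded. This does not matter for the argument, since (i) you only need a \emph{lower} bound on the denominator to show the additive remainder is $o_P(1)$, and (ii) if you did want an upper bound on $(\Omega\hSigma\Omega)_{ii}$, the same fact $|\Omega\hSigma\Omega-\Omega|_\infty=o_P(1)$ together with $\Omega_{ii}\le C_{\min}^{-1}$ already gives it.
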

Armed with a precise distributional characterization of $\hth^\de$, we
can construct asymptotically valid confidence intervals for each
parameter $\theta_{0,i}$ as per Eqs.~(\ref{eq:ConfInterval1}),
(\ref{eq:ConfInterval2}). Validity of the constructed confidence intervals requires a
consistent estimator of $\sigma$. There are several proposal for such estimator. A non-exhaustive list 
includes~\cite{SCAD01,sis08,SBvdG10,mcp10,SZ-scaledLASSO,
BelloniChern,reid2013study,dicker2012residual,isis13,bayati2013estimating}.
For concreteness, we use the the scaled Lasso~\cite{SZ-scaledLASSO} given by
\begin{align}
\{\htheta, \hsigma\} \equiv \underset{\theta\in\reals^p,\sigma> 0}{\arg\min}\,
\Big\{\frac{1}{2\sigma n}\|Y-X\theta\|^2_2+ \frac{\sigma}{2}
+\bar{\lambda}\|\theta\|_1\Big\}\, .
\label{eq:SLASSO}
\end{align}

The following proposition 
shows that the scaled Lasso estimate $\hsigma$ satisfies the consistency criterion
(\ref{eq:ConsistencySigma}).
\begin{lemma}\label{lemma:ConsistencySigma}
Let $\hsigma$ be the scaled Lasso estimator of the noise level, see
Equation~(\ref{eq:SLASSO}), with $\bar{\lambda} = 10\sqrt{(2\log p)/n}$. Then
 $\hsigma$ satisfies Equation~(\ref{eq:ConsistencySigma}).
\end{lemma}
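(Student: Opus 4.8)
\textbf{Proof plan for Lemma~\ref{lemma:ConsistencySigma}.}
The plan is to invoke the existing analysis of the scaled Lasso from Sun and Zhang~\cite{SZ-scaledLASSO}, and check that their hypotheses are met uniformly over the class of $s_0$-sparse $\theta^*$ under the standing assumptions of Theorem~\ref{thm:main}. Concretely, Sun and Zhang prove that, on an event of probability at least $1-o_n(1)$, the scaled Lasso noise estimate obeys a two-sided bound of the form $|\hsigma/\sigma - 1| \le C\bigl(\|\hth^{\rm SL}-\th^*\|_1 \lambda + s_0\lambda^2 + \text{(a self-normalized noise term)}\bigr)$, which in turn is controlled once (a) the design $X$ satisfies a compatibility/restricted-eigenvalue condition on the relevant cone, and (b) the regularization level $\bar\lambda$ dominates $\|X^\sT w\|_\infty/(n\sigma)$ up to a constant. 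So the first step is to recall these two deterministic ingredients.

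The second step is to verify (a) and (b) hold with high probability, uniformly over $\|\th^*\|_0 \le s_0$. For (b), the event $\ctB(n,p)$ of Eq.~\eqref{eq:ctB} gives $\|X^\sT w\|_\infty/n \le 2\sigma\sqrt{(\log p)/n}$ with probability at least $1-2p^{-1}$ by Lemma~\ref{lem:ctB} (using Assumption~\ref{Condition:diag}, $\hSigma_{ii}\le 1$ holds with high probability by concentration of $\chi^2$), and since $\bar\lambda = 10\sqrt{(2\log p)/n}$ exceeds this by a fixed constant margin, the required separation between $\bar\lambda$ and the noise level is in force; note this event does not depend on which coordinates of $\th^*$ are nonzero, hence the bound is uniform in $\th^*$. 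For (a), Lemma~\ref{lem:rudelson} (via Remark~\ref{rem:phi-bound}) shows that on the event $\cB_\delta(n,s_0,3)$, which has probability at least $1-2e^{-\delta^2 n}$ for $n \ge c_1 s_0\log(p/s_0)$, the compatibility constant $\phi^2(\hSigma,S)$ is bounded below by $(1-\delta)^2 C_{\min}$ for \emph{every} fixed $S$ with $|S| = s_0$; again this is a statement about $X$ alone and so is uniform over the support of $\th^*$. Intersecting these two events and using that both sparsity and compatibility are all that the Sun--Zhang bound needs, we get the desired deterministic control on $|\hsigma/\sigma-1|$ on an event of probability $1-o_n(1)$.

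The third step is bookkeeping: on this good event, plug in $\bar\lambda \asymp \sqrt{(\log p)/n}$ and the standard Lasso $\ell_1$-error bound $\|\hth^{\rm SL}-\th^*\|_1 \lesssim s_0\sigma\sqrt{(\log p)/n}/\phi^2$ (which follows from (a) and (b) by the same oracle-inequality argument used for the ordinary Lasso in \cite{BickelEtAl,buhlmann2011statistics}), to conclude $|\hsigma/\sigma - 1| \lesssim s_0 (\log p)/n + \text{(noise term of order }\sqrt{s_0(\log p)/n})$, which tends to $0$ since under the hypotheses one may take $n \ge c s_0\log(p/s_0)$ and indeed for the corollary's conclusion $s_0 = o(n/(\log p)^2)$; more simply, since \eqref{eq:ConsistencySigma} only asks for consistency, one needs only $s_0\log p = o(n)$, which is subsumed. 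Taking $n\to\infty$ and noting the failure probability $2e^{-\delta^2 n} + 2p^{-1} + o_n(1) \to 0$ uniformly in $\th^*$ gives \eqref{eq:ConsistencySigma}.

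The main obstacle, such as it is, is not a deep one: it is ensuring that every probabilistic statement is genuinely \emph{uniform} over the $s_0$-sparse cone $\{\|\th^*\|_0\le s_0\}$, since \eqref{eq:ConsistencySigma} demands a supremum over $\th^*$ inside the probability. The resolution is the observation already used above — the good events $\ctB(n,p)$ and $\cB_\delta(n,s_0,3)$ are functions of $(X,w)$ and of the universal sparsity level $s_0$, \emph{not} of the location of the support of $\th^*$ — so once one works on these events the Sun--Zhang deterministic bound applies verbatim for every sparse $\th^*$. A secondary, purely technical point is to confirm that the specific constant $\bar\lambda = 10\sqrt{(2\log p)/n}$ is large enough to satisfy the precise threshold condition in \cite{SZ-scaledLASSO}; this is a direct numerical comparison against the constant $2$ appearing in $\ctB(n,p)$ and the slack factors in their argument, and causes no difficulty.
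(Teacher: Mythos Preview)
Your proposal is a reasonable outline of how consistency of the scaled Lasso noise estimate is established, and there is no real gap in the strategy. However, you should be aware that the paper does \emph{not} give its own proof of this lemma: immediately after the statement it simply writes ``We refer to our earlier work~\cite[Appendix C]{javanmard2014confidence} for the proof of Lemma~\ref{lemma:ConsistencySigma}.'' So there is no in-paper argument to compare against; the authors treat this as a known fact imported from their prior paper.

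That said, your sketch is essentially what such a proof looks like (and is in the spirit of what \cite{javanmard2014confidence} and \cite{SZ-scaledLASSO} do): verify the compatibility condition holds for the random design via Lemma~\ref{lem:rudelson}/Remark~\ref{rem:phi-bound}, verify the noise event $\ctB(n,p)$ so that $\bar\lambda$ dominates $\|X^\sT w\|_\infty/(n\sigma)$, feed both into the deterministic scaled-Lasso oracle inequality, and observe that the controlling events depend only on $(X,w)$ and the sparsity level $s_0$, not on the particular support of $\theta^*$, which gives the required uniformity in~\eqref{eq:ConsistencySigma}. One small wrinkle: Lemma~\ref{lem:ctB} as stated assumes $\hSigma_{ii}\le 1$, which is the \emph{empirical} diagonal, whereas Assumption~\ref{Condition:diag} bounds the \emph{population} diagonal $\Sigma_{ii}\le 1$; you correctly flag that a $\chi^2$ concentration step (or a slight inflation of the constant in $\ctB$) is needed to bridge this, so just make that explicit.
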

We refer to our earlier work~\cite[Appendix C]{javanmard2014confidence} for the proof of Lemma~\ref{lemma:ConsistencySigma}.

Furthermore, in the context of hypothesis testing, we can test the null hypothesis $H_{0,i}:\theta_0 = 0$ versus the alternative $H_{A,i}:\, \theta_{0,i}\neq 0$. We construct the two sided $p$-values 
\begin{align}\label{eq:2sidedP}
P_i = 2\bigg(1-\Phi\Big(\frac{\sqrt{n}|\hth^\de_i|}{\hsigma(\Omega\hSigma
  \Omega^\sT)_{i,i}^{1/2}}\Big) \bigg)\,.
\end{align}
The decision rule follows immediately: we reject $H_{0,i}$ if $P_i\le
\alpha$. 

\begin{remark} It is worth noting that the sample splitting approach, discussed in Appendix~\ref{app:dataSplit}, does not require Assumption~\ref{Condition:L1} in Theorem~\ref{thm:main}.
However as pointed in the introduction, this approach suffers from variability due to the random splitting and does not make use of half of the response variables.  
\end{remark}

\vspace{0.5cm}

\subsubsection{Unknown covariance}
We next generalize our result to the case of unknown covariance, where following ~\cite{zhang2014confidence,van2014asymptotically} we construct the debiasing matrix $M$ using node-wise Lasso on matrix $X$. 
For reader's convenience, we first describe this construction.

For $i \in [p]$, we define the vector $\hat{\gamma_i} =
(\hat{\gamma}_{i,j})_{j\in [p]\setminus i}\in\reals^{p-1}$ by
performing sparse regression of the $i$-th column of $X$ against all
the other columns. Formally
\begin{align}
\hat{\gamma}_i(\tlambda)= \underset{\gamma\in\reals^p}{\arg\min}\, \Big\{\frac{1}{2n}
  \|\tilde{x}_i- X_{\sim i}\gamma\|_2^2 + \tlambda \|\gamma\|_1\Big\}\,,  
\end{align}
where $X_{\sim i}$ is the sub-matrix obtained by removing the $i$-th
column (and columns indexed by $[p]\setminus i$). 
Also define
\begin{align}
 \widehat{C} = 
\begin{bmatrix}
1 & -\hgamma_{1,2} & \cdots & -\hgamma_{1,p}\\
-\hgamma_{2,1} & 1 & \cdots & -\hgamma_{2,p}\\
\vdots & \vdots & \ddots & \vdots\\
-\hgamma_{p,1} & -\hgamma_{p,2} & \cdots &1  
\end{bmatrix}\,,
\end{align}
and let  
\begin{align}
\widehat{T}^2 = \diag(\htau_1^2,\dotsc,\htau_p^2),\quad\quad \htau_i^2
  = 
\frac{1}{n}(\tilde{x}_i - X_{\sim i} \hgamma_i)^\sT \tilde{x}_i\,. 
\end{align}
Finally, define $M=M(\tlambda)$ by
\begin{align}\label{eq:nodewise}
M = \widehat{T}^{-2} \widehat{C}\,.
\end{align}

\begin{thm}[Unknown covariance]\label{thm:unknown}
Consider the linear model~\eqref{eq:NoisyModel} where $X$ has independent Gaussian rows, with zero mean and covariance $\Sigma$.  
Suppose that Assumptions $(i),(ii),(iii)$ in Theorem~\ref{thm:main}
hold true for $\Sigma$. We further let
$s_{\Omega}$ be the maximum sparsity of the rows of $\Omega \equiv
\Sigma^{-1}$, i.e.
\begin{align}\label{eq:sOmega}
s_\Omega \equiv \max_{i\in[p]} |\{j\neq i, \Omega_{i,j} \neq 0\}|\,.
\end{align}
Let $\hth$ be the Lasso estimator defined by~\eqref{eq:Lasso} with
$\lambda = \kappa\sigma \sqrt{(\log p)/n}$, for $\kappa\in [8,\kappa_{\max}]$ and let $\hth^\de$ be debiased
estimator with $M$ given by~\eqref{eq:nodewise} and $\tlambda=
K\sqrt{\log p/n}$ (with $K$ a suitably large universal constant). 
Suppose that $s_\Omega \ll n/(\log p)$.

Then, there exist constants $c, C$ depending solely on
$C_{\min},C_{\max}, \kappa_{\max}$ such that, for $n\ge cs_0\log p$,
the following holds true:
\begin{align}\label{eq:components2}
&\sqrt{n}(\hth^\de-\th^*)= Z + R\,, \quad \quad Z|X\sim\normal(0,\sigma^2M\hSigma M^\sT)\,, \\
&\|R\|_\infty\le C\rho \sigma \sqrt{\frac{s_0}{n}} \log p+  C \sigma\min(s_0,s_\Omega) \frac{\log p}{\sqrt{n}}\,,
\end{align}
with probability at least $1- 2pe^{-c_*n/s_0} + pe^{-cn}+8p^{-1}$, for some constants $c_*,c',c'' >0$.
\end{thm}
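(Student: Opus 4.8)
The plan is to reduce the unknown-covariance case to the known-covariance case (Theorem \ref{thm:main}) by controlling the additional error incurred from replacing the oracle matrix $\Omega$ with the node-wise Lasso estimate $M$. Starting from the decomposition used in the introduction, we write
\begin{align*}
\sqrt{n}(\hth^\de-\th^*) = \frac{1}{\sqrt n} M X^\sT w + \sqrt{n}(M\hSigma - \id)(\th^*-\hth)\,,
\end{align*}
and we split the bias term as
\begin{align*}
\sqrt{n}(M\hSigma - \id)(\th^*-\hth) = \sqrt{n}(\Omega\hSigma - \id)(\th^*-\hth) + \sqrt{n}(M-\Omega)\hSigma(\th^*-\hth)\,.
\end{align*}
The first summand is exactly the bias term controlled in Theorem \ref{thm:main}, so it contributes the term $C\rho\sigma\sqrt{s_0/n}\,\log p$ in the bound on $\|R\|_\infty$. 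It remains to bound the correction $\sqrt{n}(M-\Omega)\hSigma(\th^*-\hth)$ in $\ell_\infty$, and to identify the Gaussian piece: since $Z = M X^\sT w/\sqrt n$ is conditionally Gaussian given $X$ with covariance $\sigma^2 M\hSigma M^\sT$, the claimed conditional law of $Z$ follows immediately.

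For the correction term I would use the $\ell_1$-$\ell_\infty$ bound in the ``old'' style, but now only on the \emph{difference} $M-\Omega$, which is small because the node-wise Lasso is consistent. Concretely,
\begin{align*}
\big\|\sqrt{n}(M-\Omega)\hSigma(\th^*-\hth)\big\|_\infty \le \sqrt{n}\,|( M-\Omega)\hSigma|_\infty\,\|\th^*-\hth\|_1\,.
\end{align*}
Standard node-wise Lasso analysis (as in \cite{zhang2014confidence,van2014asymptotically}, via \cite{MeinshausenBuhlmann}) gives, on the good event and under $s_\Omega \ll n/\log p$, row-wise bounds of the form $\|M_{i,\cdot}-\Omega_{i,\cdot}\|_1 \lesssim s_\Omega\sqrt{(\log p)/n}$ and $\|(M\hSigma - \Omega\hSigma)_{i,\cdot}\|_\infty \lesssim s_\Omega (\log p)/n$ (more precisely, $|(M\hSigma-\id)_{i,\cdot}|_\infty\lesssim \sqrt{(\log p)/n}$ combined with the sparsity of $M_{i,\cdot}-\Omega_{i,\cdot}$), while $\|\th^*-\hth\|_1 \lesssim s_0\sigma\sqrt{(\log p)/n}$ holds on $\cB$ by Lemma \ref{lem:Bickel} and the compatibility bound in Remark \ref{rem:phi-bound}. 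Multiplying these out yields a contribution of order $\sigma\, s_\Omega\, (\log p)/\sqrt n$. To obtain the stated $\min(s_0,s_\Omega)$ I would also run the \emph{alternative} bound where the roles are reversed: bound $|M\hSigma-\id|_\infty \lesssim \sqrt{(\log p)/n}$ directly (no sparsity of $M$ needed) and control $\|\th^*-\hth\|_1$, or exploit that the bias vector $(M\hSigma-\id)(\th^*-\hth)$ is supported (up to the oracle part) on the intersection of $\supp(\th^*-\hth)$ — of size $O(s_0)$ by Lemma \ref{lem:Bickel} — so that only $O(s_0)$ coordinates of the $M-\Omega$ rows enter; taking the better of the two bounds gives $C\sigma\min(s_0,s_\Omega)(\log p)/\sqrt n$. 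The probability bound is then the union of: the event $\cB = \ctB(n,p)\cap\cB_\delta(n,s_0,3)$ from Lemmas \ref{lem:rudelson} and \ref{lem:ctB} (contributing $2e^{-\delta^2 n}+2p^{-1}$), the events from Theorem \ref{thm:main} (contributing $2pe^{-c_* n/s_0}+pe^{-n/1000}+8p^{-1}$), and the event on which the node-wise Lasso bounds hold (contributing $O(p^{-1})$ by the standard concentration/union bound, using $n\ge cs_0\log p$ and $s_\Omega\ll n/\log p$).

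The main obstacle, and the step requiring the most care, is making the node-wise Lasso error bounds compatible with the leave-one-out / refined analysis of Theorem \ref{thm:main} without reintroducing the $\sqrt n/\log p$ barrier on $s_0$. The delicate point is that $M$ depends on all of $X$, so the bias correction $(M-\Omega)\hSigma(\th^*-\hth)$ couples $M$, $\hSigma$, and $\hth$; I expect this is handled by keeping the ``crude'' $\ell_1$-$\ell_\infty$ bound for the correction (which is harmless precisely because $\|M_{i,\cdot}-\Omega_{i,\cdot}\|_1$ carries the small factor $s_\Omega\sqrt{(\log p)/n}$ rather than a constant), and reserving the refined leave-one-out argument of Theorem \ref{thm:main} solely for the $\Omega$-part $(\Omega\hSigma-\id)(\th^*-\hth)$. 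One must also verify that $\rho(M\hSigma M^\sT)_{ii}^{1/2}$ — the effective standard deviation appearing in the companion corollary and confidence intervals — is bounded away from $0$ and $\infty$, which follows from $\sigma_{\min}(\Sigma),\sigma_{\max}(\Sigma)$ being bounded together with $|M-\Omega|_\infty = o_P(1)$ and $|\hSigma-\Sigma|_\infty = o_P(1)$ on a suitable set. Assembling these pieces gives \eqref{eq:components2} with the stated error and probability, completing the proof.
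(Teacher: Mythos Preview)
Your overall strategy matches the paper's proof exactly: decompose $\sqrt{n}(\dth-\th^*)$ into the Gaussian piece $Z = n^{-1/2}MX^\sT w$, the oracle bias $I_1 = \sqrt{n}(\Omega\hSigma-\id)(\th^*-\hth)$ handled by Theorem~\ref{thm:main}, and the correction $I_2 = \sqrt{n}(M-\Omega)\hSigma(\th^*-\hth)$; then bound $I_2$ in two ways and take the minimum. Your $s_0$-route (via $|M\hSigma-\id|_\infty\lesssim\sqrt{(\log p)/n}$, $|\Omega\hSigma-\id|_\infty\lesssim\sqrt{(\log p)/n}$, and $\|\hth-\th^*\|_1\lesssim s_0\lambda$) is exactly the paper's second bound.

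There is, however, a concrete gap in your $s_\Omega$-route. The inequality you write,
\[
\big\|\sqrt{n}(M-\Omega)\hSigma(\th^*-\hth)\big\|_\infty \le \sqrt{n}\,\big|(M-\Omega)\hSigma\big|_\infty\,\|\th^*-\hth\|_1\,,
\]
cannot yield a bound depending only on $s_\Omega$: the factor $\|\th^*-\hth\|_1$ always carries an $s_0$, and $|(M-\Omega)\hSigma|_\infty$ is at best $O(s_\Omega\sqrt{(\log p)/n})$, giving $s_0 s_\Omega(\log p)/\sqrt{n}$ rather than $s_\Omega(\log p)/\sqrt{n}$. Your alternative suggestion (that the bias vector is ``supported on $\supp(\th^*-\hth)$'') is also incorrect: $(M\hSigma-\id)(\th^*-\hth)$ is a full $p$-vector. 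The paper obtains the $s_\Omega$ bound differently: it pairs the $\ell_\infty$ \emph{operator} norm of $M-\Omega$ with the $\ell_\infty$ \emph{vector} norm of $\hSigma(\hth-\th^*)$,
\[
\|I_2\|_\infty \le \sqrt{n}\,\|M-\Omega\|_\infty\,\|\hSigma(\hth-\th^*)\|_\infty\,,
\]
and then uses the Lasso KKT conditions, $\hSigma(\hth-\th^*) = n^{-1}X^\sT w - \lambda\xi$ with $\|\xi\|_\infty\le 1$, to get $\|\hSigma(\hth-\th^*)\|_\infty \le 5\lambda/4$ on $\ctB(n,p)$. Together with $\|M-\Omega\|_\infty\lesssim s_\Omega\sqrt{(\log p)/n}$ from \cite{van2014asymptotically}, this gives $\|I_2\|_\infty\lesssim s_\Omega(\log p)/\sqrt{n}$ with no $s_0$ factor. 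Once you replace your displayed inequality with this KKT-based one, your argument goes through and coincides with the paper's.
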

The proof of Theorem~\ref{thm:unknown} is deferred to Section~\ref{proof:thm_unknown}.

A result similar to Corollary~\ref{coro:distribution} holds true for the case of unknown covariance.
\begin{coro}\label{coro:distributionUnknown}
Let $\hsigma = \hsigma(y,X)$ be an estimator of the noise
level satisfying Eq.~(\ref{eq:ConsistencySigma}) for any $\eps>0$.

Under the assumptions of Theorem~\ref{thm:main}, if $\min(s_0, s_\Omega) \ll \sqrt{n}/\log p$ and 
$s_0\ll n/(\rho(\log p)^2)$, then for all $x\in\reals$ we have
\begin{eqnarray}\label{eq:distributionNEW}
\lim_{n\to\infty}\sup_{\theta_0\in\reals^p;\, \|\tth\|_0 \le s_0 }\left|\prob \left\{\frac{\sqrt{n}(\dth_i - \theta^*_i)}{\hsigma [M \hSigma M^\sT]_{i,i}^{1/2}} 
\le x  \right\} -\Phi(x)\right| =0\, ,
\end{eqnarray}
where $M$ is given by equation~\eqref{eq:nodewise}.
\end{coro}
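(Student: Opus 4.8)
\textbf{Proof proposal for Corollary~\ref{coro:distributionUnknown}.}
The plan is to mirror the argument that yields Corollary~\ref{coro:distribution} from Theorem~\ref{thm:main}, substituting the conclusions of Theorem~\ref{thm:unknown} for those of Theorem~\ref{thm:main}. First I would invoke Theorem~\ref{thm:unknown}, whose hypotheses are met because the assumptions of Theorem~\ref{thm:main} are assumed and $\min(s_0,s_\Omega)\ll\sqrt{n}/\log p$ forces in particular $s_\Omega \ll n/\log p$ (and $s_0\ll n/(\rho(\log p)^2)$ certainly gives $n\ge cs_0\log p$ eventually). This gives the decomposition $\sqrt{n}(\dth-\th^*)=Z+R$ with $Z\mid X\sim\normal(0,\sigma^2 M\hSigma M^\sT)$ and, on an event of probability $1-o_n(1)$,
\begin{align*}
\|R\|_\infty \le C\rho\sigma\sqrt{\frac{s_0}{n}}\log p + C\sigma\,\frac{\min(s_0,s_\Omega)}{\sqrt{n}}\log p.
\end{align*}
Under the stated scalings both summands are $o_P(1)$: the first because $s_0\ll n/(\rho(\log p)^2)$ makes $\rho\sqrt{s_0/n}\,\log p\to 0$, and the second because $\min(s_0,s_\Omega)\ll\sqrt{n}/\log p$ makes $\min(s_0,s_\Omega)\log p/\sqrt{n}\to 0$.

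Next I would isolate the $i$-th coordinate. Conditionally on $X$, $Z_i/\{\sigma (M\hSigma M^\sT)_{i,i}^{1/2}\}$ is exactly a standard Gaussian, so for every $x$,
\begin{align*}
\prob\!\left\{\frac{\sqrt{n}(\dth_i-\theta^*_i)}{\sigma (M\hSigma M^\sT)_{i,i}^{1/2}}\le x\right\}
= \E\!\left[\Phi\!\left(x-\frac{R_i}{\sigma (M\hSigma M^\sT)_{i,i}^{1/2}}\right)\right].
\end{align*}
To turn this into the claimed limit I need a lower bound on $(M\hSigma M^\sT)_{i,i}$ that holds uniformly in $i$ with high probability; this is the one genuinely new ingredient relative to the known-covariance case, where one used $|\Omega\hSigma\Omega-\Omega|_\infty=o_P(1)$ together with $\Omega_{ii}\ge C_{\max}^{-1}$. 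Here the analogue is that $M=\hOmega$ from node-wise Lasso satisfies $(M\hSigma M^\sT)_{i,i}=\htau_i^{-2}(1+o_P(1))$ and $\htau_i^2\to (\Sigma^{-1})_{ii}^{-1}$ uniformly under $s_\Omega\ll n/\log p$ — a fact that is standard in the node-wise Lasso literature (\cite{MeinshausenBuhlmann,van2014asymptotically}) and can be extracted from the estimates already used inside the proof of Theorem~\ref{thm:unknown}. Granting this, $\min_i (M\hSigma M^\sT)_{i,i}\ge c>0$ w.h.p., so $|R_i|/\{\sigma(M\hSigma M^\sT)_{i,i}^{1/2}\}\le c^{-1}\sigma^{-1}\|R\|_\infty = o_P(1)$, and by bounded convergence $\E[\Phi(x-o_P(1))]\to\Phi(x)$, uniformly over $\|\theta^*\|_0\le s_0$ since all the bounds above are uniform over that class.

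Finally I would replace $\sigma$ by the consistent estimator $\hsigma$. Writing the ratio with $\hsigma$ as the ratio with $\sigma$ times $\sigma/\hsigma = 1+o_P(1)$ uniformly over the sparse class by~\eqref{eq:ConsistencySigma}, and using that the limiting law $\Phi$ is continuous (hence Slutsky applies, uniformly, via a standard $\eps$-net over $x$ together with monotonicity of CDFs), gives~\eqref{eq:distributionNEW}. The main obstacle is the uniform-in-$i$ lower bound on the conditional variance $(M\hSigma M^\sT)_{i,i}$: for known covariance it is essentially immediate, but for the node-wise-Lasso $M$ one must control $\htau_i^2$ simultaneously for all $i$, which is exactly where the condition $s_\Omega\ll n/\log p$ enters; everything else is routine bookkeeping and an application of bounded convergence.
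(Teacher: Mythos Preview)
Your approach is correct and matches the paper's implicit argument: the paper gives no separate proof of this corollary, treating it as an immediate consequence of Theorem~\ref{thm:unknown} in exactly the way Corollary~\ref{coro:distribution} follows from Theorem~\ref{thm:main}. Your identification of the one new ingredient (the uniform lower bound on $(M\hSigma M^\sT)_{i,i}$, handled via the node-wise Lasso analysis of \cite{van2014asymptotically}) is on target, and the Slutsky step for $\hsigma$ is routine.

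There is, however, one logical slip. Your claim that ``$\min(s_0,s_\Omega)\ll\sqrt{n}/\log p$ forces in particular $s_\Omega \ll n/\log p$'' is false when $s_0\le s_\Omega$: in that case the minimum equals $s_0$ and nothing constrains $s_\Omega$ at all. But the hypothesis $s_\Omega\ll n/\log p$ of Theorem~\ref{thm:unknown} is genuinely needed---both for the bound $|M\hSigma-\id|_\infty\lesssim\sqrt{(\log p)/n}$ (via the lower bound on $\htau_i^2$) and for the variance lower bound you invoke. So the hypotheses of Theorem~\ref{thm:unknown} are not fully verified from the hypotheses literally stated in the corollary. This is arguably an omission in the corollary's statement itself (it should say ``under the assumptions of Theorem~\ref{thm:unknown}'' rather than ``Theorem~\ref{thm:main}''); once $s_\Omega\ll n/\log p$ is assumed, your argument goes through verbatim.
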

Using the above distributional characterization, we can construct confidence intervals for the individual model parameters $\theta^*_i$ as in~\eqref{eq:ConfInterval1}, \eqref{eq:ConfInterval2} with $M$ given by~\eqref{eq:nodewise} and $\hsigma$ given by the scaled Lasso as per~\eqref{eq:SLASSO}.
For hypothesis testing task, two sided $p$-values can be built similar to~\eqref{eq:2sidedP}, where we replace $\Omega \hSigma \Omega$ with $M\hSigma M^\sT$. 
%

\subsection{Numerical illustration}\label{sec:numerical}
Our goal in this section is to numerically corroborate the results of
Theorem~\ref{thm:main} and Theorem \ref{thm:unknown}. More specifically, we would like to check whether
the  debiased estimator exhibits an unbiased Gaussian distribution provided that the sample size scales linearly with the number of 
nonzero   parameters. 

We generate data from linear model~\eqref{eqn:regression} with the following configuration. We fix $p= 3000$ and consider regression parameter $\theta_0$ with support $S_0$ chosen uniformly at random from the index set $[p]$ and $\theta_{0,i} = 0.15$ for $i\in S_0$ and zero otherwise. The design matrix $X$ has i.i.d. rows drawn from $\normal(0,\Sigma)$, where $\Sigma \in \reals^{p\times p}$ is the circulant matrix with entries $\Sigma_{i,j} = 0.8^{|i-j|}$. The measurement noise $w$ has i.i.d. standard normal entries.
 
Let $s_0 = |S_0|$ and $\varepsilon = s_0/p$ be the sparsity level and $\delta = n/p$ denote the under sampling rate.
We vary $\varepsilon$ in the set $\{0.1,0.15,0.2, 0.25,0.3\}$ and for
each value of $\varepsilon$ we compute critical value of $\delta$
above which
the unbiased estimator admits a Gaussian distribution. 
We will denote this critical value as $\delta_c$ and define it as
follows. We vary $\delta$ and for each pair $(\varepsilon, \delta)$,
compute the debiased estimator (with $M=\Sigma^{-1}$) for $100$ realizations of noise $w$. We then compute the empirical kurtosis of each coordinate $T_i=\sqrt{n}(\dth_i-\th^*_{i})/(\sigma [M\hSigma M]_{i,i}^{1/2})$. For $i \in [p]$, let $\gamma^\delta_i$ denote the empirical kurtosis of $T_i$, where we make the dependence on $\delta$ explicit in the notation. Denote by $m(\gamma^\delta)$ and $\SD(\gamma^\delta)$ the mean and the standard deviation of $\gamma^\delta = (\gamma^\delta_1,\dotsc, \gamma^\delta_p)$, respectively. We further define the standard error $\SE(\gamma^\delta) = \SD(\gamma^\delta)/\sqrt{p}$. 
We use one standard error rule to decide the value of
$\delta_c$. Namely, 
\begin{align}
\delta_c = \arg\min\{\delta\in (0,1),\,\, {\rm s. t.,}\,\,
m(\gamma^\delta) \le \SE(\gamma^\delta ) \,\}\,.
\end{align}

Figure~\ref{fig:eps02} corresponds to $\varepsilon = 0.2$. The dots indicate $m(\gamma^\delta)$ and the dotted lines correspond to $m(\gamma^\delta)\pm \SE(\gamma^\delta)$. By one standard error rule, the estimated value of $\delta_c$ works out at $\delta_c = 0.57$.

Figure~\ref{fig:delta_eps} shows $\delta_c$ versus $\varepsilon$. 
The black curve corresponds to the case of known covariance, where we set $M = \Omega$ and 
the red curve corresponds to the case of unknown covariance, where $M$ is set as in Equation~\eqref{eq:nodewise}.
 The figure confirms that $\delta_c$ scales roughly linearly in
$\varepsilon$ (for small $\varepsilon$). In other words, in order for the debiased estimator to
have unbiased Gaussian distribution, the sample size $n$ has only  to
scale linearly in the support size $s_0$. (Note that for the circulant covariance chosen in this example, $s_\Omega = 2$).

\begin{figure}[]
\centering
\includegraphics*[width = 3.5in]{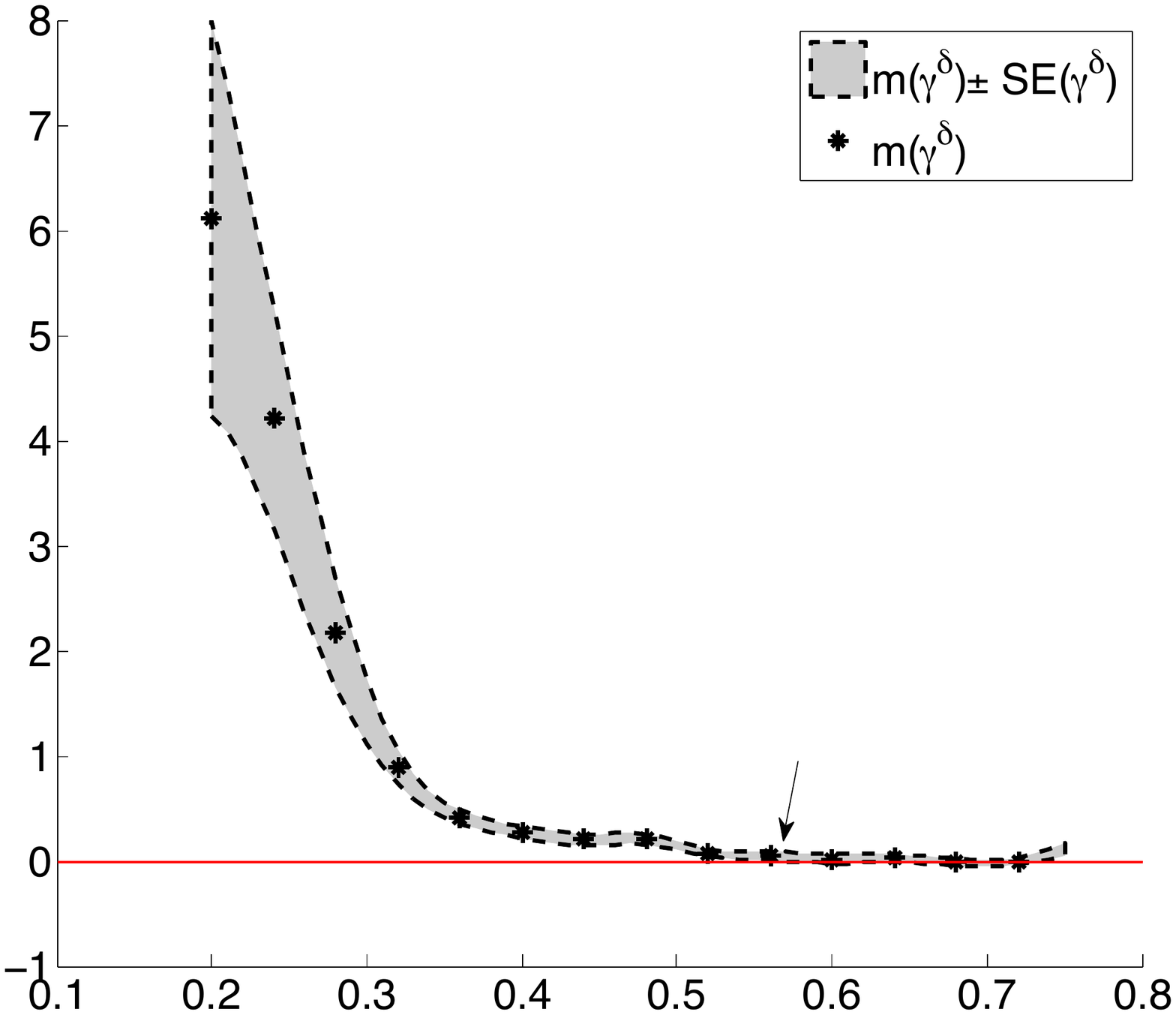}
\put(-125,-10){$\delta \equiv n/p$}
\put(-85,60){$\delta_c = 0.57$}
\caption{
Empirical kurtosis of the (rescaled) debiased Lasso estimator
$T_i=\sqrt{n}(\dth_i-\th^*_{i})/(\sigma [M\hSigma M]_{i,i}^{1/2})$.
We plot the kurtosis $m(\gamma^\delta)$ (over coordinates and $100$
independent realizations) versus $\delta$ along with the upper and
lower one standard error curves,
as a function of the number of samples per parameter $\delta$. Here,
$\varepsilon = 0.2$ and $\delta_c = 0.57$ is our empirical estimate
for the number of samples above which the debiased estimator is
approximately Gaussian.}\label{fig:eps02}
\end{figure}

\begin{figure}[]
\centering
\includegraphics*[width = 3in]{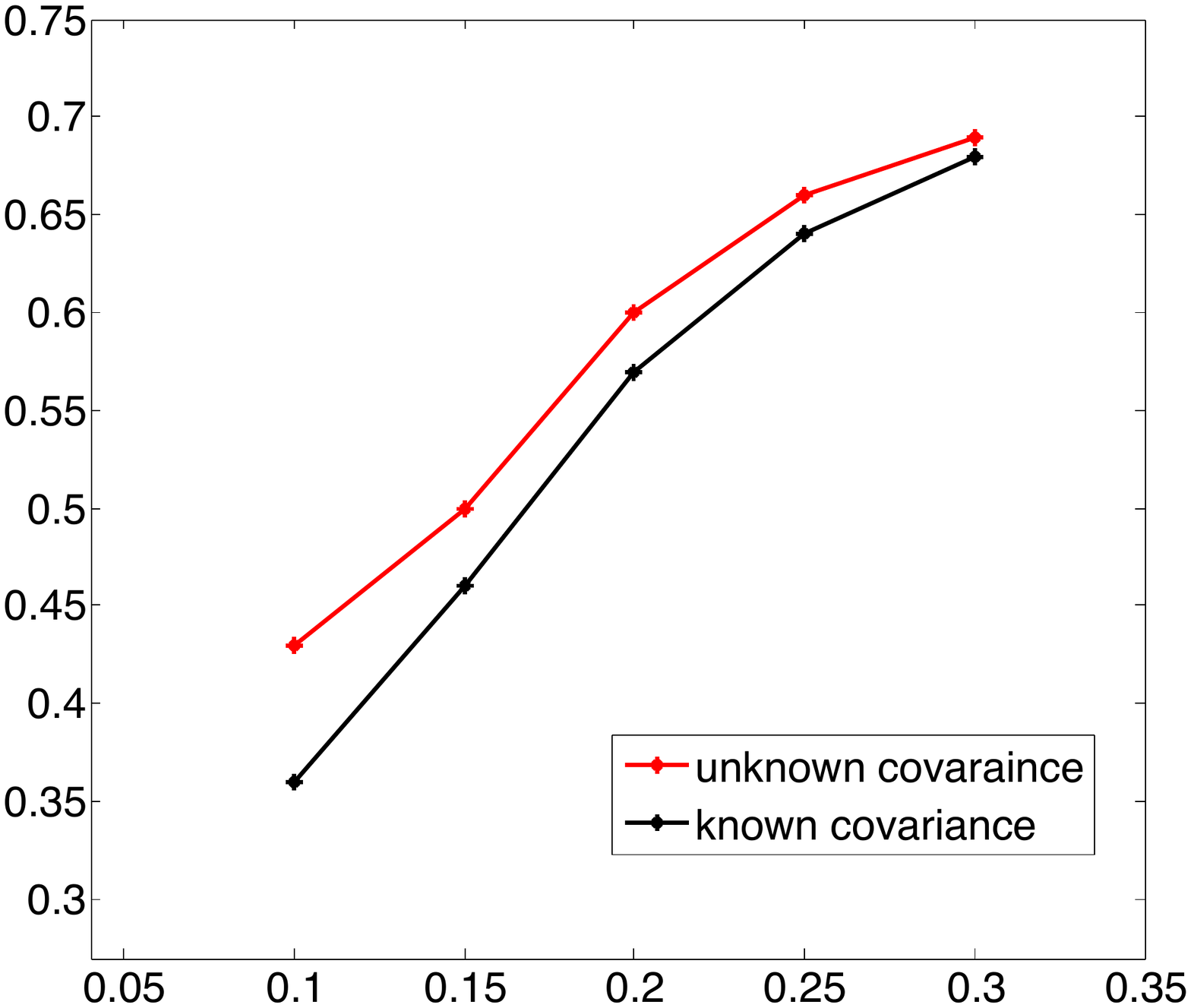}
\put(-120,-10){$\varepsilon = s_0/p$}
\put(-230,85){$\delta_c$}
\caption{Critical  number of samples per coordinate $\delta_c$, versus
  fraction of non-zero coordinates $\varepsilon$.
For $\delta>\delta_c(\eps)$ the debiased Lasso estimator is
empirically Gaussian distributed in our experiment. The approximately
linear relationship at small $\eps$ is in agreement with our theory.}\label{fig:delta_eps}
\end{figure}
\section{Minimax lower bound on the residual $R$}
\label{sec:Minimax}

In case that the design covariance matrix is unknown, Theorem~\ref{thm:unknown} establishes the following high probability bound on the residual term $R$:
\begin{align}
\|R\|_\infty\le C\rho \sigma \sqrt{\frac{s_0}{n}} \log p+  C \sigma\, \min(s_0,s_\Omega) \frac{\log p}{\sqrt{n}}\,.
\end{align}
For sparse precision matrices, such that $s_\Omega\ll \sqrt{n}/(\log p)$, the residual term $\|R\|_\infty$ vanishes asymptotically under the near optimal condition $s_0\ll n/(\log p)^2$.
The question we will study in this section is whether such condition on $s_\Omega$ is necessary.    
To answer this question, we develop a minimax lower bound on $\|R\|_\infty$. This also clarifies the  connection between our 
results and the ones of \cite{cai2015confidence}, whose general approach we build on here.

Before presenting our results we need to introduce some notations and definitions.

Consider the linear model~\eqref{eq:NoisyModel} and define parameters of the form $\gamma = (\theta,\Omega,\sigma^2)$, which consists of the signal $\theta$,
precision matrix $\Omega = \Sigma^{-1}$, and the noise standard deviation $\sigma$.

For $\alpha\in(0,1)$ and a given parameter space $\Gamma$, denote by $\cI_{\alpha}(\Gamma)$ the set of all $(1-\alpha)$-confidence intervals for 
$\theta_1$ over the entire space $\Gamma$,
\begin{align}
\cI_\alpha(\Gamma) \equiv \Big\{J_\alpha(y,X): \, \inf_{\gamma\in \Gamma} \prob_\gamma (\theta_1\in J_\alpha(y,X)) \ge 1-\alpha \Big\}\,,
\end{align}
where $\prob_\gamma$ is the induced probability distribution on $(y,X)$ for random gaussian design $X$ and noise realization $w$, given the fixed signal $\theta$. Here and below we focus on the first coordinate $\theta_1$ without loss of generality.
For a given interval $J_\alpha(y,X)\in \cI_\alpha(\Gamma)$, we let $\ell(J_\alpha(\,\cdot\,),\Gamma)$ be the maximum expected length over a parameter space $\Gamma$,
\begin{align}
\ell(J_\alpha(\,\cdot\,),\Gamma) = \sup_{\gamma\in\Gamma} \E_\gamma\{ \ell(J_\alpha(y,X))\}\,,
\end{align}
with $\E_\gamma$ expectation with respect to $\prob_\gamma$. We further define the minimax rate for the expected length of confidence intervals over $\Gamma$ as follows:
\begin{align}
\ell^*_\alpha(\Gamma) = \inf_{J_\alpha(\,\cdot\,)\in \cI_\alpha(\Gamma)} \ell(J_\alpha(\,\cdot\,),\Gamma)\,.
\end{align}
We next define parameter space $\Gamma(s_0,s_\Omega,\rho)$ as follows. 
Applying inequality~\eqref{ineq-norm}, we relax Condition~\ref{Condition:L1} as $\|\Omega\|_\infty \le \rho$
and write 
\begin{align}\label{eq:Gamma-s0}
\Gamma(s_0,s_\Omega,\rho) \equiv \bigg\{\gamma = (\theta,\Omega,\sigma^2):\, &\|\theta\|_0\le s_0, \sigma^2 \in (0,c],\nonumber\\
&(\Omega^{-1})_{ii} \le 1,\, \frac{1}{C_{\max}}< \sigma_{\min}(\Omega) \le 
\sigma_{\max}(\Omega) <\frac{1}{C_{\min}},\, \|\Omega\|_\infty \le \rho,\nonumber\\
&\max_{i\in[p]} |\{j\neq i, \Omega_{i,j} \neq 0\}| \le s_\Omega\bigg\}\,.
\end{align}
Quantities $c$, $C_{\min}$ and $C_{\max}\ge 1$ are constant which do not effect the minimax rate and therefore we have not made them explicit in our notation $\Gamma(s_0,s_\Omega,\rho)$.

\begin{propo}\label{propo:minimaxUnknown-UB} 
Consider a debiased estimator of form~\eqref{eq:debiased} with $M$ being a function of $X$ and $\hth$ the Lasso estimator at regularization parameter $\lambda$. 
Further, let $R = \sqrt{n}(M\hSigma-\id)(\hth-\tth)$ be the bias term and $Q= \diag(M\hSigma M^\sT)$ be the variance term.
Suppose that there exist a choice of $M$ and $\lambda$ such that
%
\begin{align}
&\lim_{n\to \infty} \prob\bigg(\sup \Big\{\|R\|_\infty :\,\, (\th^*,\Omega,\sigma^2)\in{\Gamma(s_0,s_\Omega, \rho)}\Big\} \le \Delta_n\bigg) = 1 \,,\label{eq:Delta1}\\
&\lim_{n\to \infty} \prob\bigg(\sup \Big\{\|Q\|_\infty :\,\, (\th^*,\Omega,\sigma^2)\in{\Gamma(s_0,s_\Omega, \rho)}\Big\} \le C \bigg) = 1 \,,\label{eq:Delta2}
\end{align}
for some known $\Delta_n$ and for some known constant $C$. Then, we have
\begin{align}
\ell^*_\alpha(\Gamma(s_0,s_\Omega,\rho)) \lesssim \frac{(1+\Delta_n)}{\sqrt{n}} \,.
\end{align}
\end{propo}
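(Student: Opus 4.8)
The plan is to exhibit an explicit confidence interval in $\cI_\alpha(\Gamma(s_0,s_\Omega,\rho))$ whose expected length is of the claimed order $(1+\Delta_n)/\sqrt{n}$; since $\ell^*_\alpha$ is the infimum of the expected length over all valid intervals, producing one good candidate suffices. The natural candidate is the debiased-Lasso interval built from the given $M$ and $\lambda$: namely $J_\alpha(y,X) = [\dth_1 - L_n, \dth_1 + L_n]$ with a half-width $L_n$ of order $(1+\Delta_n)/\sqrt{n}$, chosen so that validity holds uniformly over $\Gamma(s_0,s_\Omega,\rho)$.

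First I would fix any $\gamma = (\th^*,\Omega,\sigma^2)\in\Gamma(s_0,s_\Omega,\rho)$ and decompose, exactly as in~\eqref{eq:components2}, $\sqrt{n}(\dth_1-\th^*_1) = Z_1 + R_1$, where conditionally on $X$ the variable $Z_1$ is centered Gaussian with variance $\sigma^2 (M\hSigma M^\sT)_{1,1} = \sigma^2 Q_1 \le \sigma^2 C \le cC$ on the event of~\eqref{eq:Delta2}, and $|R_1| \le \|R\|_\infty \le \Delta_n$ on the event of~\eqref{eq:Delta1}. Intersecting these two events (each of probability $\to 1$ uniformly, by hypothesis) with a further event $\{Q_1 \le C\}$ that is observable from $X$, I would set
\begin{align}
L_n \equiv \Phi^{-1}\!\Big(1-\frac{\alpha}{4}\Big)\frac{\sqrt{cC}}{\sqrt n} + \frac{\Delta_n}{\sqrt n}\,,
\end{align}
so that $L_n$ is deterministic and of order $(1+\Delta_n)/\sqrt n$. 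On the good event, $\prob_\gamma(\theta^*_1\in J_\alpha \mid X) \ge \prob(|Z_1|\le \sqrt{cC}\,\Phi^{-1}(1-\alpha/4)\mid X) \ge 1-\alpha/2$ using the conditional Gaussianity and the variance bound; taking expectation over $X$ and absorbing the (vanishing) probability of the bad event gives the uniform coverage $\inf_{\gamma\in\Gamma}\prob_\gamma(\theta^*_1\in J_\alpha) \ge 1-\alpha$ for $n$ large, hence $J_\alpha \in \cI_\alpha(\Gamma(s_0,s_\Omega,\rho))$. Since $\ell(J_\alpha(y,X)) = 2L_n$ is deterministic, $\ell(J_\alpha(\cdot),\Gamma) = 2L_n \lesssim (1+\Delta_n)/\sqrt n$, and the bound on $\ell^*_\alpha$ follows.

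The one subtlety — and the main thing to handle carefully — is that the hypotheses~\eqref{eq:Delta1}--\eqref{eq:Delta2} only give control \emph{with probability tending to one}, not almost surely, while the definition of $\cI_\alpha(\Gamma)$ demands exact $(1-\alpha)$ coverage for finite $n$. The clean fix is to state the conclusion as an asymptotic statement (which is how $\ell^*_\alpha(\Gamma)$ is used in the surrounding discussion): for every $\eps>0$ and $n$ large enough, inflating $\alpha$ slightly to $\alpha-\eps$ in the quantile and still taking $L_n$ as above yields an interval in $\cI_{\alpha}(\Gamma)$, so that $\limsup_{n}\sqrt n\,\ell^*_\alpha(\Gamma(s_0,s_\Omega,\rho)) \lesssim \liminf_n (1+\Delta_n)$, which is exactly the displayed inequality read asymptotically. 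I would also remark that no properties of the Lasso beyond~\eqref{eq:Delta1}--\eqref{eq:Delta2} are needed here: the proposition is essentially a packaging statement converting a bias/variance bound on any debiased estimator into a minimax-length upper bound, and the real content lies in verifying~\eqref{eq:Delta1}--\eqref{eq:Delta2} for a concrete $M$, which is supplied by Theorem~\ref{thm:unknown}.
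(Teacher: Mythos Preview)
Your proposal is correct and follows essentially the same approach as the paper: construct the debiased-Lasso interval centered at $\dth_1$, bound its length deterministically by $O((1+\Delta_n)/\sqrt{n})$, and verify uniform asymptotic $(1-\alpha)$ coverage by combining the conditional Gaussianity of $Z_1$ with the high-probability events \eqref{eq:Delta1}--\eqref{eq:Delta2}. The paper's version differs only cosmetically: it uses the data-dependent half-width $\Phi^{-1}(1-\alpha/2)\,(1-\eps)^{-1}n^{-1/2}\min\{\hsigma\sqrt{Q_1},\sqrt{(1+\eps)cC}\}+\Delta_n/\sqrt{n}$, introducing the noise estimator $\hsigma$ and a consistency event for it, whereas you exploit directly that $\sigma^2\le c$ and $Q_1\le C$ are known bounds and take a fully deterministic $L_n$; your route is a bit cleaner for this particular upper-bound statement, and both handle the asymptotic-coverage subtlety the same way.
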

Note that since $Q$ is a function of only $X$, the arguments $\tth$ and $\sigma^2$ in Equation~\eqref{eq:Delta2} are superfluous.
To establish the above upper bound, we construct a confidence interval $J^\de_\alpha$ using a debiased estimator, such that $J^\de_\alpha\in \cI(\Gamma(s_0,s_\Omega,\rho))$. We refer to Section \ref{sec:proofminimax_U} for the proof of Proposition~\ref{propo:minimaxUnknown-UB}. 

The next proposition provides a lower bound on $\ell^*_\alpha(\Gamma(s_0,s_\Omega,\rho))$.
\begin{propo}\label{propo:minimaxUnknown-LB}
Suppose that $\alpha\in (0,1/2)$ and $s_0 \lesssim \min(p^\eta, n/\log p)$ for some constant $0\le \eta<1/2$.  Further, assume $\rho\ge 1.02$. The minimax 
expected length for $(1-\alpha)$-confidence intervals of $\theta_1$ over $\Gamma(s_0,s_\Omega,\rho)$ satisfies
\begin{align}
\ell^*_\alpha(\Gamma(s_0,s_\Omega,\rho)) \gtrsim \frac{1}{\sqrt{n}} + \min\bigg(s_0\frac{\log p}{n}, s_\Omega \frac{\log p}{n}, \rho\sqrt{\frac{\log p}{n}}\bigg)\,.
\end{align}
\end{propo}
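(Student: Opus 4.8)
The plan is to follow the general strategy of Cai and Guo \cite{cai2015confidence}: reduce the problem of lower bounding the length of an honest confidence interval to a (possibly fuzzy) two‑hypothesis testing problem, exhibit a least favorable subfamily inside $\Gamma(s_0,s_\Omega,\rho)$, and then control the statistical distance between the two induced laws of $(X,y)$. The first ingredient is the standard reduction (an adaptation of the corresponding lemma in \cite{cai2015confidence}): if $J_\alpha\in\cI_\alpha(\Gamma(s_0,s_\Omega,\rho))$ has worst‑case expected length at most $\ell$, and $\P_0,\P_1$ are two distributions on $(X,y)$ obtained as mixtures over parameters in $\Gamma(s_0,s_\Omega,\rho)$ all of whose first coordinate equals $0$, respectively $\tau$, then a Markov bound on the length together with the coverage guarantees turns $J_\alpha$ into a test between $\P_0$ and $\P_1$ whose total error is bounded away from $1$ unless $\ell\gtrsim_\alpha\tau$; hence if $\|\P_0-\P_1\|_{\mathrm{TV}}$ stays below a constant depending only on $\alpha$, then $\ell^*_\alpha(\Gamma(s_0,s_\Omega,\rho))\gtrsim\tau$. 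The term $1/\sqrt n$ is then the classical parametric rate: take $\Omega=\id$, $\sigma^2$ a fixed constant, and compare $\th=0$ with $\th=\tau_0 e_1$, $\tau_0\asymp\sigma/\sqrt n$ (with a small constant); both parameters lie in $\Gamma(s_0,s_\Omega,\rho)$ and the KL divergence between the joint laws of $(X,y)$ is $n\tau_0^2/(2\sigma^2)=O(1)$, which gives $\ell^*_\alpha\gtrsim 1/\sqrt n$.

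The three quantities $s_0\log p/n$, $s_\Omega\log p/n$ and $\rho\sqrt{\log p/n}$ all come from a single construction, tuned by an integer $m$ that the constraints defining $\Gamma(s_0,s_\Omega,\rho)$ cap in three distinct ways. Fix a level $a=c\sqrt{\log p/n}$ with a small absolute constant $c$, let $B$ be a uniformly random $m$‑subset of $\{2,\dots,p\}$, and set, writing $\mathbf{1}_B\in\reals^p$ for the indicator of $B$,
\[
\Sigma^{(0)}_B=\id+a\bigl(e_1\mathbf{1}_B^\sT+\mathbf{1}_Be_1^\sT\bigr),\qquad \th^{(0)}_B=a\,\mathbf{1}_B,\qquad \tau:=a^2m,
\]
with the noise level $(\sigma^{(0)})^2\in(0,c]$ chosen so that $\mathrm{Var}(y_i)$ matches that of the reference point $H_1$: $\Omega=\id$, $\th=\tau e_1$, noise level $(\sigma^{(1)})^2\in(0,c]$. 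A direct computation of the covariance of $(x_i,y_i)$ shows that the laws of $(X,y)$ under $\gamma^{(0)}_B:=(\th^{(0)}_B,(\Sigma^{(0)}_B)^{-1},(\sigma^{(0)})^2)$ and under $H_1$ differ only in the $m$ entries $\mathrm{Cov}(x_{i,1},x_{i,j})=a$ and the $m$ entries $\mathrm{Cov}(x_{i,j},y_i)=a$, $j\in B$ (the $(x_1,y)$ and $(y,y)$ entries being matched by the choices of $a$ and of the noise levels), while $\th^{(0)}_1=0$ and $\th^{(1)}_1=\tau$. One then checks that $\gamma^{(0)}_B\in\Gamma(s_0,s_\Omega,\rho)$ whenever $m\le c_0\min\bigl(s_0,\,s_\Omega,\,(\rho-1)\sqrt{n/\log p}\bigr)$ for a suitable small constant $c_0$: the support of $\th^{(0)}_B$ has size $m\le s_0$; by Sherman--Morrison $(\Sigma^{(0)}_B)^{-1}$ has the ``star plus clique on $B$'' sparsity pattern, hence at most $m+1$ nonzeros per row, row $\ell_1$‑norm $1+O(am)=1+O(m\sqrt{\log p/n})\le\rho$, and eigenvalues $1$ and $1\pm a\sqrt m$, which lie in $(C_{\min},C_{\max})$ because $a\sqrt m=c\sqrt{m\log p/n}$ is small. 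The hypothesis $\rho\ge1.02$ enters precisely here: it makes $\rho-1$ bounded below by a positive constant (so that $m\ge1$ is admissible when the $\rho$‑constraint binds) and gives $\rho-1\gtrsim\rho$, so that taking $m$ equal to the above cap yields $\tau=a^2m\asymp\frac{\log p}{n}\min\bigl(s_0,\,s_\Omega,\,\rho\sqrt{n/\log p}\bigr)=\min\bigl(s_0\tfrac{\log p}{n},\,s_\Omega\tfrac{\log p}{n},\,\rho\sqrt{\tfrac{\log p}{n}}\bigr)$.

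It then remains to bound the statistical distance between $\P_0=\E_B[\text{law of }(X,y)\text{ under }\gamma^{(0)}_B]$ and $\P_1$. Both are products of $n$ i.i.d.\ centered Gaussians in $\reals^{p+1}$ whose covariances differ by a piece of rank $O(|B|)$, so $\chi^2(\P_0\|\P_1)$ reduces, via the Gaussian integral, to $\E_{B,B'}\bigl[\exp\bigl(n\,\Psi(|B\cap B'|)\bigr)\bigr]-1$ for an explicit $\Psi$ with $\Psi(r)\lesssim a^4r=(\log p/n)^2r$; since $B,B'$ are independent uniform $m$‑subsets this is of the order $\bigl(1-m/p+(m/p)e^{O(a^2n)}\bigr)^m-1\lesssim\exp\bigl(c_1 m^2 p^{c_2-1}\bigr)-1$ with $c_2=O(a^2n/\log p)=O(c)$. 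Choosing $c$ small enough that $2\eta+c_2<1$, the hypothesis $s_0\lesssim\min(p^\eta,n/\log p)$ with $\eta<1/2$ gives $m^2p^{c_2-1}\le p^{2\eta+c_2-1}=o(1)$, hence $\chi^2(\P_0\|\P_1)=o(1)$ and $\|\P_0-\P_1\|_{\mathrm{TV}}$ is bounded away from $1$. This is exactly where $\eta<1/2$ is used: it keeps the planted sparse structure below the $\chi^2$/sum‑of‑squares detection boundary even when each planted entry has magnitude of order $\sqrt{\log p/n}$, which is what produces the logarithmic factor in $\tau$. Combining with the two‑point bound and using $\max(A,B)\ge\tfrac12(A+B)$ yields the claimed $\ell^*_\alpha(\Gamma(s_0,s_\Omega,\rho))\gtrsim\frac1{\sqrt n}+\min\bigl(s_0\tfrac{\log p}{n},\,s_\Omega\tfrac{\log p}{n},\,\rho\sqrt{\tfrac{\log p}{n}}\bigr)$.

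The \emph{main obstacle} is the interplay between the feasibility checks of the second step and the indistinguishability bound of the third. On the one hand, $(\Sigma^{(0)}_B)^{-1}$ must remain inside the eigenvalue, $\ell_\infty$‑norm and row‑sparsity constraints of $\Gamma(s_0,s_\Omega,\rho)$ — and it is the clique on $B$ created by inverting the ``star'' that forces the $s_\Omega$‑constraint (hence the $s_\Omega\log p/n$ term) to enter. On the other hand, the mixture $\chi^2$ computation is for a perturbation that affects both the mean coupling (the $(x_j,y)$ entries) and the covariance (the $(x_1,x_j)$ entries) of the joint Gaussian, so one must track carefully how the resulting Gaussian integral depends on the overlap $|B\cap B'|$ and verify that, at magnitude $a\asymp\sqrt{\log p/n}$, it stays below the detection threshold. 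Everything else — the reduction lemma, the two‑point computation, and the concentration statements used to match variances — is a routine adaptation of the arguments in \cite{cai2015confidence}.
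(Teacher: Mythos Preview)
Your approach is essentially the same as the paper's: both follow the Cai--Guo reduction, build a rank-two ``star'' perturbation of the joint $(y,x)$ covariance indexed by a random $m$-subset $B\subseteq\{2,\dots,p\}$ at amplitude $a\asymp\sqrt{(\log p)/n}$, observe that inverting the star in $\Sigma$ produces a star-plus-clique in $\Omega$ (which is what forces the $s_\Omega$ cap on $m$), cap $m$ by $\min(s_0,s_\Omega,(\rho-1)/a)$ via the feasibility checks, and control the $\chi^2$ of the mixture against the point through the overlap $|B\cap B'|$, using $s_0\lesssim p^\eta$ with $\eta<1/2$ to stay below the detection threshold. The only structural difference is that you place the mixture on the $\theta_1=0$ side and the point on the $\theta_1=\tau$ side, whereas the paper does the reverse; since both keep the $\Omega=\id$ model as the point reference for the $\chi^2$, this is immaterial. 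The paper's choice has the minor advantage that Lemma~2 of \cite{cai2015confidence} applies verbatim, while your reference $\tilde\Sigma_1$ carries the extra $\tau e_1$ coupling and requires redoing the Gaussian integral (still routine, since the perturbation lives in coordinates where $\tilde\Sigma_1^{-1}$ acts as the identity).

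One slip to flag: you write $\Psi(r)\lesssim a^4 r$, but the leading term in the Gaussian $\chi^2$ computation is $\Tr(D_B\tilde\Sigma_1^{-1}D_{B'}\tilde\Sigma_1^{-1})\asymp a^2|B\cap B'|$, so $\Psi(r)\asymp a^2 r$. Your downstream formula $(1-m/p+(m/p)e^{O(a^2n)})^m$ is the one consistent with $\Psi(r)\asymp a^2 r$ (and matches the paper's $(1-2\delta^\sT\tilde\delta)^{-n}\le e^{4n\nu^2|B\cap B'|}$), so the argument is unaffected; just correct the exponent.
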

Proposition~\ref{propo:minimaxUnknown-LB} generalizes the result of~\cite[Theorem 2]{cai2015confidence} which shows that without the sparsity constraint on $\Omega$ and the constraint $\|\Omega\|_\infty\le\rho$, the minimax rate for expected confidence interval length is lower bounded as $\ell^*_\alpha(\Gamma(s_0,p)) \ge (1/\sqrt{n} + s_0 \log p/n)$.  Proposition~\ref{propo:minimaxUnknown-LB} provides 
a more refined lower bound that takes into account the sparsity structure of the precision matrix. We refer to Section \ref{sec:proofminimax_L}
for its proof.

By comparing the upper and lower bounds on $\ell^*_\alpha(\Gamma(s_0,s_\Omega,\rho))$, we conclude that 
the condition $\min(s_0,s_{\Omega})\log p \lesssim \sqrt{n}$ is necessary for having $\|R\|_{\infty}\le \Delta_n\to 0$.
If this is not the case then $\Delta_n\gtrsim \min(s_0,s_{\Omega})\log p / \sqrt{n}$.

In particular, in order to get $\Delta_n= o(1)$ at a nearly optimal condition $s_0 \ll n/(\log p)^2$, we 
need the precision matrix to be sparse with $s_\Omega \lesssim \sqrt{n}/(\log p)$.

\section{Other applications}
\label{sec:Other}

Our main results, Theorem \ref{thm:main}  and Theorem \ref{thm:unknown} establish a Gaussian limit for the debiased  Lasso estimator. 
While our main motivation was the construction of confidence intervals for single coordinates of the parameter vector,
we want to emphasize that the Gaussian limit has other important applications.
We  illustrate this point using three examples: $(i)$ We establish a characterization of the Lasso estimator in terms of a certain 
denoising problem. $(ii)$ We develop a new thresholded Lasso estimator and provide a tight characterization of its $\ell_2$ risk.
In the case of standard Gaussian designs this approach is minimax optimal up to a factor $1+o_n(1)$. 
$(iii)$ We prove that the celebrated Stein's Unbiased Estimate of the prediction risk \cite{efron2012estimation} is consistent
in high dimension an unbiased estimator, for standard Gaussian designs.

\subsection{A probabilistic approximation result for the Lasso}

As a first consequence of our main theorem, we obtain a precise approximation result for the Lasso estimator. In order to state this
result, let $\eta_{\Sigma}:\reals^p\to\reals^p$ be defined by
\begin{align}
\eta_{\Sigma}(z)\equiv \arg\min_{\theta\in\reals^p}\Big\{\frac{1}{2}\big\|\Sigma^{1/2}(\theta-z)\big\|_2^2
+\lambda\|\theta\|_1\Big\}\, . \label{eq:EtaSigmaDef}
\end{align}
Note that the minimizer is always unique because $\Sigma$ is strictly positive definite. In the case $\Sigma=\id$, 
$\eta_{\Sigma}$ coincides with component-wise soft thresholding at level $\lambda$. More generally, $\eta_{\Sigma}(\,\cdot\,)$
can be viewed as a denoising operator associated to the problem of estimating $\theta^*$  from the noisy observation
$z = \theta^*+\tilde{w}$, where $\tilde{w}$ has covariance $\Sigma$.
Our next theorem connects the Lasso to this denoising problem.
\begin{thm}\label{thm:Approximation}
Consider the linear model~\eqref{eq:NoisyModel} where $X$ has
independent Gaussian rows, with zero mean and covariance $\Sigma$,
satisfying the assumptions of Theorem \ref{thm:main}.  Further assume the following condition:
\begin{enumerate}
\item[$(iv)$] Letting $C_*\equiv 32 C_{\max}/ C_{\min}$, we assume $\|\Sigma_{T,T^c}\|_{\infty}\le \trho$ for some constant $\trho$ and all $T\subseteq[p]$,
$|T|\le 2C_*s_0$.
\end{enumerate}
Let $\Lth = \Lth(y,X;\lambda)$ be the Lasso estimator with $\lambda =\kappa\sigma\sqrt{(\log
  p)/n}$, for  $\kappa \in [ 8,\kappa_{\max}]$.
Then, there exist constants $c, \tC$ (depending on $C_{\min}$, $C_{\max}$, $\rho$, $\trho$, $\kappa_{\max}$), such that
for $n\ge \max(25 \log p , cs_0\log(p/s_0))$, the following holds true 
with high probability.
\begin{align}
\Big\|\Lth - \eta_{\Sigma}\Big(\theta^*+\frac{1}{n}\Omega X^{\sT}w\Big)\Big\|_2^2\le \tC \sigma^2 
\Big(\frac{s_0\log p}{n}\Big)^2\, .
\end{align}
\end{thm}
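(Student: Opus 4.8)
The plan is to use the Lasso stationarity conditions to identify $\Lth$ \emph{exactly} as the $\eta_\Sigma$-denoiser of the debiased estimator, and then compare two nearby denoising problems. First I would note that the Lasso~\eqref{eq:Lasso} has a subgradient $v\in\partial\|\Lth\|_1$ with $\frac1n X^\sT(y-X\Lth)=\lambda v$; substituting into~\eqref{eq:debiased} with $M=\Omega$ gives $\dth=\Lth+\lambda\Omega v$, hence $\Sigma(\Lth-\dth)+\lambda v=0$ with $v\in\partial\|\Lth\|_1$. Since $\Sigma\succ0$, the minimization in~\eqref{eq:EtaSigmaDef} is strictly convex and this is exactly its (necessary and sufficient) optimality condition at $z=\dth$, so $\Lth=\eta_\Sigma(\dth)$. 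On the other hand, Theorem~\ref{thm:main} gives $\sqrt n(\dth-\th^*)=Z+R$ with $Z=\tfrac1{\sqrt n}\Omega X^\sT w$ and $\|R\|_\infty\le C\rho\sigma\sqrt{s_0/n}\,\log p$ with high probability; writing $z_0\equiv\th^*+\tfrac1n\Omega X^\sT w=\th^*+\tfrac1{\sqrt n}Z$ we obtain $\dth-z_0=\tfrac1{\sqrt n}R$, so $\|\dth-z_0\|_\infty\le C\rho\sigma\,s_0^{1/2}(\log p)/n$. Thus the theorem reduces to bounding $\|\eta_\Sigma(\dth)-\eta_\Sigma(z_0)\|_2$ in terms of $\|\dth-z_0\|_\infty$.

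Next I would control the supports of the two denoisers. By Lemma~\ref{lem:Bickel}, $|\supp(\Lth)|\lesssim s_0$ on $\cB=\ctB(n,p)\cap\cB_\delta(n,s_0,3)$. For $\theta_2\equiv\eta_\Sigma(z_0)$, its optimality condition reads $\Sigma(\theta_2-\th^*)=\Sigma\tilde w-\lambda u$ with $\tilde w=\tfrac1n\Omega X^\sT w$ and $u$ a subgradient; crucially $\Sigma\tilde w=\tfrac1n(\Sigma\Omega)X^\sT w=\tfrac1n X^\sT w$, which has $\ell_\infty$-norm at most $2\sigma\sqrt{(\log p)/n}=(2/\kappa)\lambda\le\lambda/4$ on $\ctB(n,p)$. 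Since $\Sigma\succeq C_{\min}I$, the compatibility condition for the deterministic ``design'' $\Sigma^{1/2}$ holds with a universal constant, so the argument of Lemma~\ref{lem:Bickel} (equivalently, a direct KKT count: $|(\Sigma(\theta_2-\th^*))_j|\ge\tfrac34\lambda$ on $\supp(\theta_2)$, combined with the basic $\ell_2$ bound $\|\theta_2-\th^*\|_2^2\lesssim s_0\lambda^2/C_{\min}$) gives $|\supp(\theta_2)|\lesssim s_0$. Set $T\equiv\supp(\Lth)\cup\supp(\theta_2)$, so $|T|\le \bar C s_0$ and $\Lth-\theta_2$ is supported on $T$.

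Finally I would carry out the comparison. Subtracting the two optimality conditions and taking the inner product with $\Lth-\theta_2$ yields $\langle\Lth-\theta_2,\Sigma(\Lth-\theta_2)\rangle+\lambda\langle\Lth-\theta_2,v-u\rangle=\langle\Sigma(\Lth-\theta_2),\dth-z_0\rangle$. Monotonicity of the $\ell_1$-subgradient kills the second term on the left, and $\Sigma\succeq C_{\min}I$ bounds the first below, so $C_{\min}\|\Lth-\theta_2\|_2^2\le\langle\Sigma(\Lth-\theta_2),\dth-z_0\rangle\le\|\Sigma(\Lth-\theta_2)\|_1\,\|\dth-z_0\|_\infty$. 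Here Assumption~$(iv)$ applied to singletons (together with $(i)$) gives $\max_i\sum_j|\Sigma_{ij}|\le 1+\trho$, hence $\|\Sigma(\Lth-\theta_2)\|_1\le(1+\trho)\|\Lth-\theta_2\|_1\le(1+\trho)\sqrt{|T|}\,\|\Lth-\theta_2\|_2\le(1+\trho)\sqrt{\bar C s_0}\,\|\Lth-\theta_2\|_2$. Cancelling one factor of $\|\Lth-\theta_2\|_2$ and inserting the bound on $\|\dth-z_0\|_\infty$ gives $\|\Lth-\theta_2\|_2\le\tfrac{(1+\trho)\sqrt{\bar C}\,C\rho}{C_{\min}}\,\sigma\,\tfrac{s_0\log p}{n}$, whose square is the claimed estimate with $\tC=\big[(1+\trho)\sqrt{\bar C}\,C\rho/C_{\min}\big]^2$.

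The main obstacle, and the place where care is needed, is the bookkeeping of powers of $s_0$: one needs the \emph{sharp} bound $\|R\|_\infty\lesssim\rho\sigma\sqrt{s_0/n}\,\log p$ of Theorem~\ref{thm:main} (not the crude $\lesssim\sigma s_0(\log p)/\sqrt n$ from the $\ell_1$–$\ell_\infty$ argument) \emph{together} with the sparsity $|T|\lesssim s_0$, so that $\sqrt{|T|}\cdot\|\dth-z_0\|_\infty$ scales like $s_0(\log p)/n$ rather than $s_0^{3/2}(\log p)/n$; the global nonexpansiveness of $\eta_\Sigma$ in $\|\cdot\|_\Sigma$ is of no use here since $\|R\|_2$ is uncontrolled, so exploiting sparsity (hence the support bound on $\eta_\Sigma(z_0)$ and Assumption~$(iv)$) is essential. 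The exact identity $\Lth=\eta_\Sigma(\dth)$ is what makes Theorem~\ref{thm:main} applicable in the first place.
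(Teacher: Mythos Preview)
Your proof is correct and follows essentially the same route as the paper: establish the exact identity $\Lth=\eta_\Sigma(\dth)$ from the Lasso KKT conditions, control the supports of both $\Lth$ and $\eta_\Sigma(z_0)$ to size $O(s_0)$, and then compare the two denoising problems via strong convexity of $\eta_\Sigma$ and the $\|R\|_\infty$ bound from Theorem~\ref{thm:main}.

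The only tactical difference worth noting is in the last step. The paper bounds $\langle \Sigma r,\hth-\hth^0\rangle\le \|(\Sigma r)_{\baS}\|_2\,\|\hth-\hth^0\|_2$ (using that $\hth-\hth^0$ is supported on $\baS$), then splits $(\Sigma r)_{\baS}=\Sigma_{\baS,\baS}r_{\baS}+\Sigma_{\baS,\baS^c}r_{\baS^c}$ and invokes Assumption~$(iv)$ with $T=\baS$ to handle the off-diagonal block. You instead apply H\"older the other way, bounding $\langle \Sigma(\Lth-\theta_2),r\rangle\le \|\Sigma(\Lth-\theta_2)\|_1\|r\|_\infty$ and using only the singleton case of Assumption~$(iv)$ (plus $(i)$) to get $\|\Sigma\|_{1}\le 1+\trho$. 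Your version is marginally cleaner in that it uses a strictly weaker consequence of~$(iv)$; the paper's version keeps the support restriction on the $\Sigma r$ side. Both yield the same $s_0(\log p)/n$ scaling and are interchangeable.
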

Under the hypothesis of this theorem,  the Lasso $\ell_2$ error is known to be bounded as $\|\Lth-\theta^*\|^2_2\le C(s_0\log p)/n$ \cite{BickelEtAl}.
Hence, Theorem \ref{thm:Approximation} provides a characterization of the Lasso estimator that is one order of magnitude more accurate than
what available in the literature.

This characterization is particularly convenient if the population covariance has a simple structure.
For instance we obtain the following immediate corollary that characterizes the $\ell_2$ error 
for standard  designs.
\begin{coro}\label{coro:MSELasso}
Consider the linear model~\eqref{eq:NoisyModel} where $X$ has
independent Gaussian rows, with zero mean and covariance $\Sigma = \id$. 
Let $\Lth = \Lth(y,X;\lambda)$ be the Lasso estimator with $\lambda =\kappa\sigma\sqrt{(\log
  p)/n}$, for a constant $\kappa\ge 8$.
Then, for $n\ge \max(25 \log p , cs_0\log(p/s_0))$ we have
\begin{align}
\|\Lth-\theta^*\|^2_2 = \sum_{i\in \supp(\theta^*)}\E_Z\big\{\big[\eta(\theta^*_i+n^{-1/2}Z_i;\lambda)-\theta^*_i\big]^2\big\}+ O_P\left(\sigma^2\frac{\sqrt{s_0\log p}}{n}\vee\sigma^2\Big(\frac{s_0\log p}{n}\Big)^{3/2}\right)\, .
\end{align}
where expectation is taken with respect to $Z_i\sim\normal(0,1)$, and the $O_P(\,\cdot\,)$ is uniform for 
$\kappa\in[8,\kappa_{\max}]$.
\end{coro}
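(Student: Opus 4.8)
The plan is to specialize Theorem~\ref{thm:Approximation} to $\Sigma=\id$ and then replace the (random) denoiser output by its expectation through a conditional second-moment argument.

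First, $\Sigma=\id$ trivially satisfies every hypothesis of Theorem~\ref{thm:Approximation}: $\Sigma_{ii}=1$, $\sigma_{\min}(\Sigma)=\sigma_{\max}(\Sigma)=1$, $\rho(\Sigma,k)=1$ for all $k$, and $\Sigma_{T,T^c}=0$ so that condition $(iv)$ holds with $\trho=0$. Moreover $\Omega=\id$ and, as noted in the text, $\eta_\Sigma(\cdot)$ is coordinate-wise soft thresholding $\eta(\cdot\,;\lambda)$. Hence Theorem~\ref{thm:Approximation} gives, on an event of probability $1-o(1)$ and uniformly for $\kappa\in[8,\kappa_{\max}]$, that $\|\Lth-\eta(\theta^*+n^{-1}X^\sT w;\lambda)\|_2^2\le\tC\sigma^2(s_0\log p/n)^2$. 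Set $v\equiv\eta(\theta^*+n^{-1}X^\sT w;\lambda)-\theta^*$ and expand $\|\Lth-\theta^*\|_2^2=\|v\|_2^2+2\langle v,\,\Lth-\eta(\theta^*+n^{-1}X^\sT w;\lambda)\rangle+\|\Lth-\eta(\theta^*+n^{-1}X^\sT w;\lambda)\|_2^2$. On the good event $\cB$ of Lemma~\ref{lem:Bickel} (in particular on $\ctB(n,p)$) every coordinate $i\notin\supp(\theta^*)$ of $\eta(\theta^*+n^{-1}X^\sT w;\lambda)$ vanishes, since $|n^{-1}(X^\sT w)_i|\le 2\sigma\sqrt{(\log p)/n}<\lambda$; therefore $\|v\|_2^2=\sum_{i\in\supp(\theta^*)}[\eta(\theta^*_i+n^{-1}(X^\sT w)_i;\lambda)-\theta^*_i]^2$ is a sum of $s_0$ terms, each at most $(\lambda+2\sigma\sqrt{(\log p)/n})^2=O(\sigma^2(\log p)/n)$, so $\|v\|_2=O_P(\sigma\sqrt{s_0(\log p)/n})$. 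Combining this with the Theorem~\ref{thm:Approximation} bound via Cauchy--Schwarz, the cross term and the last term are both $O_P(\sigma^2(s_0\log p/n)^{3/2})$, hence $\|\Lth-\theta^*\|_2^2=\|v\|_2^2+O_P(\sigma^2(s_0\log p/n)^{3/2})$.

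It remains to show $\|v\|_2^2=\sum_{i\in\supp(\theta^*)}\E_Z\{[\eta(\theta^*_i+\sigma n^{-1/2}Z_i;\lambda)-\theta^*_i]^2\}+O_P(\sigma^2\sqrt{s_0\log p}/n)$, where $Z_i\sim\normal(0,1)$ (the factor $\sigma$ enters because the conditional law of $n^{-1}(X^\sT w)_i$ given $w$ is $\normal(0,\|w\|_2^2/n^2)\approx\normal(0,\sigma^2/n)$). I condition on the noise vector $w$. Since $\Sigma=\id$, the columns $\tx_1,\dots,\tx_p$ of $X$ are i.i.d.\ $\normal(0,\id_n)$, so conditionally on $w$ the scalars $\{n^{-1}\tx_i^\sT w\}_{i\in\supp(\theta^*)}$ are i.i.d.\ $\normal(0,\|w\|_2^2/n^2)$; thus $\|v\|_2^2$ is a sum of $s_0$ conditionally i.i.d.\ terms $h_i(g_i)^2$, where $h_i(g)\equiv\eta(\theta^*_i+(\|w\|_2/n)g;\lambda)-\theta^*_i$ and $g_i\sim\normal(0,1)$. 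On the event $\{\|w\|_2^2\le 2n\sigma^2\}$ one has $\E[h_i^2\mid w]\le(\lambda+(\|w\|_2/n)\,\E|g|)^2=O(\lambda^2)=O(\sigma^2(\log p)/n)$, and Gaussian Poincar\'e (using $|h_i'|\le\|w\|_2/n$) gives $\Var(h_i^2\mid w)\le 4(\|w\|_2/n)^2\,\E[h_i^2\mid w]=O(\sigma^4(\log p)/n^2)$; by conditional independence the conditional variance of $\|v\|_2^2$ is $O(s_0\sigma^4(\log p)/n^2)$, so Chebyshev yields $\|v\|_2^2=m(\|w\|_2/n)+O_P(\sigma^2\sqrt{s_0\log p}/n)$, where $m(t)\equiv\sum_{i\in\supp(\theta^*)}\E_g[(\eta(\theta^*_i+tg;\lambda)-\theta^*_i)^2]$. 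Finally, each summand of $m(t)$ is $O(\lambda)$-Lipschitz in $t$ near $t=\sigma/\sqrt n$, while $\|w\|_2^2/n=\sigma^2(1+O_P(n^{-1/2}))$ gives $|\,\|w\|_2/n-\sigma/\sqrt n\,|=\sigma\,O_P(1/n)$, so $|m(\|w\|_2/n)-m(\sigma/\sqrt n)|=O_P(s_0\lambda\sigma/n)=O_P(\sigma^2\sqrt{\log p}\,s_0/n^{3/2})\le O_P(\sigma^2\sqrt{s_0\log p}/n)$ since $s_0\le n$. Assembling the three displays gives the claim.

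The only step requiring real work is the concentration of $\|v\|_2^2$ in the last paragraph, and it is short precisely because of the identity covariance: conditioning on $w$ makes the $s_0$ active coordinates of $n^{-1}X^\sT w$ exactly i.i.d.\ Gaussians, reducing the fluctuation control to a one-line second-moment estimate (per-term variance by Gaussian Poincar\'e, then Chebyshev, which already delivers the $O_P$ rate without any exponential tail bound). The remaining subtlety is cosmetic --- passing from the realized noise scale $\|w\|_2/n$ to the nominal $\sigma/\sqrt n$, absorbed by the $O(\lambda)$-Lipschitz bound. This conditioning trick breaks for general $\Sigma$, since the columns of $X$ are then dependent, which is why Corollary~\ref{coro:MSELasso} is confined to standard designs.
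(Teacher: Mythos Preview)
Your proof is correct and reaches the same conclusion as the paper, but via a genuinely different conditioning. The paper fixes $X$ and shows that $w\mapsto\|\hth^0_S-\theta^*_S\|_2$ is $O(n^{-1/2})$-Lipschitz (using $\|X_S\|_2\le 4\sqrt n$ via Bai--Yin), then applies Gaussian isoperimetry in $w$ to get $\|\hth^0_S-\theta^*_S\|_2=\sqrt{\E_w\|\hth^0_S-\theta^*_S\|_2^2}+O_P(\sigma/\sqrt n)$, and finally replaces $\|\tx_i\|_2^2/n$ by $1$. You instead fix $w$ and exploit that for $\Sigma=\id$ the columns $\{\tx_i\}_{i\in S}$ are i.i.d., so $\|v\|_2^2$ is a sum of $s_0$ conditionally i.i.d.\ terms; a one-term variance bound via Gaussian Poincar\'e plus Chebyshev gives the fluctuation $O_P(\sigma^2\sqrt{s_0\log p}/n)$, and you finish by replacing $\|w\|_2/n$ by $\sigma/\sqrt n$. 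Your route is more elementary (no isoperimetry, no operator-norm bound on $X_S$), and as you rightly remark it is specific to $\Sigma=\id$; the paper's concentration-in-$w$ argument would in principle survive for general $\Sigma$, whereas conditioning on $w$ destroys the column independence as soon as $\Sigma\neq\id$.

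One small slip: the inequality $\E[h_i^2\mid w]\le(\lambda+(\|w\|_2/n)\E|g|)^2$ is Jensen in the wrong direction. The correct bound is $\E[h_i^2\mid w]\le\E[(\lambda+t|g|)^2]=\lambda^2+2\lambda t\,\E|g|+t^2$ with $t=\|w\|_2/n$, which is still $O(\lambda^2)$ on $\{\|w\|_2^2\le 2n\sigma^2\}$, so nothing downstream changes.
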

Let us emphasize that this is not an upper bound, but an equality up to higher order terms.
It provides a connection between the Lasso mean square error and the mean square error of soft-thresholding
denoising in the classical sequence model. A similar connection was anticipated --for instance-- in 
\cite{DMM-NSPT-11,donoho2013accurate}.
An asymptotic characterizations of the Lasso mean square error for standard Gaussian designs was first
obtained in \cite{BayatiMontanariLASSO}. However, in the present case 
we recover this as a corollary of a result for general Gaussian designs, and in a non-asymptotic form.

\subsection{Minimax optimal estimation}

The analysis in the last section suggests that it is possible to reduce the estimation 
error through a two step procedure. For the sake of simplicity, we shall assume here that $\Sigma$ is known. 
Our approach  can be extended to imperfectly known covariance by using
Theorem \ref{thm:unknown}, but we leave  this for future work.
The suggested procedure is:
\begin{enumerate}
\item[$(i)$] Compute the Lasso estimator $\Lth = \Lth(y,X;\lambda)$ with $\lambda =8\sigma\sqrt{(\log
  p)/n}$.
\item[$(ii)$] Compute the debiased estimator $\dth = \Lth +n^{-1}\Omega X^{\sT}(y-X\Lth)$.
\item[$(iii)$] Compute a new estimator $\hth^{(2)}$ by soft thresholding $\dth$ component-wise, namely
\begin{align}
\hth^{(2)}_i= \eta(\dth_i;\tau_i)\, , \;\;\;\; \tau_i = \sqrt{\frac{2\sigma^2 \Omega_{ii} \log(p/s_0)}{n}}\, .
\end{align}
Here $\eta(x;\tau) \equiv (|x|-\tau)_+\sign(x)$ is the scalar soft-thresholding function.
\end{enumerate}

Let us emphasize that in the last step we soft-threshold at a level that is smaller than 
the regularization used in the Lasso. Indeed, since $\Omega_{ii}\le C_{\min}^{-1}$, we have
$\tau_i = O(\sqrt{\log(p/s_0)/n})$, while $\lambda$ is of order $\sqrt{(\log p)/n}$.
\begin{thm}\label{thm:TwoStep}
Consider the linear model~\eqref{eq:NoisyModel} where $X$ has independent Gaussian rows, with 
zero mean and covariance $\Sigma$, satisfying the assumptions of Theorem \ref{thm:main}. Further assume $s_0\to\infty$, 
$s_0/p\to 0$ and $(s_0(\log p)^3)/n\to 0$.
Let $\hth^{(2)}$ be the two-step estimator defined above.
Then 
\begin{align}
\big\|\hth^{(2)} - \theta^*\big\|_2^2\le \frac{2s_0\sigma^2}{n}\log (p/s_0) 
\left(\frac{1}{s_0}\sum_{i\in \supp(\theta^*)}\Omega_{ii}\right)\big(1+o_P(1)\big)\, .
\label{eq:MMaxBound}
\end{align}
\end{thm}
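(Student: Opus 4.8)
The plan is to use Theorem~\ref{thm:main} to turn $\dth$ into an observation of $\theta^*$ in a Gaussian sequence model, and then bound the $\ell_2$ risk of the coordinate-wise soft thresholding in step~$(iii)$ by a Donoho--Johnstone-type argument, keeping careful track of the fact that we threshold at $\sqrt{2\log(p/s_0)}$ standard deviations. Note first that $s_0(\log p)^3/n\to 0$ together with $s_0\to\infty$ implies $n\ge\max(25\log p,\,cs_0\log(p/s_0))$ eventually, so Theorem~\ref{thm:main} applies and the probability bound~\eqref{eq:R} is $o(1)$; hence on an event of probability $1-o(1)$ we may write $\dth_i=\theta^*_i+g_i+r_i$, where $g:=Z/\sqrt n$ is, conditionally on $X$, centered Gaussian with covariance $(\sigma^2/n)\,\Omega\hSigma\Omega$, and $\|r\|_\infty=\|R\|_\infty/\sqrt n\le C\rho\sigma\sqrt{s_0}\,(\log p)/n$. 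We will also use two quantitative facts holding with probability $1-o(1)$: the quantitative form of the estimate $|\Omega\hSigma\Omega-\Omega|_\infty=O_P(\sqrt{(\log p)/n})$ invoked in the discussion after Theorem~\ref{thm:main}, which by $\Omega_{ii}\in[1/C_{\max},1/C_{\min}]$ (Condition~\ref{Condition:eig}) gives $v_i:=(\sigma^2/n)(\Omega\hSigma\Omega)_{ii}=(\sigma^2\Omega_{ii}/n)(1+\varepsilon_i)$ with $\max_i|\varepsilon_i|\log(p/s_0)\le CC_{\max}\sqrt{(\log p)^3/n}\to 0$; and the bound $\max_i\|r\|_\infty/\tau_i\le C\sqrt{s_0(\log p)^2/(n\log(p/s_0))}$ with $\tau_i$ as in step~$(iii)$, so that $\|r\|_\infty/\tau_i\to0$ and even $(\|r\|_\infty/\tau_i)\log(p/s_0)\le C\sqrt{s_0(\log p)^3/n}\to 0$. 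Writing $\tau_i=\lambda_i\sqrt{v_i}$, these yield $\lambda_i^2=2\log(p/s_0)+o(1)$; and since $s_0/p\to0$ we have $\log(p/s_0)\to\infty$.

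\textbf{On the support.} Set $S_0:=\supp(\theta^*)$. The $1$-Lipschitz property of $x\mapsto\eta(x;\tau)$ together with $|\eta(\mu;\tau)-\mu|\le\tau$ gives $|\hth^{(2)}_i-\theta^*_i|\le|g_i+r_i|+\tau_i$, so for any $\delta>0$,
\begin{align*}
\sum_{i\in S_0}(\hth^{(2)}_i-\theta^*_i)^2\;\le\;(1+\delta)\sum_{i\in S_0}\tau_i^2\;+\;(1+\delta^{-1})\sum_{i\in S_0}(g_i+r_i)^2\, .
\end{align*}
Conditionally on $X$, $\E\big[\sum_{i\in S_0}g_i^2\,\big|\,X\big]=\sum_{i\in S_0}v_i\lesssim s_0\sigma^2/n$ and this (quadratic-form) sum concentrates around its mean since $s_0\to\infty$, while $\sum_{i\in S_0}r_i^2\le s_0\|r\|_\infty^2\lesssim s_0^2(\log p)^2\sigma^2/n^2$; taking $\delta=(\log(p/s_0))^{-1/2}\to0$ and using $s_0(\log p)^2/n\to0$ makes the second term $o_P\big(s_0\sigma^2\log(p/s_0)/n\big)$. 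Since $\sum_{i\in S_0}\tau_i^2=(2\sigma^2\log(p/s_0)/n)\sum_{i\in S_0}\Omega_{ii}$ by definition, this gives $\sum_{i\in S_0}(\hth^{(2)}_i-\theta^*_i)^2\le (2\sigma^2\log(p/s_0)/n)\sum_{i\in S_0}\Omega_{ii}\,(1+o_P(1))$.

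\textbf{Off the support and conclusion.} For $i\notin S_0$ we have $\theta^*_i=0$ and, since $\|r\|_\infty<\tau_i$, $|\hth^{(2)}_i|=(|g_i+r_i|-\tau_i)_+\le\big(|g_i|-(\tau_i-\|r\|_\infty)\big)_+$. With $\beta_i:=(\tau_i-\|r\|_\infty)/\sqrt{v_i}$ the estimates of the first paragraph give $\beta_i^2=2\log(p/s_0)+o(1)$, hence $e^{-\beta_i^2/2}=(s_0/p)(1+o(1))$. The elementary bound $\E[(|\xi|-\beta\sqrt v)_+^2]\le 4v\,\beta^{-3}e^{-\beta^2/2}$ for $\xi\sim\normal(0,v)$ then yields, conditionally on $X$ (using also $v_i\lesssim\sigma^2/n$ uniformly),
\begin{align*}
\E\Big[\sum_{i\notin S_0}(\hth^{(2)}_i)^2\,\Big|\,X\Big]\;\lesssim\;p\cdot\frac{\sigma^2}{n}\cdot\frac{s_0/p}{(\log(p/s_0))^{3/2}}\;=\;\frac{s_0\sigma^2}{n\,(\log(p/s_0))^{3/2}}\, ,
\end{align*}
which is $o\big(s_0\sigma^2\log(p/s_0)/n\big)$, so Markov gives $\sum_{i\notin S_0}(\hth^{(2)}_i)^2=o_P\big(s_0\sigma^2\log(p/s_0)/n\big)$. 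Adding the two contributions and absorbing the off-support remainder into the $(1+o_P(1))$ factor (using $\sum_{i\in S_0}\Omega_{ii}\ge s_0/C_{\max}$, so that $s_0\sigma^2\log(p/s_0)/n\le C_{\max}\cdot(\sigma^2\log(p/s_0)/n)\sum_{i\in S_0}\Omega_{ii}$) yields exactly $\|\hth^{(2)}-\theta^*\|_2^2\le (2s_0\sigma^2/n)\log(p/s_0)\,\big(s_0^{-1}\sum_{i\in\supp(\theta^*)}\Omega_{ii}\big)(1+o_P(1))$.

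\textbf{Main obstacle.} The crux is the sharpness demanded of the approximations in the first paragraph. Because $\tau_i$ lies $\sqrt{2\log(p/s_0)}$ standard deviations out, a relative error $\eta$ in the effective threshold $\tau_i/\sqrt{v_i}$ distorts the per-coordinate Gaussian tail by a multiplicative factor $e^{\Theta(\eta\log(p/s_0))}$; so obtaining the \emph{sharp} constant ``$2$'' (rather than just a bound up to constants) forces one to drive \emph{both} $(\Omega\hSigma\Omega)_{ii}/\Omega_{ii}-1$ and $\|R\|_\infty/(\sqrt n\,\tau_i)$ below $o(1/\log(p/s_0))$ uniformly in $i$. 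This is precisely where the hypotheses $(\log p)^3/n\to0$ and $s_0(\log p)^3/n\to0$ are used; the remaining steps are routine Gaussian computations once $s_0\to\infty$ supplies the concentration of the sums over $S_0$.
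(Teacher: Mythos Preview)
Your proof is correct and reaches the same conclusion as the paper's, but the route is genuinely different. The paper introduces two intermediate estimators $\hth^{(1)}_i=\eta(\theta^*_i+n^{-1}(\Omega X^{\sT}w)_i;\tau_i)$ and $\oth^{(1)}_i=\eta(\theta^*_i+n^{-1}(\Omega X^{\sT}w)_i;\otau_i)$ with $\otau_i=\sqrt{2\sigma^2(\Omega\hSigma\Omega)_{ii}\log(p/s_0)/n}$, then first proves a sparsity bound $\|\hth^{(2)}\|_0,\|\hth^{(1)}\|_0,\|\oth^{(1)}\|_0\le s_0L_n$ whp, uses this to control $\|\hth^{(2)}-\hth^{(1)}\|_2$ and $\|\hth^{(1)}-\oth^{(1)}\|_2$ via $\|R\|_\infty$ and $\max_i|\tau_i-\otau_i|$, and finally bounds $\|\oth^{(1)}-\theta^*\|_2$ on $S$ using the Donoho--Johnstone worst-case soft-thresholding risk together with Gaussian isoperimetry for concentration, and on $S^c$ by a direct moment calculation.

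You bypass all the intermediate estimators: you split $\|\hth^{(2)}-\theta^*\|_2^2$ directly into the on- and off-support contributions. On $S_0$ you use the crude pointwise bound $|\hth^{(2)}_i-\theta^*_i|\le\tau_i+|g_i+r_i|$ and Young's inequality with $\delta=(\log(p/s_0))^{-1/2}$, which makes $\sum_{i\in S_0}\tau_i^2$ the main term \emph{deterministically} and relegates the Gaussian part to a lower-order remainder handled by Markov; this replaces the paper's concentration-of-measure step. Off $S_0$ you push $\|r\|_\infty$ into the effective threshold and compute the Gaussian tail directly, which is essentially the paper's $S^c$ calculation done for $\hth^{(2)}$ rather than for $\oth^{(1)}$. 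Your ``Main obstacle'' paragraph correctly isolates why $s_0(\log p)^3/n\to0$ is exactly the condition needed to keep both the variance mismatch $\varepsilon_i$ and the bias $\|R\|_\infty$ from corrupting the exponent in the off-support tail; this is the same mechanism the paper exploits, but made more explicit.

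Two small points of presentation. First, when you take the conditional expectation $\E[\,\cdot\mid X]$ off support, note that $\|r\|_\infty$ depends on $w$; the clean way is to replace $\|r\|_\infty$ by its deterministic high-probability bound $r_*$ \emph{before} taking expectation, bound $\sum_{i\notin S_0}(\hth^{(2)}_i)^2\le\sum_{i\notin S_0}(|g_i|-(\tau_i-r_*))_+^2$ on the good event, and then apply Markov to the right-hand side (which is defined everywhere and has Gaussian conditional law). Second, for the on-support remainder you only need $\sum_{i\in S_0}g_i^2=O_P(s_0\sigma^2/n)$, which follows from Markov on the conditional mean; the concentration you mention is true but unnecessary.
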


Note that, in the case $\Sigma = \id$, the right-hand side of  (\ref{eq:MMaxBound}) is 
\emph{minimax optimal} risk, up to a factor going to one as $n,s_0,p\to\infty$ \cite{su2015slope}. Cand\'es and Su
\cite{su2015slope} recently proved that SLOPE achieves the same guarantee for Gaussian designs with $\Sigma=\id$.
On one hand, the approach of \cite{su2015slope} has the advantage of being adaptive to unknown sparsity level $s_0$.
On the other, Theorem \ref{thm:TwoStep} establishes this result as a special case of a guarantee holding for more general Gaussian
designs.

\subsection{SURE estimate of the prediction error}\label{sec:Risk}

Define the Lasso prediction error as
\begin{align}
\Risk(y,X,\theta^*) \equiv \frac{1}{n}\big\|X(\Lth-\theta^*)\big\|_2^2 + \frac{1}{n}\|w\|_2^2\, .
\end{align}
Notice that the first term is the standard prediction error, for given design matrix $X$. The second term is the 
residual error that would be present even for the perfect estimator $\hth=\theta^*$. We include this contribution for mathematical convenience, 
but it is just a constant,  independent of the estimator.

The naive empirical estimate for the prediction error is 
\begin{align}
\hRisk(y,X) \equiv \frac{1}{n}\big\|y-X\Lth\big\|_2^2\, .
\end{align}
Of course we expect the empirical risk to under-estimate the actual risk. Stein's Unbiased Risk Estimate 
(SURE)  provides a corrected estimate
\begin{align}
\hRisk_{\SURE}(y,X)\equiv \frac{1}{n}\big\|y-X\Lth\big\|_2^2 +\frac{2\sigma^2}{n}\,\|\Lth\|_0\, . \label{eq:RiskCorrect}
\end{align}
This approach has a rich history for which we can only provide a few pointers.
Donoho and Johnstone used SURE to develop an adaptive denoising procedure via wavelet thresholding. From the perspective of
linear regression, this corresponds to $X$ being proportional to an orthogonal matrix. 
Efron \cite{efron2012estimation} developed a general formula for estimating the prediction error, based on Stein's ideas,
and clarified the connection with classical model selection criteria such as Akaike's information criterion \cite{akaike1974new},
and Mallows $C_p$ \cite{mallows1973some}.
Zou, Hastie and Tibshirani \cite{zou2007degrees} showed that the number of degrees of freedom (which enters Efron's
formula) coincides with the number of non-zero parameters $\|\Lth\|_0$. They also proved that $\hRisk_{\SURE}(y,X)$
is consistent in the classical low-dimensional regime $n\to\infty$ with $p$ fixed.

To the best of our knowledge, this is the first case in which $\hRisk_{\SURE}(y,X)$ is proved to be consistent in high dimension
(although in a restricted setting, namely for Gaussian designs).
\begin{thm}\label{thm:RiskEst}
Consider the linear model~\eqref{eq:NoisyModel} where $X$ has
independent Gaussian rows, with zero mean and identity covariance
$\Sigma=\id$.
Let $\Lth = \Lth(y,X;\lambda)$ be the Lasso estimator with $\lambda \ge 9\sigma\sqrt{(\log
  p)/n}$. 
If $n,p\to\infty$ with $s_0 = o(n/(\log p)^2)$, then there exists $\eps_n\to 0$ as $n\to\infty$, such that
 the following holds with probability at least $1-e^{-ct^2}-o_n(1)$:
\begin{align}
\big|\hRisk_{\SURE}(y,X)- \Risk(y,X,\theta^*) \big|&  \le \frac{t\sigma^2}{\sqrt{n}} + \frac{s_0\sigma^2\eps_n}{n} 
\, . \label{eq:ExcessRisk}
\end{align}
\end{thm}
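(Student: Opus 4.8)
The plan is to turn the theorem, via an exact algebraic identity, into a concentration statement. Substituting $y=X\theta^*+w$ into the definitions of $\hRisk_{\SURE}$ and $\Risk$ and cancelling the common $\tfrac1n\|w\|_2^2$ term gives
\[
\hRisk_{\SURE}(y,X)-\Risk(y,X,\theta^*)\;=\;\frac{2\sigma^2}{n}\,\|\Lth\|_0\;-\;\frac{2}{n}\big\langle w,\,X(\Lth-\theta^*)\big\rangle\;=\;-\,\frac{2}{n}\Big(\big\langle w,h(w)\big\rangle-\sigma^2\,\div h(w)\Big)\,,
\]
where $h(w)\equiv X\Lth(X\theta^*+w,X;\lambda)-X\theta^*$. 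The key point is that $h$ is weakly differentiable in $w$, its Jacobian is a.e.\ the orthogonal projector $P_{\hS}=X_{\hS}(X_{\hS}^\sT X_{\hS})^{-1}X_{\hS}^\sT$ onto the span of the active columns, and hence $\div h(w)=\rank P_{\hS}=\|\Lth\|_0$ for $X$ in general position — the identity underlying the degrees-of-freedom formula of \cite{zou2007degrees}. Thus $\E[\langle w,h(w)\rangle-\sigma^2\div h(w)\mid X]=0$, which re-derives that $\hRisk_{\SURE}$ is conditionally unbiased; the content of the theorem is that this mean-zero Stein discrepancy concentrates sharply.

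\textbf{Deterministic inputs.} On the good event $\cB=\ctB(n,p)\cap\cB_\delta(n,s_0,3)$ I would record: $\|\Lth\|_0<C_*s_0$ by Lemma~\ref{lem:Bickel}; the standard Lasso prediction bound $\|X(\Lth-\theta^*)\|_2^2\le Cs_0\sigma^2\log p$ under the compatibility condition (Remark~\ref{rem:phi-bound}, or a byproduct of Theorem~\ref{thm:Approximation}); and $w^\sT P_T w\lesssim\sigma^2 s_0\log p$ uniformly over $T\subseteq[p]$ with $|T|\le C_*s_0$, by a union bound over the $\binom{p}{C_*s_0}$ choices of $T$ (this union bound enters only the probability of the event on which the estimate holds, never the final rate). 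Consequently, wherever it exists, the gradient of $w\mapsto\langle w,h(w)\rangle$ equals $h(w)+P_{\hS}w=X(\Lth-\theta^*)+P_{\hS}w$, of norm $\lesssim\sigma\sqrt{s_0\log p}$ on $\cB$. A $\chi^2$ tail bound also gives $\big|\,\|w\|_2^2/n-\sigma^2\,\big|\le t\sigma^2/\sqrt n$ with probability $1-e^{-ct^2}$, which controls the analogous lower-order remainders.

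\textbf{Concentration and conclusion.} The core step is a Gaussian concentration bound for $\langle w,h(w)\rangle-\sigma^2\|\Lth\|_0$ around $0$ at scale $\sigma\big(\|X(\Lth-\theta^*)\|_2+\sigma\sqrt{\|\Lth\|_0}\big)\lesssim\sigma^2\sqrt{s_0\log p}$, by a second-order Stein argument: since $h$ is Lipschitz with constant $\lesssim\sigma\sqrt{s_0\log p}$ on $\cB$, the Gaussian log-Sobolev inequality makes $\langle w,h(w)\rangle$ concentrate around its conditional mean $\sigma^2\E[\|\Lth\|_0\mid X]$ at that scale, while the distributional second derivatives of $h$ that would otherwise obstruct matching $\|\Lth\|_0$ to $\E[\|\Lth\|_0\mid X]$ are supported on the measure-zero set of breakpoints of the Lasso path and contribute only a lower-order bias $\lesssim\sigma^2 s_0\sqrt{(\log p)/n}$. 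This yields, with probability $1-e^{-ct^2}-o_n(1)$, $\ \big|\hRisk_{\SURE}-\Risk\big|\lesssim t\sigma^2\sqrt{s_0\log p}/n+(s_0\sigma^2/n)\sqrt{(\log p)/n}$; since the hypothesis $s_0=o(n/(\log p)^2)$ forces $s_0\log p\le n$, the first term is $\le t\sigma^2/\sqrt n$, and $\sqrt{(\log p)/n}\to0$ puts the second term in the form $s_0\sigma^2\eps_n/n$ with $\eps_n\to0$, while the failure of $\cB$ and of the general-position condition go into the $o_n(1)$. This proves \eqref{eq:ExcessRisk}.

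\textbf{Main obstacle.} The hard part is the concentration of the Stein discrepancy itself. The difficulty is intrinsic: $\Lth$ depends on $w$ only piecewise-smoothly and $\|\Lth\|_0$ is integer-valued, so \emph{neither} $\langle w,X(\Lth-\theta^*)\rangle$ \emph{nor} $\|\Lth\|_0$ concentrates at the required $o(s_0\sigma^2)$ scale on its own — one must show their \emph{difference} does, which is exactly what subtracting the degrees-of-freedom correction $2\sigma^2\|\Lth\|_0/n$ in SURE is designed to achieve. Making this rigorous requires either a general second-order Stein / SURE-concentration apparatus for weakly differentiable maps with controlled Jacobian, or regularizing $\Lth$ (e.g.\ adding an infinitesimal ridge term or passing to a Moreau envelope), proving the concentration for the smooth surrogate by the log-Sobolev inequality, and then controlling the regularization error; either route carries essentially all the technical weight. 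The remaining ingredients — the algebraic identity, the divergence formula, the Lasso support-size and prediction bounds, and the $\chi^2$ deviation — are routine given the preliminaries of Section~\ref{sec:preliminary}.
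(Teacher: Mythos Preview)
Your algebraic reduction to the Stein discrepancy is correct and coincides with the paper's starting point: the paper also writes
\[
\Risk-\hRisk \;=\;\frac{2}{n}\langle w, X(\hth-\theta^*)\rangle,
\]
and then must compare this to $\tfrac{2\sigma^2}{n}\|\hth\|_0$. You have also correctly diagnosed the obstacle: Gaussian concentration for $w\mapsto\langle w,h(w)\rangle$ with $\|\nabla\|_2\lesssim\sigma\sqrt{s_0\log p}$ on the good event only gives concentration around $\sigma^2\,\E_w\|\hth\|_0$, and the remaining step---controlling $\big|\,\|\hth\|_0-\E_w\|\hth\|_0\,\big|$ for the \emph{actual} Lasso support---is the crux. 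Your proposed resolutions (a general second-order Stein apparatus, or a ridge/Moreau regularization) are not carried out; as you note yourself, ``either route carries essentially all the technical weight.'' In particular, the sentence ``the distributional second derivatives\ldots contribute only a lower-order bias $\lesssim\sigma^2 s_0\sqrt{(\log p)/n}$'' is the whole theorem in disguise, and no mechanism is given for why the random integer $\|\hth\|_0$ should fluctuate only by $o(s_0)$.

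The paper resolves exactly this gap, but by a different device that you did not anticipate: it introduces the surrogate
\[
\hth^{0}\;\equiv\;\eta\Big(\theta^*+\tfrac{1}{n}X^{\sT}w;\ \lambda\Big),
\]
namely componentwise soft thresholding of the ``debiased'' vector with $R=0$, and uses the approximation theorem (Theorem~\ref{thm:Approximation}) to show $\|\hth-\hth^{0}\|_2\lesssim\sigma s_0\log p/n$. The cross term $\Delta_2=\tfrac{2}{n}\langle w,X(\hth-\hth^{0})\rangle$ is then controlled directly by Cauchy--Schwarz. All the hard work is done for $\hth^{0}$, which is vastly simpler because its coordinates are functions of \emph{scalar} Gaussians $(X^{\sT}w)_i/n$: Stein's lemma computes $\E_w\Delta_1$; Gaussian isoperimetry on the Lipschitz function $w\mapsto\Delta_1$ gives the $t\sigma^2/\sqrt{n}$ fluctuation; and---this is the step replacing your missing argument---the variance of $\|\hth^{0}_S\|_0=\sum_{i\in S}\ind(|\theta^*_i+n^{-1}(X^{\sT}w)_i|>\lambda)$ is bounded by $s_0^2\sqrt{(\log p)/n}$ using that for $\Sigma=\id$ the sample correlations $\langle\tx_i,\tx_j\rangle/\|\tx_i\|\|\tx_j\|$ are $O(\sqrt{(\log p)/n})$, which gives concentration of $\|\hth^{0}\|_0$ by Chebyshev. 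Finally $\big|\|\hth\|_0-\|\hth^{0}\|_0\big|$ is bounded by counting coordinates $i\in S$ for which $|\theta^*_i+n^{-1}(X^{\sT}w)_i|$ falls within $\|R\|_\infty/\sqrt{n}$ of the threshold $\lambda$. None of these steps is available for $\hth$ itself, because the support indicators of $\hth$ are not functions of individual coordinates of $X^{\sT}w$.

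In short: your framework is right, but the paper's key idea---decoupling via the componentwise-thresholded surrogate $\hth^{0}$, enabled by Theorem~\ref{thm:Approximation}---is precisely what turns the step you flagged as ``the hard part'' into a sequence of elementary calculations. A minor side remark: your phrase ``$h$ is Lipschitz with constant $\lesssim\sigma\sqrt{s_0\log p}$'' should read ``$w\mapsto\langle w,h(w)\rangle$ has gradient bounded by $\lesssim\sigma\sqrt{s_0\log p}$''; the map $h$ itself is $1$-Lipschitz since $Dh=P_{\hS}$.
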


Let us emphasize a few important points:
\begin{itemize}
\item The error bound in Eq.~(\ref{eq:ExcessRisk}) is of smaller order with respect to the correction in
(\ref{eq:RiskCorrect}) which typically is of order $s_0\sigma^2/n$.
\item The SURE risk estimate $\hRisk_{\SURE}(y,X)$ is perfectly well defined for arbitrary design covariance $\Sigma$.
\item While our proof applies to standard designs, $\Sigma = \id$, we expect the conclusion of Theorem \ref{thm:RiskEst}
to hold more generally. This is also confirmed by the simulations discussed below.
\end{itemize}

In Figure \ref{fig:risks}, we present the results of a numerical
simulation with $p = 5000$, $n=1800$. We choose a subset $S\subseteq[p]$ of size $s_0 = |S| = 100$ uniformly at random
and set $\theta^*_{0,i}=0.1$ if $i\in S$ and $\th^*_{0,i} = 0$,
otherwise.  The design matrix $X$ has i.i.d random rows  $x_i\sim \normal(0,\Sigma)$
with $\Sigma_{ij} = r^{|i-j|}$. We set $r = 0.1$ to illustrate a case of low correlation between predictors and $r = 0.9$ for a case of high
correlation. 
In our simulations, we replace the noise level $\sigma$ appearing in Eq.~(\ref{eq:RiskCorrect}) with an estimate 
$\hsigma$, obtained as follows. 
We first run scaled Lasso and then perform least square after model selection to mitigate the estimation bias.
More precisely, we use the {\sf R}-package {\sf scalreg} with the default value for the regularization parameter in the scaled Lasso cost function.
This selects a model $\widehat{S}$. 
We then perform least square on $\widehat{S}$ to obtain an estimate $\hth^{\rm LS}$. The noise variance is computed as
$\hsigma = \|y-X\hth^{\rm LS}\|_2/\sqrt{n}$.

The agreement between $\hRisk_{\SURE}(y,X)$  and $\Risk(y,X,\theta^*)$ is excellent.

Let us mention that \cite{bayati2013estimating}  also studied estimators similar to $\hRisk_{\SURE}(y,X)$,
and related ideas were developed in \cite{obuchi2015cross} on the basis of non-rigorous but insightful statistical mechanics 
techniques.
Other approaches to the risk estimation, e.g. \cite{cai2016accuracy}, are based on sample-splitting, 
which has complementary shortcomings.

\begin{figure}[t]
    \centering
    \subfigure[$r=0.1$]{
        \includegraphics[width = 3.1in]{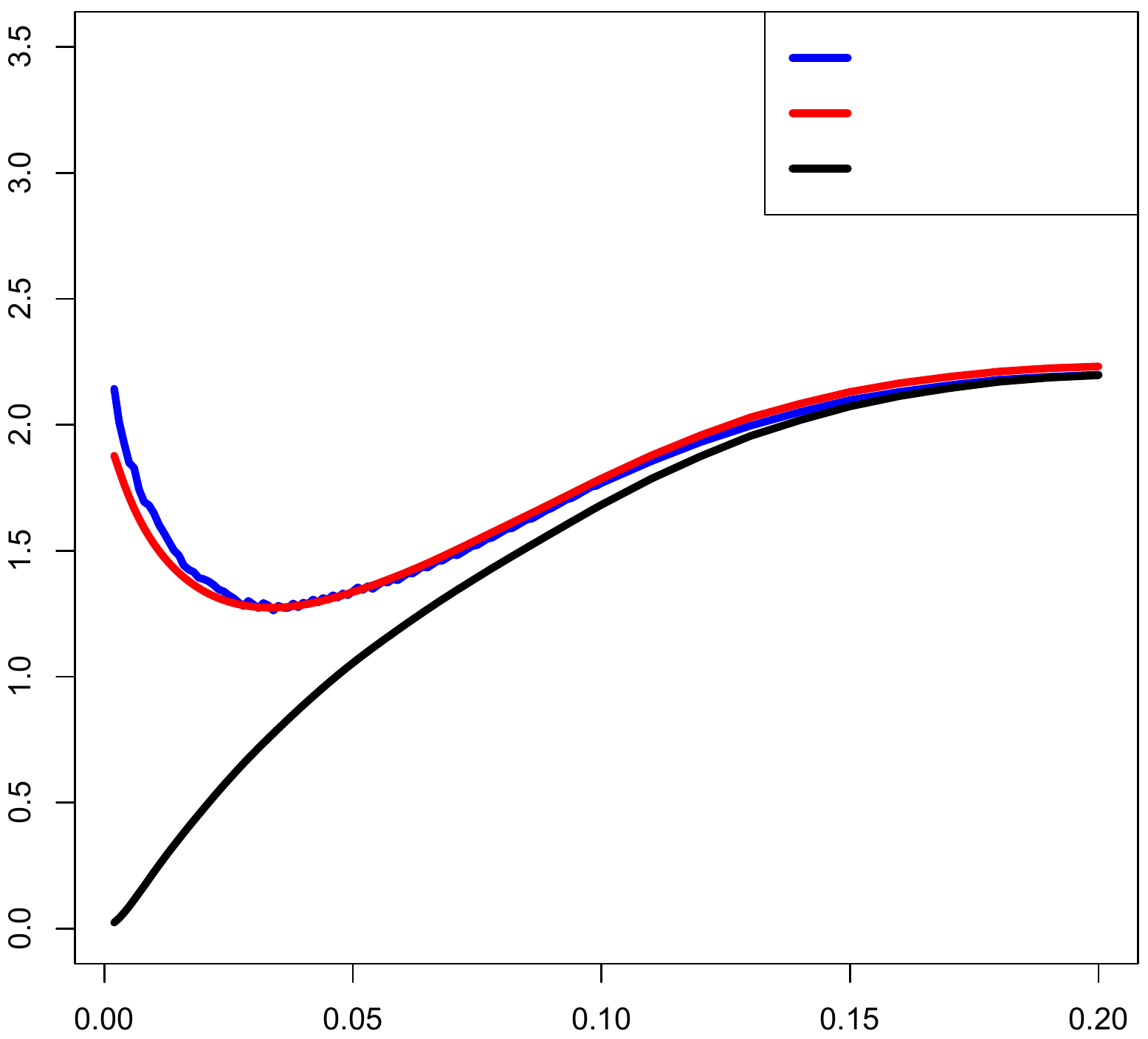}
         \put(-50,190){{\scriptsize $\hRisk_{\SURE}(y,X)$}}
        \put(-50,179){{\scriptsize $\Risk(y,X,\tth)$}}
        \put(-50,168){{\scriptsize $\hRisk(y,X)$}}
        \put(-105,-13){$\lambda$}
        \put(-105,-23){}
        }\hspace{0.2cm}
    \subfigure[$r=0.9$]{
        \includegraphics[width=3.1in]{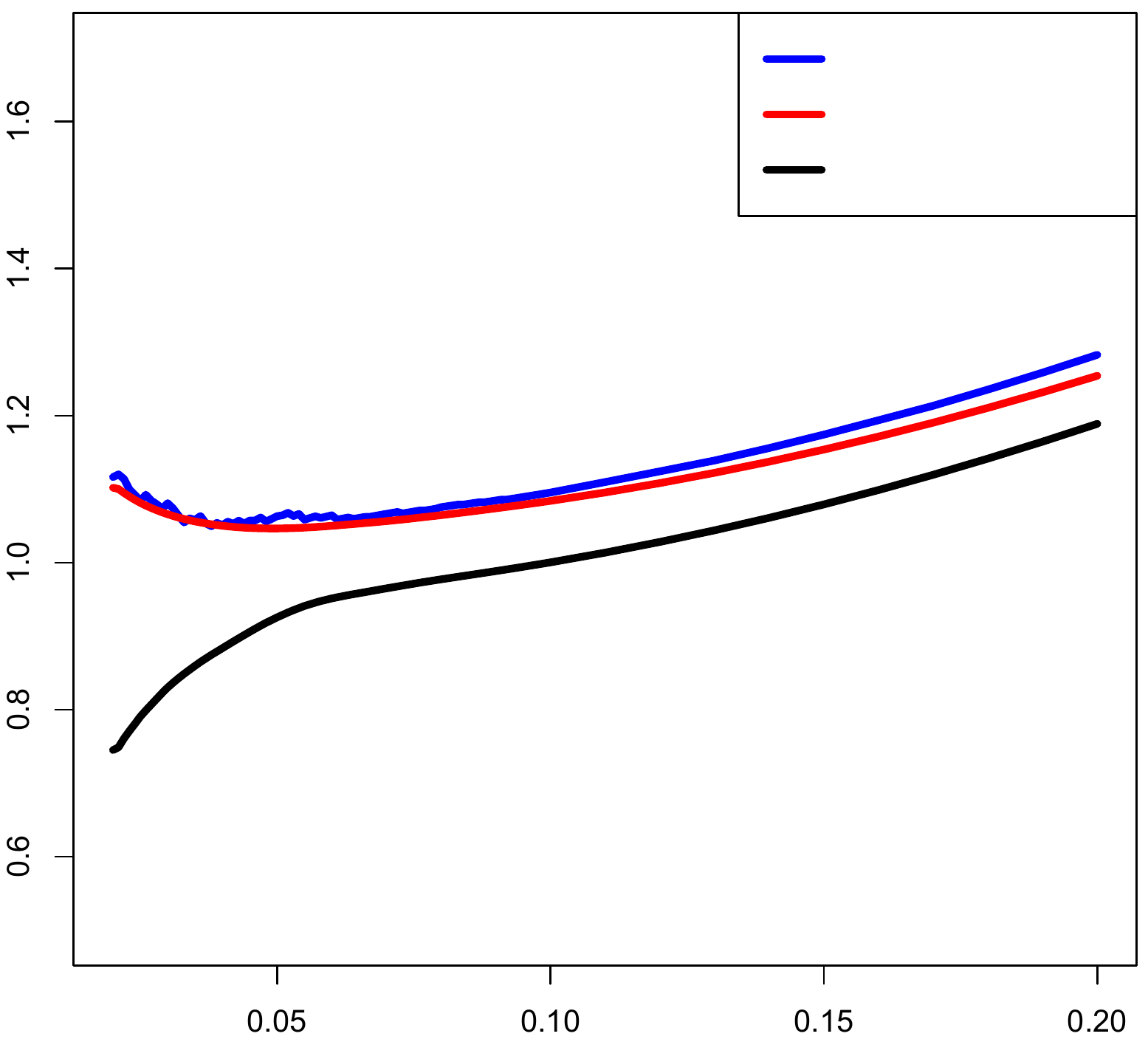}
         \put(-50,190){{\scriptsize $\hRisk_{\SURE}(y,X)$}}
        \put(-50,179){{\scriptsize $\Risk(y,X,\tth)$}}
        \put(-50,168){{\scriptsize $\hRisk(y,X)$}}
        \put(-105,-13){$\lambda$}
        \put(-105,-23){}
        }
    \caption{Lasso prediction error $\Risk(y,X,\tth)$, empirical prediction error $\hRisk(y,X)$, and SURE  estimator $\hRisk_{\SURE}$ curves versus $\lambda$ for the simulation setting described in Section~\ref{sec:Risk}.}\label{fig:risks}
\end{figure}

\section{Proof of Theorem~\ref{thm:main} (known covariance)}\label{sec:thm_main}
\subsection{Outline of the proof}
\label{sec:Outline}
Fix arbitrary integer $i \in [p]$. In our analysis, we focus on the $i$-th coordinate $\theta^*_i$, and then discuss how the argument can be adjusted to apply to all the coordinates simultaneously.
Our argument relies on a perturbation analysis. We let $\hth^\pe$ be the Lasso estimator when one forces $\hth^\pe_i = \theta^*_i$.
With a slight abuse of notation, we use the representation $\th = (\theta_i,\thnz)$.\footnote{Or without loss of generality one can assume $i=1$.}
 Adopting this convention, we have $\hth^\pe = (\theta^*_i,\pthnz)$ where
\begin{align}\label{eq:pthnz}
\pthnz &= \arg\min_{\th} \cL_{y,X}(\theta^*_i,\th)\,.
\end{align}
Throughout, we make the convention that $\cL_{y,X}(\theta^*_i,\theta) \equiv \cL_{y,X}((\theta^*_i,\theta))$.

We observe that $\pthnz$ can be written as a Lasso estimator. Specifically, by definition of Lasso cost function we have 
$$\cL_{y,X}(\theta^*_i,\theta) = \frac{1}{2n} \|y-\tx_i\theta^*_i - X_{\sim i} \theta\|_2^2 + \lambda |\theta^*_i| + \lambda \|\theta\|_1\,.$$
Letting $\ty \equiv y - \tx_i \theta^*_i = w+ \Xnz \tthnz$, we obtain
\begin{align}\label{eq:pLasso}
\pthnz = \arg\min_{\th} \cL_{\ty,X_{\sim i}}(\theta)\,.
\end{align}
%
%

Let $v_i = X\Omega e_i$ and expand $\hth^\de_i - \th^*_i$ as follows:
\begin{align}
\sqrt{n}(\hth^\de_i-\th^*_i) &\equiv \sqrt{n} \hth_i + \frac{1}{\sqrt{n}} e_i^\sT\Omega X^\sT(y - X \hth) -\sqrt{n}\theta^*_i\nonumber\\
& = \sqrt{n}\hth_i + \frac{v_i^\sT}{\sqrt{n}} \Big[w + \tx_i (\theta^*_i-\hth_i) + \Xnz (\tthnz - \hthnz)  \Big] -\sqrt{n}\theta^*_i\nonumber\\
& = \sqrt{n}\Big(1-\frac{1}{n}\<v_i,\tx_i\>\Big) (\hth_i-\theta^*_i) + \frac{v_i^\sT}{\sqrt{n}} \Big[w+\Xnz (\tthnz-\hthnz) \Big]\,.\label{eq:decompose}
\end{align}
We decompose the above expression into the following terms:
\begin{eqnarray}
\begin{split}\label{eq:T}
Z_{i}&\equiv  \frac{v_i^\sT \bw}{\sqrt{n}}\,,  \\
R^{(1)}_{i}&\equiv \sqrt{n}\Big(1-\frac{\<v_i,\tx_i\>}{n}\Big) (\hth_i-\theta^*_i)\,,\\
R^{(2)}_{i}&\equiv \frac{v_i^\sT}{\sqrt{n}} \Xnz(\tthnz - \pthnz)\,,\\
R^{(3)}_{i}&\equiv  \frac{v_i^\sT}{\sqrt{n}}  \Xnz(\pthnz - \hthnz)\,.
\end{split}
\end{eqnarray}
The bulk of the proof consists in treating each of the terms above separately. Term $Z_i$ gives the Gaussian component $Z$ in equation~\eqref{eq:components}. For bounding $R^{(2)}_i$, note that $\pthnz$ is a deterministic function of $(\ty,\Xnz)$ (and thus a deterministic function of 
$(w,\Xnz)$) by Equation~\eqref{eq:pLasso}.  Further, $v_i$ is independent of $\Xnz$, as per Lemma~\ref{lem:independence}, and independent of noise $w$.  Hence, $v_i$ is independent of $ \Xnz(\tthnz - \pthnz)$. Bounding $R^{(3)}_i$ relies on a perturbation analysis showing that the solutions of Lasso $\hth$ and its perturbed form $\hth^{\pe}$, are close to each other.

\subsection{Technical steps}
Let $Z= (Z_i)_{1\le i\le p}$. We rewrite $Z$ as
\[Z = \frac{1}{\sqrt{n}} \Omega X^\sT w\,.\]
Since $\bw\sim\normal(0,\sigma^2 \id)$ is independent of $X$, we get
\[Z|X \sim \normal(0,\sigma^2 \Omega \hSigma \Omega )\,.\]

Let $R^{(1)} = (R^{(1)}_i)_{i=1}^p, R^{(2)} = (R^{(2)}_i)_{i=1}^p, R^{(3)} = (R^{(3)}_i)_{i=1}^p\in \reals^p$.
In the following, we provide a detailed analysis to control the terms $R^{(1)}, R^{(2)}, R^{(3)}$.
\bigskip

\noindent $\bullet$ \emph{Bounding term $R^{(1)}$:} 
Recalling the definition $v_i = X\Omega e_i$, we write
\[R^{(1)}_i = \sqrt{n}\Big(1-\frac{1}{n} e_i^\sT\Omega X^\sT Xe_i \Big) (\hth_i - \theta^*_i)\,.\]
Therefore,
\begin{align*}
\|R^{(1)}\|_\infty \le  \sqrt{n}|\id - \Omega \hSigma|_\infty  \|\hth-\theta^*\|_2 \,.
\end{align*}
 For $A>0$, let $\cG_n = \cG_n(A)$ be the event that 
\begin{align}\label{eq:GnA}
\cG_n(A) \equiv \Big\{X\in \reals^{n\times p}:\,|\Omega \hSigma -\id|_\infty \le A\sqrt{\frac{\log p}{n}} \Big\}\,.
\end{align}
Using the result of~\cite[Lemma 6.2]{javanmard2014confidence} for $n\ge (A^2 C_{\min})/ (4e^2C_{\max})\log p$ we have
\begin{align*}
\prob(X \in \cG_n(a)) \ge 1- 2p^{-c}\,, \quad \quad c = \frac{A^2 C_{\min}}{24e^2 C_{\max}}-2\,. 
\end{align*}
By choosing $A\equiv 10e \sqrt{C_{\max}/C_{\min}}$ we get $c\ge 1$. Therefore, provided that $n\ge 25 \log p$, 
\begin{align}\label{eq:Gn}
\prob(X \in \cG_n(A)) \ge 1- 2p^{-1}\,. 
\end{align}
%

In addition, on the event $\cB\equiv\cB_\delta(n,s_0,3)\cap \ctB(n,p)$ we have~\cite{buhlmann2011statistics}
\[\|\hth-\th^*\|_2 \le \frac{\sqrt{20}}{(1-\delta)^2C_{\min}} \lambda\sqrt{s_0} \,. \]

Combining the above bounds, we obtain that on event $\cG_n(A) \cap \cB$,
\begin{align}\label{eq:R1}
\|R^{(1)}\|_\infty  \le \frac{5\kappa A\sigma}{(1-\delta)^2C_{\min}} \sqrt{\frac{s_0}{n}} \log p\,.
\end{align}
%
%

\bigskip

\noindent $\bullet$ \emph{Bounding term $R^{(2)}$:} 
%
To lighten the notation, we define 
\begin{align}\label{eq:zeta}
\zeta_i \equiv \frac{1}{\sqrt{n}} \Xnz(\tthnz - \pthnz)\,.
\end{align}
As discussed $\pthnz$ is a Lasso estimator with design matrix $X_{\sim i}$ and response vector $\ty = y - \tx_i \theta^*_i$, as per equation~\eqref{eq:pLasso}. We recall the following results on the prediction error of the Lasso estimator, which bounds $\|\zeta_i\|_2$.
\begin{propo}[\cite{buhlmann2011statistics}, Theorem 6.1]\label{propo:prediction_error}
Let $S\equiv \supp(\theta^*_{\sim i})$. Then on the event $\ctB(n,p)$, we have for $\lambda \ge 8\sigma\sqrt{(\log p)/n}$, 
$$\|\zeta_i\|_2^2 \le \frac{4 \lambda^2 |S|}{\phi^2(S,\hSigma_{\sim i,\sim i})}\,.$$
\end{propo}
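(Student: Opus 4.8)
The plan is to recognize this as the classical Lasso prediction (oracle) inequality of Bickel--Ritov--Tsybakov and Bühlmann--van de Geer, applied to the \emph{reduced} regression problem. By Eq.~\eqref{eq:pLasso}, $\pthnz$ is the Lasso estimator with design matrix $\Xnz$ and response $\ty = w + \Xnz\tthnz$; in particular this reduced instance has true parameter $\tthnz$ and noise vector $w$, and since $\zeta_i = n^{-1/2}\Xnz(\tthnz - \pthnz)$ we have $\|\zeta_i\|_2^2 = \tfrac1n\|\Xnz\Delta\|_2^2$ with $\Delta \equiv \pthnz - \tthnz$, i.e. $\|\zeta_i\|_2^2$ is exactly the in-sample prediction error of this Lasso fit.

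First I would write the basic inequality: optimality of $\pthnz$ for $\cL_{\ty,\Xnz}$ gives $\tfrac{1}{2n}\|\ty-\Xnz\pthnz\|_2^2 + \lambda\|\pthnz\|_1 \le \tfrac{1}{2n}\|\ty - \Xnz\tthnz\|_2^2 + \lambda\|\tthnz\|_1$, and since $\ty - \Xnz\tthnz = w$, expanding the square yields $\tfrac{1}{2n}\|\Xnz\Delta\|_2^2 \le \tfrac1n\<\Delta,\Xnz^\sT w\> + \lambda(\|\tthnz\|_1 - \|\pthnz\|_1)$. On the event $\ctB(n,p)$ the noise term is controlled by Hölder: since $\Xnz$ is a submatrix of $X$, $\tfrac1n\|\Xnz^\sT w\|_\infty \le \tfrac1n\|X^\sT w\|_\infty \le 2\sigma\sqrt{(\log p)/n} \le \lambda/4$, so $\tfrac1n|\<\Delta,\Xnz^\sT w\>| \le \tfrac\lambda4\|\Delta\|_1$. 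Using that $\tthnz$ is supported on $S$, the triangle inequality gives $\|\tthnz\|_1 - \|\pthnz\|_1 \le \|\Delta_S\|_1 - \|\Delta_{S^c}\|_1$.

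Combining these, $\tfrac{1}{2n}\|\Xnz\Delta\|_2^2 \le \tfrac{5\lambda}{4}\|\Delta_S\|_1 - \tfrac{3\lambda}{4}\|\Delta_{S^c}\|_1$. Since the left side is nonnegative, this forces $\|\Delta_{S^c}\|_1 \le 3\|\Delta_S\|_1$, so $\Delta$ lies in the cone appearing in Definition~\ref{def:phi} and the compatibility condition for $S$ relative to $\hSigma_{\sim i,\sim i} = \Xnz^\sT\Xnz/n$ applies: $\|\Delta_S\|_1^2 \le |S|\<\Delta,\hSigma_{\sim i,\sim i}\Delta\>/\phi^2(\hSigma_{\sim i,\sim i},S) = |S|\,\|\zeta_i\|_2^2/\phi^2$. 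Plugging this in (and discarding $-\tfrac{3\lambda}{4}\|\Delta_{S^c}\|_1$) gives $\|\zeta_i\|_2^2 \le \tfrac{5\lambda}{2}\|\Delta_S\|_1 \le \tfrac{5\lambda}{2}\sqrt{|S|}\,\|\zeta_i\|_2/\phi$; solving this quadratic inequality in $\|\zeta_i\|_2$ yields $\|\zeta_i\|_2^2 \lesssim \lambda^2|S|/\phi^2$, and careful bookkeeping of the slack between the factor $8$ in $\lambda\ge 8\sigma\sqrt{(\log p)/n}$ and the factor $2$ in the definition of $\ctB(n,p)$ (equivalently, the standard $\lambda\|\Delta\|_1$-augmented version of the argument) sharpens the constant to the stated $4$ — this is precisely Theorem~6.1 of~\cite{buhlmann2011statistics}.

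There is essentially no serious obstacle: this is a textbook oracle inequality. The only points needing care are (a) verifying that, after deleting column $i$ and absorbing $\tx_i\theta^*_i$ into the response, one still has a bona fide Lasso instance whose true parameter is $\tthnz$ and whose noise is $w$, so that the \emph{same} event $\ctB(n,p)$ (defined through the full design $X$) is sufficient — restricting to the columns $\sim i$ only shrinks the relevant $\ell_\infty$ norm; and (b) that the compatibility constant entering the bound is that of the reduced empirical covariance $\hSigma_{\sim i,\sim i}$, which is exactly what the statement records and which, on $\cB_\delta(n,s_0,3)$, is bounded below by $(1-\delta)^2 C_{\min}$ by the computation of Remark~\ref{rem:phi-bound} applied to $\Xnz$ (padding cone vectors in $\reals^{p-1}$ with a zero in position $i$).
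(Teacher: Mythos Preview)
Your proposal is correct and matches the paper exactly in spirit: the paper does not supply its own proof but simply cites Theorem~6.1 of \cite{buhlmann2011statistics}, and what you have written is precisely the standard basic-inequality/compatibility argument underlying that theorem, specialized to the reduced regression $(\ty,\Xnz)$ with true parameter $\tthnz$ and noise $w$. Your observations (a) that $\ctB(n,p)$ for the full design already controls $\tfrac1n\|\Xnz^\sT w\|_\infty$ and (b) that the relevant compatibility constant is $\phi^2(S,\hSigma_{\sim i,\sim i})\ge\phi^2(S,\hSigma)$ are exactly the remarks the paper makes immediately after stating the proposition, en route to Corollary~\ref{coro:zeta}.
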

From the definition of the compatibility constant (cf. Definition~\ref{def:phi}), it is clear that 
$\phi^2(S,\hSigma_{\sim i, \sim i}) \ge \phi^2(S,\hSigma)$.
%
Therefore, combining Proposition~\ref{propo:prediction_error} and Remark~\ref{rem:phi-bound}, we arrive at the following 
corollary:
\begin{coro}\label{coro:zeta}
On the event $\cB\equiv\cB_\delta(n,s_0,3) \cap \ctB(n,p)$, we have for $\lambda \ge 8\sigma \sqrt{(\log p)/n}$,
$$\|\zeta_i\|_2^2 \le \frac{4\lambda^2 s_0}{(1-\delta)^2C_{\min}}\,.$$
\end{coro}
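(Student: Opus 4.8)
The statement is an immediate consequence of the three ingredients collected just above it, so the plan is to chain them together, working throughout on the event $\cB = \cB_\delta(n,s_0,3)\cap\ctB(n,p)$. First I would apply Proposition~\ref{propo:prediction_error} to the reduced problem: by Eq.~\eqref{eq:pLasso}, $\pthnz$ is the Lasso estimator with design $X_{\sim i}$, response $\ty = w + X_{\sim i}\theta^*_{\sim i}$, and regularization $\lambda \ge 8\sigma\sqrt{(\log p)/n}$, so on $\ctB(n,p)$ the proposition gives, with $S \equiv \supp(\theta^*_{\sim i})$,
\[
\|\zeta_i\|_2^2 \;\le\; \frac{4\lambda^2 |S|}{\phi^2(S,\hSigma_{\sim i,\sim i})}\, .
\]
The noise-control event for this reduced Lasso is exactly $\ctB(n,p)$, consistent with the general form of such a bound, since deleting column $i$ only removes a coordinate of $X^\sT w$ and hence $\|X_{\sim i}^\sT w\|_\infty \le \|X^\sT w\|_\infty$.

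It then remains to bound numerator and denominator. For the numerator, $\theta^*$ is $s_0$-sparse, whence $|S| = |\supp(\theta^*_{\sim i})| \le s_0$. For the denominator, the inequality $\phi^2(S,\hSigma_{\sim i,\sim i}) \ge \phi^2(S,\hSigma)$, noted right before the corollary and immediate from Definition~\ref{def:phi}, reduces the task to a lower bound on $\phi^2(\hSigma,S)$; and on $\cB_\delta(n,s_0,3)$, Remark~\ref{rem:phi-bound} gives $\phi^2(\hSigma,S) \ge (1-\delta)^2 C_{\min}$. The remark is stated for $|S| = s_0$, but the same Cauchy--Schwarz computation applies verbatim when $|S| \le s_0$: any $\theta$ with $\|\theta_{S^c}\|_1 \le 3\|\theta_S\|_1$ lies in $\cone(s_0,3)$, and $\|\theta_S\|_1^2 \le |S|\,\|\theta_S\|_2^2 \le s_0\|\theta\|_2^2$, so the factor $|S|$ cancels and the residual quantity $\|X\theta\|_2^2/(n\|\theta\|_2^2)$ is $\ge (1-\delta)^2 C_{\min}$ on $\cB_\delta(n,s_0,3)$. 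Substituting $|S| \le s_0$ and $\phi^2(S,\hSigma_{\sim i,\sim i}) \ge (1-\delta)^2 C_{\min}$ into the display yields $\|\zeta_i\|_2^2 \le 4\lambda^2 s_0/((1-\delta)^2 C_{\min})$ on $\cB$, which is the claim.

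I do not expect any genuine obstacle here: the corollary is pure bookkeeping on top of Proposition~\ref{propo:prediction_error}, the monotonicity of the compatibility constant under column deletion, and Remark~\ref{rem:phi-bound}. The only points that deserve a sentence are the observation that $\ctB(n,p)$ is the correct noise event for the reduced $(\ty, X_{\sim i})$ Lasso, and the mild index-chasing around $|\supp(\theta^*_{\sim i})|$ being possibly $s_0 - 1$ rather than $s_0$ when $i \in \supp(\theta^*)$, which is absorbed by the extension of Remark~\ref{rem:phi-bound} sketched above.
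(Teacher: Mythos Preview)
Your proposal is correct and follows exactly the paper's approach: combine Proposition~\ref{propo:prediction_error} with the monotonicity $\phi^2(S,\hSigma_{\sim i,\sim i}) \ge \phi^2(S,\hSigma)$ and Remark~\ref{rem:phi-bound}. The extra care you take in checking that $\ctB(n,p)$ controls the reduced noise term and in extending Remark~\ref{rem:phi-bound} to $|S|\le s_0$ is sound and more explicit than the paper itself, which simply states the corollary as an immediate combination of these ingredients.
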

Employing Corollary~\ref{coro:zeta}, we derive a tail bound on $R^{(2)}_i$. 

For $i\in [p]$ define the event 
\begin{eqnarray}\label{eq:eventE}
\event_i \equiv \bigg\{\|\zeta_i\|_2^2\le \frac{4 \lambda^2 s_0}{(1-\delta)^2C_{\min}}\bigg\}\,.
\end{eqnarray}

By Corollary~\ref{coro:zeta}, we have $\cB \subseteq \event_i$ for $i\in [p]$. Hence,
for any value $t>0$
\begin{align*}
\prob \Big(\|R^{(2)}\|_\infty \ge t; \cB\Big)&\le
\prob \Big(\max_{i\in [p]} |v_i^\sT \zeta_i| \ge t; \event_i\Big)\\
&\le p\max_{i\in [p]} \E\Big\{\ind(|v_i^\sT \zeta_i| \ge t)\cdot \ind(\event_i)\Big\}\\
&\le 2p\max_{i\in [p]}\E\Big(\exp\Big[-\frac{t^2}{2 \Omega_{ii} \|\zeta_i\|^2}\Big]\cdot \ind(\event_i) \Big)\\
&\le 2p\exp\Big(-\frac{c_*t^2}{s_0\lambda^2 \Omega_{ii}} \Big)\,,
\end{align*}
with $c_* \equiv (1-\delta)^2C_{\min}/8$. In the third inequality, we applied Fubini's theorem, and first integrate w.r.t $v_i$ and then w.r.t $\zeta_i$ using the fact that $v_i$ and $\zeta_i$ are independent. Note that $v_i \sim \normal(0,\Omega_{ii}\id_{n\times n})$ and thus $v_i^\sT \zeta_i |\zeta_i \sim \normal(0,\Omega_{ii}\|\zeta_i\|^2)$. Further, on the event $\event_i$, $\|\zeta_i\|^2$ can be bounded as in Equation~\eqref{eq:eventE}.

Setting $t\equiv \kappa \sigma \sqrt{2 s_0/(c_*C_{\min} n)} \log p$, we get
\begin{align}\label{eq:R2}
\prob \Big(\|R^{(2)}\|_\infty \ge \kappa \sigma\sqrt{\frac{2s_0}{c_*C_{\min}n}}\log p; \cB\Big) \le 2p^{-1}\,.
\end{align}
%


%
%
%
  \bigskip

\noindent $\bullet$ \emph{Bounding term $R^{(3)}$:} 
%
%
In order to bound the last term, we first need to establish the following main lemma that bounds the distance between
Lasso estimator and the solution of the perturbed problem. We refer to Section~\ref{app:lem_main} for the proof of Lemma~\ref{lem:main}.
\begin{lemma}[Perturbation bound]\label{lem:main}
Suppose that $\Sigma_{ii} \le1$, for $i\in [p]$.
Set $\lambda = 8\sigma \sqrt{(\log p)/n}$ and let 
$\cB(C_\delta) \equiv \ctB(n,p)\cap \cB_\delta(n,C_\delta s_0,3)$. The following holds true.
\begin{eqnarray}
\prob\Big(\|\hthnz - \pthnz\|_2 \ge C' \lambda;\cB(C_\delta)\Big) \le 2\exp \Big(-\frac{c_*n}{s_0}\Big) + \exp\Big(-\frac{n}{1000}\Big)\,,
\end{eqnarray}
where, 
\begin{align*}
C' \equiv &\frac{24\rho (1+\delta)\sqrt{C_{\max}}}{(1-\delta)^2 C_{\min}}\,,\quad \quad c_* \equiv \frac{1}{8}(1-\delta)^2 C_{\min}\,,\\
&\quad \quad \quad \quad  \quad C_\delta \equiv \frac{16 C_{\max}}{(1-\delta)^2 C_{\min}}+1\,.
\end{align*}
\end{lemma}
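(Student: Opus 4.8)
The plan is to compare the Lasso solution $\hthnz$ with the restricted Lasso solution $\pthnz$ by viewing both as minimizers of strongly-convex-on-the-cone objectives and exploiting the first-order optimality (KKT) conditions. First I would write down the KKT conditions for $\hth=(\hth_i,\hthnz)$ and for $\hth^{\pe}=(\theta^*_i,\pthnz)$. The latter is, by \eqref{eq:pLasso}, the Lasso estimator on design $X_{\sim i}$ with response $\ty=y-\tx_i\theta^*_i$; its subgradient condition reads $\tfrac1n X_{\sim i}^\sT(\ty-X_{\sim i}\pthnz)=\lambda\, g^{\pe}$ with $\|g^{\pe}\|_\infty\le 1$, $g^{\pe}_j=\sign(\pthnz_j)$ on the support. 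Similarly the $\sim i$ block of the KKT condition for $\hth$ gives $\tfrac1n X_{\sim i}^\sT(y-X\hth)=\lambda\, g$. Subtracting, and writing $\Delta\equiv\hthnz-\pthnz$, one obtains an identity of the form $\tfrac1n X_{\sim i}^\sT X_{\sim i}\Delta = \tfrac1n X_{\sim i}^\sT\tx_i(\hth_i-\theta^*_i)+\lambda(g^{\pe}-g)$, i.e. the quantity we want to control is driven by the single ``extra'' column $\tx_i$ weighted by the small scalar $\hth_i-\theta^*_i$.

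Next I would take the inner product of this identity with $\Delta$. The left side becomes $\|X_{\sim i}\Delta\|_2^2/n$, which, provided $\Delta$ lies in an appropriate cone $\cone(C_\delta s_0,3)$, is lower bounded by $(1-\delta)^2 C_{\min}\|\Delta\|_2^2$ on the event $\cB_\delta(n,C_\delta s_0,3)$ (Remark~\ref{rem:phi-bound} / definition of $\cB_\delta$). For this one must first verify that $\Delta$ does satisfy the cone constraint; this follows by the standard argument using $\|\hthnz\|_1$-optimality of $\hth$ versus the feasibility of $\pthnz$, together with the support-size bound $|\hS|<C_*s_0$ from Lemma~\ref{lem:Bickel} (so that $\Delta$ is supported, up to the usual $\ell_1$-cone slack, on a set of size $\lesssim s_0$, explaining the constant $C_\delta$). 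On the right side, the term $\lambda\langle g^{\pe}-g,\Delta\rangle$ is handled by convexity/monotonicity of the subdifferential of $\|\cdot\|_1$: $\langle g-g^{\pe},\hthnz-\pthnz\rangle\ge 0$, so this term only helps. What remains is $\tfrac1n (\hth_i-\theta^*_i)\langle X_{\sim i}^\sT\tx_i,\Delta\rangle = \tfrac1n(\hth_i-\theta^*_i)\langle \tx_i, X_{\sim i}\Delta\rangle$.

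To bound this cross term I would (i) control $|\hth_i-\theta^*_i|$: since $|\hth_i-\theta^*_i|\le\|\hth-\theta^*\|_2\le C\lambda\sqrt{s_0}$ on $\cB$ by the standard $\ell_2$-rate already quoted in the $R^{(1)}$ bound, and moreover $\hth_i$ is close to $\theta^*_i$ at the sharper scale $\lambda$ once one uses the debiasing-type identity for coordinate $i$ itself; (ii) control $|\langle\tx_i,X_{\sim i}\Delta\rangle|$ using independence — by Lemma~\ref{lem:independence} the column $\tx_i$ is (after conditioning) Gaussian and, crucially, the relevant Gaussian vector $v_i=X\Omega e_i$ is independent of $X_{\sim i}$ and of $w$, hence independent of the deterministic-in-$(X_{\sim i},w)$ quantities; a Gaussian concentration bound then gives $|\langle\tx_i,X_{\sim i}\Delta\rangle|\lesssim \sqrt{n\log p}\,\|X_{\sim i}\Delta\|_2$ with the stated exponential probability, or more precisely one splits $\tx_i$ into its projection onto $\mathrm{span}(X_{\sim i})$ — governed by $\Sigma$-entries, where condition $(iii)$ and the parameter $\rho$ enter — plus an independent Gaussian piece. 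Combining, $\|X_{\sim i}\Delta\|_2^2/n \lesssim |\hth_i-\theta^*_i|\cdot\sqrt{\log p}\cdot\|X_{\sim i}\Delta\|_2/\sqrt n + (\text{nonpositive})$, and dividing through by $\|X_{\sim i}\Delta\|_2/\sqrt n$ then using the restricted-eigenvalue lower bound on $\|X_{\sim i}\Delta\|_2^2/n$ yields $\|\Delta\|_2\lesssim \rho\lambda$, with the explicit constant $C'$ as stated.

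The main obstacle I expect is the interplay between the cone membership of $\Delta$ and the perturbation size: a clean restricted-eigenvalue argument needs $\Delta$ to live in a cone whose sparsity parameter $C_\delta s_0$ must be chosen so that the event $\cB_\delta(n,C_\delta s_0,3)$ is still high-probability (hence the need for $n\gtrsim s_0\log(p/s_0)$), while simultaneously the support-size control of both $\hthnz$ and $\pthnz$ must be established \emph{before} the bound on $\Delta$ is available — this is a mild circularity that is broken by using the a priori (crude) $\ell_2$ and support bounds for $\hth$ and the analogous ones for the restricted problem, then bootstrapping. The second delicate point is getting the \emph{right power} of $\rho$ (and not $\rho^2$ or $\sqrt{s_0}\rho$): this requires carefully isolating the component of $\tx_i$ correlated with $X_{\sim i}$ — whose coefficients are a row of $\Sigma_{\sim i,\sim i}^{-1}\Sigma_{\sim i, i}$-type object controlled by $\rho(\Sigma,C_0 s_0)\le\rho$ — from the genuinely independent Gaussian residual, and bounding each contribution to $\langle X_{\sim i}^\sT\tx_i,\Delta\rangle$ separately.
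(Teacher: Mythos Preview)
Your KKT-subtraction step is fine: taking the inner product with $\Delta=\hthnz-\pthnz$ and using monotonicity of $\partial\|\cdot\|_1$ does yield
\[
\frac{1}{n}\|X_{\sim i}\Delta\|_2^2 \;\le\; \frac{1}{n}\,|\hth_i-\theta^*_i|\,\big|\langle \tx_i,\,X_{\sim i}\Delta\rangle\big|\, .
\]
The gap is in how you propose to bound the right-hand side. You invoke Lemma~\ref{lem:independence} to apply Gaussian concentration to $\langle\tx_i,X_{\sim i}\Delta\rangle$, treating $X_{\sim i}\Delta$ as ``deterministic in $(X_{\sim i},w)$''. It is not: $\Delta$ involves $\hthnz$, which is the \emph{full} Lasso solution and depends on the column $\tx_i$. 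Neither $v_i=X\Omega e_i$ nor the residual of $\tx_i$ after regressing on $X_{\sim i}$ is independent of $\Delta$, so no concentration argument applies here. Likewise, your claim that $|\hth_i-\theta^*_i|$ is available ``at the sharper scale $\lambda$'' is exactly what is at stake; a priori you only have $|\hth_i-\theta^*_i|\le\|\hth-\theta^*\|_2\lesssim\lambda\sqrt{s_0}$, which would give $\|\Delta\|_2\lesssim\lambda\sqrt{s_0}$ --- the wrong rate and of no use for the theorem.

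The paper resolves this not by bootstrapping but by arranging the inequality so that the only random quantity requiring an independence argument depends on $\pthnz$ rather than $\hthnz$. Concretely, a one-dimensional cost-function comparison (Lemmas~\ref{lem1}, \ref{lem:f_k}, \ref{lem:aux1}) produces
\[
\frac{1}{2n}\big\|\proj^\perp_{\tx_i}X_{\sim i}\Delta\big\|_2^2 \;\le\; \big(c_i|u(\pthnz)|+\lambda\big)\,\big|u(\hthnz)-u(\pthnz)\big|\,,
\]
with $u(\theta)=\tx_i^\sT(w+X_{\sim i}(\tthnz-\theta))/\|\tx_i\|^2$. The forcing term $c_i|u(\pthnz)|$ now involves only $\pthnz$, which \emph{is} a deterministic function of $(X_{\sim i},w)$; this is what makes the Gaussian decomposition $\tx_i=X_T(\Sigma_{T,T})^{-1}\Sigma_{T,i}+\Sigma_{i|T}^{1/2}z$ (Proposition~\ref{propo:F'main}) legitimate, and is where $\rho$ enters. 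In fact your inequality can be pushed to exactly this display: use the $i$-th KKT condition $\hth_i=\eta(\theta^*_i+u(\hthnz);\lambda/c_i)$ to write $|\hth_i-\theta^*_i|\le|u(\pthnz)|+|u(\hthnz)-u(\pthnz)|+\lambda/c_i$, observe that $c_i|u(\hthnz)-u(\pthnz)|^2=\frac{1}{n}\|\proj_{\tx_i}X_{\sim i}\Delta\|_2^2$, and move that piece to the left. So the missing idea is not a different decomposition but the realization that the $\hth_i$-dependence must be \emph{absorbed algebraically} (into the projected norm) rather than bounded probabilistically.
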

We are now ready to bound term $R^{(3)}$.

%
%
\begin{align*}
|R^{(3)}_i| &\le  \frac{1}{\sqrt{n}} \|v_i^\sT \Xnz\|_{\infty} \|\pthnz - \hthnz\|_1 \\
&\le \sqrt{\frac{C_\delta s_0}{n}} \|v_i^\sT \Xnz\|_\infty \|\pthnz - \hthnz\|_2\\
&\le \sqrt{C_\delta s_0 n}\, |\Omega \hSigma - \id|_\infty \|\pthnz - \hthnz\|_2\,,
\end{align*}
where in the first inequality we used Lemma~\ref{lem:Bickel}, which implies that $\|\pthnz - \tthnz\|_0 \le C_\delta s_0$, under $\cB$. 
Therefore, by Lemma~\ref{lem:main} and equation~\eqref{eq:Gn} and since $\cB(C_\delta) \subseteq \cB$, we have
\begin{eqnarray*}
\prob\Big(|R^{(3)}_i| \ge C''\sigma\sqrt{\frac{s_0}{n}}\log p ;\cB(C_\delta) \Big) \le 2\exp \Big(-\frac{c_*n}{s_0}\Big) + \exp\Big(-\frac{n}{1000}\Big) + 2p^{-2} \,, 
\end{eqnarray*}
with ${C''}\equiv \kappa \sqrt{(C_*+1)} A C'$. Hence, by union bound over the $p$ coordinates, we get
\begin{eqnarray}\label{eq:T4_final}
\prob\Big(\|R^{(3)}\|_\infty \ge {C''}\sigma\sqrt{\frac{s_0}{n}}\log p ;\cB(C_\delta) \Big) \le 2p\exp \Big(-\frac{c_*n}{s_0}\Big) + p\exp\Big(-\frac{n}{1000}\Big) + 2p^{-1} \,.
\end{eqnarray}

We are now in position to prove the claim of Theorem~\ref{thm:main}.

Using equations~\eqref{eq:decompose} and \eqref{eq:T}, we have $\sqrt{n}(\hth^\de - \theta^*) = Z + R$, where $Z|X \sim \normal(0,\sigma^2 \Omega \hSigma \Omega)$ and $R = R^{(1)}+R^{(2)}+R^{(3)}$. 
Combining equations~\eqref{eq:R1}, \eqref{eq:R2} and \eqref{eq:T4_final}, we get
\begin{align}\label{eq:prob1}
\prob\Big(\|R\|_\infty \ge C \sqrt{\frac{s_0}{n}}\log p; \cG_n(A) \cap \cB(C_\delta) \Big)\le 
2p\exp \Big(-\frac{c_*n}{s_0}\Big) + p\exp\Big(-\frac{n}{1000}\Big) + 4p^{-1}\,,
\end{align}
where $C$ is given by
\begin{align}\label{eq:C}
C \equiv \kappa\sigma\Big(\frac{5A}{(1-\delta)^2C_{\min}} + \sqrt{\frac{2 }{c_*C_{\min}}} + \sqrt{C_\delta}AC'\Big)\,.
\end{align}
Further, for $n\ge \max(25\log p, c_1C_\delta s_0\log (p/s_0))$, we have
\begin{align}
\prob\Big((\cG_n(A) \cap \cB(C_\delta))^c \Big) &\le \prob(\cG_n(A)^c) + \prob(\ctB(n,p)^c) + \prob(\cB_\delta(n,C_\delta s_0,3)^c)\nonumber\\
&\le 2p^{-1}+2p^{-1}+2e^{-\delta^2n} = 4p^{-1}+ 2e^{-\delta^2n}\,,\label{eq:prob2}
\end{align}
where we used bound~\eqref{eq:Gn}, Lemma~\ref{lem:rudelson} and Lemma~\ref{lem:ctB}. 

The result follows from equations~\eqref{eq:prob1} and \eqref{eq:prob2}, and setting $\delta = 1-1/\sqrt{2}$.

\subsection{Proof of Lemma~\ref{lem:main} (perturbation bound)}
\label{app:lem_main}

\begin{lemma}\label{lem1}
 For all $\th\in \reals^{p-1}$ the following holds true.
\begin{align}
\frac{1}{2n}\|\Xnz(\th - \pthnz)\|_2^2\le \cL_{y, X} (\th^*_i,\th) - \cL_{y,X}(\theta^*_i, \pthnz)\,.
\end{align}
\end{lemma}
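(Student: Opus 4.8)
The statement is a standard "quadratic lower bound via strong convexity at the minimizer" fact, specialized to the restricted problem in which coordinate $i$ is fixed at $\theta^*_i$. Recall from Eq.~\eqref{eq:pLasso} that $\pthnz$ minimizes $\gamma\mapsto \cL_{\ty,X_{\sim i}}(\gamma)$ where $\ty = y-\tx_i\theta^*_i$, and from the decomposition of the cost function that $\cL_{y,X}(\theta^*_i,\th) = \frac{1}{2n}\|\ty - X_{\sim i}\th\|_2^2 + \lambda|\theta^*_i| + \lambda\|\th\|_1$. So the inequality to be proved is exactly
\[
\frac{1}{2n}\|X_{\sim i}(\th-\pthnz)\|_2^2 \;\le\; F(\th) - F(\pthnz),
\qquad F(\gamma)\equiv \frac{1}{2n}\|\ty - X_{\sim i}\gamma\|_2^2 + \lambda\|\gamma\|_1,
\]
the $\lambda|\theta^*_i|$ terms cancelling on both sides.

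The plan is to exploit that $F$ is a sum of a quadratic term $g(\gamma)=\frac{1}{2n}\|\ty-X_{\sim i}\gamma\|_2^2$ (with Hessian $\hSigma_{\sim i,\sim i} = X_{\sim i}^\sT X_{\sim i}/n$, constant) and the convex term $\lambda\|\gamma\|_1$. First I would write the exact second-order Taylor expansion of the quadratic part about $\pthnz$:
\[
g(\th) = g(\pthnz) + \langle \nabla g(\pthnz), \th-\pthnz\rangle + \frac{1}{2n}\|X_{\sim i}(\th-\pthnz)\|_2^2 .
\]
Next I would invoke the first-order optimality (KKT) condition for $\pthnz$ as a minimizer of $F$: there exists a subgradient $z\in\partial\|\pthnz\|_1$ with $\nabla g(\pthnz) + \lambda z = 0$, hence $\langle \nabla g(\pthnz),\th-\pthnz\rangle = -\lambda\langle z,\th-\pthnz\rangle$. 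By the subgradient inequality for the convex function $\|\cdot\|_1$, $\langle z,\th-\pthnz\rangle \le \|\th\|_1 - \|\pthnz\|_1$, so $\langle \nabla g(\pthnz),\th-\pthnz\rangle \ge -\lambda(\|\th\|_1-\|\pthnz\|_1)$. Substituting into the Taylor expansion gives
\[
g(\th) \ge g(\pthnz) - \lambda(\|\th\|_1 - \|\pthnz\|_1) + \frac{1}{2n}\|X_{\sim i}(\th-\pthnz)\|_2^2,
\]
which rearranges to $\frac{1}{2n}\|X_{\sim i}(\th-\pthnz)\|_2^2 \le F(\th) - F(\pthnz)$, i.e.\ the claim.

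Since $X_{\sim i}(\th-\pthnz) = X(\theta^*_i,\th) - X(\theta^*_i,\pthnz)$ (the $i$-th coordinates agree and contribute nothing to the difference), the left side equals $\frac{1}{2n}\|X((\theta^*_i,\th)-(\theta^*_i,\pthnz))\|_2^2$ in the notation of the lemma, completing the identification. There is no real obstacle here: the only point requiring a word of care is that $\pthnz$ need not be the \emph{unique} minimizer, but the argument only uses that $\pthnz$ \emph{is} a minimizer (so that some valid subgradient $z$ with $\nabla g(\pthnz)+\lambda z=0$ exists), which holds by Eq.~\eqref{eq:pLasso}; and that the Taylor identity for the quadratic is exact, which it is because $g$ is quadratic with constant Hessian. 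So the proof is short and self-contained.
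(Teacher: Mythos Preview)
Your proposal is correct and follows essentially the same argument as the paper: expand the quadratic part of the restricted cost exactly around $\pthnz$, substitute the KKT condition $\nabla g(\pthnz)+\lambda z=0$ with $z\in\partial\|\pthnz\|_1$, and use the subgradient inequality $\langle z,\th-\pthnz\rangle\le\|\th\|_1-\|\pthnz\|_1$ to drop the remaining term. The paper writes the expansion by direct algebra rather than calling it a Taylor expansion, but the steps are identical.
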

Lemma~\ref{lem1} is proved in Appendix~\ref{app:lem1}.

\begin{lemma}\label{lem:f_k}
Let $f_k(x) = \frac{c}{2}(x-a-u_k)^2 + \lambda |x|+b_k$ for $k=1,2$. Further assume that $\min_x f_1(x)\le \min_x f_2(x)$. Then,
\begin{align}
f_1(a)-f_2(a) \le (c|u_2|+\lambda) |u_1-u_2| + \frac{c}{2}(u_1-u_2)^2\,.
\end{align}
\end{lemma}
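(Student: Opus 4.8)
The plan is to exploit the fact that $f_1$ and $f_2$ differ only by a horizontal translation of the quadratic part together with the additive constants $b_k$, so that the $\ell_1$ term is the sole obstruction to an exact "shift" relation between the two functions.

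First I would record the elementary identity $\frac{c}{2}(x-a-u_1)^2 = \frac{c}{2}\big((x-(u_1-u_2))-a-u_2\big)^2$ and combine it with the triangle inequality $|x|\le |x-(u_1-u_2)| + |u_1-u_2|$ to obtain, for every $x\in\reals$,
\[
f_1(x) = \frac{c}{2}\big((x-(u_1-u_2))-a-u_2\big)^2 + \lambda|x| + b_1 \le f_2\big(x-(u_1-u_2)\big) + \lambda|u_1-u_2| + (b_1-b_2)\,.
\]
Minimizing over $x$ (on the left this is $\min_x f_1$, and on the right $x\mapsto x-(u_1-u_2)$ is a bijection of $\reals$, so this is $\min_x f_2$) gives $\min_x f_1(x) \le \min_x f_2(x) + \lambda|u_1-u_2| + (b_1-b_2)$. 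Running the same argument with the roles of $1$ and $2$ interchanged yields the companion inequality $\min_x f_2(x) \le \min_x f_1(x) + \lambda|u_1-u_2| + (b_2-b_1)$.

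The crucial step is to feed the hypothesis $\min_x f_1(x)\le \min_x f_2(x)$ into the \emph{second} of these two inequalities: it gives $\min_x f_1(x) \le \min_x f_2(x) \le \min_x f_1(x) + \lambda|u_1-u_2| + (b_2-b_1)$, hence $b_1-b_2 \le \lambda|u_1-u_2|$. Finally I would evaluate both functions at $x=a$; since the term $\lambda|a|$ cancels, $f_1(a)-f_2(a) = \frac{c}{2}(u_1^2-u_2^2) + (b_1-b_2)$. Writing $u_1^2-u_2^2 = (u_1-u_2)^2 + 2u_2(u_1-u_2)$ and using $b_1-b_2\le \lambda|u_1-u_2|$ together with $|2u_2(u_1-u_2)|\le 2|u_2|\,|u_1-u_2|$ gives
\[
f_1(a)-f_2(a) \le \frac{c}{2}(u_1-u_2)^2 + c|u_2|\,|u_1-u_2| + \lambda|u_1-u_2| = (c|u_2|+\lambda)|u_1-u_2| + \frac{c}{2}(u_1-u_2)^2\,,
\]
as claimed. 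There is no genuine obstacle here; the only points requiring care are choosing the correct one of the two symmetric shift inequalities to combine with the hypothesis, and tracking the sign of $b_1-b_2$ in that comparison.
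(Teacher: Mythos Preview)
Your proof is correct and takes a genuinely different route from the paper's. The paper computes the minimizers of $f_1,f_2$ explicitly as soft-thresholding operators, writes the minimum values in terms of the Huber function $\cH(\cdot;\lambda/c)$, decomposes $f_1(a)-f_2(a) = \Delta_1-\Delta_2 + (\min f_1-\min f_2)$ with $\Delta_k\equiv f_k(a)-\min f_k$, drops the last bracket using the hypothesis, and then bounds $\Delta_1-\Delta_2$ via the Lipschitz property $|\cH'(\cdot;\alpha)|\le \alpha$. You instead bypass all of this machinery: your shift argument $f_2(x)\le f_1(x-(u_2-u_1))+\lambda|u_1-u_2|+(b_2-b_1)$, combined with the hypothesis, directly yields $b_1-b_2\le \lambda|u_1-u_2|$, and the rest is the same algebraic expansion of $\tfrac{c}{2}(u_1^2-u_2^2)$ that the paper also uses. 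Your route is shorter and more elementary (no need to identify the minimizer or introduce the Huber function), while the paper's route is perhaps more transparent about \emph{why} the bound is tight, since the Lipschitz constant of $\cH$ is exactly $\lambda/c$.
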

Lemma~\ref{lem:f_k} is proved in Appendix~\ref{app:aux0}.
%
%
\begin{lemma}\label{lem:aux1}
For $\th\in \reals^{p-1}$ define
\begin{eqnarray}\label{eq:u}
u(\th) \equiv \frac{\tx_i^\sT(\bw+\Xnz(\tthnz - \th))}{\|\tx_i\|^2}
\end{eqnarray}
Also let $c_i \equiv \|\tx_i\|^2/n$.  Then, the following relation holds true.
\begin{align}
\cL(\theta_i,\th)  = \lambda |\theta_i| +\frac{c_i}{2}(\th_i - \th^*_i -u(\th))^2-\frac{c_i}{2}u(\th)^2+ \cL(\th^*_i,\th)-\lambda |\th^*_i|\,.\label{eq:expansion-nn}
\end{align}
\end{lemma}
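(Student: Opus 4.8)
The statement is an exact algebraic identity, so the plan is a routine but careful expansion of $\cL(\theta_i,\th)$ viewed as a function of its first coordinate $\theta_i$, with the tail $\th$ held fixed. The starting point is $\cL(\theta_i,\th)=\frac{1}{2n}\|y-\tx_i\theta_i-\Xnz\th\|_2^2+\lambda|\theta_i|+\lambda\|\th\|_1$. First I would set $r\equiv y-\Xnz\th$, the partial residual, which does not involve $\theta_i$, and expand $\frac{1}{2n}\|r-\tx_i\theta_i\|_2^2$ as a quadratic in $\theta_i$ with leading coefficient $c_i/2=\|\tx_i\|^2/(2n)$. Completing the square in $\theta_i$ produces a term $\frac{c_i}{2}\big(\theta_i-\langle r,\tx_i\rangle/\|\tx_i\|^2\big)^2$ plus a remainder that does not depend on $\theta_i$.

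The key step is to recognize the center of that square. Using the model equation $y=X\th^*+w=\tx_i\th^*_i+\Xnz\tthnz+w$ we have $r=\tx_i\th^*_i+\Xnz(\tthnz-\th)+w$, and dividing $\langle r,\tx_i\rangle$ by $\|\tx_i\|^2$ gives precisely $\th^*_i+u(\th)$ with $u(\th)$ as in~\eqref{eq:u}. This turns the completed-square term into $\frac{c_i}{2}(\theta_i-\th^*_i-u(\th))^2$ and leaves the $\theta_i$-free remainder $-\frac{c_i}{2}(\th^*_i+u(\th))^2+\frac{1}{2n}\|r\|_2^2+\lambda\|\th\|_1$.

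Finally I would check that this remainder equals $-\frac{c_i}{2}u(\th)^2+\cL(\th^*_i,\th)-\lambda|\th^*_i|$, which is what the lemma claims. To do so, expand $\cL(\th^*_i,\th)-\lambda|\th^*_i|=\frac{1}{2n}\|r-\tx_i\th^*_i\|_2^2+\lambda\|\th\|_1$ in the same way, and use the rescaled relation $c_i\,u(\th)=\frac{1}{n}\langle r,\tx_i\rangle-c_i\th^*_i$ (obtained from the previous display by multiplying through by $\|\tx_i\|^2/n$); a direct comparison then shows the two expressions differ by $\th^*_i\big(\frac{1}{n}\langle r,\tx_i\rangle-c_i\th^*_i-c_i u(\th)\big)=0$, which proves~\eqref{eq:expansion-nn}. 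The whole argument is elementary; the only place that needs attention is the bookkeeping of the $\theta_i$-independent constant terms, and the one genuine input is the substitution $y=X\th^*+w$, which is exactly what makes the least-squares minimizer over $\theta_i$ collapse to $\th^*_i+u(\th)$.
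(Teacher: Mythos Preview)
Your proposal is correct and follows essentially the same route as the paper's proof: both substitute $y=\tx_i\th^*_i+\Xnz\tthnz+w$ into the loss, expand the quadratic in $\theta_i$, and complete the square to identify the center $\th^*_i+u(\th)$. Your use of the partial residual $r=y-\Xnz\th$ as an intermediate variable is a slightly tidier bookkeeping device than the paper's direct expansion, but the argument and all the ingredients are identical.
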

Lemma~\ref{lem:aux1} is proved in Appendix~\ref{app:aux1}.

We let $f_1(x) =\cL (x,\hthnz)$ and $f_2(x) = \cL(x,\pthnz)$. 
Note that $(\hth_i,\hthnz)$ is the minimizer of $\cL_{y,X}(\theta_i,\theta_{\sim i})$.
Therefore, $\min f_1(x) = \cL(\hth_i,\hthnz) \le \min f_2(x)$.
Using decomposition~\eqref{eq:expansion-nn} and applying 
Lemma~\ref{lem:f_k} with
\begin{align}
&c = c_i, \quad a = \theta^*_i ,\quad u_1 = u(\hthnz), \quad u_2 = u(\pthnz),\\
&b_1 = -\frac{c_i}{2}u(\hthnz)^2+ \cL(\th^*_i,\hthnz)-\lambda |\th^*_i|,\\
&b_2 = -\frac{c_i}{2}u(\pthnz)^2+ \cL(\th^*_i,\pthnz)-\lambda |\th^*_i|\,,
\end{align}
we obtain 
\begin{align}\label{eq:L-L}
\cL(\theta^*_i,\tthnz) - \cL(\theta^*_i,\pthnz) \le (c_i |u(\pthnz)|+\lambda) |u(\hthnz)-u(\pthnz)| +
\frac{c_i}{2} (u(\hthnz)-u(\pthnz))^2\,.
\end{align}
We next write 
\begin{align}
\frac{c_i}{2} (u(\hthnz)-u(\pthnz))^2 = \frac{1}{2n} (\hthnz -\pthnz)^\sT \Xnz^\sT \tx_i \tx_i^\sT \Xnz (\hthnz -\pthnz)
 = \frac{1}{2n} \|\proj_{\tx_i}  \Xnz (\hthnz -\pthnz)\|_2^2\,,
\end{align}
where $\proj_{\tx_i} \equiv \tx_i\tx_i^\sT/\|\tx_i\|^2$ denotes the projection on the direction of $\tx_i$. 

We lower bond the left-hand side of Equation~\eqref{eq:L-L} using Lemma~\ref{lem1} and employing the above identity
to get
\begin{align}\label{eq:proj-B}
\frac{1}{2n}\|\proj^\perp_{\tx_i}\Xnz(\hthnz - \pthnz)\|_2^2 \le  (c_i |u(\pthnz)|+\lambda) |u(\hthnz)-u(\pthnz)| \,.
\end{align}
Next preposition bounds $c_i |u(\pthnz)|$.
We defer the proof of Proposition~\ref{propo:F'main} to Appendix~\ref{app:F'main}.
\begin{propo}\label{propo:F'main}
Let $\cB \equiv \ctB(n,p) \cap \cB_\delta(n,s_0,3)$, where the events $\cB_\delta(n,s_0,3)$ and $\ctB(n,p)$ are given as per equations \eqref{eq:Bdelta} and \eqref{eq:ctB}. The following holds true.
\[ \prob\Big(|c_i u(\pthnz)| \ge 1.25\rho \lambda; \cB\Big) \le 2\exp \Big(-\frac{c_*n}{s_0}\Big)\,. \]
%
where
$c_* \equiv (1-\delta)^2 C_{\min}/8$.
\end{propo}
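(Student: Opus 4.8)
From the definitions of $c_i$ and $u(\cdot)$ in Lemma~\ref{lem:aux1} together with $\ty = w + \Xnz\tthnz$, we have the identity
\[
c_i\,u(\pthnz)\;=\;\frac1n\,\tx_i^\sT\big(\ty-\Xnz\pthnz\big)\;=\;\frac1n\tx_i^\sT w\;+\;\frac1n\tx_i^\sT\Xnz(\tthnz-\pthnz)\,.
\]
On $\cB\subseteq\ctB(n,p)$ the first term is $\le\frac1n\|X^\sT w\|_\infty\le 2\sigma\sqrt{(\log p)/n}=\lambda/4$. The second term is the one that in earlier analyses forced $s_0\ll\sqrt n/\log p$; the plan is to show it is $O(\rho\lambda)$ by exploiting that $\tx_i$ does not enter $\pthnz$ at all. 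Two structural facts are used. First, $\pthnz$ is the Lasso estimator with design $\Xnz$ and response $\ty$ (Eq.~\eqref{eq:pLasso}), so its stationarity condition reads $\tfrac1n\Xnz^\sT r=\lambda\hat g$ with $\|\hat g\|_\infty\le 1$, where $r\equiv\ty-\Xnz\pthnz=w+\Xnz(\tthnz-\pthnz)$. Second, on $\cB$ the vector $\tthnz-\pthnz$ is supported on $T\equiv\supp(\tthnz)\cup\supp(\pthnz)$, which by Lemma~\ref{lem:Bickel} and $|\supp(\tthnz)|\le s_0$ satisfies $|T|\le (C_*+1)s_0\le C_0 s_0$, so $\Xnz(\tthnz-\pthnz)=X_T(\tthnz-\pthnz)_T$.

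Next I would split $\tx_i$ into its $\Xnz$-conditional mean and the orthogonal Gaussian part. By Lemma~\ref{lem:independence}, $v_i=X\Omega e_i=\Omega_{ii}\tx_i+\Xnz\Omega_{\sim i,i}$ is independent of $\Xnz$ (and, being a function of $X$, of $w$); hence, writing $\gamma_i\equiv-\Omega_{\sim i,i}/\Omega_{ii}=(\Sigma_{\sim i,\sim i})^{-1}\Sigma_{\sim i,i}$ and $z_i\equiv\tx_i-\Xnz\gamma_i=v_i/\Omega_{ii}$, we have $z_i\sim\normal(0,\Omega_{ii}^{-1}\id_n)$ independent of $(\Xnz,w)$, with $\Omega_{ii}^{-1}\le\Sigma_{ii}\le 1$. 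Substituting $\tx_i=\Xnz\gamma_i+z_i$, using $\tfrac1n\Xnz^\sT\Xnz(\tthnz-\pthnz)=\tfrac1n\Xnz^\sT(r-w)=\lambda\hat g-\tfrac1n\Xnz^\sT w$, and the identity $\Xnz\gamma_i=\tx_i-z_i$ to cancel the $\tfrac1n\tx_i^\sT w$ terms, everything collapses to
\[
c_i\,u(\pthnz)\;=\;\lambda\,\gamma_i^\sT\hat g\;+\;\frac1n\,z_i^\sT r\,.
\]
For the fluctuation term, conditionally on $(\Xnz,w)$ the quantity $\tfrac1n z_i^\sT r$ is centered Gaussian with variance $\Omega_{ii}^{-1}\|r\|_2^2/n^2$; on $\cB$, Corollary~\ref{coro:zeta} gives $\|\Xnz(\tthnz-\pthnz)\|_2^2=n\|\zeta_i\|_2^2\le 4n\lambda^2 s_0/((1-\delta)^2C_{\min})$, and, after intersecting with the standard event $\{\|w\|_2^2\le 2n\sigma^2\}$, $\|r\|_2^2\lesssim n\sigma^2$ since $n\gg s_0\log p$. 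Integrating first over $z_i$ and then over $(\Xnz,w)$ exactly as in the treatment of $R^{(2)}$ — using $\|\zeta_i\|_2^2\lesssim\lambda^2 s_0$ in the exponent and $\Omega_{ii}^{-1}\le 1$ — yields $|\tfrac1n z_i^\sT r|\le\tfrac14\rho\lambda$ on $\cB$ outside a set of probability $2\exp(-c_* n/s_0)$, with $c_*=(1-\delta)^2C_{\min}/8$. For the deterministic piece $\lambda\gamma_i^\sT\hat g=\tfrac1n\gamma_i^\sT\Xnz^\sT r$, I would exploit that $r-w$ lives in the span of the $\le C_0 s_0$ columns $X_T$: via the $T$-restricted stationarity one may replace $\gamma_i$ by the $T$-localized population regression $\gamma_T\equiv(\Sigma_{T,T})^{-1}\Sigma_{T,i}$ (the leftover cross terms merge into the $z_i$-fluctuation just treated), and then bound $\lambda|\gamma_T^\sT\hat g_T|\le\lambda\|\gamma_T\|_1\le\lambda\,\|(\Sigma_{T,T})^{-1}\|_\infty\max_j|\Sigma_{ij}|\le\rho(\Sigma,C_0 s_0)\,\lambda\le\rho\lambda$, using $\Sigma_{ii}\le 1$ (so $|\Sigma_{ij}|\le 1$) and $|T|\le C_0 s_0$. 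This is precisely the step where the \emph{localized} constant $\rho$ enters in place of the global $\ell_1$--$\ell_\infty$ bound. Adding the pieces, $|c_i u(\pthnz)|\le\rho\lambda+\tfrac14\rho\lambda=\tfrac54\rho\lambda$ on $\cB$ outside a set of probability $\le 2\exp(-c_* n/s_0)$, which is the claim.

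\noindent\emph{Main obstacle.} The genuinely delicate step is the localization in the last paragraph: one wants the independence of $z_i$ from $\pthnz$ (which holds cleanly only for the full-design residual, so that $\tfrac1n z_i^\sT r$ is conditionally Gaussian) while \emph{simultaneously} invoking $\rho(\Sigma,C_0 s_0)$ rather than $\|\Omega\|_\infty$ (which requires working with a $|T|$-sized block of $\Sigma$, with $T$ data-dependent). Making this rigorous calls either for a union bound over the (at most $\binom{p}{\lceil C_0 s_0\rceil}$) candidate supports $T\supseteq\supp(\tthnz)$, which is affordable because $n\gg s_0\log p$, or for a staged conditioning that integrates out the component of $\tx_i$ orthogonal to $X_T$ last, keeping every residual cross term absorbed into the $z_i$-fluctuation. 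Controlling the accompanying $\|\hSigma_{T,T}^{-1}-\Sigma_{T,T}^{-1}\|$-type errors on $\cB$ and tracking constants so as to land exactly at $\tfrac54\rho\lambda$ and $c_*$ is then routine but lengthy, which is why it is deferred to Appendix~\ref{app:F'main}.
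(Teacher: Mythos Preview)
The paper's argument differs from yours in \emph{where} the localization to the small index set $T$ happens, and this difference is what closes the proof. The paper keeps $n^{-1}\tx_i^\sT w$ as a standalone term (bounded by $\lambda/4$ on $\ctB$) and, for the remaining piece $n^{-1}\tx_i^\sT X_T(\tthT-\pthT)$, regresses $\tx_i$ directly on the small column set $T=\supp(\pthnz)\cup\supp(\tthnz)$: writing $\tx_i = X_T\gamma_T + \Sigma_{i|T}^{1/2}z$ with $\gamma_T=(\Sigma_{T,T})^{-1}\Sigma_{T,i}$, the $X_T\gamma_T$-part yields $\gamma_T^\sT\nu$ with $\|\nu\|_\infty\le 5\lambda/4$ from KKT, and $\|\gamma_T\|_1\le\rho-1$ is read off from the Schur-complement form of the first column of $(\Sigma_{\tilde T,\tilde T})^{-1}$, $\tilde T=T\cup\{i\}$, using $|\tilde T|\le C_0 s_0$ and $\Sigma_{i|T}\le 1$. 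The $z$-part is $n^{-1/2}\Sigma_{i|T}^{1/2}z^\sT\zeta_i$, and with $\Sigma_{i|T}\le 1$ and $\|\zeta_i\|_2^2\le 4\lambda^2 s_0/((1-\delta)^2C_{\min})$ this gives exactly the tail $2\exp(-c_* n/s_0)$ at threshold $\lambda$. Summing $\lambda/4+\lambda+(\rho-1)\cdot 5\lambda/4=5\rho\lambda/4$ produces the stated bound.

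Your route instead regresses $\tx_i$ on the \emph{full} $\Xnz$, giving the tidy identity $c_i u(\pthnz)=\lambda\gamma_i^\sT\hat g+n^{-1}z_i^\sT r$, but then two genuine gaps appear. First, the deterministic piece is governed by $\|\gamma_i\|_1=\|\Omega_{\sim i,i}\|_1/\Omega_{ii}$, which is controlled by $\|\Omega\|_\infty$, \emph{not} by $\rho(\Sigma,C_0 s_0)$; your proposal to swap in $\gamma_T$ with ``leftover cross terms merging into the $z_i$-fluctuation'' fails because the leftover $n^{-1/2}(\Xnz\gamma_i-X_T\gamma_T)^\sT\zeta_i$ is a function of $(\Xnz,w)$, not a $z_i$-term, and bounding it via $\ell_1$--$\ell_\infty$ brings $\|\gamma_i\|_1$ right back. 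The union-bound alternative is also not affordable: it costs $\log\binom{p}{C_0 s_0}\asymp s_0\log(p/s_0)$ in the exponent, so you would need $n\gg s_0^2\log(p/s_0)$, which is strictly worse than the target regime $n\gg s_0(\log p)^2$. (Separately, your inequality $\|\gamma_T\|_1\le\|(\Sigma_{T,T})^{-1}\|_\infty\max_j|\Sigma_{ij}|$ is incorrect---for symmetric $A$ one has $\|Av\|_1\le\|A\|_\infty\|v\|_1$, so the right factor is $\|\Sigma_{T,i}\|_1$, not $\|\Sigma_{T,i}\|_\infty$; the paper's Schur argument is what delivers $\|\gamma_T\|_1\le\rho-1$.) Second, because $n^{-1}z_i^\sT r$ contains $n^{-1}z_i^\sT w$, the conditional variance is $\Omega_{ii}^{-1}\|r\|_2^2/n^2\asymp\sigma^2/n$, which at threshold $O(\lambda)$ gives only a $p^{-c}$ tail, not $\exp(-c_* n/s_0)$; the paper avoids this precisely by keeping $\tx_i^\sT w$ out of the decomposition so that the Gaussian residual multiplies only $\zeta_i$.
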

We further have
\begin{align}
|u(\hthnz) - u(\pthnz)| = \frac{|\tx_i\Xnz(\pthnz-\hthnz)|}{\|\tx_i\|^2}
\le \frac{\|\Xnz(\pthnz-\hthnz)\|}{\|\tx_i\|}\,.
\end{align}
We next upper bound the term $\|\Xnz(\hthnz - \pthnz)\|$.
%

The corollary below follows from Proposition~\ref{lem:Bickel} and its proof is given in Appendix~\ref{proof:coroBickel}.

\begin{coro}\label{coro:Bickel}
Set $\lambda = \kappa\sigma\sqrt{(\log p)/n}$, for a constant $\kappa\ge 8$. On the event $\cB(C_*) \equiv \ctB(n,p)\cap \cB_\delta(n,(C_*+1)s_0,3)$, the following holds.
\begin{align}\label{eq:RHS_B}
\frac{1}{n}\|\Xnz(\hthnz - \pthnz)\|^2 \le (1+\delta)^2 C_{\max} \|\hthnz - \pthnz\|^2\,. 
\end{align}
\end{coro}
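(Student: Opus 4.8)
The plan is to recognize the left-hand side of~\eqref{eq:RHS_B} as $\tfrac1n\|Xv\|_2^2$ for a single $p$-dimensional vector $v$ and then apply the \emph{upper} inequality that is part of the event $\cB_\delta(n,(C_*+1)s_0,3)$. Let $v\in\reals^p$ be obtained from $\hthnz-\pthnz$ by inserting a zero in coordinate $i$; then $Xv=\Xnz(\hthnz-\pthnz)$ and $\|v\|_2=\|\hthnz-\pthnz\|_2$, so the whole claim reduces to showing $v\in\cone((C_*+1)s_0,3)$. Once this is known, the definition~\eqref{eq:Bdelta} gives $\|Xv\|_2\le(1+\delta)\sqrt{C_{\max}}\,\sqrt{n}\,\|v\|_2$, which is exactly~\eqref{eq:RHS_B}.

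To certify the cone membership of $v$ I would proceed as follows. Since $\cone(s_0,3)\subseteq\cone((C_*+1)s_0,3)$, we have $\cB(C_*)\subseteq\ctB(n,p)\cap\cB_\delta(n,s_0,3)$, so Lemma~\ref{lem:Bickel} applies on this event and yields $\|\hth\|_0<C_*s_0$; hence $\hthnz$ is supported on a set $A$ with $|A|<C_*s_0$. Moreover $\tthnz$ is $s_0$-sparse with support $S_0$, and, by~\eqref{eq:pLasso}, $\pthnz$ is the Lasso estimator for design $\Xnz$ and response $\ty=w+\Xnz\tthnz$, so the standard Lasso argument on $\ctB(n,p)$ (the noise bound needed there follows from $\ctB(n,p)$ since $\|\Xnz^\sT w\|_\infty\le\|X^\sT w\|_\infty$) places the error $\pthnz-\tthnz$ in the cone $\|(\pthnz-\tthnz)_{S_0^c}\|_1\le 3\|(\pthnz-\tthnz)_{S_0}\|_1$. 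Choosing $G\supseteq A\cup S_0$ with $|G|=(C_*+1)s_0$ and noting that $\hthnz-\pthnz$ equals $-(\pthnz-\tthnz)$ off $A\cup S_0$, I would combine these two facts to bound $\|v_{G^c}\|_1$ by $3\|v_G\|_1$.

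The delicate point — and the one I expect to be the real obstacle — is exactly this last combination: the restricted cones $\cone(\,\cdot\,,3)$ are not closed under subtraction, so simply adding the exact-support statement for $\hthnz$ to the cone statement for $\pthnz-\tthnz$ does not by itself give a cone statement for the difference; one needs a common index set $G$ containing both $\supp(\hthnz)$ and $\supp(\tthnz)$ together with a careful accounting showing that the $\ell_1$ mass of $v$ outside $G$ originates only from $\pthnz-\tthnz$ and is dominated by the mass inside $G$. Making this precise may require invoking the Lasso KKT conditions for $\hth$ and for the constrained minimizer $\hth^\pe$ (which differ only in coordinate $i$), or a sharper relation between $\supp(\hthnz)$ and $\supp(\pthnz)$. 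Everything else — the inclusion of events, the zero-padding bookkeeping, and the final call to the restricted-isometry upper bound in $\cB_\delta$ — is routine.
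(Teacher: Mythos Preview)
Your overall shape is right—embed the difference as a zero-padded $v\in\reals^p$, show $v\in\cone((C_*+1)s_0,3)$, and apply the upper inequality in $\cB_\delta$—but the route you take to cone membership is overcomplicated, and the ``delicate point'' you flag is a genuine gap. Combining a support bound on $\hthnz$ with a cone bound on $\pthnz-\tthnz$ does not yield a cone bound on $\hthnz-\pthnz$: the $\ell_1$ mass of $\pthnz-\tthnz$ inside $S_0$ need not be comparable to that of $\hthnz-\pthnz$ inside any fixed $G$, so the inequality $\|v_{G^c}\|_1\le 3\|v_G\|_1$ has no reason to hold, and invoking KKT conditions will not rescue it.

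The paper sidesteps this entirely with a simpler observation you missed: Lemma~\ref{lem:Bickel} bounds the sparsity of the Lasso \emph{solution}, not merely the cone geometry of its error. Since $\pthnz$ is itself a Lasso estimator (design $\Xnz$, response $\ty$, cf.~\eqref{eq:pLasso}), Lemma~\ref{lem:Bickel} applies to it directly on $\cB$ and gives $\|\pthnz\|_0\le C_*s_0$. Together with the sparsity of $\hthnz$ from the same lemma applied to the original problem, the difference $\hthnz-\pthnz$ is itself sparse of order $(C_*+1)s_0$. But any $k$-sparse vector lies trivially in $\cone(k,L)$ for every $L$: take $S=\supp(v)$, so that $\|v_{S^c}\|_1=0$. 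Hence $v\in\cone((C_*+1)s_0,3)$ and the upper bound in $\cB_\delta(n,(C_*+1)s_0,3)$ gives~\eqref{eq:RHS_B} immediately. No cone arithmetic on Lasso errors is needed at all.
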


Corollary~\ref{coro:Bickel} is proved in Appendix~\ref{proof:coroBickel}.

%
%
We next lower bound $\|\tx_i\|_2$. Observe that the entries $\tx_{i\ell}^2-1$, $\ell\in [n]$, are zero-mean sub-exponential random variables.
We obtain the following tail-bound inequality by applying Bernstein-type inequality for sub-exponential random variables. (See e.g.~\cite[Equation (190)]{javanmard2013hypothesis}.) 
\begin{align}\label{eq:tx0_B}
\prob\Big(\|\tx_i\| > \frac{\sqrt{n}}{5}\Big) \le e^{-n/1000}\,.
\end{align}

Combining the results of Proposition~\eqref{propo:F'main} and equations~\eqref{eq:RHS_B} and \eqref{eq:tx0_B}, we obtain that on event $\cB$, with probability 
at least $1-e^{-n/1000} - 2e^{-c_*n/s_0}$, the following holds:
\begin{align}\label{eq:z2}
\frac{1}{2n}\|\proj^\perp_{\tx_i}\Xnz(\hthnz - \pthnz)\|_2^2 \le 12\rho(1+\delta)\sqrt{C_{\max}} \lambda \|\hthnz - \pthnz\|
\end{align}
%
The last step is to lower bound the left-hand side of Equation~\eqref{eq:z2}. Write
\begin{align*}
\pproj_{\tx_i} \Xnz(\hthnz - \pthnz) &= \Xnz(\hthnz - \pthnz) - \proj_{\tx_i} \Xnz(\hthnz - \pthnz)\\
&=\Xnz(\hthnz - \pthnz) - \tx_i \Big\<\frac{\tx_i}{\|\tx_i\|^2}, \Xnz(\hthnz - \pthnz)\Big\>\,.
\end{align*}
Define vector $\mu\in \reals^p$ with 
$$\mu_i \equiv -\Big\<\frac{\tx_i}{\|\tx_i\|^2}, \Xnz(\hthnz - \pthnz)\Big\>\,, \quad \quad \mu_{\sim i} = \hthnz - \pthnz\,.$$
Then $\mu \in \cone(C_\delta s_0,3)$, by Proposition~\ref{lem:Bickel}, with $C_\delta = C_*+1$ . Hence, on the event $\cB(n,C_\delta s_0,3)$, we have
\begin{align}\label{eq:LHS_B}
\frac{1}{2n}\|\pproj_{\tx_i} \Xnz(\hthnz - \pthnz)\|^2 &= \frac{1}{2n} \|X\mu\|^2 \nonumber\\
&\ge \frac{1}{2}(1-\delta)^2C_{\min} \|\mu\|^2 \nonumber\\
&\ge \frac{1}{2}(1-\delta)^2 C_{\min} \|\hthnz - \pthnz\|^2\,.
\end{align}
Finally, note that $\cone(s_0,3) \subseteq \cone(C_\delta s_0,3) $, since $C_\delta \ge1$. Therefore, $\cB_\delta(n,C_\delta s_0,3)\subseteq \cB_\delta(n,s_0,3)$, by definition. 
Letting $\cB(C_\delta) \equiv \ctB(n,p)\cap \cB(n,C_\delta s_0,3)$, we have $\cB(C_\delta) \subseteq \cB$.
Combining equations~\eqref{eq:z2} and \eqref{eq:LHS_B}, we obtain
\begin{eqnarray}
\prob\Big(\|\hthnz - \pthnz\| \ge \frac{24\rho (1+\delta)\sqrt{C_{\max}}}{(1-\delta)^2 C_{\min}} \lambda\,; \cB(C_\delta)\Big)
\le 2\exp \Big(-\frac{c_*n}{s_0}\Big) + \exp\Big(-\frac{n}{1000}\Big)\,.
\end{eqnarray}
This completes the proof.

\section{Proof of Theorem~\ref{thm:unknown} (unknown covariance)}\label{proof:thm_unknown}
We decompose $\sqrt{n}(\dth-\th^*)$ into three terms:
\begin{eqnarray*}
\sqrt{n}(\dth-\th^*) &=& \sqrt{n} (\hth-\th^*)+\frac{1}{\sqrt{n}} MX^\sT(y-X\hth)\\
&=& \sqrt{n}(\id-M\hSigma)(\hth-\th^*)+\frac{1}{\sqrt{n}} MX^\sT w\\
&=&\underbrace{\sqrt{n}(\id-\Omega\hSigma)(\hth-\th^*)}_{ I_1} + 
\underbrace{{\sqrt{n}} (\Omega-M)
  \hSigma(\hth-\th^*)}_{I_2}+\underbrace{\frac{1}{\sqrt{n}} MX^\sT
  w}_{I_3}\, .
\end{eqnarray*}
Note that the term $I_1$ is exactly the bias vector $R$ of the debiased estimator in case of known covariance (with $M=\Omega$). Therefore, by invoking the result of Theorem~\ref{thm:main}, we have
\begin{eqnarray}
\prob\Big(\|I_1\|_\infty \ge C\sqrt{\frac{s_0}{n}}\log p \Big) \le 2pe^{-c_*n/s_0} + pe^{-n/1000}+8p^{-1}+2e^{-\delta^2 n}\,.\label{eq:I1}
\end{eqnarray}
We next provide two bounds on $\|I_2\|_\infty$.

In our first bound, we use duality of $\ell_\infty$ norm (on $\hSigma(\hth-\tth)$) and $\ell_1$ norm on rows of $\Omega - M$ as follows:
\begin{eqnarray}\label{eq:I2-1}
\|I_2\|_\infty \le \sqrt{n} \|\Omega-M\|_\infty \|\hSigma(\hth-\tth)\|_\infty\,.
\end{eqnarray}
By the KKT condition for $\hth$, there exists a vector $\xi$ in the subgradient of the $\ell_1$ norm at $\hth$, such that 
$
\hSigma(\hth-\tth) = X^\sT w/n - \lambda \xi\,.
$
Therefore,
\begin{align}\label{eq:hSigma}
\|\hSigma(\hth-\tth)\|_\infty \le \frac{1}{n} \|X^\sT w\|_\infty + \lambda \|\xi\|_\infty\,.
\end{align}
We have $\|\xi\|_\infty\le 1$ and on event $\ctB(n,p)$, 
\begin{align}
\frac{1}{n}\|X^\sT w\|_\infty \le 2\sigma \sqrt{\frac{\log p}{n}} \le \frac{\lambda}{4}\,.
\end{align}
Using these bounds in Equation~\eqref{eq:hSigma}, we obtain
$
\|\hSigma(\hth-\tth)\|_\infty \le 5\lambda/4\,.
$
As proved in~\cite[Theorem 2.4]{van2014asymptotically}, we have
$
\|M-\Omega\|_\infty \lesssim \, s_\Omega \sqrt{{(\log p)}/{n}}.
$
Combining these bounds in Equation~\eqref{eq:I2-1} gives our first bound on $I_2$.
\begin{align}\label{eq:I2-1-2}
\|I_2\|_\infty \lesssim \sqrt{n} s_\Omega \sqrt{\frac{\log p}{n}} \sqrt{\frac{\log p}{n}} \lesssim \frac{s_\Omega \log p}{\sqrt{n}}\,.
\end{align}
To obtain a second bound on $I_2$, we proceed by writing $I_2$ as 
\begin{align}
I_2 &= \sqrt{n} \Big[(\Omega \hSigma - \id) - (M\hSigma-\id) \Big] (\hth-\tth)\,.
\end{align}
Therefore,
\begin{align}
\|I_2\|_\infty \le \sqrt{n} \Big( |\Omega\hSigma - \id|_\infty + |M\hSigma-\id|_\infty \Big) \|\hth-\tth\|_1\,.
\end{align}
On event $\cG_n(A)$ (see Equation~\eqref{eq:GnA}), we have $|\Omega\hSigma - \id|_\infty \lesssim \sqrt{(\log p)/n}$. Further, for $s_\Omega \ll n/\log p$,
we have $|M\hSigma - \id|_\infty\lesssim \sqrt{(\log p)/n}$. For the proof of this inequality  we refer the reader to~\cite{van2014asymptotically}, Equation (10) and Lemma 5.3 therein. In addition, on the event $\cB\equiv\cB_\delta(n,s_0,3)\cap \ctB(n,p)$ we have $\|\hth-\tth\|_1\lesssim s_0 \lambda\approx s_0 \sqrt{(\log p)/n}$. (See e.g.,~\cite{buhlmann2011statistics}.)

Combining these bounds, we arrive at
\begin{align}\label{eq:I2-2}
\|I_2\|_\infty \lesssim \frac{s_0 \log p}{\sqrt{n}}\,.
\end{align}
We summarize bounds given by~\eqref{eq:I2-1-2} and~\eqref{eq:I2-2} as 
\begin{align}
\|I_2\|_\infty \lesssim \min(s_0,s_\Omega) \frac{\log p}{\sqrt{n}}\,.
\end{align}
Finally, note that 
$$I_3|X \sim \normal(0,\sigma^2 M\hSigma M^\sT)\,.$$
The result follows by letting $Z\equiv I_3$ and $R \equiv I_1+I_2$.

\section*{Acknowledgements}

The authors would like to thank Jason D. Lee  and Cun-Hui Zhang  for 
stimulating discussions, and Zhao Ren for valuable comments to improve the presentation. A.M. was partially supported by 
NSF grants CCF-1319979 and DMS-1106627, and the
AFOSR grant FA9550-13-1-0036.

\bibliographystyle{amsalpha}

\newcommand{\etalchar}[1]{$^{#1}$}

\newpage
\appendix

\section{Proof of Lemma~\ref{lem:Bickel}}
\label{app:Lasso-supp-size}
This proposition is an improved version of Theorem 7.2 in~\cite{BickelEtAl}.

We first recall the definition of \emph{restricted eigenvalues} as given by:
\begin{align*}
\phi_{\max}(k) &\equiv \underset{1\le \|v\|_0 \le k}{\max}\, \frac{\<v,\hSigma v\>}{\|v\|_2^2}\,.
\end{align*}
Clearly, $\phi_{\max}(k)$ is an increasing function of $k$.

Employing~\cite[Remark 5.4]{Vershynin-CS}, for any $1\le k\le n$ and a fixed subset $J\subset [p]$ with $|J| = k$, we have
\[
\prob\Big(\sigma_{\max}(\hSigma_{J,J}) \ge C_{\max} + C\sqrt{\frac{k}{n}}+ \frac{t}{\sqrt{n}}\Big) \le 2e^{-ct^2}\,,
\]
for $t \ge 0$, where $C$ and $c$ depend only on $C_{\max}$. Therefore, by union bound over all possible subsets $J \subseteq [p]$ we obtain
\begin{align}
\prob\Big(\phi_{\max}(k) \ge  C_{\max} + C\sqrt{\frac{k}{n}}+ \frac{t}{\sqrt{n}} \Big) \le 2 {p\choose k} e^{-c t^2} \le  2e^{-ct^2+ k\log p+k}\,,\label{eq:bound-phi}
\end{align}
for $t \ge 0$.

Let $\hS \equiv \supp(\hth)$. Recall that the stationarity condition for the Lasso cost function reads
$X^{\sT}(y-X\hth) = n\lambda\, v(\hth)$, where
$v(\hth)\in\partial\|\hth\|_1$. Equivalently,
\begin{align*}
\frac{1}{n}X^{\sT}X(\th^*-\hth) = \lambda\,
v(\hth)-\frac{1}{n}X^{\sT} w\, .
\end{align*}
On the event $\ctB(n,p)$, we have
$\|X^\sT w\|_\infty \le n\lambda/4$. Thus for all $i\in \hS$
\begin{align*}
\left|\frac{1}{n}[X^{\sT}X(\th^*-\hth)]_i\right| \ge
\frac{\lambda}{2}\, .
\end{align*}
%
Squaring and summing the last identity
over $i\in\hS$, we obtain that, for $h \equiv
n^{-1/2}X(\th^*-\hth)$,
\begin{align}\label{eq:square}
\frac{\lambda^2}{4}|\hS|&\le \frac{1}{n} \sum_{i\in \hS} (e_i^\sT X^\sT h)^2 = \<h,\frac{1}{n}X_{\hS}X_{\hS}^{\sT}\,
h\>\le \|\hSigma_{\hS,\hS}\|_2^2 \|h\|^2\
\le  \phi_{\max}(|\hS|)\|h\|_2^2\,.
%
\end{align}
By a similar argument as in Corollary~\ref{coro:zeta}, on the event $\cB \equiv \ctB(n,p) \cap \cB(n,s_0,3)$ we have
$$\|h\|_2^2\le \frac{4\lambda^2s_0}{(1-\delta)^2 C_{\min}}\,.$$ 
Thus,
\begin{align}\label{eq:bound-phi0-A}
|\hS|&\le \frac{16 \phi_{\max}(\hS)}{(1-\delta)^2 C_{\min}}s_0\,.
\end{align}

Note that $|\hS| \le n$ by the fact
that the columns of $X$ are in generic positions. Using monotonicity property of $\phi_{\max}(\cdot)$, we have
$\phi_{\max}(|\hS|) \le \phi_{\max}(n)$. 
Invoking equation~\eqref{eq:bound-phi} with $k=n$, we have $\phi_{\max}(n) < c_1\sqrt{\log p}$ with high probability for some constant $c_1$.

Hence, by equation~\eqref{eq:bound-phi0-A}
\begin{align}
|\hS| < \widetilde{C} s_0 \sqrt{\log p}\,,\quad \quad \tilde{C} \equiv \frac{16 c_1}{(1-\delta)^2 C_{\min}}\,.
\end{align}
%
%
Now, we use this bound on $|\hS|$ along with equation~\eqref{eq:bound-phi0-A} to get a better bound on $|\hS|$.
Again by using the fact that $\phi_{\max}(k)$ is a non-decreasing function of $k$, we have
\begin{align}
\phi_{\max}(|\hS|) < \phi_{\max}(\widetilde{C} s_0\sqrt{\log p}) \le C_{\max}\,,\label{eq:phi_maxB}
\end{align}
with high probability where we used the assumption $n\gg s_0 (\log p)^2$.
Using this bound in equation~\eqref{eq:bound-phi0-A}, we get
\[|\hS|< \frac{16 C_{\max}}{(1-\delta)^2 C_{\min}}s_0\,.\]
The result follows.

\section{Proof of Lemma~\ref{lem:prop-norm}}\label{app:prop-norm}
By definition of $\ell_\infty$ operator norm, for a symmetric invertible matrix $A$ we have
\begin{align}
\|A^{-1}\|_\infty  \equiv \max_{v\neq 0} \frac{\|A^{-1} v\|_\infty}{\|v\|_\infty} = 
\max_{u\neq 0} \frac{\|u\|_\infty}{\|Au\|_\infty} = 
\frac{1}{\min_{u\neq 0} \frac{\|Au\|_\infty}{\|u\|_\infty}}\,.
\end{align}
Note that for any set $T\subseteq [p]$ we have 
$$ \min_{u\neq 0} \frac{\|Au\|_\infty}{\|u\|_\infty} \le \min_{\tilde{u}\neq 0} \frac{\|A_{T,T}\tilde{u}\|_\infty}{\|\tilde{u}\|_\infty}\,,$$
whence we obtain
\begin{align}
\|A^{-1}\|_\infty \ge \frac{1}{\min_{u\neq 0} \frac{\|A_{T,T}\tilde{u}\|_\infty}{\|\tilde{u}\|_\infty}} = \|A_{T,T}^{-1}\|_\infty\,.
\end{align}
Since the above inequality holds for any $T\subseteq [p]$, we obtain the desired result.

\section{Proof of Lemma~\ref{lem1}}\label{app:lem1}
For $\th$ we have
\begin{align*}
\cL_{y, X} (\theta^*_i,\th) = \frac{1}{2n}\|y-\tx_i \theta^*_i -\Xnz\th\|^2+\lambda\|\th\|_1+\lambda|\theta^*_i|
\end{align*}
Let $\ty \equiv y-\tx_i \theta^*_i$. We then have
\begin{align}
\cL_{y, X} (\theta^*_i,\th) &=  \frac{1}{2n}\|\ty -\Xnz\pthnz -\Xnz(\th-\pthnz)\|^2+\lambda\|\th\|_1+\lambda|\theta^*_i| \nonumber\\
&= \cL_{y,X}(\theta^*_i, \pthnz) + \frac{1}{2n}\|\Xnz(\th-\pthnz)\|^2 - \frac{1}{n}\<\ty-\Xnz \pthnz,\Xnz(\th-\pthnz)\> \nonumber\\
&\quad+ \lambda \|\th\|_1-\lambda\|\pthnz\|_1\label{eq:Diff1}
\end{align}
Since $\pthnz$ is the minimizer of $\cL_{y,X}(\theta^*_i,\th)$ by KKT condition we have
\begin{align}\label{eq:subgrad}
\frac{1}{n}\Xnz^\sT(\ty - \Xnz\pthnz) = \lambda \xi,\quad \quad \xi\in \partial\|\pthnz\|_1\,.
\end{align}
Applying equation~\eqref{eq:subgrad} in equation~\eqref{eq:Diff1} we get
\begin{align*}
\cL_{y, X} (\theta^*_i,\th) - \cL_{y,X}(\theta^*_i, \pthnz) &=
 \frac{1}{2n}\|\Xnz(\th-\pthnz)\|^2 + \lambda\Big(\|\th\|_1- \|\pthnz\|_1 - \<\xi,\th-\pthnz\>\Big)\\
 &\ge  \frac{1}{2n}\|\Xnz(\th-\pthnz)\|^2\,,
\end{align*}
where the last step follows from the definition of a subgradient.
\section{Proof of Lemma~\ref{lem:f_k}}\label{app:aux0}
Define $\xopta = \arg\min_x f_1(x)$. It is simple to see that $\xopta = \eta(a+u_1;\lambda/c_i)$, where $\eta(x;\alpha)$ is the soft-thresholding function given by 
\begin{align*}
\eta(x;\alpha) = \begin{cases}
x-\alpha & x\ge \alpha\,,\\
0 & |x|\le \alpha\,,\\
x+\alpha & x\le -\alpha\,.
\end{cases}
\end{align*}
By substituting for $\xopta$ in equation~\eqref{eq:expansion-nn} and after some algebraic manipulations, we obtain
\begin{eqnarray*}
f_1(\xopta) = c_i\cH(a+u_1;\lambda/c_i) +b_1\,,
\end{eqnarray*}
where $\cH(x;\alpha)$ is the Huber function:
\begin{eqnarray*}
\cH(x;\alpha) = \begin{cases}
\alpha |x| -\frac{\alpha^2}{2} &\text{ if } |x|>\alpha\,,\\
\\
\frac{x^2}{2} &\text{ if } |x|\le\alpha\,.
\end{cases}
\end{eqnarray*}
Similarly, setting $\xoptb = \arg\min_x f_2(x)$ we have 
\begin{eqnarray*}
f_1(\xoptb) = c_i\cH(a+u_2;\lambda/c_i) +b_2\,.
\end{eqnarray*}

Define $\Delta_1\equiv f_1(a) - f(\xopta)$ and $\Delta_2\equiv f_2(a) - f(\xoptb)$. Substituting for $f_1(a)$ and $f_2(a)$, we get
\begin{align}
\Delta_1 =c_i \frac{u_1^2}{2} + \lambda |a| - c_i\cH(a+u_1;\lambda/c_i)\,,\\
\Delta_2 =c_i \frac{u_2^2}{2} + \lambda |a| - c_i\cH(a+u_2;\lambda/c_i)\,.
\end{align}
We then write
\begin{align}
f_1(a)-f_2(a) = \Delta_1 - \Delta_2 + f_1(\xopta) - f_2(\xoptb) \le \Delta_1 - \Delta_2\,,
\end{align}
where we use the assumption $\min_x f_1(x) \le \min_x f_2(x)$.

Finally we bound $\Delta_1-\Delta_2$ as follows:
\begin{align*}
\Delta_1-\Delta_2 &= c_i \frac{u_1^2-u_2^2}{2} + c_i \Big\{\cH(a+u_2;\lambda/c_i)- \cH(a+u_1;\lambda/c_i) \Big\}\\
&\le c_i u_2 (u_1-u_2) + \frac{(u_1-u_2)^2}{2} + \lambda |u_1-u_2|\,
\end{align*}
where the last inequality holds since $\cH'(x;\alpha) = x -\eta(x;\alpha)$ and hence $|\cH'(x;\alpha)| \le \alpha$ and due to the
mean-value theorem. 
\section{Proof of Lemma~\ref{lem:aux1}}\label{app:aux1}
To lighten the notation, we drop the subscripts $y,X$ in $\cL_{y,X}(\cdot)$.
Recall that $\Delta(\th)\equiv \cL_{y,X}(\theta^*_i,\th) - \cL^+(\th)$. We start by expanding $\cL(\theta_i,\th)$.
\begin{align*}
\cL(\theta_i,\th) =\frac{1}{2n} \|y-\tx_i \theta_i- \Xnz \th\|_2^2+ \lambda |\theta^*_i| + \lambda \|\th\|_1 \,.
\end{align*}
Plugging in $y= \tx_i \theta^*_i + \Xnz \tthnz +w$ and rearranging the terms, we obtain
\begin{align}
\cL(\theta_i,\th) =&\frac{1}{2n}\|w+ \Xnz(\tthnz- \th)\|_2^2+ \frac{1}{n} \<\th^*_i-\th_i, \tx_i^\sT(w+\Xnz(\tthnz-\th))\>\nonumber\\
&+\frac{1}{2n} \|\tx_i\|^2 (\th^*_i - \th_i)^2+\lambda |\theta_i| + \lambda \|\th\|_1\,.\label{eq:expansion}
\end{align}
Therefore,
\begin{align}
\cL(\th^*_i,\th) = \frac{1}{2n}\|w+ \Xnz(\tthnz- \th)\|_2^2+\lambda |\theta^*_i| + \lambda \|\th\|_1\,.\label{eq:expansion-tt}
\end{align}
Combining equations~\eqref{eq:expansion} and~\eqref{eq:expansion-tt}, we rewrite $\cL(\theta_i,\th)$ as
\begin{align}
\cL(\theta_i,\th) =&\cL(\th^*_i,\th)+ \frac{1}{n} \<\th^*_i-\th_i, \tx_i^\sT(w+\Xnz(\tthnz-\th))\>\nonumber\\
&+\frac{1}{2n} \|\tx_i\|^2 (\th^*_i - \th_i)^2+\lambda |\theta_i|  - \lambda |\th^*_i|\nonumber\\
=& \lambda |\theta_i| + \frac{1}{2n} \|\tx_i\|^2 \Big(\th_i-\th^*_i - \frac{\tx_i^\sT}{\|\tx_i\|^2} (w+\Xnz(\tthnz-\th)) \Big)^2\nonumber\\
&-\frac{1}{2n\|\tx_i\|^2} \Big(\tx_i^\sT(w+\Xnz(\tthnz-\th)) \Big)^2 + \cL(\th^*_i,\th)-\lambda |\th^*_i|\,.\label{eq:expansion-n}
\end{align}
Writing expression~\eqref{eq:expansion-n} in terms of $c_i \equiv \|\tx_i\|^2/n$ and $u(\th)$, given by~\eqref{eq:u}, we get
\begin{align}
\cL(\theta_i,\th)  = \lambda |\theta_i| +\frac{c_i}{2}(\th_i - \th^*_i -u(\th))^2-\frac{c_i}{2}u(\th)^2+ \cL(\th^*_i,\th)-\lambda |\th^*_i|\,.\label{eq:expansion-nn-NEW}
\end{align}
%
The result follows.
%
\section{Proof of Preposition~\ref{propo:F'main}}\label{app:F'main}

Let $T = \supp(\pthnz) \cup \supp(\theta_*)$. By Lemma~\ref{lem:Bickel}, $|T| < C_\delta s_0$, where 
$$C_\delta \equiv C_*+1 = \frac{16}{(1-\delta)^2} \frac{C_{\max}}{C_{\min}}+1\,.$$
$J = \tilde{J}\backslash \{i\}$.
For $i\in[p]$ define 
$$\Sigma_{i|T} \equiv \Sigma_{i,i} - \Sigma_{i,T} (\Sigma_{T,T})^{-1} \Sigma_{T,i}\,.$$
Since $\tx_i$ and $X_T$ are jointly Gaussian, we have
\begin{eqnarray}\label{eq:schur}
\tx_i = X_T (\Sigma_{T,T})^{-1} \Sigma_{T,i} + \Sigma_{i|T}^{1/2} z\,,
\end{eqnarray}
where $z\in \reals^{n}$ is independent of $X_T$ with i.i.d standard normal coordinates. 

Recalling the definition of $c_i \equiv \|\tx_i\|^2/n$ and $u(\theta)$, given by equation~\eqref{eq:u}, we write $c_i|u(\pthnz)|$ as 
\begin{align}
c_i|u(\pthnz)| &= \frac{1}{n} \bigg|\tx_i^\sT(w+\Xnz(\tthnz-\pthnz)) \bigg|\nonumber\\
&=\frac{1}{n} \bigg|\tx_i^\sT(w+X_T(\tthT-\pthT)) \bigg|\nonumber\\
&\le \frac{1}{n} |\tx_i^\sT w| + \frac{1}{n} \Sigma_{i|T}^{1/2} \bigg|z^\sT X_T(\tthT-\pthT)\bigg|
+\frac{1}{n} \bigg|\Sigma_{i,T} (\Sigma_{T,T})^{-1} \Xnz^\sT\Xnz(\tthT-\pthT)\bigg|\nonumber\\
&\le \frac{1}{n} |\tx_i^\sT w| + \frac{1}{n} \Sigma_{i|J}^{1/2} \bigg|z^\sT X_T(\tthT-\pthT)\bigg|
+\frac{1}{n} \|\Sigma_{i,T} (\Sigma_{T,T})^{-1}\|_1 \|X_T^\sT X_T(\tthT-\pthT)\|_\infty\,.
\label{eq:u2}
\end{align}
The first inequality here follows from equation~\eqref{eq:schur}.

In the following we bound each term on the RHS of equation~\eqref{eq:u2} individually.

On the event $\ctB(n,p)$, defined by equation~\eqref{eq:ctB}, we have
\begin{align}\label{eq:term1}
\frac{1}{n} \|\tx_i^\sT w\| \le \frac{1}{n}\|X^\sT w\|_\infty \le 2\sigma\sqrt{\frac{\log p}{n}} \le \frac{\lambda}{4}\,.
\end{align}

We use Corollary~\ref{coro:zeta} to bound the second term of expression~\eqref{eq:u2}.
We recall the event $\cB_\delta(n,s_0,3)$, given by equation~\eqref{eq:Bdelta} and let $\cB \equiv  \cB_\delta(n,s_0,3) \cap \ctB(n,p)$.
Further, recall the notation $\zeta_i \equiv \Xnz(\tthnz-\pthnz)/\sqrt{n} = X_T(\tthT-\pthT)/\sqrt{n}$ and the event $\event_i$ defined by equation~\eqref{eq:eventE}. We write
\begin{align}
\prob\Big(\frac{1}{\sqrt{n}}\Sigma_{i|T}^{1/2}\; |z^\sT \zeta_i|\ge \lambda; \cB \Big) 
&\le \prob\Big(\frac{1}{\sqrt{n}}\Sigma_{i|T}^{1/2}\; |z^\sT \zeta_i| \ge \lambda; \event_i \Big) \nonumber\\
&= \E \Big\{\ind\Big(\frac{1}{\sqrt{n}}\Sigma_{i|T}^{1/2}\; |z^\sT \zeta_i|\ge \lambda \Big) \cdot \ind(\event_i) \Big\} \nonumber\\
&\le 2\E \Big(\exp\Big[-\frac{n\lambda^2}{2\|\zeta_i\|^2}\Big]\cdot  \ind(\event_i)\Big)\nonumber\\
&\le 2\exp (-c_*\frac{n}{s_0}) \,,\label{eq:term2}
\end{align}
with $c_* \equiv (1-\delta)^2C_{\min}/8$. Here, the penultimate inequality follows from Fubini's theorem where we first integrate w.r.t $z$ and then w.r.t $\zeta_i$. Note that $z$ and $\zeta_i$ are independent. Therefore, $z^\sT \zeta_i | \zeta_i \sim \normal(0,\|\zeta_i\|^2)$. In the last step, we applied Corollary~\ref{coro:zeta}.

We next bound the third term on the RHS of equation~\eqref{eq:u2}. 
Note that the KKT conditions for optimization~\eqref{eq:pLasso} reads
\begin{eqnarray}\label{eq:nu}
\frac{1}{n}\Xnz^\sT (w+\Xnz (\tthnz-\pthnz)) = \lambda \xi\,,
\end{eqnarray}
for $\xi \in \partial\|\pthnz\|_1$. Since $\tthnz-\pthnz$ is supported on $T$, we have $\Xnz(\tthnz-\pthnz) = X_T(\tthT-\pthT)$. 
To lighten the notation, let 
$$\nu \equiv \frac{1}{n}X_T^\sT X_T(\tthT-\pthT)\,.$$
We know by equation~\eqref{eq:nu},
$$\|\nu\|_\infty \le \frac{1}{n}\|X_T^\sT w\|_\infty + \lambda \|\xi_T\|_\infty\,.$$
%
On the event $\ctB(n,p)$ we have
\[\frac{1}{n}\|X_{T}^\sT w\|_\infty \le 2\sigma \sqrt{\frac{\log p}{n}} \le \frac{\lambda}{4}\,. \]
Combining the above two inequalities we obtain
\begin{align}\label{eq:nu-bound}
\|\nu\|_\infty \le 5\lambda/4\,.
\end{align}

We next employ Condition~\ref{Condition:L1} to bound $\|\Sigma_{i,T}(\Sigma_{T,T})^{-1}\|_1$.
Define $\tilde{T} = T\cup \{i\}$ and write the inverse of $\Sigma_{\tilde{T},\tilde{T}}$
using Schur complement:
\[
\Sigma_{\tilde{T},\tilde{T}}^{-1} = 
\begin{pmatrix}
\Sigma_{i|T}^{-1} & -\Sigma_{i|T}^{-1}  \Sigma_{i,T} \Sigma_{T,T}^{-1}\\
-\Sigma_{T,T}^{-1}  \Sigma_{T,i} \Sigma_{i|T}^{-1} & \Sigma_{T,T}^{-1} + \Sigma_{T,T}^{-1}  \Sigma_{T,i} \Sigma_{i|T}^{-1} \Sigma_{i,T} \Sigma_{T,T}^{-1}
\end{pmatrix}\,.
\]
By Condition~\ref{Condition:L1} and as $|\tilde{T}|\le C_\delta s_0$, $\|\Sigma_{\tilde{T},\tilde{T}}^{-1}e_i\|_1\le \rho$. Further, by Condition~\ref{Condition:diag}, $\Sigma_{i|T}\le \Sigma_{i,i}\le 1$. Hence, we get
\begin{eqnarray}\label{eq:rho-1}
\rho \ge \|\Sigma_{\tilde{T},\tilde{T}}^{-1}e_i\|_1\ge 
1+ \|\Sigma_{i,T} (\Sigma_{T,T})^{-1}\|_1\,.
\end{eqnarray}

Using equations~\eqref{eq:term1} to~\eqref{eq:rho-1}, we bound the RHS of equation~\eqref{eq:u2} as follows. Under the event $\cB$,
\[
c_i|u(\pthnz)| \le \frac{5\lambda}{4}\rho\,.
\]
%

%
%
\section{Proof of Corollary~\ref{coro:Bickel}}\label{proof:coroBickel}
Note that $\pthnz$ is the Lasso estimators corresponding to $(\ty,X_{\sim i})$, according to equation~\eqref{eq:pLasso}.
As a corollary of Proposition~\ref{lem:Bickel}, on event $\cB$, $\|\pthnz\|_0 \le C_* s_0$, with $C_* \equiv (16C_{\max}/C_{\min})(1-\delta)^{-2}$.
Also, $\|\hth_{\sim i}\|_0 \le s_0$. Therefore,$(0,\hth_{\sim i}-\pthnz) \in \cone((C_*+1)s_0,3)$ and, by definition, on event $\cB_\delta(n,(C_*+1)s_0,3)$, the claim holds true.

\section{Sample splitting techniques}\label{app:dataSplit}

In this appendix, we discuss how sample splitting can be used to 
modify the debiased estimator as to go around the sparsity barrier at 
$s_0 = o(\sqrt{n}/\log p)$. This provides an alternative to the more
careful analysis carried out in the main body of the paper, that we
discuss for the sake of simplicity. 
As mentioned in the introduction, sample splitting has its own
drawbacks, most notably the dependence of the results on the random
data split, and the  sub-optimal use of all the samples.

For the sake of notational simplicity we assume here that the number
of samples is $2n$ and is randomly split in two batches of size $n$:
$(x_1,y_1)$, \dots, $(x_n,y_n)$, and $(\bax_1,\bay_1)$,\dots,
$(\bax_n,\bay_n)$. Note that the change of notation only amounts to a
constant multiplicative factor in the sample size, which is of no
concern to us.  In vector notation, these batches are denoted as
$(y,X)$ and $(\bay,\baX)$. We then proceed as follows:
\begin{enumerate}
\item We use the second batch to compute  the Lasso estimator, namely
\begin{eqnarray}
\hth(\bay,\baX;\lambda)\equiv \arg\max_{\theta\in\reals^p}
\left\{\frac{1}{2n}\|\bay-\baX\theta\|_2^2+\lambda
\|\theta\|_1\right\}\,.
\end{eqnarray}
\item We use the first batch to compute the debiasing matrix $M$,
  e.g. using the node-wise Lasso as in Section \ref{sec:debias}.
\item We use the first batch to implement the debiasing, namely
\begin{align}\label{eq:split-debiased}
\sth = \hth(\bay,\baX) + \frac{1}{n} M X^\sT \big(y- X\hth(\bay,\baX)\big)\,.
\end{align}
\end{enumerate}
The main remark is that, thanks to the splitting, $X$ is statistically
independent from $\hth$, which greatly simplifies the analysis. Notice
that we did not use the responses in $y$.

For the sake of simplicity, we shall analyze this procedure in the
case in which the precision matrix $\Omega$ is known, and we hence set
$M=\Omega$. The generalization to $M$ constructed via the node-wise
Lasso is straightforward as in the proof of Theorem \ref{thm:unknown}.

The next statement implies that, for sparsity level $s_0 = o(n/(\log
p)^2)$,the sample splitting debiased estimator is asymptotically Gaussian.
\begin{propo}
Consider the linear model~\eqref{eq:NoisyModel} where $X$ has
independent Gaussian rows, with zero mean and covariance $\Sigma$.  
Suppose that $\Sigma$ satisfies the technical conditions of Theorem
\ref{thm:main}

Let $\hth$ be the Lasso estimator defined by~\eqref{eq:Lasso} with $\lambda = 8\sigma \sqrt{(\log p)/n}$. Further, let $\sth$ be 
the modified (sample-splitting) debiased estimator defined in
Eq.~(\ref{eq:split-debiased}) with $M=\Omega\equiv \Sigma^{-1}$. 
Then, there exist constants $c, C$ depending solely on
$C_{\min},C_{\max},\delta$ and $\rho$, such that, for $n\ge c\,\max(\log p, s_0\log(p/s_0))$
the following holds true:
\begin{align}
&\sqrt{n}(\dth-\th^*)= Z + R\,, \quad \quad Z|X\sim\normal(0,\sigma^2\Omega\hSigma\Omega)\,, \\
&\lim_{n\to \infty}\prob\Big(\|R\|_\infty \ge C\sqrt{\frac{s_0}{n}}\log p \Big) = 0\, .
\end{align}
\end{propo}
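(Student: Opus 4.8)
The plan is to mirror the decomposition used in the proof of Theorem~\ref{thm:main}, but exploit the statistical independence created by the sample split, which makes the bias terms dramatically easier to control. Write $\ty = \bay$ and $\ba{\hth} = \hth(\bay,\baX;\lambda)$, and expand
\begin{align*}
\sqrt{n}(\sth-\th^*) &= \sqrt{n}(\ba{\hth}-\th^*) + \frac{1}{\sqrt{n}}\Omega X^\sT(X\th^* + w - X\ba{\hth})\\
&= \frac{1}{\sqrt{n}}\Omega X^\sT w + \sqrt{n}(\id - \Omega\hSigma)(\ba{\hth}-\th^*)\,,
\end{align*}
so that $Z \equiv \frac{1}{\sqrt{n}}\Omega X^\sT w$ and $R \equiv \sqrt{n}(\id-\Omega\hSigma)(\ba{\hth}-\th^*)$. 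Since $w\sim\normal(0,\sigma^2\id)$ is independent of $X$, we immediately get $Z|X\sim\normal(0,\sigma^2\Omega\hSigma\Omega)$, which is the claimed Gaussian part.

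The key point is that $\ba{\hth}$ is a function of $(\baX,\bay)$ only, hence independent of $X$. Condition on $\ba{\hth}$; set $\Delta \equiv \ba{\hth}-\th^*$, a fixed vector (given the second batch). Then $R_i = \sqrt{n}\, e_i^\sT(\id-\Omega\hSigma)\Delta = -\sqrt{n}\,(\Omega e_i)^\sT(\hSigma - \Sigma)\Delta$, using $\Omega\Sigma = \id$. Write $u_i = \Omega e_i$ and note $(\hSigma-\Sigma)\Delta = \frac1n X^\sT X\Delta - \Sigma\Delta = \frac1n\sum_{k=1}^n x_k(\langle x_k,\Delta\rangle - \E[\langle x_k,\Delta\rangle \mid \cdot]) + (\text{mean-zero bilinear terms})$; more cleanly, $u_i^\sT(\hSigma-\Sigma)\Delta = \frac1n\sum_{k=1}^n\big(\langle u_i,x_k\rangle\langle x_k,\Delta\rangle - \E[\langle u_i,x_k\rangle\langle x_k,\Delta\rangle]\big)$, an average of $n$ i.i.d.\ centered products of two jointly Gaussian linear forms. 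Each summand is sub-exponential with parameter $\lesssim \|u_i\|_{\Sigma}\|\Delta\|_{\Sigma}\lesssim \sqrt{\Omega_{ii}}\cdot\sqrt{C_{\max}}\|\Delta\|_2$ (here $\|v\|_\Sigma^2 = v^\sT\Sigma v$, and $u_i^\sT\Sigma u_i = \Omega_{ii}\le C_{\min}^{-1}$). A Bernstein bound for sub-exponential variables then gives, for each fixed $i$,
\[
\prob\Big(|u_i^\sT(\hSigma-\Sigma)\Delta| \ge t \,\Big|\, \ba{\hth}\Big) \le 2\exp\Big(-c\,n\min\Big\{\frac{t^2}{C_{\max}C_{\min}^{-1}\|\Delta\|_2^2},\ \frac{t}{\sqrt{C_{\max}C_{\min}^{-1}}\|\Delta\|_2}\Big\}\Big)\,.
\]
Taking $t = C\sqrt{(\log p)/n}\,\|\Delta\|_2$ and a union bound over $i\in[p]$ yields $\|R\|_\infty = \sqrt{n}\,\max_i|u_i^\sT(\hSigma-\Sigma)\Delta| \lesssim \sqrt{\log p}\cdot\|\Delta\|_2$ with probability $1 - O(p^{-1})$, conditionally on the second batch (and hence unconditionally, since the bound does not depend on $\ba{\hth}$ beyond $\|\Delta\|_2$).

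It remains to bound $\|\Delta\|_2 = \|\ba{\hth}-\th^*\|_2$. Apply the standard Lasso $\ell_2$ consistency bound (as recalled after Lemma~\ref{lem:ctB}, via Lemma~\ref{lem:rudelson} and Lemma~\ref{lem:ctB} applied to the second batch): on the event $\ctB(n,p)\cap\cB_\delta(n,s_0,3)$ for $(\baX,\bay)$, which holds with probability $1 - 2p^{-1} - 2e^{-\delta^2 n}$ provided $n\ge c\,s_0\log(p/s_0)$, we have $\|\ba{\hth}-\th^*\|_2 \le \frac{\sqrt{20}}{(1-\delta)^2 C_{\min}}\lambda\sqrt{s_0} = C'\sigma\sqrt{(s_0\log p)/n}$. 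Substituting this into the previous display gives $\|R\|_\infty \lesssim \sigma\sqrt{s_0/n}\,\log p$ with probability $\to 1$, as claimed. Set $\delta = 1-1/\sqrt2$ as in Theorem~\ref{thm:main} and collect the failure probabilities; the condition $n\ge c\,\max(\log p, s_0\log(p/s_0))$ ensures all invoked events hold with probability tending to one.

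The main obstacle, relative to the genuinely hard argument in the body of the paper, essentially disappears here: the dependence between $(\Omega\hSigma-\id)$ and $\hth$ — which forced the delicate leave-one-out perturbation analysis of $R^{(3)}$ in Theorem~\ref{thm:main} — is broken by the sample split, so the bias is just an average of i.i.d.\ mean-zero sub-exponential terms with a fixed (conditionally) multiplier $\Delta$. The only genuine care needed is (i) verifying the sub-exponential Orlicz-norm bound for products $\langle u_i,x_k\rangle\langle x_k,\Delta\rangle$ of correlated Gaussians — this is a routine application of the fact that a product of two (possibly correlated) sub-Gaussian variables is sub-exponential, e.g.\ via Cauchy–Schwarz on the moment generating function or \cite[Lemma~2.7.7]{Vershynin-CS}-type estimates — and (ii) making sure the union bound over $i\in[p]$ and the independence structure are handled so that the final probability statement is genuinely unconditional. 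Neither is difficult, but (i) is where one must be slightly careful about constants depending on $C_{\min},C_{\max}$.
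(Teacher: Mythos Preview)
Your proof is correct and follows essentially the same route as the paper's: both condition on the second batch to make $\Delta=\ba{\hth}-\theta^*$ deterministic, write each coordinate of $R$ as an average of $n$ i.i.d.\ centered sub-exponential terms (products of two correlated Gaussian linear forms), apply a Bernstein-type inequality with sub-exponential norm $\lesssim\sqrt{C_{\max}/C_{\min}}\,\|\Delta\|_2$, union-bound over $p$ coordinates, and finish with the standard Lasso $\ell_2$ bound $\|\Delta\|_2\lesssim\sigma\sqrt{(s_0\log p)/n}$. The only cosmetic difference is that the paper obtains the $\psi_1$ bound via the whitening $\Omega^{1/2}x_k\sim\normal(0,\id)$ whereas you compute the $\|\cdot\|_\Sigma$ norms directly; the resulting constants are the same.
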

\begin{proof}
Proceeding as in the proof of Theorem \ref{thm:main}, it is sufficient
to bound the bias term of $\sqrt{n}(\sth -\th^*)$, which is given by (cf. \eqref{eq:noisebias})
\begin{align}
R\equiv \sqrt{n} (\Omega \hSigma -\id)(\th^* - \hth)\,.
\end{align}
To lighten the notation, let $u = \th^*-\hth$. Expanding $R$ we get
\begin{align}
R = \sqrt{n}(\Omega \hSigma - \id)u = \frac{1}{\sqrt{n}} \sum_{i=1}^n (\Omega x_ix_i^\sT - \id)u\,.
\end{align}
To control $\|R\|_\infty$, we bound each component $R_j$ individually. Let $e_j$ be the $j$-th element of the standard basis with one at the $j$-th position and zero everywhere else. We write 
\begin{align*}
R_j &= \frac{1}{\sqrt{n}} \sum_{i=1}^n (e_j^\sT \Omega x_i)(x_i^\sT u) - u_j\,.
\end{align*}
Let $Z_i \equiv (e_j^\sT \Omega x_i)(x_i^\sT u) - u_j$. 
Note that conditional on $(\bay,\baX)$, $\htheta$ and therefore $u$ are deterministic. Furthermore, since the first batch $(y,X)$ is independent of $(\bay,\baX)$, the rows $x_i$ are independent conditional on $(\bay,\baX)$. Therefore, $Z_i|(\bay,\baX)$ are independent with $\E(Z_i|\bay,\baX) = e_j^\sT \Omega \Sigma u - u_j = 0$. We let $\|\cdot\|_{\psi_1}$ and $\|\cdot\|_{\psi_2}$ respectively denote the sub-exponential and sub-gaussian norms and condition on $(\bay,\baX)$ in the sequel. As shown in \cite[Remark 5.18]{Vershynin-CS},
$$\|Z_i\|_{\psi_1} \le 2 \|(e_j^\sT \Omega x_i) (x_i^\sT u)\|_{\psi_1}\,.$$

In addition, for any two random variables $v$ and $w$, we have $\|v w\|_{\psi_1} \le 2 \|v\|_{\psi_2} \|w\|_{\psi_2}$. Hence,
\begin{align*}
\|(e_j^\sT \Omega x_i) (x_i^\sT u)\|_{\psi_1} &\le 2\|e_j^\sT \Omega x_i\|_{\psi_2} \|x_i^\sT u\|_{\psi_2}\\
&= 2\|e_j^\sT \Omega^{1/2} \|_2 \|\Omega^{1/2} x_i\|_{\psi_2}^2 \|\Omega^{-1/2} u \|_2\\
&\le 2\sqrt{C_{\max}/C_{\min}} \, \|\Omega^{1/2} x_i\|_{\psi_2}^2 \|u\|_2\,.
\end{align*}
Given that $\Omega^{1/2} x_i \sim \normal(0,\id)$, we get $\|\Omega^{1/2} x_i\|_{\psi_2} = 1$. Hence, $\max_{i} \|Z_i\|_{\psi_1} \le C \|u\|_2$ with $C \equiv 4 \sqrt{C_{\max}/C_{\min}}$.
Applying Bernstein-type inequality \cite[Proposition 5.16]{Vershynin-CS}, for every $t\ge 0$, we have
\begin{eqnarray}\label{eq:Bernstein}
\prob\Big\{\bigg|\sum_{i=1}^n \frac{1}{\sqrt{n}} Z_{i}\Big|\ge t \;\bigg| (\bay,\baX) \bigg\}\le
2\exp \Big[ -c \min\Big(\frac{t^2}{C^2\|u\|_2^2}, \frac{t\sqrt{n}}{C\|u\|_2}\Big)\Big] \,,
\end{eqnarray}
where $c >0$ is an absolute constant. Observe that on the event $\cB\equiv \cB_\delta(n,s_0,3)\cap \tilde{\cB}(n,p)$\footnote{See Section~\ref{sec:preliminary} for definition of $\cB_\delta(n,s_0,3)$ and $\tilde{\cB}(n,p)$}, we have
\begin{align*}
\|u\|_2^2 = \|\th^*-\hth\|_2^2 \lesssim s_0\lambda^2\,.
\end{align*}
Therefore, by using tail bound~\eqref{eq:Bernstein} and applying union
bound over the $p$ entries of $R$, we get 
(for $n\ge c\log p$ with $c$ a suitable constant) 
\[
\|R\|_\infty \lesssim \sqrt{\frac{s_0}{n}} \log p\,,
\]
with high probability. 
\end{proof}

\section{Proof of Propositions~\ref{propo:minimaxUnknown-UB} and \ref{propo:minimaxUnknown-LB}}
\label{sec:proofminimax_UL}
\subsection{Proof of Proposition~\ref{propo:minimaxUnknown-UB}}\label{sec:proofminimax_U}
Fix $M$ and $\lambda$ for which Equations~\eqref{eq:Delta1}-\eqref{eq:Delta2} hold true and let 
\[\th^\de = \hth+\frac{1}{n}MX^\sT (y-X\hth)\,.\]
We construct confidence interval $J^\de_\alpha$ centered at $\hth^\de$ as follows:
\begin{eqnarray}
J^\de_\alpha &\equiv& [\hth^\de_1-\delta(\alpha,n),\hth^\de_1+\delta(\alpha,n)]\,\label{eq:ConfInterval-app1}\\
\delta(\alpha,n) &\equiv&
{\Phi^{-1}(1-\alpha/2)}\frac{1}{(1-\eps)\sqrt{n}} \min\{\hsigma \sqrt{Q_{1}},\sqrt{(1+\eps)c C}\} +\frac{\Delta_n}{\sqrt{n}}\,, \label{eq:ConfInterval-app2}
\end{eqnarray}
where $\eps\in(0,1/2)$ is arbitrary fixed value and $\Phi(x) \equiv \int_{-\infty}^xe^{-t^2/}\de t/\sqrt{2\pi}$ is
the Gaussian distribution. Further, recall that $c$ is the bound on $\sigma$ in the definition of $\Gamma(s_0,s_\Omega, \rho)$.

We have
\begin{align}
\ell(J^\de_\alpha) \le {2\Phi^{-1}(1-\alpha/2)}\frac{1}{(1-\eps)\sqrt{n}} \sqrt{(1+\eps)c C} +\frac{\Delta_n}{\sqrt{n}}\,,
\end{align}
and therefore, $\E_\gamma\{\ell(J^\de_\alpha)\} \lesssim (1+\Delta_n)/\sqrt{n}$.


We next show that $J^\de_\alpha \in \cI_\alpha(\Gamma)$. 
Define the following events:
\begin{eqnarray}
\cE_1 &\equiv& \{(1-\eps)\sigma\le\hsigma \le (1+\eps)\sigma \},\\
\cE_2 &\equiv& \{\|Q\|_\infty \le C\} \,,\\
\cE_3&\equiv & \{\|R\|_\infty \le \Delta_n\}\,.
\end{eqnarray}
We further let $\cE = \cE_1\cap \cE_2\cap \cE_3$ and
$Z = e_1^\sT M X^\sT w/\sqrt{n}$. Since $Z/\sqrt{n} |X \sim \normal(0,\sigma^2 Q_{1}/n)$, we have
\begin{align}\label{eq:gauss}
\prob\Big(\frac{1}{\sqrt{n}}|Z| \le \sqrt{\frac{Q_{1}}{n}}\sigma \Phi^{-1}(1-\alpha/2) \Big| X\Big) = 1-\alpha\,.
\end{align}
By integrating w.r.t $X$ we get the same coverage probability unconditionally.
Note that on event $\cE$, $\hsigma \sqrt{Q_1}\le \sqrt{(1+\eps) \sigma^2 C}$ and on $\Gamma(s_0,s_\Omega, \rho)$, we have $\sigma\le \sqrt{c}$.
Further, $\sigma \le \hsigma/(1-\eps)$. Hence, on event $\cE$
\begin{align}
\delta(\alpha,n) &=
{\Phi^{-1}(1-\alpha/2)}\frac{\hsigma}{(1-\eps)}  \sqrt{\frac{Q_{1}}{n}} +\frac{\Delta_n}{\sqrt{n}}\nonumber\\
&\ge{\Phi^{-1}(1-\alpha/2)}\sigma  \sqrt{\frac{Q_{1}}{n}} +\frac{\Delta_n}{\sqrt{n}}\equiv \delta_0(\alpha,n)\,.
 \label{eq:ConfInterval-app-E}
\end{align}
We have the following bound on the coverage probability
\begin{align}
\prob(\th^*_1\in J^\de_\alpha) &= \prob(|\hth^\de_1-\tth_1| \le \delta(\alpha,n)) \\
&\ge \prob(\{|\hth^\de_1-\th^*_1| \le \delta_0(\alpha,n)\} \cap \cE)\\
&\stackrel{(a)}{\ge} \prob\Big(\Big\{\frac{1}{\sqrt{n}}|Z| \le \sqrt{\frac{Q_{1}}{n}}\sigma \Phi^{-1}(1-\alpha/2)\Big\} \cap \cE\Big) \\
&\ge \prob\Big(\frac{1}{\sqrt{n}}|Z| \le \sqrt{\frac{Q_{1}}{n}}\sigma \Phi^{-1}(1-\alpha/2)\Big) - \prob(\cE^c)\\
&\stackrel{(b)}{=} 1-\alpha - \prob(\cE^c) = \prob(\cE)-\alpha\,,
\end{align}
where $(a)$ follows from the decomposition $\hth^\de_1 = \tth_1 + Z/\sqrt{n} + R/\sqrt{n}$ and the fact that $\|R\|_\infty \le \Delta_n$ on $\cE$;
$(b)$ follows from Equation~\eqref{eq:gauss}.
Since $\prob(\cE) \to 0$, we obtain 
\begin{align}
\underset{n\to \infty} {\lim\inf}
\inf_{\gamma\in \Gamma(s_0,s_\Omega,\rho)} \prob_\gamma(\th^*_1\in J^\de_\alpha) \ge 1-\alpha\,.
\end{align}
Therefore, as claimed,
\begin{align}
\ell_\alpha^*(\Gamma(s_0,s_\Omega,\rho)) \le \E_\gamma\{\ell(J^\de_\alpha)\} \lesssim (1+\Delta_n)/\sqrt{n}\,.
\end{align}
%


\subsection{Proof of Proposition~\ref{propo:minimaxUnknown-LB}}\label{sec:proofminimax_L}

The proof follows the same lines as~\cite{cai2015confidence}[Theorem 3].  
Under the gaussian design model, the data pairs $(y_i,x_i)$ has a joint gaussian distribution with mean zero and covariance
$\tSigma$, where $\tSigma$ admits the following block decomposition:
\begin{align}
\tSigma = \begin{pmatrix}
\tSigma_{yy} & \tSigma_{yx}\\
\tSigma_{xy} & \tSigma_{xx}
\end{pmatrix} = 
\begin{pmatrix}
\theta^\sT \Sigma \theta+\sigma^2 & \theta^\sT \Sigma\\
\Sigma \theta & \Sigma
\end{pmatrix}\,,
\end{align}
where we posit the model $y=X\theta+w$ with $w\sim \normal(0,\sigma^2\id)$. (Throughout this section, we simplify 
our notations by writing $\theta$ instead of $\theta^*$ for the true model parameters.)
We also define $\PSD(p) \equiv \{M\in \reals^{p\times p}:\, M\succeq 0\}$, the set of positive semidefinite matrices of size $p$.

Notice that there is a one-to-one map between the parameter space $\Gamma \equiv\{\gamma = (\theta,\Omega,\sigma^2):\, \theta\in \reals^{p}, \Omega\in \PSD(p), \sigma^2 \in \reals_+\}$ and $\PSD(p+1)$. Specifically,  define the function $h: \PSD(p+1)\mapsto \Gamma$ as $h(\tSigma) = ((\tSigma_{xx})^{-1}\tSigma_{xy}, (\tSigma_{xx})^{-1}, \tSigma_{yy} - (\tSigma_{xy})^\sT (\tSigma_{xx})^{-1}\tSigma_{xy})$. The inverse map $h^{-1}$ is given by
\begin{align}
h^{-1}((\theta,\Omega,\sigma^2)) = \begin{pmatrix}
\theta^\sT \Omega^{-1} \theta + \sigma^2& \theta^\sT \Omega^{-1}\\
\Omega^{-1} \theta &\Omega^{-1}
\end{pmatrix}\,.
\end{align}
We next define a null hypothesis $H_0$ and an alternative hypothesis $H_1$ as follows.  Let $s_* = \min(s_0-1,s_\Omega)$ and $s_1 = s_0-s_*\ge 1$. The null space is a singleton 
$H_0 = \{\cgamma =(\ctheta,\id, \csigma^2)\}$ with $\ctheta_1=0$, 
$\|\ctheta\|_0 = s_1-1$ and $\csigma^2\in(0,c]$. We further let $S = \supp(\ctheta)$ 
and denote by $\pi_{H_0}$ the point mass prior on $H_0$.

Next we construct the alternative parameter space $H_1$.  First, we define the following set
\begin{align}\label{eq:Anu}
\cA(\nu,k) \equiv \bigg\{\delta:\, \delta\in \reals^{p_1}, \,\|\delta\|_0 = k, \, \delta_i\in\{0,\nu\} \text{ for }1\le i\le p_1 \Big\}\,,
\end{align}
where $p_1 = p-s_1$. We set  $k\equiv \min(s_*,{(\rho-1.01)}/{\nu})$ where $\rho$ comes from the constraint $\|\Omega\|_\infty \le \rho$ in the definition of $\Gamma(s_0,s_\Omega)$ . Later in the proof we enforce some constraints on the value of $\nu$ and in the hindsight, set 
a suitable value for $\nu$ that complies with those constraints.

For a given $\delta\in \reals^{p_1}$, define $\tSigma^\delta$ as follows (here the block decomposition corresponds
to decomposition $[p]=\{1\}\cup S\cup (S^c\setminus 1)$):
\begin{align}
\tSigma^\delta = \begin{pmatrix}
\|\ctheta\|^2+\csigma^2 & 0 & \ctheta_S^\sT &\csigma \delta^\sT \\
0 & 1 & 0_{1\times s_1} &\delta^\sT\\
\ctheta_S & 0_{s_1\times 1} & \id_{s_1\times s_1} & 0_{s_1\times p_1}\\
\csigma \delta & \delta & 0_{p_1\times s_1} & \id_{p_1\times p_1}
\end{pmatrix}\,.
\end{align}
We let $\cF \equiv \{\tSigma^\delta:\, \delta\in \cA(\nu,k)\}$ and construct the alternative space
\begin{align}
H_1 = \Big\{ (\theta,\Omega,\sigma^2):\, \gamma = h(\tSigma^\delta) \text{ for some }\tSigma^{\delta}\in \cF \Big\}\,.
\end{align}
We need to show that if $\tSigma^\delta\in \cF$ then $h(\tSigma^\delta) \in \Gamma(s_0,s_\Omega,\rho)$. 
Let $(\theta,\Omega, \sigma^2) = h(\tSigma^\delta)$. Then,
\begin{align}
\theta_1 = \frac{-\csigma \|\delta\|^2}{1-\|\delta\|^2},\, \theta_S  = \ctheta_S,\, \theta_{S^c\backslash\{1\}} = (\csigma-\theta_1) \delta.\label{eq:theta1}
\end{align}
Therefore, $\|\ctheta\|_0 = 1+ |S| + \|\delta\|_0 = 1+ (s_1-1)+s_* = s_0$. Further, if $\nu \le 1/\sqrt{s_*}$, then
$\|\delta\|_2\le \sqrt{s_*} \nu<1$ and we have
\begin{align}
\sigma^2 = \|\ctheta\|^2+\csigma^2 - \|\ctheta_S\|^2 - \csigma(\csigma-\theta_1)\|\delta\|^2 = \csigma^2 - \frac{\csigma^2\|\delta\|^2}{1-\|\delta\|^2} \le\csigma^2 <c\,.
\end{align}
Finally we note that
\begin{align}
\Omega = \frac{1}{1-\|\delta\|^2}\begin{pmatrix}
1 & 0 & -\delta^\sT\\
0& (1-\|\delta\|^2)\id_{s_1\times s_1} &0_{s_1\times p_1}\\
-\delta& 0_{p_1\times s_1} & (1-\|\delta\|^2) \id_{p_1\times p_1} + \delta \delta^\sT
\end{pmatrix}
\end{align}
Hence, $\max_{i\in [p]} |\{j\neq i, \Omega_{i,j}\neq 0\}| = \|\delta\|_0 \le s_*\le s_\Omega$. Further, ($\Omega^{-1})_{ii} = 1$ for all $i\in [p]$. Also by Weyl's inequality, if $\|\delta\|_2\le \sqrt{s_*} \nu\le \min(C_{\max}-1,1-C_{\min})$, then $C_{\min} \le \sigma_{\min}(\Sigma) \le \sigma_{\max}(\Sigma)\le C_{\max}$. 
The last condition is on $\|\Omega\|_\infty$. We have
\begin{align}
\|\Omega\|_\infty \le \frac{1+\|\delta\|_1}{1-\|\delta\|^2} \le \frac{\rho-0.01}{1-(\rho-1.01)\nu} \le \rho\,,
\end{align}
where the second inequality is due to the fact that $\delta\in \cA(\nu,k)$ and $k \le (\rho-1.01)/\nu$.  
The last inequality holds if we choose $\nu < \frac{0.01}{\rho(\rho-1.01)}$. 

Summarizing, $(\theta,\Omega,\sigma^2) \in \Gamma(s_0,s_\Omega,\rho)$ if we choose
\begin{align}\label{eq:nuCond}
\nu \le \min\Big\{\frac{1}{\sqrt{s_*}}(C_{\max}-1),\frac{1}{\sqrt{s_*}}(1-C_{\min}), \frac{0.01}{\rho(\rho-1.01)}\Big\}\,.
\end{align}
Let $\pi$ be the uniform prior on $\delta$ over $\cA(\nu,k)$ for a fixed $\nu$ (whose value is to be determined later) and denote by $\pi_{H_1}$ the induced prior  over $H_1$. We define $f_{1}$ and $f_0$ as the density function of marginal distribution of data $(y,x)$ with priors $\pi_{H_0}$ and $\pi_{H_1}$ respectively. Precisely, for $\gamma= (\theta,\Omega,\sigma^2)$ and $i\in\{0,1\}$, we have $f_i(y,x) = \int f_\gamma(y,x) \pi_i(\de\gamma) $, where $f_\gamma$ is the induced density on $(y,x)$  for random $x_i\sim\normal(0,\Omega^{-1})$ and noise $w\sim\normal(0,\sigma^2)$, with $y= \<x,\theta\>+w$ when we fix the signal $\theta$.

Applying~\cite[Lemma 1]{cai2015confidence}, we have (noting that $\theta_1$, $\ctheta_1$ are deterministic)
\begin{align}\label{eq:ell*B}
\E_{\cgamma} \{\ell(J_\alpha(y,X))\} \ge |\theta_1-\ctheta_1| \Big(1- 2\alpha - {\rm TV} (f_{1},f_{0}) \Big)_+\,,
\end{align}
where for two density functions ${\rm TV}(f_1,f_0)\equiv \int |f_1(z)-f_0(z)| \de z$ denotes their total variation distance.
Also recall the $\chi^2$ distance between $f_1$ and $f_0$: 
$$\chi^2(f_1,f_0) \equiv \int \frac{f_1^2(z)}{f_0(z)} \de z -1\,.$$ 
It is well known that ${\rm TV}(f_1,f_0) \le \sqrt{\chi^2(f_1,f_0)}$.
Using~\cite[Lemma 2]{cai2015confidence} we have
\begin{align}
\chi^2(f_{1},f_{0}) +1 = \E_{\delta,\tdelta} (1-2\delta^\sT \tdelta)^{-n} \le \E_{\delta,\tdelta} \exp(4n\delta^\sT\tdelta)\,,
\end{align}
for $\delta$ and $\tdelta$ two independent random draws from prior $\pi$ over $\cA(\nu,k)$.

By~\cite[Lemma 3]{cai2015confidence} we obtain
\begin{align}
\E_{\delta,\tdelta} \exp(4n\delta^\sT\tdelta) \le  e^{\frac{k^2}{p_1-k}} \Big(1-\frac{k}{p_1} + \frac{k}{p_1} e^{4n\nu^2}\Big)^k\,.
\end{align}
We set $\nu = c\sqrt{(\log p)/n}$. Since $k\le s_0\lesssim p^\eta$ for some constant $\eta \in [0,1/2)$, by choosing
$c$ small enough, we can ensure that ${\rm TV}(f_{\pi_{H_1}},f_{\pi_{H_0}}) \le 1/2-\alpha$.  Further, given that $s_*\le s_0 \lesssim n/\log p$ and $\rho$ is a constant,  condition~\eqref{eq:nuCond} holds true for small enough $c$. 

Finally, by invoking inequality~\eqref{eq:ell*B} and substituting for $\theta_1$ from Equation~\eqref{eq:theta1} and $\ctheta_1=0$, we obtain
\begin{align}\label{eq:final}
\E_\cgamma\{\ell(J_\alpha(y,X))\}\ge \frac{\csigma \|\delta\|^2}{1-\|\delta\|^2} \Big(\frac{1}{2}-\alpha\Big)
\asymp k\nu^2 = \min(\rho \nu, s_* \nu^2)\,.
\end{align}
Note that the inequality~\eqref{eq:final} implies that $\ell^*_\alpha(\Gamma(s_0,s_\Omega)) \gtrsim \min(\rho \nu, s_* \nu^2)$. 
Using $\nu \asymp \sqrt{(\log p)/n}$ and $s_* = \min(s_0-1,s_{\Omega})$, we get $\E_\cgamma\{\ell(J_\alpha(y,X))\}\gtrsim 
\min(\rho\sqrt{(\log p)/n}, s_* (\log p)/n)$. Proof of the lower bound rate $1/\sqrt{n}$ follows along the same lines as the proof in~\cite[Theorem 3]{cai2015confidence}.

It is worth noting that Equation~\eqref{eq:final} is much stronger than the implied minimax lower bound. Indeed it shows that the expected length of confidence intervals at any given point in a large subset of $\Gamma(s_0,s_{\Omega})$, namely $\{(\ctheta,\id,\csigma): \|\ctheta\|_0 = s_1-1, \csigma^2\in (0,c]\}$, is at least of the provided lower bound rate.

%
%

%
%

\section{Proof of Theorem \ref{thm:Approximation} and Corollary \ref{coro:MSELasso}}

\subsection{Proof of Theorem \ref{thm:Approximation}}
\label{sec:ProofApproximation}

Throughout the proof, we will use $\hth=\Lth(y,X)$ to denote the
Lasso estimator. Using the KKT conditions, it is immediate to see that
this satisfies
\begin{align}
\hth &= \eta_{\Sigma}(\dth)\\
&  = \eta_{\Sigma}\Big(\theta^*+\frac{1}{n}\Omega X^{\sT}w+\frac{1}{\sqrt{n}} R\Big)\, ,\label{eq:hthNEW}
\end{align}
with $R = \sqrt{n}(\Omega\hSigma-\id)(\theta^*-\hth)$ defined as in Theorem \ref{thm:main}. We also define $\hth^0$ by
\begin{align}
\hth^0 &\equiv   \eta_{\Sigma}\Big(\theta^*+\frac{1}{n}\Omega X^{\sT}w\Big)\, . \label{eq:hth0}
\end{align}
Recall that $\hS=\supp(\hth)$ is the support of the 
Lasso estimator.  By Proposition \ref{lem:Bickel}, we have, with high probability $|\hS|\le C_* s_0$ for a constant 
$C_*$. Define $\hS^0=\supp(\hth^0)$.  Proceeding as in Proposition \ref{lem:Bickel}, we obtain, 
with high probability $|\hS^0|\le C_* s_0$ as well. Letting $\baS\equiv \hS\cup \hS^0$, we have $|\baS|\le 2C_*s_0$.

Write $z^0\equiv \theta^*+n^{-1}\Omega X^{\sT}w$, $r\equiv R/\sqrt{n}$. By Eq.~(\ref{eq:hthNEW}), and the definition 
of $\eta_{\Sigma}(\,\cdot\,)$, cf. Eq.~(\ref{eq:EtaSigmaDef}), we have
\begin{align}
\frac{1}{2}\big\|\Sigma^{1/2}(\hth-z^0-r)\big\|_2^2 +\lambda\|\hth\|_1\le
\frac{1}{2}\big\|\Sigma^{1/2}(\hth^0-z^0-r)\big\|_2^2 +\lambda\|\hth^0\|_1\, .
\end{align}
Expanding the squares on both sides, this can be rewritten as 
\begin{align}
\frac{1}{2}\big\|\Sigma^{1/2}(\hth-\hth^0)\big\|_2^2-\<r,\Sigma(\hth-\hth^0)\>\le
-\<(\hth-\hth^0),\Sigma(\hth^0-z^0)\>+\lambda\|\hth^0\|_1-\lambda\|\hth\|_1\, .
\end{align}
By the KKT conditions for $\hth^0$ (which follow from the definition (\ref{eq:hth0}), and the definition of $\eta_{\Sigma}$),
 there exists a vector $v(\hth^0)$ in the subgradient of the $\ell_1$ norm at $\hth^0$,
such that $\Sigma(\hth^0-z^0)+\lambda\, v(\hth^0)=0$.  Hence, by definition of subgradient
\begin{align}
\frac{1}{2}\big\|\Sigma^{1/2}(\hth-\hth^0)\big\|_2^2-\<r,\Sigma(\hth-\hth^0)\>\le
-\lambda\big[\|\hth\|_1-\|\hth^0\|_1-\<v(\hth^0),(\hth-\hth^0)\>\big]\le 0\, .
\end{align}
Using the assumption $\sigma_{\min}(\Sigma)\ge C_{\min}$, we have
\begin{align}
C_{\min} \|\hth-\hth^0\|_2^2&\le 2\<\Sigma r, (\hth-\hth^0)\>\\
&\le 2\|(\Sigma r)_{\baS}\|_2 \|\hth-\hth^0\|_2\, .
\end{align}
Hence
\begin{align}
 \|\hth-\hth^0\|_2^2&\le \frac{8}{C_{\min}^2}\big\{  \|\Sigma_{\baS,\baS} r_{\baS}\|^2_2+ \|\Sigma_{\baS,\baS^c} r_{\baS^c}\|^2_2\big\}\\ 
& \le \frac{8}{C_{\min}^2}\big\{  C_{\max}^2|\baS| \|r\|^2_{\infty}+ \trho^2|\baS|  \|r_{\baS^c}\|^2_{\infty}\big\}\\
& \le \frac{32(C_{\max}^2+\trho^2)}{C_{\min}^2}\, C_*\, \frac{s_0}{n}\,\|R\|_{\infty}^2 \equiv \tC^2  \frac{s_0}{n}\,\|R\|_{\infty}^2\, . \label{eq:BoundTT0}
\end{align}
The proof is completed by using Theorem \ref{thm:main}.

\subsection{Proof of Corollary \ref{coro:MSELasso}}

As in the previous section, we use $\hth=\Lth(y,X)$ to denote the Lasso estimator and define $\hth^0$ by
\begin{align}
\hth^0 &\equiv   \eta\Big(\theta^*+\frac{1}{n}X^{\sT}w;\lambda\Big)\,. 
\end{align}
Note that, by Lemma \ref{lem:ctB}, we have $\|X^{\sT}w/n\|_{\infty}<\lambda$ with high probability, whence 
$\hS^0 \equiv\supp(\hth^0)\subseteq S\equiv \supp(\theta^*)$.
By triangular inequality and Theorem  \ref{thm:Approximation}, we get
\begin{align}
\|\hth-\theta^*\|_2 &= \|\hth^0-\theta^*\|_2+ O_P\Big(\frac{\sigma s_0\log p}{n}\Big)
 = \|(\hth^0-\theta^*)_S\|_2+ O_P\Big(\frac{\sigma s_0\log p}{n}\Big) \, .\label{eq:ThThS}
\end{align}
We next show that $\|(\hth^0-\tth)_S\|$ concentrates around its expectation. Fixing $X\in \reals^{n\times p}$, define
$$F(w;X) = \|\hth^0_S-\th^*_S\|_2 = \Big\|\eta(\tth+\frac{1}{n}\Omega X^\sT w+\frac{1}{\sqrt{n}}R )_S - \th^*_S \Big \|_2\,.$$
Noting that the soft-thresholding function $\eta(\cdot;\lambda)$ is  1-Lipschitz continuous, we have
\begin{align}
F(w;X) - F(w';X) &= \Big\|\eta(\tth+\frac{1}{n}\Omega X^\sT w+\frac{1}{\sqrt{n}}R )_S - \th^*_S \Big \|_2 - \Big\|\eta(\tth+\frac{1}{n}\Omega X^\sT w'+\frac{1}{\sqrt{n}}R )_S - \th^*_S \Big \|_2\nonumber\\
&\le \Big\|\eta\Big(\theta^*+\frac{1}{n}X^{\sT}w;\lambda\Big)_S-\eta\Big(\theta^*+\frac{1}{n}X^{\sT}w';\lambda \Big)_S\Big\|_2\nonumber\\
&\le \frac{1}{n}\big\|(X^{\sT}w-X^{\sT}w')_S\big\|_2\nonumber\\
&\le \frac{1}{n} \|X_S\|_2 \,\|w-w'\|_2\,.
\end{align}
Next by the Bai-Yin law \cite{Guionnet}), we have $\|X_S\|_2\le 2(\sqrt{s_0}+\sqrt{n})$, with high probability. Therefore, using $s_0\le n$, we obtain
$F(w;X)-F(w';X) \le 4\|w-w'\|_2/\sqrt{n}$. 

Denote by $\prob_w$ and $\E_w$ probability and expectation with respect to $w$. By Gaussian isoperimetry \cite{Ledoux}, we have
$\prob(F(w;X)-\E_w\{F(w;X)\}\ge t)\le 2\, e^{-cn t^2/\sigma^2}$, for some universal constant $c>0$ .
This implies $\E_w \|(\hth^0-\theta^*)_S\|_2 = \E_w \{\|(\hth^0-\theta^*)_S\|_2^2\}^{1/2}+O(\sigma/\sqrt{n})$,
and therefore
\begin{align}
\|(\hth^0-\theta^*)_S\|_2 \le \E_w \{\|(\hth^0-\theta^*)_S\|_2^2\}^{1/2} + \frac{t \sigma }{\sqrt{n}}\, ,
\end{align}
with probability at least $1-2e^{-ct^2}$.
Using this together with Eq.~(\ref{eq:ThThS}), we get 
\begin{align}
\|\hth-\theta^*\|_2 &= \|\hth^0-\theta^*\|_2+ O_P\Big(\frac{\sigma s_0\log p}{n}\Big)\\
& = \sqrt{\E_w\big\{\|(\hth^0-\theta^*)_S\|_2^2}\}+ O_P\Big(\frac{\sigma}{\sqrt{n}}\vee \frac{\sigma s_0\log p}{n}\Big)\\
& = \sqrt{\sum_{i\in\supp(\theta^*)}\E\big\{\big[\eta(\theta^*_i+n^{-1/2}\tilde{Z}_i;\lambda)-\theta^*_i\big]^2\big\}}+ O_P\Big(\frac{\sigma}{\sqrt{n}}\vee \frac{\sigma s_0\log p}{n}\Big)\, ,
\end{align}
where in the last equality expectation is with respect to $\tilde{Z_i}\sim \normal(0,\|\tx_i\|_2^2/n)$.
The proof is completed by using the fact that, with high probability, 
$\max_{i\in [p]}\big|\|\tx_i\|_2^2/n-1\big|\le C\sqrt{(\log p)/n}$, and bounding the resulting error.

%
%
\section{Proof of Theorem \ref{thm:TwoStep}}

Throughout this proof, we denote by $\prob_w$ and $\E_w$, the probability and the expectation with respect to the noise vector 
$w$ (conditional on $X$).
Let 
\begin{align}
\otau_i \equiv \sqrt{\frac{2\sigma^2 (\Omega\hSigma\Omega)_{ii} \log(p/s_0)}{n}} \, ,
\end{align}
and define the estimators $\hth^{(1)}$, $\oth^{(1)}$  by
\begin{align}
\oth^{(1)}_i & \equiv \eta\Big(\theta^*_i +\frac{1}{n}(\Omega X^{\sT} w)_i;\otau_i\Big)\, ,\\
\hth^{(1)}_i & \equiv \eta\Big(\theta^*_i +\frac{1}{n}(\Omega X^{\sT} w)_i;\tau_i\Big)\, .\label{eq:Hth1_def}
\end{align}
Throughout this section, $L_n$  denotes  a deterministic sequence with $L_n\to\infty$ arbitrarily slow as $n\to\infty$.
First we claim that,, $\|\oth^{(1)}\|_0, \|\hth^{(1)}\|_0\le s_0L_n$ with high probability for any such sequence $L_n$.
In other to prove this, recall that $S\equiv \supp(\theta^*)$,   and consider $i\not\in S$. Conditional on $X$, 
we have $(\Omega X^{\sT} w)_i/n\sim\normal(0,\sigma^2(\Omega\hSigma\Omega)_{ii}/n)$. Hence,
for $Z\sim\normal(0,1)$ independent of $X$, and by $W_n$ a chi-squared random variable with $n$ degrees of freedom, we get
\begin{align}
\prob(\hth^{(1)}_i\neq 0) &= \prob\big(|(\Omega X^{\sT} w)_i|\ge n\tau_i\big)\\
& = \prob\Big(|(\Omega\hSigma\Omega)^{1/2}_{ii}Z|\ge \sqrt{2\Omega_{ii}\log(p/s_0)}\Big)\\
& \le \prob\Big(|Z|\ge \sqrt{2(1+\delta)^{-1}\log(p/s_0)}\Big)+\prob\big((\Omega\hSigma\Omega)_{ii}\ge (1+\delta)\Omega_{ii}\big)\\
& \le \left(\frac{s_0}{p}\right)^{1-\delta} + \prob(W_n\ge n(1+\delta))\\
& \le \left(\frac{s_0}{p}\right)^{1-\delta} +e^{-cn\delta^2} \le \frac{Cs_0}{p}\, ,
\end{align}
where the last inequality follows with high probability by taking $\delta = C_0\sqrt{\log(p/s_0)/n}$, and using the assumption that $\log(p/s_0)^3/n\to 0$.
Hence, by Markov inequality $\|\hth^{(1)}\|_0\le s_0L_n$ with high probability. The claim follows by the same argument
for $\oth^{(1)}$.

We next claim that $\|\hth^{(2)}\|_0\le s_0L_n$ with high probability as well. 
Indeed, by definition
\begin{align}
\hth^{(2)}_i \equiv \eta\Big(\theta^*_i +\frac{1}{n}(\Omega X^{\sT} w)_i+\frac{1}{\sqrt{n}}R_i;\tau_i\Big)\, ,
\end{align}
Using the fact that $\|R\|_{\infty}\le C\sigma\sqrt{s_0(\log p)^2/n}$,
with high probability (cf. Theorem \ref{thm:main}) and proceeding along the same lines as above, we obtain
\begin{align}
\prob(\hth^{(2)}_i\neq 0) &\le \prob\left(|(\Omega\hSigma\Omega)^{1/2}_{ii}Z|\ge \sqrt{2\Omega_{ii}\log(p/s_0)}
-C\sqrt{\frac{s_0(\log p)^2}{n}}\right)+\prob\left(\|R\|_{\infty}> C\sigma\sqrt{\frac{s_0(\log p)^2}{n}}\right)\nonumber\\
&\le \left(\frac{s_0}{p}\right)^{1-\delta}\exp\left\{C\sqrt{\frac{s_0(\log p)^2\log (p/s_0)}{n}}\right\} +e^{-cn\delta^2} +o(1)\\
& \le \frac{Cs_0}{p}\, ,
\end{align}
where in the final step we used the assumption $s_0(\log p)^3/n \to 0$.
Hence, by Markov inequality, we have $\|\hth^{(2)}\|_0\le s_0L_n$, with high probability as claimed.

Therefore , with high probability,
\begin{align}
\big\|\hth^{(2)}-\hth^{(1)}\|_2&\le \frac{1}{\sqrt{n}}\|R\|_{\infty}\sqrt{\|\hth^{(2)}\|_0+\|\hth^{(1)}\|_0}\\
&\le \frac{C\sigma}{\sqrt{n}}\, \sqrt{\frac{s_0(\log p)^{2}}{n}}\, \sqrt{2s_0L_n} = \sqrt{2L_n}C \sigma\,\frac{s_0\log p}{n}\, . \label{eq:BoundHth1}
\end{align}
Analogously, we have 
\begin{align}
\big\|\oth^{(1)}-\hth^{(1)}\|_2&\le \sqrt{\|\oth^{(1)}\|_0+\|\hth^{(1)}\|_0}\,\cdot  \max_{i\in [p]}\big|\htau_i-\tau_i\big|\\
& \le  \sqrt{s_0L_n C_{\max}} \;\; \sqrt{\frac{2\sigma^2 \log(p/s_0)}{n}} \;\;\cdot
\max_{i\in [p]} \big| (\Omega\hSigma\Omega)_{ii} -\Omega_{ii}\big|\, ,
\end{align}
where we used the fact that $C_{\max}^{-1}\le \Omega_{ii}\le C_{\min}^{-1}$ is bounded uniformly and 
$|\sqrt{x}-\sqrt{y}|\le |x-y|/\sqrt{4c}$ for $x,y\ge c$.
Since $(\Omega\hSigma\Omega)_{ii}/\Omega_{ii}$ is distributed as $W_n/n$, for $W_n$ a chi-squared random variable with
$n$ degrees of freedom, and $\Omega_{ii}\le C^{-1}_{\min}$, we have 
$\max_{i\in [p]} \big| (\Omega\hSigma\Omega)_{ii} -\Omega_{ii}\big| \le C\sqrt{(\log p)/n}$. Substituting above,
we get
\begin{align}
\big\|\oth^{(1)}-\hth^{(1)}\|_2&\le \sqrt{2L_n C_{\max}}C \sigma \, \frac{\sqrt{s_0}\log p}{n}\, . \label{eq:BoundHth2}
\end{align}
Hence, using triangular inequality together with Equations.~(\ref{eq:BoundHth1}) and (\ref{eq:BoundHth2}), we 
obtain 
\begin{align}
\big\|\hth^{(2)}-\theta^*\|_2&\le                               \big\|\oth^{(1)}-\theta^*\|_2+C \sigma \sqrt{L_n}\, \frac{ s_0\log
                               p}{n}\\
&\le  \big\|\oth^{(1)}_S-\theta^*_S\|_2+\big\|\oth^{(1)}_{S^c}\|_2+C\sigma \sqrt{L_n}\,\frac{ s_0\log p}{n}\,, \label{eq:Triangular1}
\end{align}
for some constant $C>0$.

We are left with the task of bounding $\|\oth^{(1)}_S-\theta^*_S\|_2$ and  $\|\oth^{(1)}_{S^c}\|_2$.

$\bullet$ \emph{Bounding $\|\oth^{(1)}_S-\theta^*_S\|_2$.} Fixing $X\in \reals^{n\times p}$, we let  
$F(w;X) \equiv \|\oth^{(1)}_S-\theta^*_S\|_2$.  Letting
$\sigma_i^2 \equiv \sigma^2(\Omega\hSigma\Omega)_{ii}/n$, and denoting by $Z\sim\normal(0,1)$
a standard Gaussian random variable, we have
\begin{align}
\E_w \{F(w;X)^2\} &\stackrel{(a)}{=} 
\sum_{i\in S} \E_Z\Big\{\big[\eta\big(\theta^*_i+\sigma_i\, Z; \sigma_i\sqrt{2\log(p/s_0)}\big)-\theta^*_i\big]^2\Big\} \\
&\stackrel{(b)}{\le} 2\log (p/s_0) \sum_{i\in S}\sigma_i^2\\
&\stackrel{(c)}{\le}   \frac{2s_0\sigma^2}{n}\log (p/s_0) 
\left(\frac{1}{s_0}\sum_{i\in S}\Omega_{ii}\right)
\;\Big\{1+C\sqrt{\frac{\log p}{n}}\Big\}\equiv \baF^2\ .  \label{eq:baF}
\end{align}
Here, $(a)$ follows because $(\Omega X^{\sT}w)/n\sim\normal(0,\sigma_i^2)$, $(b)$ because  the soft-thresholding
risk is maximized for $\theta^*_i\to\infty$\cite{DJ94a,DoJo95,DMM09}, and $(c)$ because,
as remarked above, with high probability we have
$\max_{i\in [p]} \big| (\Omega\hSigma\Omega)_{ii} -\Omega_{ii}\big| \le C\sqrt{(\log p)/n}$.

Recall that $\baF^2$ denotes the upper bound on the right-hand side  of Eq.~(\ref{eq:baF}).
Let $\cG_0$ denote the set of matrices $X$ for which the bound 
$\E_w \{F(w;X)^2\}\le \baF^2$ holds. By above argument $\prob(\cG_0)\to 1$ as $n,p\to\infty$.
Now note that, since $\eta(\,\cdot\, ;\tau)$ is Lipschitz continuous (with Lipschitz constant equal to one),
and denoting by $\hth^{(1)}(w)$ the vector defined in Eq.~(\ref{eq:Hth1_def}) with noise vector $w$, we have
\begin{align}
\big|F(w;X) -F(w';X)\big|& \le \|\hth^{(1)}(w)_S-\hth^{(1)}(w')_S\|_2\\
& \le \frac{1}{n} C_{\min}^{-1}\|X_S\|_{2}\|w-w'\|_2\, .
\end{align}
By the Bai-Yin law, we have $\|X_S\|_2\le 2(\sqrt{n}+\sqrt{s_0})\le 4\sqrt{n}$ with high probability (since $s_0\le n$). 
Define, $\cG = \cG_0\cap \{X\in\reals^{n\times p}:\; \|X_S\|_2\le 4\sqrt{n}\}$.
By Gaussian isoperimetry \cite{Ledoux}, we have, on $\cG$,    
$\prob_w\Big(F(w;X)\ge \E_w\{F(w;X)\}+t\Big)\le   \, e^{-c\, nt^2/\sigma^2}$. 
This implies $\E\{F(w;X)\} = \baF+O(\sigma/\sqrt{n})$.
Hence, with high probability,
\begin{align}
\big\|\oth^{(1)}_S-\theta^*_S\big\|_2\le  \baF + \frac{L_n\sigma }{\sqrt{n}}\, .\label{eq:Triangular2}
\end{align}

$\bullet$ \emph{Bounding $\|\oth^{(1)}_{S^c}\|_2$.} As above, we let
$\sigma_i^2 \equiv \sigma^2(\Omega\hSigma\Omega)_{ii}/n$. Denoting by $Z\sim\normal(0,1)$
a standard Gaussian random variable, we write
\begin{align}
\E_w \{\|\oth^{(1)}_{S^c}\|_2^2\} 
&=
\sum_{i\in S^c} \E_Z\Big\{\eta\big(\sigma_i\, Z; \sigma_i\sqrt{2\log(p/s_0)}\big)^2\Big\} \\
&\stackrel{(a)}{\le} \sum_{i\in S^c} \sigma_i^2 \E_Z\Big\{
\eta\big( Z; \sqrt{2\log(p/s_0)}\big)^2\Big\}\\
&\stackrel{(b)}{\le}  C\frac{s_0}{p}\sum_{i\in S^c} \sigma_i^2\\
& = C\frac{s_0\sigma^2}{np}\Tr(\Omega\hSigma\Omega)\, .
\end{align}
Here, $(a)$ follows because $\eta(c\, x;c\lambda) =c\, \eta( x;\lambda)$  and $(b)$ by a Gaussian integral calculation. 
As mentioned above, $(\Omega\hSigma\Omega)_{ii}/\Omega_{ii}$ is distributed as $W_n/n$ for $W_n$ a chi-squared
random variable with $n$ degrees of freedom. Tail bounds on chi-squared random variables, together
with the fact that $\Omega_{ii}\le C_{\min}^{-1}$ is bounded uniformly, imply that 
$\Tr(\Omega\hSigma\Omega)\le C p$, with high probability. Hence, with high probability with respect to the choice of $X$,
$\E_w \{\|\oth^{(1)}_{S^c}\|_2^2\}  \le   C s_0\sigma^2/n$ for some constant $C>0$. Hence, with high probability 
\begin{align}
\|\oth^{(1)}_{S^c}\|_2\le \sigma\sqrt{\frac{s_0 L_n}{n}}\, .\label{eq:Triangular3}
\end{align}
The proof is completed by putting together Equations~(\ref{eq:Triangular1}), (\ref{eq:Triangular2}), (\ref{eq:Triangular3})
and setting $L_n = \log p$.
%
%
\section{Proof of Theorem \ref{thm:RiskEst}}

Throughout this section, we let $\hth=\Lth$ denote the Lasso estimator.
Define $\hth^0$ by
\begin{align}
\hth^0 &\equiv   \eta\Big(\theta^*+\frac{1}{n} X^{\sT}w;\lambda\Big)\, , \label{eq:hth0bis}
\end{align}
where $\eta(\,\cdot\,;\lambda)$ is componentwise soft thresholding,
defined for scalars via $\eta(x;\lambda) \equiv (|x|-\lambda)_+\sign(x)$.
Further we denote  by $\prob_w$ and $\E_w$ probability and expectation with respect to $w$ (conditional on $X$).
Finally, let $\hS\equiv \supp(\hth)$, $\hS^0\equiv \supp(\hth^0)$ and
$\baS\equiv \hS\cup\hS^0$. 

Expanding the square in the definition of $\hRisk(y,X)$, we obtain
\begin{align}
\Risk(y,X,\theta^*) -\hRisk(y,X) & = \frac{2}{n}\, \<w,X(\hth-\theta^*)\>\\
& =  \frac{2}{n}\, \<w,X(\hth^0-\theta^*)\>+\frac{2}{n}\, \<w,X(\hth-\hth^0)\>\\
& \equiv \Delta_1(w,X,\theta^*)+\Delta_2(w,X,\theta^*)\, .\label{eq:Delta1Delta2}
\end{align}
We will separately study the error terms $\Delta_1$ and $\Delta_2$.

We start by considering a preliminary remark.
\begin{lemma}\label{lemma:GoodLast}
Let $X\in \reals^{n\times p}$ have iid entries $X_{ij}\sim\normal(0,1)$, and define
\begin{align}
\cG_1(M)\equiv\left\{X\in\reals^{n\times p}: \; \max_{i\in [p]} \Big|\frac{\|\tx_i\|_2^2}{n}-1\Big|\le M\sqrt{\frac{\log p}{n}},\;\;
\max_{i\neq j\in [p]}\Big|\frac{\<\tx_i,\tx_j\>}{\|\tx_i\|_2\|\tx_j\|_2}\Big|\le  M\sqrt{\frac{\log p}{n}}\right\}\, .
\end{align}
Then, for $M$ a large enough constant, we have $\prob(X\in\cG_1(M))\ge 1-p^{-10}$.  

Further, under the assumptions of Theorem \ref{thm:RiskEst}, we have 
$\prob(\baS\subseteq S)\ge 1-p^{-3}$.
\end{lemma}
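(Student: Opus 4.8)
I would prove the two assertions separately; the first is a routine concentration estimate. The diagonal quantities $\|\tx_i\|_2^2$ are $\chi^2_n$-distributed, so a standard sub-exponential tail bound gives $\prob\big(\big|\|\tx_i\|_2^2/n-1\big|\ge M\sqrt{(\log p)/n}\big)\le 2p^{-cM^2}$, and a union bound over $i\in[p]$ controls the first sub-event of $\cG_1(M)$ with probability at least $1-2p^{1-cM^2}$; on that sub-event $\|\tx_i\|_2\ge\sqrt n/2$ for every $i$. Conditioning on $\tx_i$, the variable $\tx_i^\sT\tx_j$ is centered Gaussian with variance $\|\tx_i\|_2^2\le 2n$, hence $|\tx_i^\sT\tx_j|/(\|\tx_i\|_2\|\tx_j\|_2)\le (2\sqrt 2/\sqrt n)\,|g_{ij}|$ with $g_{ij}\sim\normal(0,1)$, and a union bound over the at most $p^2$ pairs bounds the second sub-event by $2p^{2-cM^2}$. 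Taking $M$ a large enough absolute constant makes the total failure probability at most $p^{-10}$.

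For the inclusion $\hS^0\subseteq S$: if $i\notin S$ then $\theta^*_i=0$, so $\hth^0_i=\eta\big(\frac1n(X^\sT w)_i;\lambda\big)$ is nonzero only when $|(X^\sT w)_i|>n\lambda$. Since $w$ is independent of $X$, conditionally on $X\in\cG_1(M)$ we have $(X^\sT w)_i\sim\normal(0,\sigma^2\|\tx_i\|_2^2)$ with $\|\tx_i\|_2^2\le 2n$, so $\frac1n|(X^\sT w)_i|$ is at most $\sigma\sqrt{2/n}$ times a standard normal; since $\lambda\ge 9\sigma\sqrt{(\log p)/n}$, a Gaussian tail bound and a union bound over the (at most $p$) indices $i\notin S$ give $\prob(\hS^0\not\subseteq S)\le p^{-10}+2p^{-10}$.

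For the inclusion $\hS\subseteq S$ I would run the primal-dual witness construction: let $\hth_S$ minimize the $S$-restricted Lasso cost and set $\hth_{S^c}=0$; it then suffices to verify the strict dual feasibility $\big|\frac1n\tx_j^\sT(y-X_S\hth_S)\big|<\lambda$ for every $j\notin S$, since by the standard primal-dual witness lemma this forces $\hS\subseteq S$. With $z_S$ the active subgradient and $\projp_S\equiv\id-X_S(X_S^\sT X_S)^{-1}X_S^\sT$, one has $y-X_S\hth_S=\projp_S w+n\lambda\,X_S(X_S^\sT X_S)^{-1}z_S$, so the quantity to control splits into a noise term $\frac1n\tx_j^\sT\projp_S w$ and an incoherence term $\lambda\,\tx_j^\sT X_S(X_S^\sT X_S)^{-1}z_S$. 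Because $\Sigma=\id$ and $j\notin S$, the column $\tx_j$ is independent of $(X_S,w)$, hence of $z_S$: conditioning on $(X_S,\tx_j)$, the noise term is centered Gaussian with variance at most $2\sigma^2/n$ on $\cG_1(M)$ (using $\|\projp_S\tx_j\|_2\le\|\tx_j\|_2\le\sqrt{2n}$), while conditioning on $(X_S,w)$ the incoherence term is $\lambda$ times a centered Gaussian with variance $z_S^\sT(X_S^\sT X_S)^{-1}z_S\le s_0/\lambda_{\min}(X_S^\sT X_S)\le 2s_0/n$, the last step because $\lambda_{\min}(\hSigma_{S,S})\ge 1/2$ with probability $1-e^{-cn}$ (apply Lemma~\ref{lem:rudelson} with $\Sigma=\id$ to vectors supported on $S$, which lie in $\cone(s_0,3)$). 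Gaussian tail bounds then bound the noise term by $\lambda/2$ for all $j$ with probability at least $1-2p^{-4}$ (using $\lambda\ge 9\sigma\sqrt{(\log p)/n}$), and the incoherence term by $\lambda/2$ for all $j$ with probability at least $1-2p\,e^{-n/(16s_0)}\ge 1-2p^{-9}$ for $p$ large (using $s_0=o(n/(\log p)^2)$). Intersecting with the events of the previous two paragraphs, all the relevant events hold simultaneously with probability at least $1-p^{-3}$, whence $\baS=\hS\cup\hS^0\subseteq S$.

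The main obstacle is the incoherence term just discussed: it must be kept below a fixed fraction of $\lambda$ uniformly over $\sim p$ coordinates, and since its conditional variance is only of order $s_0/n$, the $\sqrt{\log p}$ produced by the union bound forces a sparsity requirement of the form $s_0=o(n/\log p)$, which is comfortably implied by the hypothesis $s_0=o(n/(\log p)^2)$. The rest is bookkeeping: one works throughout on $\cG_1(M)$ precisely so that $\ctB(n,p)$-type noise bounds stay valid even though $\hSigma_{ii}\ne 1$ exactly, and one must line up the numerical constants (the ``$9$'' in $\lambda$, the $\lambda/2$ splits, the constant $M$) so that the final failure probability genuinely falls below $p^{-3}$.
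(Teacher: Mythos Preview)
Your argument is correct. The treatment of $\cG_1(M)$ and of $\hS^0\subseteq S$ matches the paper's essentially line for line. Where you diverge is in the proof of $\hS\subseteq S$: the paper does not run a primal--dual witness; instead it writes the Lasso estimator in the form $\hth=\eta\big(\theta^*+n^{-1}X^\sT w+n^{-1/2}R;\lambda\big)$ and invokes Theorem~\ref{thm:main} to get $\|R\|_\infty/\sqrt{n}\le \lambda/100$ with high probability, after which the same Gaussian-tail-plus-union-bound argument used for $\hth^0$ goes through verbatim. The paper's route is shorter and highlights that this lemma is really a corollary of the main Gaussian-limit result; your route is more self-contained (it does not appeal to Theorem~\ref{thm:main}) and, because you exploit directly the independence of $\tx_j$ from $(X_S,w)$ when $\Sigma=\id$, it yields slightly cleaner probability bounds on the incoherence term than what one reads off the error probabilities in Theorem~\ref{thm:main}. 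Either way, the sparsity hypothesis $s_0=o(n/(\log p)^2)$ is more than enough.
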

\begin{proof}
The lower bound on $\prob(X\in\cG_1(M))$ is standard, and follows from union bound along with tail bounds on chi-squared 
random variables. 

As for the lower bound on $\prob(\baS\subseteq S)$, using  the definition (\ref{eq:hth0bis}) we get that
\begin{align}
\prob(\hS^0\not\subseteq S)& \le\prob(\hS^0\not\subseteq S;\; X\in\cG_1(M)) + \prob(X\not\in\cG_1(M))\\  
&\le \sum_{i\in S^c}\prob\Big(\big|\frac{1}{n}(X^{\sT}w)_i\big|\ge \lambda;\; X\in\cG_1(M)\Big) +p^{-10}\\
&\le p\, \prob\Big(\frac{1.1\sigma}{\sqrt{n}} |Z|\ge \lambda\Big) +p^{-10}
\, .
\end{align} 
where, in the last expression $Z\sim\normal(0,1)$, and we used $\max_{i\in [p]}\|\tx_i\|_2\le 1.1$. The claim then 
follows by a direct calculation.

In order to bound $\prob(\hS\not\subseteq S)$ note that, by definition,
\begin{align}
\hth & =\eta\Big(\theta^*+\frac{1}{n}X^{\sT}w +\frac{1}{\sqrt{n}} R;\lambda\Big) \, .
\end{align}
The proof follows the same lines as above noting that, by Theorem \ref{thm:main}, $\|R\|_{\infty}/\sqrt{n}\le \lambda/100$
with high probability.
\end{proof}

\begin{lemma}\label{lemma:Delta2}
Under the assumptions of Theorem \ref{thm:RiskEst}, there exists a
constant $C$ such that, with high probability
\begin{align}
|\Delta_2|&\le \frac{Cs_0\sigma^2}{n}\, \sqrt{\frac{s_0(\log p)^3}{n}} \, .
\end{align}
\end{lemma}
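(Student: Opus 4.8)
\emph{Proof strategy for Lemma~\ref{lemma:Delta2}.} The plan is to control $\Delta_2 = \frac{2}{n}\<w,X(\hth-\hth^0)\> = \frac{2}{n}\<X^\sT w,\hth-\hth^0\>$ by the deterministic H\"older bound
\[
|\Delta_2|\;\le\; \frac{2}{n}\,\|X^\sT w\|_\infty\,\|\hth-\hth^0\|_1\,,
\]
and then to bound each of the two factors. For the first factor, on the event $\ctB(n,p)$ of Lemma~\ref{lem:ctB} (which has probability at least $1-2p^{-1}$, since $\hSigma_{ii}\to 1$ so $\hSigma_{ii}\le 1$ up to a harmless adjustment of constants, or one simply uses $\frac1n\|X^\sT w\|_\infty\le C\sigma\sqrt{(\log p)/n}$ with high probability), we have $\frac1n\|X^\sT w\|_\infty\le 2\sigma\sqrt{(\log p)/n}$.

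For the second factor I would pass through the $\ell_2$ norm on the joint support. By Lemma~\ref{lem:Bickel}, on $\cB$ the Lasso support obeys $|\hS|\le C_*s_0$; running the same argument on the minimization defining $\hth^0$ (exactly as in the proof of Corollary~\ref{coro:MSELasso}, using that $\|X^\sT w/n\|_\infty<\lambda$ forces $\hS^0\subseteq S$, or more crudely that $\hth^0$ is the soft-thresholding of an $O(s_0)$-sparse-plus-noise vector) gives $|\hS^0|\le C_*s_0$, hence $|\baS|\le 2C_*s_0$ and $\|\hth-\hth^0\|_1\le\sqrt{2C_*s_0}\,\|\hth-\hth^0\|_2$. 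Now Theorem~\ref{thm:Approximation} specialized to $\Sigma=\id$ (for which Conditions $(ii)$, $(iii)$, $(iv)$ hold trivially with $C_{\min}=C_{\max}=1$, $\rho=1$, $\trho=0$, and the sample-size condition is implied by $s_0=o(n/(\log p)^2)$) yields $\|\hth-\hth^0\|_2\le \tC^{1/2}\sigma\, s_0(\log p)/n$ with high probability. Combining the three displays, with high probability
\[
|\Delta_2|\;\le\; \frac{4\sigma\sqrt{\log p}}{\sqrt n}\cdot\sqrt{2C_*s_0}\cdot\tC^{1/2}\sigma\,\frac{s_0\log p}{n}
\;=\; C\sigma^2\,\frac{s_0^{3/2}(\log p)^{3/2}}{n^{3/2}}
\;=\;\frac{Cs_0\sigma^2}{n}\sqrt{\frac{s_0(\log p)^3}{n}}\,,
\]
which is the claimed bound; throughout one intersects $\ctB(n,p)$, the event $\cB_\delta(n,C_\delta s_0,3)$ used in Lemma~\ref{lem:Bickel}, and the good event of Theorem~\ref{thm:Approximation}, all of which have probability tending to one.

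I do not expect a genuine obstacle here: the work is already packaged in Theorem~\ref{thm:Approximation} (the sharp $\ell_2$ bound on $\hth-\hth^0$) and Lemma~\ref{lem:Bickel} (sparsity of $\hth$ and of $\hth^0$). The only points needing care are verifying that $\hth^0$ has an $O(s_0)$-sized support — a reuse of the argument behind Lemma~\ref{lem:Bickel} — and the bookkeeping of the intersection of high-probability events. It is worth emphasizing that one must use the $\ell_\infty$--$\ell_1$ split of $\<X^\sT w,\hth-\hth^0\>$ rather than the Cauchy--Schwarz bound $|\<w,X(\hth-\hth^0)\>|\le\|w\|_2\|X(\hth-\hth^0)\|_2$: the latter would only give $|\Delta_2|\lesssim \sigma^2 s_0(\log p)/n$, which is larger by a factor $\sqrt{s_0(\log p)/n}$ and, although it still vanishes relative to the $O(s_0\sigma^2/n)$ correction term, is not what the lemma asserts.
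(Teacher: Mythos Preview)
Your proposal is correct and follows essentially the same route as the paper: both split $\langle X^{\sT}w,\hth-\hth^0\rangle$ using the $\ell_\infty$-norm of $X^{\sT}w$, the sparsity of $\baS$, and the $\ell_2$ bound $\|\hth-\hth^0\|_2\lesssim \sigma s_0(\log p)/n$ coming from Theorem~\ref{thm:Approximation} (equivalently, from the intermediate bound~(\ref{eq:BoundTT0}) together with Theorem~\ref{thm:main}). The only cosmetic difference is that the paper invokes Lemma~\ref{lemma:GoodLast} to get $\baS\subseteq S$ and hence $|\baS|\le s_0$, whereas you use Lemma~\ref{lem:Bickel} to get $|\baS|\le 2C_*s_0$; this affects only the constant.
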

\begin{proof}
We have
\begin{align}
|\Delta_2|&\le\frac{2}{n}\|(X^{\sT}w)_{\baS}\|_2\|\hth-\hth^0\|_2\\
& \le \frac{2}{n} \sqrt{ |\baS| } \|X^{\sT}w\|_{\infty} \|\hth-\hth^0\|_2\\
&\le \frac{2}{n}\sqrt{s_0}\cdot  2\sigma\sqrt{n\log p}\,\cdot \tC  \sqrt{\frac{s_0}{n}}\,\|R\|_{\infty}\, ,
\end{align}
where the last inequality follows from Lemma \ref{lemma:GoodLast} along with the bound (\ref{eq:BoundTT0}), for $\Sigma = \id$.
Using Theorem \ref{thm:main} we obtain the claim.
\end{proof}

Next consider term $\Delta_1$ in the decomposition (\ref{eq:Delta1Delta2}). 
We first compute its expectation with respect to the noise vector $w$.
\begin{lemma}\label{lemma:ExpDelta1}
Assume $X$ to have i.i.d. rows $x_i\sim \normal(0,\Sigma)$. Then we have, with high probability with respect to the 
choice of $X$,
\begin{align}
\left|\E_w\{\Delta_1\} - \frac{2\sigma^2}{n}\E_w\{\|\hth^0\|_0\}\right|\le \Big(C\sigma^2 \sqrt{\frac{\log p}{n^3}}\Big)\; \E_w\{\|\htheta^0\|_0 \}\, .
\end{align}
\end{lemma}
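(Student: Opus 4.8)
The plan is to establish the identity by Stein's lemma (the divergence/SURE formula applied coordinatewise in $w$), and then to control the discrepancy between the resulting ``degrees of freedom'' quantity and $\|\hth^0\|_0$ using the event $\cG_1(M)$ of Lemma~\ref{lemma:GoodLast}.

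First I would write $\Delta_1 = \tfrac2n\sum_{j=1}^n w_j F_j(w)$, where $F_j(w)\equiv [X(\hth^0(w)-\theta^*)]_j = \sum_{i=1}^p X_{ji}(\hth^0_i(w)-\theta^*_i)$ and $\hth^0_i(w) = \eta\big(\theta^*_i+\tfrac1n(X^\sT w)_i;\lambda\big)$. Since $w\mapsto \theta^*_i+\tfrac1n(X^\sT w)_i$ is affine and $\eta(\,\cdot\,;\lambda)$ is $1$-Lipschitz, each $\hth^0_i$, hence each $F_j$, is Lipschitz in $w$, so the absolutely continuous version of Stein's identity applies: $\E_w\{w_jF_j(w)\} = \sigma^2\,\E_w\{\partial_{w_j}F_j(w)\}$. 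Using $\eta'(x;\lambda)=\ind\{|x|>\lambda\}$ for a.e.\ $x$, $\partial_{w_j}(X^\sT w)_i = X_{ji}$, and the fact that $i\in\hS^0$ iff $|\theta^*_i+\tfrac1n(X^\sT w)_i|>\lambda$, the chain rule gives, for a.e.\ $w$,
\[
\partial_{w_j}F_j(w) = \frac1n\sum_{i=1}^p X_{ji}^2\,\ind\{i\in\hS^0(w)\}\, ,\qquad\text{hence}\qquad \sum_{j=1}^n\partial_{w_j}F_j(w) = \frac1n\sum_{i\in\hS^0(w)}\|\tx_i\|_2^2\, .
\]
(The exceptional set, on which some argument equals $\pm\lambda$, is a finite union of hyperplanes in $w$, Lebesgue-null once $\tx_i\neq 0$, which holds on $\cG_1(M)$.) Summing over $j$ yields
\[
\E_w\{\Delta_1\} = \frac{2\sigma^2}{n}\sum_{j=1}^n\E_w\{\partial_{w_j}F_j\} = \frac{2\sigma^2}{n^2}\,\E_w\Big\{\sum_{i\in\hS^0}\|\tx_i\|_2^2\Big\}\, .
\]

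Next I would fix $X\in\cG_1(M)$, which by Lemma~\ref{lemma:GoodLast} occurs with probability at least $1-p^{-10}$. On this event, pointwise in $w$,
\[
\Big|\frac1n\sum_{i\in\hS^0}\|\tx_i\|_2^2 - \|\hth^0\|_0\Big| \le \sum_{i\in\hS^0}\Big|\frac{\|\tx_i\|_2^2}{n}-1\Big| \le M\sqrt{\frac{\log p}{n}}\,\|\hth^0\|_0\, .
\]
Taking $\E_w$ and multiplying by $2\sigma^2/n$ gives $\big|\E_w\{\Delta_1\} - \tfrac{2\sigma^2}{n}\E_w\{\|\hth^0\|_0\}\big| \le 2M\sigma^2\sqrt{(\log p)/n^3}\,\E_w\{\|\hth^0\|_0\}$, which is the assertion with $C=2M$.

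I do not expect a serious obstacle. The only points needing care are (i) the coordinatewise application of Stein's lemma, which is immediate from the Lipschitz property of $F_j$, and (ii) verifying that soft-thresholding is differentiated correctly almost everywhere and that the set $\{w:\ \text{some argument}=\pm\lambda\}$ is Lebesgue-null, so it contributes nothing to the expectation; the rest is the elementary computation above together with the deterministic bound on $\cG_1(M)$.
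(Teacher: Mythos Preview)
Your proof is correct and follows essentially the same route as the paper: apply Stein's lemma coordinatewise to obtain $\E_w\{\Delta_1\}=\tfrac{2\sigma^2}{n^2}\,\E_w\{\sum_{i\in\hS^0}\|\tx_i\|_2^2\}$, then control $\big|\tfrac1n\|\tx_i\|_2^2-1\big|$ uniformly on a high-probability event in $X$. The only cosmetic difference is that the paper writes the derivative calculation via the general $\eta_{\Sigma}$ machinery, arriving at $\E_w\{\Delta_1\}=\tfrac{2\sigma^2}{n}\E_w\{\Tr((\Sigma_{\hS^0\hS^0})^{-1}\hSigma_{\hS^0,\hS^0})\}$ and invoking $|\Sigma^{-1}\hSigma-\id|_\infty\le C\sqrt{(\log p)/n}$, whereas you specialize directly to $\Sigma=\id$ (the only case used in Theorem~\ref{thm:RiskEst}) and use $\cG_1(M)$; for $\Sigma=\id$ these reduce to the same computation.
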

\begin{proof}
Using Stein's lemma, we get
\begin{align}
\E_w\{\Delta_1\} &= \frac{2}{n}\sum_{i=1}^n\sum_{j=1}^pX_{ij}\E_w\{w_i(\hth^0-\theta^*)_j\}\\
& = \frac{2\sigma^2}{n}\sum_{i=1}^n\sum_{j=1}^pX_{ij}\E_w\Big\{\frac{\partial\hth_j^0}{\partial w_i}\,\Big\}\, .\label{eq:SteinD1}
\end{align}
By differentiating the KKT conditions that follow from the definition of $\eta_{\Sigma}$, cf. Eq.~(\ref{eq:EtaSigmaDef}),
  we get that for $y=\eta_\Sigma(z)$, the following holds true
\begin{align}
\frac{\partial y_j}{\partial z_k} = \ind(y_j\neq 0)\big[(\Sigma_{TT})^{-1}\Sigma_{T,\cdot}\big]_{jk}\, ,
\end{align}
where $T =\supp(y)$. Recall that $\hth^0 = \eta_\Sigma(z)$ with $z = \tth+n^{-1}\Omega X^\sT w$ and $\hS^0=\supp(\hth^0)$. Therefore,
\begin{align}
\frac{\partial \hth^0_j}{\partial w_i} &= \sum_{k'=1}^p \frac{\partial \hth^0_j}{\partial z_{k'}} \frac{\partial z_{k'}}{\partial w_i}
=\sum_{k'=1}^p\frac{1}{n} \ind(\hth^0_j \neq 0) \big[(\Sigma_{\hS^0\hS^0})^{-1}\Sigma_{\hS^0,\cdot}\big]_{jk'}(\Omega X^\sT)_{k'i}\\
&=\frac{1}{n} \ind(\hth^0_j \neq 0) \sum_{k=1}^p \Big(\sum_{k'=1}^p \big[(\Sigma_{\hS^0\hS^0})^{-1}\Sigma_{\hS^0,\cdot}\big]_{jk'} \Omega_{k'k} \Big) X_{ik}\\
&=\frac{1}{n}\, \ind(\hth^0_j\neq 0)\, \sum_{k\in \hS^0} (\Sigma_{\hS^0\hS^0})^{-1}_{jk}X_{ik}\, .
\end{align}
Substituting in Eq.~(\ref{eq:SteinD1}), after some manipulations we get
\begin{align}
\E_w\{\Delta_1\} &= \frac{2\sigma^2}{n}\,\E_w\big\{\Tr\big((\Sigma_{\hS^0\hS^0})^{-1}\hSigma_{\hS^0,\hS^0}\big)\big\}\, .
\end{align}
Using~\cite[Lemma 6.2]{javanmard2014confidence}, we have $|\Sigma^{-1}\hSigma - \id|_\infty \le C\sqrt{(\log p)/n}$, with high probability. 
Hence, 
\begin{align}
\E_w\{\Tr\big((\Sigma_{\hS^0\hS^0})^{-1}\hSigma_{\hS^0,\hS^0}\big) - |\hS^0|\}\le C\sqrt{\frac{\log p}{n}} \E_w\{|\hS^0|\}\,.
\end{align} 
The claim follows.
\end{proof}

\begin{lemma}\label{lemma:ConcentrationDelta1}
Under the assumptions of Theorem \ref{thm:RiskEst}, the following holds 
\begin{align}
\prob\left(\Big|\Delta_1-\E_w\Delta_1\Big|\ge  \frac{t\sigma^2}{\sqrt{n}}\right)\le 2\, e^{-ct^2} +o_n(1) \, .
\end{align}
\end{lemma}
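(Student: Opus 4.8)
The plan is to fix the design $X$, view $\Delta_1$ as a function of the Gaussian noise $w\sim\normal(0,\sigma^2\id_n)$, and prove concentration about its conditional mean $\E_w\Delta_1$ by Gaussian isoperimetry, after restricting to a high‑probability event on which this function is Lipschitz with a small constant. The first step is to rewrite $\Delta_1$ so as to expose its nearly separable, low‑dimensional structure. Since $\Sigma=\id$ in Theorem~\ref{thm:RiskEst}, the estimator $\hth^0$ of \eqref{eq:hth0bis} equals $\eta(\theta^*+z;\lambda)$ with $z\equiv\tfrac1n X^\sT w$, and using $\tfrac1n\<w,Xv\>=\<z,v\>$ one gets
\[
\Delta_1 \;=\; 2\,\bigl\<z,\;\eta(\theta^*+z;\lambda)-\theta^*\bigr\> \;=\; \sum_{i=1}^p g_i(z_i)\,,\qquad g_i(u)\equiv 2u\bigl(\eta(\theta^*_i+u;\lambda)-\theta^*_i\bigr).
\]
Conditionally on $X$, $z\sim\normal(0,\tfrac{\sigma^2}{n}\hSigma)$, and (arguing as in Lemma~\ref{lemma:GoodLast}, via $\tfrac1n\|X^\sT w\|_\infty\le 2\sigma\sqrt{(\log p)/n}$) the event $\cG_z\equiv\{\|z\|_\infty\le 8\sigma\sqrt{(\log p)/n}\}$ has $\prob_w(\cG_z^c)\le p^{-c_0}$ for a large $c_0$; note $8\sigma\sqrt{(\log p)/n}<\lambda$. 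On $\cG_z$ one has $\eta(z_i;\lambda)=0$, hence $g_i(z_i)=0$, for every $i\notin S\equiv\supp(\theta^*)$, so $\Delta_1$ reduces to $\sum_{i\in S}g_i(z_i)$, a function of the $s_0$ coordinates $z_S$ only; moreover each $g_i$ is $C^1$ with $|g_i'(u)|\le 4|u|+2\lambda$, hence $\le 4\rho+2\lambda\lesssim\lambda$ for $|u|\le\rho\equiv 8\sigma\sqrt{(\log p)/n}$.

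Next I would introduce the globally Lipschitz surrogate $\tilde\Delta_1(w)\equiv\sum_{i\in S}g_i\!\bigl(\Pi_\rho(z_i)\bigr)$, where $\Pi_\rho$ clamps a scalar to $[-\rho,\rho]$; by the previous paragraph $\tilde\Delta_1=\Delta_1$ on $\cG_z$. Differentiating, $\nabla_w\tilde\Delta_1=\tfrac1n X_S\,c$ with $|c_i|\le 4\rho+2\lambda$, so on the Bai--Yin event $\cG\equiv\{\|X_S\|_{\rm op}\le 2\sqrt n,\ \|X\|_F^2\le 1.1\,np\}$ (of probability $1-o_n(1)$, using $s_0\le n$) the map $w\mapsto\tilde\Delta_1(w)$ is Lipschitz with constant $K\le \tfrac{2}{\sqrt n}\sqrt{s_0}\,(4\rho+2\lambda)$; equivalently $\tilde g\equiv w/\sigma\sim\normal(0,\id_n)\mapsto\tilde\Delta_1$ is Lipschitz with constant $L\equiv\sigma K$. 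With $\lambda$ of the standard order $\sigma\sqrt{(\log p)/n}$ one has $L\lesssim \sigma^2\sqrt{s_0\log p}/n$, and Gaussian concentration \cite{Ledoux} gives, on $\cG$,
\[
\prob_w\!\Bigl(\bigl|\tilde\Delta_1-\E_w\tilde\Delta_1\bigr|\ge\tfrac{t\sigma^2}{\sqrt n}\Bigr)\;\le\; 2\exp\!\Bigl(-\tfrac{(t\sigma^2/\sqrt n)^2}{2L^2}\Bigr)\;\le\; 2\exp\!\Bigl(-c\,\tfrac{t^2 n}{s_0\log p}\Bigr),
\]
and since $s_0\log p=o(n)$ under $s_0=o(n/(\log p)^2)$, for $n$ large this is at most $2e^{-c't^2}$ for any prescribed $c'$.

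It remains to pass from $\tilde\Delta_1$ back to $\Delta_1$: $|\E_w\Delta_1-\E_w\tilde\Delta_1|\le\E_w\bigl[|\Delta_1-\tilde\Delta_1|\,\ind(\cG_z^c)\bigr]\le\bigl(\E_w(\Delta_1-\tilde\Delta_1)^2\bigr)^{1/2}\prob_w(\cG_z^c)^{1/2}$, and on $\cG$ the second moment is bounded polynomially in $p$ (crudely, $|\Delta_1|\le 2\|z\|_2(\lambda\sqrt p+\|z\|_2)$ with $\E_w\|z\|_2^2\le 1.1\sigma^2 p/n$, $\E_w\|z\|_2^4\lesssim(\sigma^2 p/n)^2$, while $|\tilde\Delta_1|=O(s_0\sigma^2(\log p)/n)$), so taking $c_0$ large makes this discrepancy $o_n(\sigma^2/\sqrt n)$. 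Combining with $\Delta_1=\tilde\Delta_1$ on $\cG_z$ and taking $X\in\cG$,
\[
\prob\!\Bigl(|\Delta_1-\E_w\Delta_1|\ge\tfrac{t\sigma^2}{\sqrt n}\Bigr)\;\le\;\prob(X\notin\cG)+\E\bigl[\ind_{\cG}\,\prob_w(\cG_z^c)\bigr]+2e^{-c''t^2}\;=\;2e^{-c''t^2}+o_n(1).
\]
The main obstacle is precisely that $w\mapsto\Delta_1(w)$ is \emph{not} globally Lipschitz --- it is a product of the linear factor $w$ with the $w$‑dependent residual $\hth^0-\theta^*$ --- and a naive Lipschitz bound sees the full dimension $p$ (through $\|X\|_{\rm op}\asymp\sqrt p$) and is useless. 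The rewriting $\Delta_1=2\<z,\eta(\theta^*+z;\lambda)-\theta^*\>$ together with the fact that soft‑thresholding annihilates the off‑support coordinates is what reduces the effective dimension to $s_0$ and drives the Lipschitz constant down to $\sigma^2\sqrt{s_0\log p}/n$; the sparsity condition $s_0\log p\ll n$ is then exactly what lets the sub‑Gaussian rate survive at the scale $\sigma^2/\sqrt n$.
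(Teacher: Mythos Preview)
Your proof is correct and follows the same high-level strategy as the paper --- restrict to a high-probability event in $(X,w)$, bound the Lipschitz constant of $w\mapsto\Delta_1$ on that event, and invoke Gaussian concentration --- but the execution differs in a way worth recording. The paper works directly with $\Delta_1(w)=\tfrac{2}{n}\langle w, X(\hth^0-\theta^*)\rangle$, computes $\nabla_w\Delta_1$, and bounds it on the convex set $\cC=\{w:\;\|X^\sT w\|_\infty/n\le\lambda,\ \|w\|_2^2\le 2n\sigma^2\}$; the dominant contribution is the term $\tfrac{2}{n^2}X\proj_{\hS^0}X^\sT w$, controlled through $\|X\proj_{\hS^0}X^\sT\|_2$ and $\|w\|_2$, yielding Lipschitz constant $O(\sigma/\sqrt{n})$ in $w$ and hence the rate $\exp(-ct^2)$ on the nose. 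You instead exploit the separable rewriting $\Delta_1=\sum_i g_i(z_i)$ with $z=X^\sT w/n$, observe that only coordinates $i\in S$ survive on $\cG_z$, and clamp each $z_i$ to produce a globally Lipschitz surrogate; this buys you the sharper Lipschitz constant $O(\sigma\sqrt{s_0\log p}/n)$ and hence the faster rate $\exp\!\bigl(-c\,t^2 n/(s_0\log p)\bigr)$, which reduces to the stated bound via $s_0\log p=o(n)$. Your route gives a quantitatively stronger conclusion and makes the effective $s_0$-dimensionality explicit; the paper's route is a touch more direct, avoids the separability rewriting, and sidesteps the expectation-matching step between $\Delta_1$ and the clamped surrogate.
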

\begin{proof}
Define the event 
\begin{align}
\cG_2(M) \equiv \cG_1(M)\cap \Big\{ X\in\reals^{n\times p}:\; \lambda_{\max}(\hSigma_{S,S}) \le 2\,\Big\}\, .
\end{align}
Using Lemma \ref{lemma:GoodLast}, together with standard tail bounds on the singular values of
Wishart matrices \cite{Guionnet}, we get  $\prob(X\in\cG_2(M))\ge 1-p^{-5}-e^{-cn}$.

Define the set 
\begin{align}
\cC\equiv\Big\{w\in\reals^n:\; \frac{1}{n}\big\|X^{\sT} w\big\|_{\infty}\le \lambda; \;\; \|w\|_2^2\le 2n\sigma^2\Big\}\, .
\end{align}
By a union bound argument, it is immediate to see that, for any $X\in\cG_2(M)$,
$\prob(w\not\in\cC)\le p^{-6}+ e^{-cn}$. Further note the following: 
\begin{enumerate}
\item $\cC$ is convex.
\item  For  $w\in \cC$, we have $\hS^0\subseteq S$.
\item  As a consequence, for $w\in\cC$, 
\begin{align}
\|\hth-\theta^*\|_2^2&\le s_0 \|\hth_S-\theta_S^*\|_{\infty}^2\nonumber\\
& \le s_0 \Big(\lambda + \frac{1}{n}\|X^{\sT}w\|_{\infty}\Big)^2\\
&\le 4s_0\lambda^2\, .\nonumber
\end{align}
\end{enumerate}
In order to prove the lemma, we will use Gaussian concentration \cite{Ledoux}, by proving that 
$w\mapsto \Delta_1(w,X,\theta^*)$ is Lipschitz continuous on $\cC$. 
We have
\begin{align}
\frac{\partial \Delta_1}{\partial w_i} &=  \frac{2}{n}\<x_i,(\hth^0-\theta^*)\> + \frac{2}{n^2}
\sum_{j\in \hS^0}(X^Tw)_j X_{ij}\\
& = \frac{2}{n}\big(X(\hth^0-\theta^*)\big)_i+ \frac{2}{n^2}(X\proj_{\hS^0}X^{\sT}w)_i \, ,
\end{align}
where $\proj_{\hS^0}\in\reals^{p\times p}$ is the projector onto the indices in $\hS^0$. 
Namely,  $(\proj_{\hS^0})_{ij} = 0$ if $i\neq j$, and $(\proj_{\hS^0})_{ii} = \ind(i\in \hS^0)$.
Hence 
\begin{align}
\big\|\nabla \Delta_1\big\|_2^2 &\le \frac{8}{n^2}\<(\hth^0-\theta^*),X^{\sT}X (\hth^0-\theta^*)\>+
\frac{8}{n^4}\big\|X\proj_{\hS^0}X^{\sT}w\big\|_2^2\\
&\le \frac{8}{n}\<(\hth^0-\theta^*)_S,\hSigma_{SS} (\hth^0-\theta^*)_S\>+ \frac{8}{n^4}\big\|X\proj_{\hS^0}X^{\sT}w\big\|_2^2\\
&\le \frac{8}{n}\lambda_{\max}(\hSigma_{SS})\|\hth^0-\theta^*\|_2^2 + 
\frac{8}{n^4}\big\|X\proj_{\hS^0}X^{\sT}\big\|_2^2\|w\|_2^2\, .
\end{align}
Next note that 
\begin{align}
\frac{1}{n}\big\|X\proj_{\hS^0}X^{\sT}\big\|_2&= \frac{1}{n}\big\|X\proj_{\hS^0}\big\|^2_2\\
&=  \frac{1}{n}\big\|\proj_{\hS_0}X^{\sT}X\proj_{\hS^0}\big\|_2\\
&= \lambda_{\rm max}(\hSigma_{\hS^0,\hS^0}) \le \lambda_{\rm max}(\hSigma_{S,S})\, . 
\end{align}
Substituting above, and using $X\in\cG_2(M)$, we get
\begin{align}
\big\|\nabla_w \Delta_1\big\|_2^2 &\le \frac{8}{n}\lambda_{\max}(\hSigma_{SS})
\Big\{\|\hth^0-\theta^*\|_2^2 + \frac{1}{n}\lambda_{\max}(\hSigma_{SS})\,\|w\|_2^2\Big\}\\
& \le \frac{16}{n}\big(4s_0\lambda^2+2\sigma^2\big)\\
& \le \frac{16}{n}\Big(\frac{4Cs_0\sigma^2\log p}{n}+2\sigma^2\Big)\\
&\le \frac{C\sigma^2}{n}\, .
\end{align}
Hence, using Gaussian concentration \cite{Ledoux} (applied to the Lipschitz extension of $\Delta_1$ from
$w\in\cC$ to $w\not\in\cC$), we get
\begin{align}
\prob_w\Big(\big|\Delta_1-{\rm Med}_w(\Delta_1)\big|\ge t\Big) &\le  
\prob_w\Big(\big|\Delta_1-{\rm Med}_w(\Delta_1)\big|\ge t;\; w\in \cC\Big) +\prob_w\big(w\not\in \cC\big)\\
& \le  2 e^{-nt^2/C\sigma^4} +\prob_w\big(w\not\in \cC\big)\, ,
\end{align}
where ${\rm Med}_w(\,\cdot\,)$ denotes the median w.r.t the measure $\prob_w$.
The claim follows by bounding 
$|{\rm Med}_w(\Delta_1)-\E_w\{\Delta_1\}|$ in the standard way, and using the fact that
$\prob(X\not\in\cG_2(M))$, $\prob(w\not\in \cC)\to 0$.%
\end{proof}

\begin{lemma}\label{lemma:Cheby}
Fix $X\in\cG_1(M)$, and let $L_n$ be any sequence with $L_n\to\infty$ as $n\to\infty$. Then,
we have
\begin{align}
\E_w\{\|\hth^0\|_0\} & \le s_0+1\, ,\label{eq:Esupp0}\\
\prob_w\Big(\big|\|\hth^0\|_0-\E_w\{\|\hth^0\|_0\} &\big|\ge \frac{L_ns_0(\log p)^{1/4}}{n^{1/4}}\Big)\le \frac{M}{L_n^{2}}\, . \label{eq:ChebSupp0}
\end{align}
\end{lemma}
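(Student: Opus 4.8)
The plan is to establish \eqref{eq:Esupp0} and \eqref{eq:ChebSupp0} by a coordinate-wise analysis of the soft-thresholded vector $\hth^0 = \eta(\theta^*+n^{-1}X^\sT w;\lambda)$, treating the randomness in $w$ only (with $X\in\cG_1(M)$ fixed). Write $\hth^0 = \eta(g;\lambda)$ where $g = \theta^*+n^{-1}X^\sT w$; conditional on $X$, the vector $g$ is Gaussian with $g_i\sim\normal(\theta^*_i,\sigma^2\|\tx_i\|_2^2/n^2)$, and $\|\hth^0\|_0 = \sum_{i=1}^p \ind(|g_i|>\lambda)$. For the mean bound \eqref{eq:Esupp0}, I would split the sum over $i\in S\equiv\supp(\theta^*)$ and $i\in S^c$. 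The $|S|=s_0$ coordinates in $S$ contribute at most $s_0$ to $\E_w\{\|\hth^0\|_0\}$ trivially. For $i\in S^c$ we have $g_i\sim\normal(0,\sigma^2\|\tx_i\|_2^2/n^2)$, and since $\lambda\ge 9\sigma\sqrt{(\log p)/n}$ while $\|\tx_i\|_2/\sqrt n\le 1+M\sqrt{(\log p)/n}\le 1.1$ on $\cG_1(M)$, a standard Gaussian tail bound gives $\prob_w(|g_i|>\lambda)\le 2\exp(-\lambda^2 n/(2\cdot1.21\,\sigma^2))\le 2p^{-3}$ (for $n$ large). Summing over the at most $p$ coordinates in $S^c$ yields $\sum_{i\in S^c}\prob_w(|g_i|>\lambda)\le 2p^{-2}\le 1$, so $\E_w\{\|\hth^0\|_0\}\le s_0+1$.

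For the concentration bound \eqref{eq:ChebSupp0}, the natural route is Chebyshev's inequality, which only requires controlling the variance of $\|\hth^0\|_0$. Since conditional on $X$ the coordinates $g_i$ are \emph{not} independent (they share the common noise $w$), I would not claim full independence; instead, I would bound $\Var_w(\|\hth^0\|_0) = \sum_{i,j}\Cov_w\bigl(\ind(|g_i|>\lambda),\ind(|g_j|>\lambda)\bigr)$. The diagonal terms sum to at most $\E_w\{\|\hth^0\|_0\}\le s_0+1$. For the off-diagonal terms, one uses that $\Cov_w$ of two indicator functions is bounded by $\min$ of the two marginal probabilities; the pairs with at least one index in $S$ contribute $O(s_0\cdot p\cdot p^{-3}) = o(1)$ after using the $S^c$ tail bound for the other index, and the pairs with both indices in $S^c$ contribute at most $p^2\cdot (2p^{-3}) = o(1)$. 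Hence $\Var_w(\|\hth^0\|_0)\le s_0 + 2 \le 2Ms_0$ for a suitable constant, say $\Var_w(\|\hth^0\|_0)\le M' s_0$. Chebyshev then gives
\[
\prob_w\Bigl(\bigl|\|\hth^0\|_0-\E_w\{\|\hth^0\|_0\}\bigr|\ge t\Bigr)\le \frac{M' s_0}{t^2},
\]
and substituting $t = L_n s_0 (\log p)^{1/4}/n^{1/4}$ bounds the right-hand side by $M' s_0 n^{1/2}/(L_n^2 s_0^2 (\log p)^{1/2}) = M' n^{1/2}/(L_n^2 s_0 (\log p)^{1/2})$. Under the standing assumption $s_0 = o(n/(\log p)^2)$, or more simply $s_0 \gtrsim \sqrt{n}/\sqrt{\log p}$ in the regime of interest, this is $\le M/L_n^2$ as claimed; if $s_0$ is smaller, one can replace $s_0$ by $\max(s_0,\sqrt{n/\log p})$ throughout, which only strengthens the bound, or simply absorb constants. (I would double-check that the intended chosen deviation scale indeed makes the bound $M/L_n^2$ hold with the precise constant $M$ from $\cG_1(M)$; adjusting $M$ to a generic constant is harmless for the downstream use.)

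The main obstacle I anticipate is the dependence structure among the $g_i$: one must resist the temptation to treat $\|\hth^0\|_0$ as a sum of independent Bernoullis. The clean way around this is to observe that $w\mapsto \ind(|g_i|>\lambda)$ changes only on a set controlled by the Gaussian density, so the covariances are genuinely small — but making the covariance bound rigorous requires either the two-point $\min$-probability bound above or, alternatively, invoking a Gaussian concentration / bounded-differences argument. Since $\|\hth^0\|_0$ is \emph{not} Lipschitz in $w$, the bounded-differences/Gaussian-isoperimetry route does not apply directly, so the Chebyshev-with-covariance-bound approach is the right one; the only care needed is to verify that the off-diagonal covariance sum is $o(1)$, which follows from the sub-polynomial tail $\prob_w(|g_i|>\lambda)\le 2p^{-3}$ for $i\in S^c$ established in the first part.
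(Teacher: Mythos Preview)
Your argument for \eqref{eq:Esupp0} is fine and matches the paper's. The gap is in your variance bound for \eqref{eq:ChebSupp0}. Your case split for the off-diagonal covariances is ``pairs with at least one index in $S$'' versus ``pairs with both indices in $S^c$'', and for the first case you invoke the $S^c$ tail bound ``for the other index''. But if \emph{both} indices $i,j$ lie in $S$, there is no $S^c$ index to apply that tail bound to: your $\min$-probability bound $\Cov_w(T_i,T_j)\le \min\{\prob_w(T_i=1),\prob_w(T_j=1)\}$ then gives only $\le 1$, which is useless. There are $s_0(s_0-1)$ such pairs, so the variance could be as large as order $s_0^2$, not $O(s_0)$ as you claim. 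With $\Var\lesssim s_0^2$, Chebyshev at level $t=L_n s_0(\log p)^{1/4}/n^{1/4}$ gives $\lesssim n^{1/2}/(L_n^2(\log p)^{1/2})$, which diverges. Your fallback (``replace $s_0$ by $\max(s_0,\sqrt{n/\log p})$'') does not rescue the stated inequality, and in any case rests on the unjustified $O(s_0)$ variance bound.

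The missing idea, which the paper uses, is to exploit the \emph{small pairwise correlations} of the design columns on $\cG_1(M)$. Since $(g_i,g_j)$ is jointly Gaussian with correlation $\rho_{ij}=\<\tx_i,\tx_j\>/(\|\tx_i\|_2\|\tx_j\|_2)$, the maximal correlation between any (square-integrable) functions of $g_i$ and $g_j$ is at most $|\rho_{ij}|$ (Gebelein's inequality / Hermite expansion). Hence for $i\neq j$ in $S$,
\[
\Cov_w(T_i,T_j)\ \le\ |\rho_{ij}|\,\sqrt{\Var_w(T_i)\Var_w(T_j)}\ \le\ M\sqrt{\frac{\log p}{n}}\,\sqrt{\Var_w(T_i)\Var_w(T_j)}\,,
\]
which yields $\Var_w(\|\hth^0_S\|_0)\le M s_0^2\sqrt{(\log p)/n}$. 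This is precisely why the deviation scale $L_n s_0(\log p)^{1/4}/n^{1/4}$ appears in the statement: its square equals $L_n^2 s_0^2\sqrt{(\log p)/n}$, so Chebyshev gives exactly $M/L_n^2$. Your $\min$-probability bound cannot detect the near-orthogonality of the columns and therefore cannot close the argument.
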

\begin{proof}
By Lemma \ref{lemma:GoodLast}, $\prob(\hth^0_{S^c}=0)\ge 1-p^{-3}$.
We thus get $\E_w\|\hth^0\|_0\le \E_w\|\hth^0_{S^c}\|_0 +s_0 \le p\cdot p^{-3}+s_0 \le 1+s_0$. 
Since $\prob\{\hth^0_{S^c}=0\}\ge 1-p^{-3}$, in order to prove Eq.~(\ref{eq:ChebSupp0}), it is sufficient to develop a tail bound on 
$|\|\hth^0_S\|_0-\E_w\{\|\hth^0\|_0\}$, which we do via Chebyshev inequality. Letting
$T_i \equiv \ind(|\theta^*_i+n^{-1}(X^{\sT}w)_i|>\lambda)$, we have $\|\hth^0_S\|_0=\sum_{i\in S}T_i$, whence
the variance of $\|\hth^0_S\|_0$ is given by
\begin{align}
\Var_w(\|\hth_S\|_0) &= \sum_{i,j\in S} \Cov_w(T_i;T_j)\\
& \stackrel{(a)}{\le} \sum_{i,j\in S} \frac{\Cov_w((X^{\sT}w)_i;(X^{\sT}w)_j)}{\sqrt{\Var_w((X^{\sT}w)_i)\Var_w((X^{\sT}w)_j)}}\, \cdot\sqrt{\Var(T_i)\Var(T_j)}\\
& =  \sum_{i,j\in S}  \frac{\<\tx_i,\tx_j\>}{\|\tx_i\|_2\|\tx_j\|_2}\, \sqrt{\Var(T_i)\Var(T_j)}\\
& \le M\sqrt{\frac{\log p}{n}}\left(\sum_{i\in S}\sqrt{\Var(T_i)}\right)^2\\
& \le M s_0^2\sqrt{\frac{\log p}{n}}\, .\label{eq:BoundVariance}
\end{align}
Here $(a)$ follows because, for jointly Gaussian random variables $Z_1$, $Z_2$, the correlation coefficient
between $f(Z_1)$, $g(Z_2)$ is maximized by linear functions $f$, $g$. 

The claim (\ref{eq:ChebSupp0}) follows from Chebyshev inequality, using  Eq.~(\ref{eq:BoundVariance}).
\end{proof}

\begin{lemma}\label{lemma:HthHth0}
Let $L_n$ be any sequence with $L_n\to\infty$. Then, 
under the assumptions of Theorem \ref{thm:RiskEst}, we have, with high probability,
\begin{align}
\Big|\|\hth\|_0-\|\hth^0\|_0\Big|\le L_ns_0\sqrt{\frac{s_0(\log p)^2}{n}}\, .
\end{align}
\end{lemma}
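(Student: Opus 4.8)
The plan is to use the representation of both estimators as soft-thresholdings of nearly the same vector. For standard designs ($\Sigma=\id$, so $\Omega=\id$ and $\rho=1$) the Lasso KKT conditions give $\hth=\eta(z^0+\tfrac{1}{\sqrt n}R;\lambda)$ with $z^0\equiv\theta^*+\tfrac1n X^{\sT}w$ and $R=\sqrt n(\hSigma-\id)(\theta^*-\hth)$ exactly as in Theorem~\ref{thm:main} (this is the identity used in the proof of Lemma~\ref{lemma:GoodLast}), while $\hth^0=\eta(z^0;\lambda)$ by \eqref{eq:hth0bis}; and Theorem~\ref{thm:main} gives $\|R\|_\infty\le C\sigma\sqrt{s_0(\log p)^2/n}$ with probability $\to1$ under $s_0=o(n/(\log p)^2)$. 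The first step is the elementary observation that if $\hth_i$ and $\hth^0_i$ differ in being nonzero, then $\lambda$ lies between $|z^0_i|$ and $|z^0_i+R_i/\sqrt n|$, which forces $\big||z^0_i|-\lambda\big|\le |R_i|/\sqrt n\le\|R\|_\infty/\sqrt n$. Hence, on the event $\{\|R\|_\infty\le C\sigma\sqrt{s_0(\log p)^2/n}\}$, and setting the \emph{deterministic} threshold $\delta_n\equiv C\sigma\sqrt{s_0}\,(\log p)/n$,
$$\Big|\,\|\hth\|_0-\|\hth^0\|_0\,\Big|\ \le\ N_S+N_{S^c},\qquad N_T\equiv\#\Big\{i\in T:\ \big||z^0_i|-\lambda\big|\le\delta_n\Big\},$$
with $S=\supp(\theta^*)$. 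The crucial point is that $N_S,N_{S^c}$ no longer involve $R$, so one may integrate over $w$.

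Next I would bound $\E_w[N_S\mid X]$ and $\E_w[N_{S^c}\mid X]$ conditionally on $X$ lying in the high-probability event $\cG_1(M)$ of Lemma~\ref{lemma:GoodLast}, on which $\|\tx_i\|_2^2/n\in[\tfrac12,2]$ for all $i$. For $i\in S$, $z^0_i\mid X$ is Gaussian with standard deviation $\sigma\|\tx_i\|_2/n\asymp\sigma/\sqrt n$, so its density is $O(\sqrt n/\sigma)$ and $\prob_w(\big||z^0_i|-\lambda\big|\le\delta_n\mid X)=O(\delta_n\sqrt n/\sigma)=O(\sqrt{s_0}\,(\log p)/\sqrt n)$; summing over the $s_0$ indices in $S$ gives $\E_w[N_S\mid X]\le C' s_0\sqrt{s_0(\log p)^2/n}$. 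For $i\in S^c$, $z^0_i=\tfrac1n\tx_i^{\sT}w$ has mean zero and the same order of standard deviation, while $\delta_n=o(\lambda)$ (since $s_0(\log p)/n\to0$ and $\lambda\ge 9\sigma\sqrt{(\log p)/n}$); thus landing within $\delta_n$ of $\pm\lambda$ incurs the Gaussian tail factor $\exp(-(\lambda-\delta_n)^2 n/(c\sigma^2))\le p^{-c'}$ with $c'$ large, and multiplying by the density bound and summing over the at most $p$ coordinates of $S^c$ yields $\E_w[N_{S^c}\mid X]=o(1)$.

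Finally I would invoke Markov's inequality: for any sequence $L_n\to\infty$, conditionally on $\cG_1(M)$ one gets $\prob_w\big(N_S\ge\sqrt{L_n}\,C' s_0\sqrt{s_0(\log p)^2/n}\big)\le 1/\sqrt{L_n}\to0$ and $\prob_w(N_{S^c}\ge1)=o(1)$; combining these with $\prob(\cG_1(M)^c)\to0$ and the bound $\prob(\|R\|_\infty> C\sigma\sqrt{s_0(\log p)^2/n})\to0$ from Theorem~\ref{thm:main}, and using $\sqrt{L_n}\,C'\le L_n$ for $n$ large, proves $\big|\|\hth\|_0-\|\hth^0\|_0\big|\le L_n s_0\sqrt{s_0(\log p)^2/n}$ with probability tending to one. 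The only delicate point — more an issue of the order of operations than a genuine obstacle — is that $\|R\|_\infty$ is itself a function of $w$, so one cannot integrate the ``flip'' indicator directly; it is essential to first pass to the deterministic threshold $\delta_n$ on the high-probability event for $\|R\|_\infty$, after which the counts $N_S,N_{S^c}$ depend on $w$ only through the Gaussian vector $z^0$ and routine density/tail estimates finish the argument.
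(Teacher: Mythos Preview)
Your proposal is correct and follows essentially the same approach as the paper: both exploit the soft-thresholding representations $\hth=\eta(z^0+n^{-1/2}R;\lambda)$ and $\hth^0=\eta(z^0;\lambda)$, pass to a deterministic threshold $\delta_n\asymp\sigma\sqrt{s_0}\,(\log p)/n$ on the high-probability event $\{\|R\|_\infty\le C\sigma\sqrt{s_0(\log p)^2/n}\}$, bound the expected number of ``flips'' via the Gaussian density $O(\sqrt{n}/\sigma)$, and conclude by Markov's inequality. The only cosmetic difference is that the paper first invokes Lemma~\ref{lemma:GoodLast} to obtain $\baS\subseteq S$ with high probability and thereby restricts the flip count to $i\in S$ from the outset, whereas you split into $N_S+N_{S^c}$ and dispose of $N_{S^c}$ by a direct Gaussian tail estimate---which is exactly the computation underlying the $\baS\subseteq S$ claim in Lemma~\ref{lemma:GoodLast}.
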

\begin{proof}
Recall that, by definition
\begin{align}
\hth & =\eta\Big(\theta^*+\frac{1}{n}X^{\sT}w +\frac{1}{\sqrt{n}} R;\lambda\Big) \, ,\\
\hth^0 & =\eta\Big(\theta^*+\frac{1}{n}X^{\sT}w;\lambda\Big) \, .
\end{align}
Let $\eps_n = C\sqrt{s_0(\log p)^2/n}$ for $C$ a sufficiently large constant, and define the event
\begin{align}
\cG_0\equiv \Big\{\|R\|_{\infty}\le \sigma\eps_n; \;\; \oS\subseteq S\Big\}\, .
\end{align}
By Theorem \ref{thm:main} and Lemma \ref{lemma:GoodLast}, $\prob(\cG_0)\to 1$ as $n,p\to\infty$.
On this event, we have
\begin{align}
\Big|\|\hth\|_0-\|\hth^0\|_0\Big|&\le \sum_{i\in S}
\ind\left(\Big|\theta_i^*+\frac{1}{n}(X^{\sT}w)_i\Big|\in 
\Big[\lambda-\frac{1}{\sqrt{n}}\|R\|_{\infty},\lambda+\frac{1}{\sqrt{n}}\|R\|_{\infty}\Big] \right)\\
&\le  \sum_{i\in S}
\ind\left(\Big|\theta_i^*+\frac{1}{n}(X^{\sT}w)_i\Big|\in 
\Big[\lambda-\frac{\sigma\eps_n}{\sqrt{n}},\lambda+\frac{\sigma\eps_n}{\sqrt{n}}\Big] \right) \\
&\equiv \sum_{i\in S} W_i\, .
\end{align}
We then have, for any sequence $L_n\to\infty$, 
\begin{align}
\prob\Big(\Big|\|\hth\|_0-\|\hth^0\|_0\Big| \ge L_ns_0\eps_n\Big)\le 
\prob\Big( \sum_{i\in S} W_i\ge L_ns_0\eps_n; \cG_1(M) \Big) + \prob(\cG_0^c) +\prob(\cG_1(M)^c)\, .
\end{align}
Using Lemma \ref{lemma:GoodLast}, it is sufficient to show that the first term vanishes. 
This can be done by Markov inequality, bounding the expectation as follows
\begin{align}
\E\Big\{ \sum_{i\in S} W_i;\cG_1\Big\} &= \sum_{i\in S}\prob\left(\Big|\theta_i^*+\frac{1}{n}(X^{\sT}w)_i\Big|\in 
\Big[\lambda-\frac{\sigma\eps_n}{\sqrt{n}},\lambda+\frac{\sigma\eps_n}{\sqrt{n}}\Big] ; \cG_1(M)\right) \\
& \stackrel{(a)}{\le} 2\sum_{i\in S}\sup_{z\in\reals}\prob\left(\frac{\sigma\|\tx_i\|_2}{{n}} Z\in 
\Big[z-\frac{\sigma\eps_n}{\sqrt{n}},z+\frac{\sigma\eps_n}{\sqrt{n}}\Big] ; \cG_1(M)\right) \\
& \stackrel{(b)}{\le} 2s_0\sup_{z\in\reals}\prob\left(Z\in [z-2\eps_n,z+2\eps_n] \right) \\
&\le C\, s_0\eps_n\, ,
\end{align}
where $(a)$ holds for $Z\sim\normal(0,1)$, and $X\in\cG_1(M)$ was used
in $(b)$.
\end{proof}

\begin{proof}[Proof of Theorem \ref{thm:RiskEst}]
First notice that
\begin{align}
 \Big|\Delta_1-\frac{2\sigma^2}{n}\|\hth\|_0\Big|&\le \Big|\Delta_1-\frac{2\sigma^2}{n}\|\hth^0\|_0\Big|+
\frac{2\sigma^2}{n}\Big|\|\hth\|_0-\|\hth^0\|_0\Big|\\
&\le \Big|\E_w\Delta_1-\frac{2\sigma^2}{n}\E_w\|\hth^0\|_0\Big|+
\Big|\Delta_1-\E_w\Delta_1\Big|\\
& \phantom{AAA}+
\frac{2\sigma^2}{n}\Big|\|\hth^0\|_0-\E_w\|\hth^0\|_0\Big|+
\frac{2\sigma^2}{n}\Big|\|\hth\|_0-\|\hth^0\|_0\Big|\nonumber\\
&\stackrel{(a)}{\le} 2Cs_0\sigma^2 \sqrt{\frac{\log p}{n^3}}+ \frac{2t\sigma^2}{\sqrt{n}} +
+2L_ns_0\sigma^2 \frac{(\log p)^{1/4}}{n^{5/4}}+
2L_n\sigma^2 \Big(\frac{s_0}{n}\Big)^{3/2} \log p\\
& \le \frac{2t\sigma^2}{\sqrt{n}} +\frac{6L_ns_0\sigma^2}{n}\left(\Big(\frac{\log p}{n}\Big)^{1/4}\vee 
\Big(\frac{s_0(\log p)^2}{n}\Big)^{1/2}\right)  \, ,
\end{align}
where the  inequality $(a)$ holds probability 
larger than $1-o_n(1)-2e^{-ct^2}$ by lemmas \ref{lemma:ExpDelta1}, \ref{lemma:ConcentrationDelta1},
\ref{lemma:Cheby}, \ref{lemma:HthHth0}  for any sequence $L_n\to \infty$ as $n\to\infty$.
We let
\begin{align}
\eps_n\equiv 6 L_n \left(\Big(\frac{\log p}{n}\Big)^{1/4}\vee 
\Big(\frac{s_0(\log p)^2}{n}\Big)^{1/2}\right) \, .
\end{align}

Using the decomposition (\ref{eq:Delta1Delta2}), we have
\begin{align}
\Big|\Risk(y,X,\theta^*) -\hRisk(y,X)  -\frac{2\sigma^2}{n}\|\hth\|_0\Big|& \le \Big|\Delta_1-
                                       \frac{2\sigma^2}{n}\|\hth\|_0\Big|+|\Delta_2|\\
&\le \frac{2t\sigma^2}{\sqrt{n}}+\frac{\eps_ns_0\sigma^2}{n} + \frac{Cs_0\sigma^2}{n}\, \sqrt{\frac{s_0(\log p)^2}{n}} \\
&\le \frac{2t\sigma^2}{\sqrt{n}}+\frac{2\eps_ns_0\sigma^2}{n} \, ,
\end{align}
where the last inequality holds for all $n$ large enough.

By choosing $L_n$ to be a sequence with slow enough growth rate, e.g. $L_n = (\frac{n}{s_0 (\log p)^2})^{1/4}$, we have $\eps_n \to 0$.
This completes the proof for Gaussian designs.
\end{proof}

\end{document}